\documentclass[12pt]{article}

\usepackage[margin=1in]{geometry}
\usepackage{amsmath,amssymb,amsthm}
\usepackage{mathtools}
\usepackage[authoryear,round]{natbib}
\usepackage{booktabs, float}
\usepackage{tikz}
\usepackage{graphicx}
\usetikzlibrary{shapes.geometric,arrows.meta,positioning,calc,fit,backgrounds}
\usepackage{enumitem}
\usepackage{hyperref}
\usepackage{xcolor}
\usepackage{authblk}
\definecolor{cSurvive}{RGB}{29,158,117}
\definecolor{cDrop}{RGB}{136,135,128}
\definecolor{cDyn}{RGB}{186,117,23}
\definecolor{cPrs}{RGB}{29,158,117}
\definecolor{cHost}{RGB}{136,135,128}
\usepackage{authblk}

\usepackage{placeins}

\usepackage{setspace}
\setlist{topsep=3pt,itemsep=1pt,parsep=0pt}

\newtheorem{proposition}{Proposition}[section]
\newtheorem{remark}{Remark}[section]

\newtheorem{corollary}{Corollary}[section]
\makeatletter
\let\c@remark\c@proposition
\let\c@principle\c@proposition
\let\c@corollary\c@proposition
\makeatother

\newcommand{\bs}[1]{\boldsymbol{#1}}
\newcommand{\bv}{\mathbf{v}}

\newcommand{\bom}{\bs{\omega}}
\newcommand{\bOm}{\bs{\Omega}}
\newcommand{\bk}{\hat{\mathbf{k}}}
\newcommand{\Div}{\nabla\!\cdot\,}
\newcommand{\gradh}{\nabla_{\!h}}
\newcommand{\gradz}{\nabla_{\!z}}
\newcommand{\divh}{\nabla_{\!h}\!\cdot\,}
\newcommand{\divz}{\nabla_{\!z}\!\cdot\,}
\newcommand{\Dt}{\partial_t}
\newcommand{\OO}{\mathcal{O}}

\newcommand{\dd}{\,\mathrm{d}}

\newcommand{\hodge}{\star}

\newcommand{\Fr}{\mathrm{Fr}}
\newcommand{\rhoa}{\rho_{\mathrm{AC}}^{\alpha}}
\hypersetup{colorlinks=true, linkcolor=blue!60!black,
  citecolor=blue!60!black, urlcolor=blue!60!black}

\title{\bfseries
Foundations of Global Ocean Climate Modelling at all Scales
  } 

\author[1,2]{Peter Korn}
\affil[1]{Max Planck Institute for Meteorology, Hamburg, Germany}
\affil[2]{Department of Mathematics, Imperial College London,
  United Kingdom}
\begin{document}
\maketitle

\begin{abstract}
We introduce  a computational method that extends the range of  current global ocean climate models to non-hydrostatic scales, and establishes a convection resolving ocean model.  The main obstruction towards this goal at global scale -the calculation of the non-hydrostatic pressure-  is resolved by computing the pressure locally in a manner that capitalizes on particular conditions of ocean dynamics, and the structure of ocean models. 
 
Our claims  are substantiated 1) by a theoretical analysis, which shows that the proposed method is compatible with ocean physics, 2) by numerical experiments, which evidence by comparison that it represents faithfully and to high accuracy non-hydrostatic dynamics, and 3) by a performance analysis that establishes that our non-hydrostatic 
method exceeds the cost of a hydrostatic model by a fixed factor of about $1.2$ in operation count and $1.3$ in runtime at any resolution.

We show how to use the additional information contained in non-hydrostatic scales can be used to calculate the  diffusivity free from numerical contributions and the mixing efficiency, transforming both from prescribed values by the modeller to calculated values from the dynamics.
\end{abstract}

\tableofcontents

\section{Introduction}\label{sec:intro}
The dynamics of the world's ocean spans a breathtaking range of scales.
The circulation couples processes from thousands of kilometres down to
the millimetre, and from seconds to millennia. Our understanding of
global ocean dynamics is to a large extent based on computational
models, which provide a spatial and temporal coverage of the ocean's
state that observation alone does not.  However, the range of scales a
computational ocean model can represent is limited by its computational
capacity.

Because of this computational limitation the Assessment Reports (AR)  of the 
The Intergovernmental Panel on Climate Change (IPCC) has since its first report AR1 (1990),
to the sixth report AR6 (2020-2023) relied on ocean climate models whose dynamical equation were hydrostatic Boussinesq equations (the ``Primitive Equations'' of large scale ocean dynamics). The dynamical cores of 
all of these models differ in implementation and calibration. What changed most distinctively over the years is 
the spatial resolution, i.e. the balance between resolved and parametrized processes is shifting towards resolved, but the fundamental limitation inherited by the models dynamical foundation remained in place. 
The impact of changing exactly this foundation can be observed for atmospheric models when they crossed the barrier
to convection resolving models.


The purpose of this paper is to enlarge -within today's computational budget- the spectrum of motions that
ocean climate model represent. More precisely, we present, analyse and evaluate a computational method,
named AC/DC, that transforms global non-hydrostatic ocean modelling
from an impossibility into an affordable option.  
Such a claim has to meet two requirements: first, that the model is able to represent non-hydrostatic dynamics, 
second, that it is able to do this by affordable costs.

We show by theoretical analysis and numerical evidence that he new method meets the first requirement, and by
a detailed performance analysis that it also satisfies the second one with a
small overhead over the hydrostatic cost.

\paragraph{Information Gain.}
A hydrostatic ocean model cannot grow
resolved plumes from an unstable stratification, whatever its resolution is, so convection must be
parametrised, by convective adjustment or by a strongly
enhanced vertical diffusivity in the boundary-layer scheme
\citep{MarshallSchott1999}.
 Non-hydrostatic models remove this
structural limitation.  At metre to decametre spacing the convective
plumes are simulated explicitly \citep{JonesMarshall1993,
MarshallSchott1999}.  
The atmospheric community crossed the analogous
threshold when models reached convection-permitting resolution, widely
regarded as a step change in simulated vertical transport
\citep{Prein2015}.  For the compressible atmosphere that step required
increasing resolution alone, because the acoustic mode is already
prognostic and split-explicitly sub-stepped.  For the incompressible
ocean, increasing resolution alone does not take the step, because it
places the three-dimensional elliptic pressure problem at the centre of
every time step; AC/DC removes this obstruction.

The information gain of resolved convection is that dissipation and mixing
change status from prescribed inputs to computable outputs.  In a
convection-resolving simulation the kinetic-energy dissipation rate
$\varepsilon$ and the tracer-variance dissipation rate $\chi$ are
diagnosed from the simulated fields, not implied by a parametrization.
The tracer diffusivity then follows from the Osborn--Cox balance
$K_T = \chi/(2(\partial_z\overline{T})^2)$ \citep{OsbornCox1972} and the
diapycnal diffusivity from $K_\rho = \Gamma\varepsilon/N^2$
\citep{Osborn1980}, so that the mixing efficiency $\Gamma$, canonically
prescribed at $0.2$, and reported in the range $0.15$--$0.20$ for
stratified shear turbulence \citep{CaulfieldPeltier2000,
SalehipourPeltier2015}, becomes a diagnosed function of the simulated
turbulence instead of a constant supplied to the model.  For this
programme to survive the approximation made in this paper, the discrete
tracer-variance budget must close despite the residual divergence the
approximation; that it does, exactly and with the numerical part
of the sink separately computable, is the content of
Proposition~\ref{prop:variance} and Corollary~\ref{cor:osborn_cox}.

The fundamental challenge in moving from hydrostatic to
non-hydrostatic dynamics is not the additional vertical velocity
equation; it is the change in the nature of incompressibility.
Hydrostatic models set the vertical velocity such that the
three-dimensional velocity field is solenoidal, $\Div\bv = 0$.
Non-hydrostatic equations cannot do that, because they already carry an
evolution equation for the vertical velocity; for them $\Div\bv = 0$
becomes a constraint: the velocity field satisfies its evolution
equation under the condition that it stays solenoidal.  This is
commonly enforced by adjusting the pressure, and that creates the
wall.

\paragraph{The Wall, and the way around it.}
To illustrate the origin of the ``computational wall'' we decompose canonically the pressure into 
hydrostatic pressure $p_{\mathrm{hyd}}$,
surface pressure $p_{\mathrm{sfc}}$ and non-hydrostatic
pressure $p_{\mathrm{NH}}$,
\begin{equation*}
  p = p_{\mathrm{hyd}} + p_{\mathrm{sfc}} + p_{\mathrm{NH}},
\end{equation*}
where the surface pressure is calculated from the kinematic surface
boundary condition and the hydrostatic pressure from the fluid's
weight over a certain level. 
The non-hydrostatic pressure is calculated via the pressure
equation, which arises when the divergence is applied to the velocity
equation,
\begin{equation}\label{press_intro}
\Delta  p_{\mathrm{NH}} = \Div\big( \text{advection + dissipation}\ \dots\big),
\end{equation}
supplemented with Dirichlet or Neumann boundary conditions.

The non-hydrostatic pressure at a single point depends on the flow in
the entire ocean and on its whole boundary; this renders the pressure
equation \emph{non-local}, and the three-dimensional elliptic problem
has to be solved at every time step.  Only this guarantees that the
flow remains incompressible, i.e.\ that volume is conserved.  Discretised,
\eqref{press_intro} is an enormous linear system on a domain as complex
as the ocean's, of the size of the mesh---$\mathcal{O}(10^{10})$ cells
and beyond---to be solved on massively parallel architectures and
accelerator technology.  This computational cost is the wall that no
approach has removed so far; it renders global non-hydrostatic ocean
modelling infeasible and limits it to regional or idealised
configurations.  Section~\ref{sec:pressure_calc} provides more
details.


\begin{figure}[tbp]
  \centering
  \begin{tikzpicture}[
    font=\footnotesize,
    >={Stealth[length=1.8mm]},
    ocean/.style={draw,thick},
    pcap/.style={align=center,text width=50mm,font=\scriptsize},
    ptitle/.style={align=center,text width=52mm,font=\footnotesize\bfseries},
  ]
  \begin{scope}[xshift=0mm]
    \fill[black!30] (0,0) rectangle (40mm,-17mm);
    \draw[ocean] (0,0) rectangle (40mm,-17mm);
    \fill[black] (5mm,-8.5mm) circle (1mm);
    \draw[->,black!75] (5mm,-8.5mm) -- (36mm,-8.5mm);
    \draw[->,black!75] (5mm,-8.5mm) -- (5mm,-2mm);
    \draw[->,black!75] (5mm,-8.5mm) -- (5mm,-15mm);
    \node[anchor=north east,font=\scriptsize] at (39mm,-1mm) {within one step};
    \node[ptitle,anchor=north] at (20mm,-19mm) {(a) elliptic projection};
    \node[pcap,anchor=north] at (20mm,-25.5mm)
      {$(L_H{+}L_z)\,p_{\mathrm{NH}}/\rho_0=S$\\[3pt]
       coupling completed at once;\\
       error set by $\Delta t$};
  \end{scope}
  \begin{scope}[xshift=54mm]
    \foreach \i in {0,...,9}{\fill[black!8] ({\i*4mm+0.4mm},0)
        rectangle ({\i*4mm+3.6mm},-17mm);}
    \fill[black!30] (4.4mm,0) rectangle (7.6mm,-17mm);
    \draw[ocean] (0,0) rectangle (40mm,-17mm);
    \foreach \i in {0,...,9}
      \draw[<->,black!70] ({\i*4mm+2mm},-2mm) -- ({\i*4mm+2mm},-15mm);
    \fill[black] (6mm,-8.5mm) circle (1mm);
    \node[ptitle,anchor=north] at (20mm,-19mm) {(b) AC/DC: column solve};
    \node[pcap,anchor=north] at (20mm,-25.5mm)
      {$L_z\,p_V/\rho_0=S$\\[3pt]
       exact per column;\\ no error, no communication};
  \end{scope}
  \begin{scope}[xshift=108mm]
    \begin{scope}
      \clip (0,0) rectangle (40mm,-17mm);
      \fill[black!10] (10mm,-8.5mm) circle (34mm);
      \foreach \r/\op in {14/16,10/23,6/30}{
        \fill[black!\op] (10mm,-8.5mm) circle (\r mm);}
      \foreach \r in {6,10,14}{\draw[black!50] (10mm,-8.5mm) circle (\r mm);}
      \draw[densely dashed,black!65] (10mm,-8.5mm) circle (34mm);
    \end{scope}
    \draw[ocean] (0,0) rectangle (40mm,-17mm);
    \fill[black] (10mm,-8.5mm) circle (1mm);
    \draw[->,thick] (11mm,-8.5mm) -- (23mm,-8.5mm)
      node[midway,above,font=\scriptsize]{$c_{\mathrm{ac}}$};
    \node[anchor=south west,font=\scriptsize] at (0.8mm,-16.2mm)
      {one cell per sub-step};
    \node[anchor=north east,font=\scriptsize] at (39.2mm,-0.8mm)
      {after $L/c_{\mathrm{ac}}$};
    \node[ptitle,anchor=north] at (20mm,-19mm) {(c) AC/DC: horizontal step};
    \node[pcap,anchor=north] at (20mm,-25.5mm)
      {residual $S^{*}=-L_H\,p_V/\rho_0$\\[3pt]
       same coupling, over time;\\ error set by $\alpha$};
  \end{scope}
  \end{tikzpicture}
  \caption{\it \small 
    Projection~(a) couples instantaneously within a time step by a
    global solve, its error set by the pressure--velocity splitting.
    AC/DC propagates the coupling at finite speed $c_{\mathrm{ac}}$:
    the vertical block~(b) is solved exactly and independently
    in each column, while the horizontal residual~(c) advances one
    cell per sub-step, which is the acoustic CFL, and so covers a
    horizontal distance $L$ in time $L/c_{\mathrm{ac}}$, shorter than
    the time $L/U$ the flow needs to cover it by a factor
    $\Fr_{\mathrm{baro}} = U/c_{\mathrm{ac}}$, which is why the error
    it leaves is second order in that factor.  Neither method is
    exact, projection has a splitting error, AC/DC a divergence error
    of order $\delta^2\Fr_{\mathrm{baro}}^2$, but AC/DC needs no
    global communication to get there.
    On a variable-resolution mesh the contrast sharpens: the solve in (a) stays global, its iteration count is set by the finest spacing  in the domain, whereas the disc in (c) need only fill the patch where the non-hydrostatic correction is active (see Section~\ref{sec:telescoping}).}
  \label{fig:acdc_idea}
\end{figure}

A legitimate way around the computational wall has to a) approximate
non-hydrostatic dynamics, b) make global simulations computationally
feasible, and c) not sacrifice one goal for the other.

The idea is to make the pressure equation local, i.e.\
to replace the non-local constraint $\Div\bv = 0$ by a (local) wave
equation,
\begin{equation}\label{wave_intro}
\Dt   \psi + \Div\bv = 0.
\end{equation}

 The full equation, restored in
Section~\ref{sec:ac_system}, carries a constant $\alpha$ with the units of a
pressure, which~\eqref{wave_intro} suppresses.  With it, $\psi$ is the
pressure of a slightly compressible fluid whose density is
\begin{equation}
  \rhoa = \rho_0\Big(1 + \frac{\psi}{\alpha}\Big),
  \qquad\text{equivalently}\qquad
  \psi = c_{\mathrm{ac}}^{2}\,(\rhoa - \rho_0),
  \qquad
  c_{\mathrm{ac}}^{2} := \frac{\alpha}{\rho_0},
\end{equation}
which is a linear barotropic equation of state.  The pseudo-pressure and
the pseudo-density are therefore not two quantities but one field written
twice, and $\alpha$ is its stiffness.

What this changes is how the pressure enforces the constraint, not the
constraint it enforces.  In an incompressible fluid the pressure is
whatever field, everywhere at once, makes the velocity divergence-free, and
that simultaneity \emph{is} the global solve.  Here it is replaced by a
spring: a converging velocity compresses the fluid, the compression raises
$\psi$ through the equation of state, and the resulting $\nabla\psi$ pushes
back until the convergence is gone.  The adjustment is no longer
instantaneous but travels at the finite speed $c_{\mathrm{ac}}$.  The
elastic energy mentioned below is then not a bookkeeping term but the
ordinary acoustic potential energy of that compression: the contribution
$\tfrac{1}{2\alpha\rho_0}\int_\Omega\psi^{2}\dd V$
of~\eqref{eq:EKL_def} is $p'^2/(2\rho_0c_{\mathrm{ac}}^{2})$ written in the
Boussinesq convention.

Two consequences shape everything that follows.  First, $\rhoa$ is not the
density that makes water sink.  It responds to the pressure relaxation
alone, while buoyancy is carried by the thermodynamic $\rho(\Theta,S,z)$ as
in any Boussinesq model; the two densities do different jobs and are kept
apart throughout.  Second, the sound speed is chosen.
The calibration used here makes $c_{\mathrm{ac}} = \sqrt{gH_{\mathrm{full}}}$,
so the artificial acoustic mode is exactly as fast as the barotropic gravity
wave the model already resolves -about $200\,\mathrm{m\,s^{-1}}$ for a
four-kilometre ocean, against some $1500\,\mathrm{m\,s^{-1}}$ for sound in
real seawater- and the compression it produces is of order $10^{-5}$ in
relative density.  The fluid is not being made compressible to go faster.  It
is given one slow, weak acoustic mode, deliberately so whose only
task is to relax divergence, and whose error is governed by the single
dimensionless number $\Fr_{\mathrm{baro}} = U/c_{\mathrm{ac}}$
of~\eqref{eq:Froude}.

This simple idea raises the immediate concern that the solenoidal constraint is abandoned, and with it
volume conservation. In a rigorous interpretation this is true. However, we show that with a careful
refinement of the localisation idea and respecting ocean conditions,
the concern is void: the volume error stays bounded and small and does
not accumulate over time, energy (including an elastic component) is
conserved, tracer content is conserved, and the non-hydrostatic
dispersion of internal gravity waves is maintained.  The details are
delivered in Section~\ref{sec:properties}.

\paragraph{Cost Analysis.}
The cost of the method is the second pillar, and the argument, made in Section~\ref{sec:pressure_calc}l.  On the accelerator systems on which such models run, the
time of a routine is set by datamovent, not by arithmetic intensity; the ICON time step is memory-bound by an order of
magnitude \citep{Klocke2025}.  The runtime of the AC/DC step relative
to the hydrostatic one is therefore a ratio of data volumes, from
which the machine cancels: the factor holds on any machine of this
class and needs no timing on a particular one.  We count data volume
in \emph{mesh passes}, the traffic of carrying one tracer once
through the mesh, and convert the operation counts read off the
source code into passes through the arithmetic intensity of each
routine.

In this unit the reason for the small overhead is visible 
(Figure~\ref{fig:why_cheap}): AC/DC adds no machinery of its
own.  The momentum tendencies gain terms 
the hydrostatic approximation omits, a cost any non-hydrostatic method
pays whatever computes its pressure; the depth-averaged
pseudo-pressure obeys an equation of the same stiffness as the free
surface and joins the host's free-surface solve as one more
two-dimensional field; the remaining pressure work is column-local
sweeps of the kind the implicit vertical mixing already performs.
Nothing iterates to convergence and nothing requires global
communication, so the count is fixed before the model runs and does
not grow with resolution or with node count.  The numbers follow: a
hydrostatic step costs about eight passes of operations and AC/DC
adds about two, a factor of $1.22$ by operation count; weighted by
the data each routine moves, the factor is $R\approx1.36$ with a
bracket of $1.24$ to $1.38$, that is, AC/DC lengthens the hydrostatic
time step by about a third, at any resolution and machine size, and
by less as tracers are added.  The projection method, priced in the
same unit, spends $10^{2}$--$10^{3}$ passes and as many rounds of
global communication on the pressure alone
(Appendix~\ref{app:cost_detail}).  The estimate is a scale analysis
under a stated configuration and the measured memory-bound regime,
not a measurement of AC/DC.

\begin{figure}[tbp]
  \centering
  \begin{tikzpicture}[
    font=\footnotesize,
    >={Stealth[length=1.8mm]},
    blk/.style={draw,thick,rounded corners=1mm,align=center,
                text width=38mm,inner sep=1.6mm,minimum height=13mm},
    host/.style={blk,fill=black!5},
    dyn/.style={blk,draw=cDyn,fill=cDyn!12},
    prs/.style={blk,draw=cPrs,fill=cPrs!10},
  ]
  \node[anchor=west,font=\scriptsize] at (-64mm,10mm) {the hydrostatic step already runs};
  \node[host] (H1) at (-42mm,0) {tendencies\\[-1pt]\scriptsize momentum and tracers};
  \node[host] (H2) at (0,0)     {free-surface solve\\[-1pt]\scriptsize sub-stepped or implicit};
  \node[host] (H3) at (42mm,0)  {column solves\\[-1pt]\scriptsize one per field};

  \node[dyn,below=8mm of H1] (A1) {Lamb and Bernoulli terms\\[-1pt]\scriptsize any NH method pays};
  \node[prs,below=8mm of H2] (A2) {one more 2D field\\[-1pt]\scriptsize 2D, so nearly free};
  \node[prs,below=8mm of H3] (A3) {more column sweeps\\[-1pt]\scriptsize exact, per column};
  \foreach \i in {1,2,3} \draw[->] (H\i) -- (A\i);

  \node[draw,thick,rounded corners=1mm,fill=black!3,inner sep=2mm,
        minimum width=128mm,below=9mm of A2] (N)
    {no new solver, no iteration to convergence, no global communication};
  \node[prs,below=7mm of N,text width=76mm] (R)
    {$R \approx 1.36$, bracket $1.24$ to $1.38$\\[-1pt]
     \scriptsize at any resolution and machine size};
  \draw[->] (N) -- (R);
  \end{tikzpicture}
  \caption{\it\small Why the overhead is small.  AC/DC adds no
    machinery of its own: each increment attaches to a block the
    hydrostatic model already runs.  The momentum tendencies gain the
    Lamb and Bernoulli terms that the hydrostatic approximation omits,
    which any non-hydrostatic method pays, whatever computes its pressure (amber).  The depth-averaged pseudo-pressure obeys an
    equation of the same stiffness as the free surface, so it joins
    whichever machinery the host uses for it, one more two-dimensional
    field in a sub-step loop or in an implicit solve.  The remaining
    pressure work is column-local sweeps of the same kind as the
    implicit vertical mixing, exact and independent per column
    (teal).  Nothing iterates to convergence and nothing stops the
    whole machine, so the count is fixed before the model is run and
    does not grow with resolution or with node count.  The resulting
    factor is derived in Section~\ref{sec:pressure_calc} and shown in
    Figure~\ref{fig:pass_budget}.}
  \label{fig:why_cheap}
\end{figure}

\paragraph{Integration into a Hydrostatic Ocean Model.}
AC/DC is designed as an increment to an existing hydrostatic core, not
as a new model.  Its components map onto machinery a hydrostatic model already has: the non-hydrostatic momentum terms use the model's
discrete operators; the vertical pressure equation is a tridiagonal
column solve of the same form as the implicit vertical mixing; the
pseudo-density is advanced by the model's transport scheme; and the
depth-averaged pseudo-pressure satisfies an equation of the same
structure and the same stiffness as the free surface, so it is
advanced inside the barotropic solve the model already performs, at
looser tolerance and with no synchronisation of its own.  Switching the
increments off recovers the hydrostatic model identically, and on a
variable-resolution mesh they act only where the mesh is fine
(Section~\ref{sec:telescoping}).  Sections~\ref{sec:disc_time_impl}
and~\ref{sec:ogcm} do this for ICON-O \citep{Korn2017}.

{\bf Structure. } The paper is organised as follows.  Section~\ref{sec:ac} formulates
artificial compressibility for thin fluids and refines it into AC/DC.
Section~\ref{sec:pressure_calc} establishes computational feasibility:
every available route to a non-hydrostatic pressure is priced in
passes, and the count is converted into runtime.
Section~\ref{sec:properties} analyses the method: energy, volume and
tracer errors, the free surface, the dispersion of internal gravity
waves, and Section~\ref{sec:diss-mixing-efficiency} develops the
dissipation and mixing diagnostics that resolved convection makes
computable.  Sections~\ref{sec:disc_time_impl} and~\ref{sec:ogcm} give
the discrete formulation and its integration into a hydrostatic ocean
model.  Section~\ref{sec:experiments} tests the method against
projection references, Section~\ref{sec:telescoping} takes it onto telescoping meshes, and Section~\ref{sec:conclusions} states what is
established and what is not.

\section{Artificial Compressibility for Thin Fluids }\label{sec:ac}
Consider the Boussinesq equations

\begin{equation}\begin{split}\label{Boussinesq_eq0}
  \Dt\bv + \mathbf{N}_{\mathrm{3D}} 
  + \nabla B_{\mathrm{3D}}
    &= \mathcal{D}(\bv) + b\,\bk,\\[2ex]
 \Div\bv &= 0, \\
 \Dt b +\nabla\cdot(b\bv)=\Delta b,
\end{split}\end{equation}
where the rotational part of the nonlinearity is given by $\mathbf{N}_{\mathrm{3D}}:=\bom_a\times \bv$, with absolute vorticity $\bom_a:=\nabla\times\bv+\mathbf{f}$ and $\mathbf{f}$ the Coriolis vector,
the Bernoulli function is  $\nabla B_{\mathrm{3D}}:=\nabla (E_{\mathrm{kin}}+p)$, with $E_{\mathrm{kin}}$
the kinetic energy, $p$ the pressure, and $b$ denotes the buoyancy.
\subsection{General Artificial Compressibility}\label{sec:ac_system}

The AC method introduces a pseudo-pressure $\psi$ as a new
prognostic variable.  The system reads:
\begin{align}
  \frac{1}{\alpha}\Dt\psi 
    + \frac{1}{\alpha}\Div(\psi\bv) &= - \Div\bv,
    \label{eq:ac_psi} \\
  \Dt\bv + \mathbf{N}_{\mathrm{3D}}
    + \nabla\!\left(B_{\mathrm{hyd}} + \frac{\psi}{\rho_0}\right)
    &= \mathcal{D}(\bv) + b\,\bk,
    \label{eq:ac_mom}
\end{align}
where the Bernoulli function is defined as
\begin{equation}\label{eq:B_hyd}
  B_{\mathrm{hyd}} := \frac{p_{\mathrm{hyd}}+p_{\mathrm{sfc}}}{\rho_0}
    + \tfrac12|\bv|^2 ,
\end{equation}
with the full three-dimensional kinetic energy and with
$p_{\mathrm{sfc}} := \rho_0 g\eta$ the surface pressure of the free-surface
elevation $\eta$ (Section~\ref{sec:free_surface}; on a fixed domain
$\eta\equiv0$ and $p_{\mathrm{sfc}}$ vanishes), and where we have split
the pressure
\begin{equation} \label{eq:pressure_split}
  p = p_{\mathrm{hyd}} + p_{\mathrm{sfc}} + p_{\mathrm{NH}}.
\end{equation}
The non-hydrostatic pressure
$p_{\mathrm{NH}}$ is deliberately absent: it is replaced by the pseudo-pressure $\psi$.  
The pseudo-pressure equation~\eqref{eq:ac_psi} replaces the
incompressibility constraint: divergence drives a pseudo-pressure
response, and $\nabla\psi$ in the momentum equation acts to
suppress divergence.  As $\alpha\to\infty$,
$\psi\to p_{\mathrm{NH}}$ and $\Div\bv\to 0$, recovering formally the full
incompressible NH equations.  The last term in~\eqref{eq:ac_psi} is the
flux-form transport of $\psi$; the reasons for carrying it are
collected at the end of Section~\ref{sec:acdc_closure}.

What matters is that~\eqref{eq:ac_psi} is a
\emph{local, explicit, hyperbolic} equation, no global solve, no
iteration, no preconditioner.  It replaces the Poisson problem with
a wave equation, and the ``cost'' of non-hydrostatic dynamics
becomes the cost of carrying one additional prognostic variable per
cell.  The convergence of AC to the incompressible limit as
$\alpha\to\infty$ is well established in the PDE literature
\citep{Temam1968,Guermond2015}; recent extensions to polytopal
meshes and variable-density flows are given by
\citet{Milani2022} and \citet{Cappanera2025}.

\paragraph*{Choice of the parameter $\alpha$.}
Here $\alpha$ has units of pressure and fixes the artificial
sound speed
$c_{\mathrm{ac}} := \sqrt{\alpha/\rho_0} = \sqrt{\tilde\alpha}$, with
$\tilde\alpha := \alpha/\rho_0$.  Throughout, the calibration is
\begin{equation}\label{eq:alpha_choice}
  \alpha = \rho_0\,g H_{\mathrm{full}} ,
  \qquad
  c_{\mathrm{ac}} = \sqrt{g H_{\mathrm{full}}} ,
\end{equation}
with $H_{\mathrm{full}}$ the full ocean depth: the artificial acoustic
speed then equals the barotropic gravity-wave speed, so the artificial
mode is carried by whatever machinery already resolves the barotropic
mode.  The dimensionless measure of the compressibility error is the
barotropic Froude number
\begin{equation}\label{eq:Froude}
\Fr_{\mathrm{baro}} := U/c_{\mathrm{ac}},
\end{equation} 
with $U$ the flow velocity
scale; at $U\sim1\,\mathrm{m\,s^{-1}}$ and $H_{\mathrm{full}} = 4000$\,m
this gives $\Fr_{\mathrm{baro}}\approx5\times10^{-3}$ and
$\Fr_{\mathrm{baro}}^{2}\approx2.5\times10^{-5}$.  Every AC error
established below is proportional to $\Fr_{\mathrm{baro}}^{2}$ except
where stated otherwise, and the experiments calibrate $\alpha$ per
configuration from the same rule $c_{\mathrm{ac}}\gg U$.


\subsection{Artificial Compressibility for Thin Fluids}\label{sec:acdc_closure}

We define the pseudo-density
\begin{equation}\label{eq:rho_ac}
  {\rhoa} \;:=\; \rho_0\!\left(1 + \frac{\psi}{\alpha}\right).
\end{equation}
As a consequence of~\eqref{eq:ac_psi}, the pseudo-density
satisfies the flux-form continuity equation
\begin{equation}\label{eq:ac_flux}
  \Dt{\rhoa} + \Div({\rhoa}\bv) = 0 ,
\end{equation}
which is~\eqref{eq:ac_psi} rescaled by $\rho_0/\alpha$.  The relaxation
form
\begin{equation}\label{eq:ac_cont}
  \Dt{\rhoa} + \rho_0\,\Div\bv
    = -\frac{\rho_0}{\alpha}\,\Div(\psi\bv) ,
\end{equation}
by contrast, holds only up to the  transport term on the
right-hand side, which is of order $\OO(1/\alpha)$. 
Artificial compressibility is usually written in the relaxation form. 

The AC system is structurally
weakly compressible, its pseudo-density transported conservatively
by~\eqref{eq:ac_flux}, the same flux form in which ocean models
transport mass and tracers.  The energy and tracer statements below
depend on this choice: the flux form is what makes them exact.


The refinement of AC through AC/DC rests on the fact that the three-dimensional
pressure problem for a thin fluid decomposes on prismatic meshes into a
vertical component (cheap, local) and a horizontal component (the
part that makes the Poisson solve expensive).  On prismatic meshes
the vertical direction is orthogonal to the horizontal, and the
three-dimensional Laplacian becomes 
\begin{equation}\label{eq:laplace_split}
\Delta = L_H + L_z,
\end{equation}
where $L_z = \partial_{zz}$ acts in $z$ at fixed $(x,y)$ and
$L_H = \nabla_H^2$ acts in $(x,y)$ at fixed $z$.

The non-hydrostatic pressure satisfies the Poisson equation
\begin{equation}\label{eq:laplace_split1}
(L_H + L_z)(p_{\mathrm{NH}}/\rho_0) = S,
\end{equation}
 where $S$ is the
divergence source from the velocity tendency.  AC/DC
decomposes this into three steps.
\begin{enumerate}[nosep]
\item \emph{Step 1: Column solve.}  For fixed horizontal position $(x,y)$
and time $t$, solve the vertical problem
\begin{equation}\label{eq:laplace_split2}
  L_z(p_V/\rho_0) = S(x,y,z,t),
\end{equation}
subject to homogeneous Dirichlet condition at the free surface,
  $p_V = 0$ and a homogeneous Neumann condition $\partial_z p_V = 0$ at
  the bottom.  
\item \emph{Step 2: Horizontal AC.}  The residual in the full Poisson
equation after applying $p_V$ is
\begin{equation}
  S^* = S - (L_H + L_z)(p_V/\rho_0)
  = S - L_z(p_V/\rho_0) - L_H(p_V/\rho_0)
  = -L_H(p_V/\rho_0),
  \label{eq:residual}
\end{equation}
since $L_z(p_V/\rho_0) = S$ by construction.  
The flux-form continuity equation for the pseudo-density drives the residual divergence to zero:
\begin{equation}
  \Dt  {\rhoa}  + \Div(  {\rhoa}\bv) = 0,
  \label{eq:hybrid_psi}
\end{equation}
where $\bv$ is the velocity corrected by both $p_V$ and $\psi$.  
\item \emph{Step 3: Aggregation.} Define the non-hydrostatic pressure as 
$p_{\mathrm{NH}} := p_V + \psi$.
\end{enumerate}
\vskip0.5cm
The AC/DC system is therefore~\eqref{eq:ac_psi} together with the
momentum equation carrying both parts of the aggregated pressure,
\begin{equation}\label{eq:acdc_mom}
  \Dt\bv + \mathbf{N}_{\mathrm{3D}}
    + \nabla\!\left(B_{\mathrm{hyd}}
        + \frac{p_V+\psi}{\rho_0}\right)
  = \mathcal{D}(\bv) + b\,\bk ,
\end{equation}
with $B_{\mathrm{hyd}}$ of~\eqref{eq:B_hyd}.  It differs from the pure-AC
system~\eqref{eq:ac_psi}--\eqref{eq:ac_mom} of
Section~\ref{sec:ac_system} by the single additional gradient
$-\nabla(p_V/\rho_0)$, supplied by the column solve of Step~1; setting
$p_V\equiv0$ recovers~\eqref{eq:ac_mom} and every statement made for
AC/DC reduces to its pure-AC counterpart. 
\vskip0.3cm
At convergence of the artificial-compressibility iteration ($\Div\bv\to 0$, equivalently
$\alpha\to\infty$), the momentum correction $-\nabla(\psi/\rho_0)$ cancels the residual
divergence left by $p_V$, so that $\psi$ solves the residual Poisson problem
\begin{equation}\label{eq:psi_converged}
  (L_H + L_z)(\psi/\rho_0) = S^* = -L_H(p_V/\rho_0).
\end{equation}
The aggregated pressure $p_{\mathrm{NH}} = p_V + \psi$ then recovers the full
three-dimensional Poisson equation~\eqref{eq:laplace_split1}:
\begin{equation}\label{eq:Laplace_solve}
  (L_H + L_z)(p_{\mathrm{NH}}/\rho_0)
  = \underbrace{(L_H + L_z)(p_V/\rho_0)}_{=\,S - S^*}
  + \underbrace{(L_H + L_z)(\psi/\rho_0)}_{=\,S^*}
  = S ,
\end{equation}
where the first brace uses the column solve $L_z(p_V/\rho_0)=S$ of~\eqref{eq:laplace_split2}
with $S^*=-L_H(p_V/\rho_0)$ from~\eqref{eq:residual}, and the second uses the converged AC
balance~\eqref{eq:psi_converged}.  At finite $\alpha$ this identity holds up to the
$\OO(1/\alpha)$ relaxation residual.

Equation~\eqref{eq:psi_converged} shows that the converged $\psi$ has vertical structure: 
the horizontal AC step therefore relaxes the stiff vertical
operator acting on $\psi$ as well, so the column solve removes vertical stiffness from $p_V$ but
not from $\psi$.



\subsection{Consequences of Artificial Compressibility}\label{sec:acdc_compress}
AC/DC approximates the Boussinesq equations by a specific compressible system. This is a
fundamental transition that imposes two modifications in the dynamical formulation:
the introduction of the pseudo-density in the tracer equations, and the formulation of kinetic energy. Both together restore the conservation structure of the original system: a flux-form continuity equation for the pseudo-density,
and an energy that is conserved identically and not only to order  $\OO(1/\alpha)$.

\subsubsection*{Tracer equation.}
Consistency between the continuity and the tracer equations, i.e. tracer fluxes are identical to volume fluxes 
for a constant tracer, imply the following form of the tracer equation
\begin{equation}\label{eq:onescale_tracer}
  \boxed{\;
  \Dt\bigl({\rhoa}C\bigr)
    + \Div\bigl({\rhoa}\,C\,\bv\bigr)
  \;=\;
  \Div\bigl(\rho_0\,\mathbb{K}\nabla C\bigr) ,
  \;}
\end{equation}
with $\mathbb{K}$ the symmetric positive semi-definite diffusivity. 
The reasons for this form are explained in Section~\ref{sec:tracer_transport}.

\subsubsection*{Kinetic Energy.}
Consider the classical definition of the kinetic energy for incompressible fluids 
in terms of the velocities $L^2$-norm
\begin{equation}
K_{\mathrm{df}} := \tfrac{1}{2}\int_\Omega |\bv|^2\dd V.
\end{equation}
We refer to this energy as the \emph{density-free kinetic energy}.

Take the inner product of~\eqref{eq:ac_mom} with $\bv$ and integrate over
the domain, using~\eqref{eq:ac_psi} to substitute
$\Div\bv = -\frac{1}{\alpha}\bigl(\Dt\psi + \Div(\psi\bv)\bigr)$.  The nonlinear term vanishes,
$\left\langle \bom_a\times\bv,\bv\right\rangle=0$, and the pressure terms give
\begin{equation}\begin{split}\label{eq:energy1}
  \left\langle\nabla\!\left(B_{\mathrm{hyd}}
    + \frac{\psi}{\rho_0}\right),\bv\right\rangle
  &= -\left\langle B_{\mathrm{hyd}}
    + \frac{\psi}{\rho_0},\;\Div\bv\right\rangle \\
  &= -\left\langle B_{\mathrm{hyd}},\;\Div\bv\right\rangle
    +\frac{d}{dt}\frac{1}{2\alpha\rho_0}\|\psi\|^2
    +\frac{1}{2\alpha\rho_0}\int_\Omega\psi^2\,\Div\bv\dd V,
\end{split}\end{equation}
where the last two terms come from
$$-\langle\psi/\rho_0,\Div\bv\rangle
= \alpha^{-1}\rho_0^{-1}\langle\psi,\Dt\psi + \Div(\psi\bv)\rangle
= \alpha^{-1}\rho_0^{-1}\langle\psi,\Dt\psi\rangle 
$$
together with $\langle\psi,\Div(\psi\bv)\rangle
= \tfrac12\int_\Omega\psi^2\Div\bv\dd V$.
The middle term in \eqref{eq:energy1} is  absorbed into the definition of  
the total energy, defined as kinetic plus elastic energy,
\begin{equation}
    e: = K_{\mathrm{df}} + \frac{1}{2\alpha\rho_0}\int_\Omega\psi^2\dd V,
  \label{eq:total_energy_df}
\end{equation}
leaving the inviscid budget
\begin{equation}\label{eq:df_budget_33}
  \frac{de}{dt}
  = \underbrace{\tfrac12\!\int_\Omega|\bv|^2\,\Div\bv\dd V}_{
      =:\;\mathcal{S}_{\mathrm{df}}}
  + \Bigl\langle\frac{p_{\mathrm{hyd}}+p_{\mathrm{sfc}}}{\rho_0},\;\Div\bv\Bigr\rangle
  - \frac{1}{2\alpha\rho_0}\int_\Omega\psi^2\,\Div\bv\dd V
  + \langle b,\,w\rangle .
\end{equation}
This budget is not closed, $\mathcal{S}_{\mathrm{df}}$ is the remainder of  the kinetic-energy part of the
Bernoulli gradient when $\Div\bv\neq0$; it is cubic in
the velocity, it is not sign-definite. Section \ref{sec:df_source} establishes
the incompatibility of $K_{\mathrm{df}}$ with energy conservation.
This is a manifestation of the energy--circulation dichotomy, investigated in 
\citep{Korn2026b} for the barotropic Navier--Stokes equations.  

The resolution is as follows:  
in the weakly compressible AC system with pseudo-density ${\rhoa}$, defined in~\eqref{eq:rho_ac}
and transported conservatively by~\eqref{eq:ac_flux}, the obstruction
$\mathcal{S}_{\mathrm{df}}$ is completely removed by
replacing $K_{\mathrm{df}}$ with the
density-weighted kinetic energy
\begin{equation}
  \boxed{
K_{\mathrm{dw}} := \tfrac{1}{2}\int_\Omega {\rhoa}|\bv|^2\dd V.
  }
  \label{eq:dw_energy}
\end{equation}
The mechanism, described in Proposition~\ref{prop:dw_removes}, is
that the time derivative of the weight supplies
$-\mathcal{S}_{\mathrm{df}}$, while the correction's own cubic
contribution telescopes into a divergence and integrates to zero.  Both
steps use the flux form~\eqref{eq:ac_flux}; with the relaxation
form~\eqref{eq:ac_cont} the second step leaves a remainder of order
$1/\alpha$, and the conservation statements below would hold only
asymptotically.
Its specific form
$$k_{\mathrm{dw}} := K_{\mathrm{dw}}/\rho_0
= \tfrac12\int_\Omega({\rhoa}/\rho_0)|\bv|^2\dd V$$
is the quantity all total-energy budgets below are written in;
throughout we assume
$\psi/\alpha > -1$, i.e.\ ${\rhoa}>0$,
since $\psi/\alpha = \OO(\Fr_{\mathrm{baro}}^2)$, so that
$k_{\mathrm{dw}}\ge0$.
For the kinetic plus elastic energy
\begin{equation}
    E: = k_{\mathrm{dw}} + \frac{1}{2\alpha\rho_0}\int_\Omega\psi^2\dd V
  \label{eq:total_energy_dw}
\end{equation}
the inviscid budget is, on a domain closed to mass flux,
\begin{equation}\label{eq:KL_energy_inviscid}
  \boxed{\;
    \frac{dE}{dt}
    = \Bigl\langle\frac{p_{\mathrm{hyd}}+p_{\mathrm{sfc}}}{\rho_0},\;
        \Div\!\bigl(\tfrac{{\rhoa}}{\rho_0}\bv\bigr)\Bigr\rangle
      + \Bigl\langle\frac{{\rhoa}}{\rho_0}\,b,\;w\Bigr\rangle ,
  \;}
\end{equation}
here every cubic
contribution has cancelled. 
 If the pressure is $\psi$
alone and buoyancy is absent, both pairings on the right vanish and
\begin{equation}\label{eq:KL_energy_exact}
  \frac{dE}{dt} = 0
\end{equation}
The first term in~\eqref{eq:KL_energy_inviscid} is the work of the hydrostatic and surface
pressures against the artificial compressibility, the second the
exchange with the potential-energy reservoir.  Both belong to the total
energy budget, inclusing potential energy, this is the topic of Section~\ref{sec:energy}.
The contrast with~\eqref{eq:df_budget_33} is: the same system, the same pressures, and the
cubic obstruction $\mathcal{S}_{\mathrm{df}}$ present in one budget and
completely absent from the other.

With the viscous term included as the density-weighted operator
in curl--divergence (Hodge) form,
\begin{equation}\label{eq:visc_hodge}
  \mathcal{D}(\bv)
  = \nu\,{\rhoa}^{-1}
    \bigl[\nabla({\rhoa}\Div\bv)
      - \nabla\times({\rhoa}\nabla\times\bv)\bigr],
\end{equation}
the viscous pairing closes, and when the two pairings
of~\eqref{eq:KL_energy_inviscid} vanish what remains is
\begin{equation}\label{eq:KL_energy_identity}
  \boxed{\;
    \frac{dE}{dt}
    = -\nu\int_\Omega\frac{{\rhoa}}{\rho_0}\bigl(|\nabla\times\bv|^2 + |\Div\bv|^2\bigr)\dd V \leq 0,
  \;}
\end{equation}
each term of~\eqref{eq:visc_hodge} integrating by parts on its own;
the density-weighting absorbs the Bernoulli
kinetic-energy pressure work that incompressibility would otherwise
eliminate.  The full-gradient weighted operator
$\nu{\rhoa}^{-1}\Div({\rhoa}\nabla\bv)$ is
not equivalent: pointwise
$$|\nabla\bv|^2 - |\nabla\times\bv|^2 - (\Div\bv)^2
= \Div\bigl[(\bv\cdot\nabla)\bv - \bv\,\Div\bv\bigr],$$
so its weighted dissipation differs
from~\eqref{eq:KL_energy_identity} by the commutator
$$\nu\rho_0^{-1}\int_\Omega\nabla{\rhoa}\cdot
\bigl[(\bv\cdot\nabla)\bv - \bv\,\Div\bv\bigr]\dd V,$$
of relative order $\OO(\Fr_{\mathrm{baro}}^2)$ since
$|\nabla{\rhoa}|/\rho_0 = \OO(\Fr_{\mathrm{baro}}^2/L)$.
If  we would transport ${\rhoa}$ by the relaxation
form~\eqref{eq:ac_cont} instead of the flux form~\eqref{eq:ac_flux},
the cubic terms would no longer telescope and~\eqref{eq:KL_energy_identity}
would acquire the $\OO(1/\alpha)$ remainder
$\tfrac{1}{2\alpha}\int_\Omega|\bv|^2\Div(\psi\bv)\dd V$; that is the
price of the more familiar form, and the reason it is not used here.

\paragraph*{Anisotropic Viscosity.}
Anisotropic viscosity and diffusivity can be handled analogously. Denote the anisotropic 
diffusivity by
$\kappa_h\gg\kappa_v$; and the viscosity, by $\nu_h\gg\nu_v$.
Write $\mathbf{P}_H := \mathsf{I}-\bk\otimes\bk$ for the horizontal
projector and
\begin{equation}\label{eq:aniso_tensors}
  \mathsf{K} := \kappa_h\,\mathbf{P}_H + \kappa_v\,\bk\otimes\bk ,
  \qquad
  \mathsf{M} := \nu_h\,\bk\otimes\bk + \nu_v\,\mathbf{P}_H .
\end{equation}
The transposed structure of $\mathsf{M}$ relative to
$\mathsf{K}$ is deliberate and is explained below.

On a prismatic Delaunay--Voronoi mesh every face is either horizontal, with normal
$\bk$, or lateral, with a horizontal normal, and in either case the
line joining the two adjacent circumcentres is parallel to that normal.
A grid-aligned tensor therefore has no off-diagonal action across any
face, and the two-point flux of~\eqref{eq:tracer_flux} remains exact
with
\begin{equation}\label{eq:kappa_face}
  \kappa_f =
  \begin{cases}
    \kappa_h, & f \text{ lateral},\\[2pt]
    \kappa_v, & f \text{ horizontal},
  \end{cases}
  \qquad \gamma_f = \frac{|f|}{d_f} .
\end{equation}


Anisotropy is therefore carried inside the curl--divergence
form~\eqref{eq:visc_hodge} itself, keeping the operator in the native
pair,
\begin{equation}\label{eq:visc_hodge_aniso}
  \mathcal{D}(\bv)
  = \frac{1}{{\rhoa}}\Bigl[
      \nabla\bigl({\rhoa}\,\nu_d\,\Div\bv\bigr)
      - \nabla\times\bigl({\rhoa}\,\mathsf{M}\,\nabla\times\bv\bigr)
    \Bigr],
\end{equation}
with $\mathsf{M}$ of~\eqref{eq:aniso_tensors} and $\nu_d\ge0$ the viscosity acting on the divergence, for
which~\eqref{eq:KL_energy_identity} becomes
\begin{equation}\label{eq:KL_energy_identity_aniso}
  \boxed{\;
  \frac{dE}{dt}
  = -\int_\Omega \frac{{\rhoa}}{\rho_0}
      \Bigl[\nu_d(\Div\bv)^2
        + \nu_h\,\omega_z^2
        + \nu_v\,|\bom_H|^2\Bigr]\dd V \;\le\; 0 ,
  \;}
\end{equation}
with $\bom = \nabla\times\bv = \omega_z\bk + \bom_H$.  The dissipation is resolved
into three separately diagnosable reservoirs instead of one.  The
assignment in~\eqref{eq:aniso_tensors} is the physical one: $\nu_h$
multiplies the vertical-axis vorticity generated by horizontal shear
and $\nu_v$ the horizontal-axis vorticity generated by vertical shear.
\section{Information Gain: Dissipation, Mixing and its Efficiency}\label{sec:diss-mixing-efficiency}
This section describes how under non-hydrostatic dynamics the mixing efficiency
becomes a computable quantity, while under hdrostatic dynamics it is prescribed by the modeller.

The mixing efficiency $\Gamma = \varepsilon_b/\varepsilon$ is the irreversible conversion of kinetic into potential
energy, $\varepsilon_b$, per unit of kinetic energy dissipated,
$\varepsilon$.  It is the factor in the Osborn relation
$K_\rho = \Gamma\varepsilon/N^2$ that turns a measured dissipation
into a diapycnal diffusivity \citep{Osborn1980}.  Osborn introduced
$\Gamma\approx0.2$ as an upper bound, taking a flux Richardson number
$R_f\lesssim0.15$; in later usage the inequality was dropped and the
value treated as a constant, and the review of \citet{Gregg2018}
finds that four decades of estimates and parametrizations of
$\Gamma$ have not converged.  In a hydrostatic model with
parametrized convection $\Gamma$ is an input in exactly this sense:
the closure prescribes it and the model returns the prescribed
value.  In a model that
resolves the convection $\Gamma$ can be an output, provided three
things hold: the dissipation $\varepsilon$ that enters it is the
physical one, not the sum of everything the discretisation removes;
the buoyancy-variance sink $\varepsilon_b$ is likewise physical; and
the numerical parts of both are computed separately, so that their
size is known and not inferred.  The section establishes the three
in turn, for the tracer variance first and for the kinetic energy
second, because the variance identity is the simpler of the two and
its proof is the pattern for the other.

The frame is the pair of budgets
\begin{equation}\label{eq:target_budgets}
  \frac{dE}{dt} = W - \int_\Omega \varepsilon\,\dd V
    - \varepsilon_{\mathrm{num}},
  \qquad
  \frac{dV}{dt} = P - \frac{\rho_0}{2}\int_\Omega \chi\,\dd V
    - \chi_{\mathrm{num}},
\end{equation}
with $W$ the energy input, $P$ the variance production, $\varepsilon$
and $\chi$ the dissipation rates of kinetic energy and of tracer
variance by the viscous and diffusive operators, and
$\varepsilon_{\mathrm{num}}$, $\chi_{\mathrm{num}}$ the removal by the
discretisation.  What makes the separation possible is that the
spatial discretisation of Section~\ref{sec:properties} conserves
energy and variance exactly, so that every term with a formula can be
evaluated from that formula, independently of the budget, and the
closure of the budget becomes a test of the implementation.  In a standard model only the sum is accessible,
as a residual.  The budgets themselves are a property of the
operators and hold for a hydrostatic model built on them as well;
resolved convection changes what the terms contain, not whether they
close.

\subsection{Dissipation Rate of Tracer Variance}\label{sec:variance}
The AC velocity has a residual divergence, so it is not obvious
that a variance budget closes at all; if it did not, $\chi$ would be
contaminated by the compressibility error.  It closes as an algebraic
identity, because the pseudo-density weighting absorbs the
divergence, and the physical and numerical parts of the sink separate
face by face.

We work with the semi-discrete transport of
Proposition~\ref{prop:tracer_consistency}.  Let $\Phi_f$ be the
pseudo-mass flux~\eqref{eq:Ff_def} through face $f$, the flux that
advances ${\rhoa}$ through the exact flux-form
balance~\eqref{eq:ac_flux} as hypothesis~(H1) requires, so that
$\Phi_f = F_f$ in the notation of that proposition.  Let $D$ be the
discrete divergence with the incidence convention that a face $f$
separates cells $c^-$ and $c^+$ with $D_{c^-\!,f}=+1$ and
$D_{c^+\!,f}=-1$, so that $\Phi_f>0$ is a flux from $c^-$ to $c^+$.
Write $[C]_f := C_{c^+}-C_{c^-}$ for the jump across $f$,
$\langle C\rangle_f := \tfrac12(C_{c^-}+C_{c^+})$ for the arithmetic
face mean, and $(RC)_f$ for the face reconstruction the scheme
actually uses.  The total tracer flux, written $J_f$ to keep it
distinct from the pseudo-mass flux, is
\begin{equation}\label{eq:tracer_flux}
  J_f = (RC)_f\,\Phi_f + J_f^{\mathrm{diff}},
  \qquad
  J_f^{\mathrm{diff}} = -\kappa_f\,\gamma_f\,[C]_f ,
\end{equation}
with $\kappa_f\ge0$ the face diffusivity and $\gamma_f>0$ the
geometric factor of the two-point diffusive flux; on the orthogonal
Delaunay--Voronoi meshes used throughout, $\gamma_f = |f|/d_f$ with
$d_f$ the distance between the adjacent circumcentres.

Two face sums recur throughout this section and are named now:
\begin{equation}\label{eq:chi_disc}
  \int_\Omega\chi\,\dd V := 2\sum_f\kappa_f\,\gamma_f\,[C]_f^2,
  \qquad
  \mathcal{D}_{\mathrm{num}}
    := -\sum_f \Phi_f\,[C]_f\bigl((RC)_f-\langle C\rangle_f\bigr) .
\end{equation}
The first is the dissipation rate of tracer variance by the explicit
diffusive flux, the discrete form of $\chi = 2\kappa|\nabla C|^2$
integrated over the domain; it is nonnegative and it is the
$\chi$ of the frame~\eqref{eq:target_budgets}.  The second is the
variance sink of the reconstruction: it measures, face by face, how
far the reconstructed face value departs from the arithmetic mean,
weighted by the jump and the flux.  It vanishes identically for the
centred reconstruction and is nonnegative for the upwind one; for a
general reconstruction it has no sign.  Both are explicit sums over
faces of fields the scheme holds.  Over a set of cells $\omega$ the
same sums restricted to the faces with both cells in $\omega$ are
written $\chi_\omega$ and $\mathcal{D}_{\mathrm{num},\omega}$.

\begin{proposition}[Tracer-variance budget under AC/DC]
  \label{prop:variance}
  Let the transport satisfy hypotheses~(H1)--(H3) of
  Proposition~\ref{prop:tracer_consistency} with the
  flux~\eqref{eq:tracer_flux}, on a domain closed to tracer flux.  Then
  the pseudo-density-weighted tracer variance
  \begin{equation}\label{eq:variance_def}
    V_h := \tfrac12\sum_c |c|\,\rho_{\mathrm{AC},c}\,C_c^2
  \end{equation}
  obeys the exact identity
  \begin{equation}\label{eq:variance_budget}
    \boxed{\;
    \frac{dV_h}{dt}
      = -\,\rho_0\,\mathcal{D}_{\mathrm{num}}
        \;-\;\frac{\rho_0}{2}\int_\Omega\chi\,\dd V \; }
  \end{equation}
  for any face reconstruction, with the two terms
  of~\eqref{eq:chi_disc}.  In particular:
  \begin{enumerate}[nosep]
  \item[(i)] for the centred reconstruction
    $(RC)_f = \langle C\rangle_f$, $\mathcal{D}_{\mathrm{num}} = 0$ and
    the only sink is the explicit one, $dV_h/dt = -\tfrac{\rho_0}{2}
    \int_\Omega\chi\,\dd V \le 0$;
  \item[(ii)] for the upwind flux
    $\mathcal{D}_{\mathrm{num}} = \tfrac12\sum_f|\Phi_f|\,[C]_f^2
    \ge 0$, a numerical sink of the same algebraic form as the
    explicit one.
  \end{enumerate}
\end{proposition}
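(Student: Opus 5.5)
The plan is a direct semi-discrete energy-type computation: differentiate $V_h$, substitute the two discrete balance laws of Proposition~\ref{prop:tracer_consistency}, sum by parts over faces, and sort the face terms. I take the hypotheses (H1)--(H3) to supply the discrete pseudo-mass balance
\[
|c|\,\tfrac{d}{dt}\rho_{\mathrm{AC},c} = -\sum_f D_{c,f}\,\Phi_f
\]
and the tracer balance
\[
\tfrac{d}{dt}\bigl(|c|\,\rho_{\mathrm{AC},c}C_c\bigr) = -\rho_0\sum_f D_{c,f}\,J_f/\rho_0\cdot(\text{scaling}).
\]
I will fix the normalisation so that the advective part uses $\Phi_f$ with the same units as in the mass balance, and the diffusive part carries the factor $\rho_0$ as in~\eqref{eq:onescale_tracer}. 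Tracking where $\rho_0$ sits is a bookkeeping point that I will settle from~\eqref{eq:Ff_def}. The factor $\rho_0$ in front of both sinks in~\eqref{eq:variance_budget} suggests that $\Phi_f$ is a volume-normalised flux, with pseudo-mass flux $\rho_0\Phi_f$; I will adopt whichever convention makes (H1) read as above.

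\textbf{Step 1 (product rule).} Write $|c|\rho_c C_c^2 = C_c\,(|c|\rho_c C_c)$, so that
\[
\tfrac{dV_h}{dt} = \sum_c C_c\,\tfrac{d}{dt}(|c|\rho_cC_c) - \tfrac12\sum_c C_c^2\,\tfrac{d}{dt}(|c|\rho_c).
\]
This is the discrete analogue of using the continuity equation to remove the divergence. It is exactly where the residual divergence of AC/DC is absorbed, because the mass balance is exact by (H1).

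\textbf{Step 2 (summation by parts).} Insert both balances and exchange the cell and face sums. The domain is closed, so boundary faces carry no flux and only interior faces remain. With the incidence convention $D_{c^-,f}=+1$, $D_{c^+,f}=-1$, each face contributes
\[
J_f\,[C]_f \;-\; \tfrac12\,\Phi_f\,[C^2]_f
\]
up to the overall factor. Now use $[C^2]_f = 2\langle C\rangle_f[C]_f$. The advective part then combines to $\Phi_f[C]_f\bigl((RC)_f-\langle C\rangle_f\bigr)$, which gives $-\mathcal{D}_{\mathrm{num}}$ after summing with the sign. The diffusive part gives $-\kappa_f\gamma_f[C]_f^2$, whose sum is $-\tfrac12\int_\Omega\chi\,\dd V$ by~\eqref{eq:chi_disc}. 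Collecting the factors of $\rho_0$ then yields~\eqref{eq:variance_budget}.

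\textbf{Step 3 (special cases).} For case~(i), $(RC)_f=\langle C\rangle_f$ makes every summand of $\mathcal{D}_{\mathrm{num}}$ vanish. For case~(ii), with upwinding $(RC)_f = C_{c^-}$ if $\Phi_f>0$ and $C_{c^+}$ otherwise, we get
\[
(RC)_f-\langle C\rangle_f = -\tfrac12\,\mathrm{sgn}(\Phi_f)\,[C]_f,
\]
and hence $\mathcal{D}_{\mathrm{num}}=\tfrac12\sum_f|\Phi_f|[C]_f^2\ge0$.

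\textbf{Main obstacle.} The algebra in Steps~2 and~3 is routine. The real difficulty is in Step~1: ensuring that the discrete $\tfrac{d}{dt}\rho_{\mathrm{AC},c}$ used in the tracer equation is driven by \emph{the same} face flux $\Phi_f$ as the tracer advection. This is what (H1) asserts, and any mismatch would leave an uncancelled term $\tfrac12\sum_c C_c^2\,(\text{divergence defect})$ of order $\Fr_{\mathrm{baro}}^2$. It must also be checked that (H2)--(H3), consistency and closure, fix the $\rho_0$ scaling of the diffusive flux, so that no leftover $\rho_{\mathrm{AC}}/\rho_0$ weight appears in $\chi$.
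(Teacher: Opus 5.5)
Your proposal is correct and follows the paper's proof essentially step for step: the product-rule regrouping $dV_h/dt=\sum_c C_c\,\tfrac{d}{dt}(|c|\rho_{\mathrm{AC},c}C_c)-\tfrac12\sum_c C_c^2\,\tfrac{d}{dt}(|c|\rho_{\mathrm{AC},c})$, summation by parts under the stated incidence convention, the jump identity $[C^2]_f=2\langle C\rangle_f[C]_f$, and the upwind evaluation $(RC)_f-\langle C\rangle_f=-\tfrac12\operatorname{sgn}(\Phi_f)[C]_f$. The normalisation you left open comes out as you guessed: $\Phi_f=F_f=r_fu_f|f|$ of \eqref{eq:Ff_def} is volume-normalised, and both balances carry the same factor, $\tfrac{d}{dt}(|c|\rho_{\mathrm{AC},c})=-\rho_0(D\Phi)_c$ and $\tfrac{d}{dt}(|c|\rho_{\mathrm{AC},c}C_c)=-\rho_0(DJ)_c$; the unweighted diffusive part of $J_f$ is given directly by the hypothesis \eqref{eq:tracer_flux} rather than supplied by (H2)--(H3), which the semi-discrete identity does not use.
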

\begin{proof}
  Appendix~\ref{app:proofs}.
\end{proof}

The residual divergence does not appear in~\eqref{eq:variance_budget}:
it is absorbed by the weighting ${\rhoa}$ in~\eqref{eq:variance_def},
as in the content budget of
Proposition~\ref{prop:tracer_consistency}.  The numerical sink is a
computable face sum, not a residual.

The two cases of the proposition are the ends of a family.  A
flux-corrected transport blends the two face values with a limiter
coefficient chosen from the local field \citep{Zalesak1979}, and the
sink of the blend is inherited from its ends.

\begin{corollary}[Variance sink under flux-corrected transport]
  \label{cor:fct_sink}
  Let the face value be the limited blend
  \begin{equation}\label{eq:fct_face}
    (RC)_f = C^{\mathrm{up}}_f + \lambda_f\bigl(\langle C\rangle_f
             - C^{\mathrm{up}}_f\bigr),
    \qquad \lambda_f\in[0,1],
    \qquad
    C^{\mathrm{up}}_f = \langle C\rangle_f
             - \tfrac12\operatorname{sgn}(\Phi_f)\,[C]_f ,
  \end{equation}
  of the upwind value and the centred mean, with $\lambda_f$ any
  limiter that respects hypotheses~(H1)--(H3).  Then
  \begin{equation}\label{eq:fct_sink}
    \mathcal{D}_{\mathrm{num}}
    = \tfrac12\sum_f (1-\lambda_f)\,|\Phi_f|\,[C]_f^2 ,
    \qquad
    0 \;\le\; \mathcal{D}_{\mathrm{num}}
    \;\le\; \tfrac12\sum_f |\Phi_f|\,[C]_f^2 ,
  \end{equation}
  the upper bound being the upwind sink of
  Proposition~\ref{prop:variance}(ii).
\end{corollary}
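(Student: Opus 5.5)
The argument is a face-by-face substitution into the definition~\eqref{eq:chi_disc} of $\mathcal{D}_{\mathrm{num}}$, followed by a sign analysis. Proposition~\ref{prop:variance} gives $\mathcal{D}_{\mathrm{num}}$ for any face reconstruction, and the limiter only changes $(RC)_f$ face by face. So no new budget calculation is needed: the identity~\eqref{eq:variance_budget} holds for the blend~\eqref{eq:fct_face} as it stands, because $\lambda_f$ respects (H1)--(H3). That leaves computing the bracket $(RC)_f-\langle C\rangle_f$ for the blend.

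The first step is algebraic. From~\eqref{eq:fct_face},
\[
(RC)_f-\langle C\rangle_f=(1-\lambda_f)\bigl(C^{\mathrm{up}}_f-\langle C\rangle_f\bigr)
=-\tfrac12(1-\lambda_f)\operatorname{sgn}(\Phi_f)\,[C]_f .
\]
Inserting this into $\mathcal{D}_{\mathrm{num}}=-\sum_f\Phi_f[C]_f\bigl((RC)_f-\langle C\rangle_f\bigr)$ and using $\Phi_f\operatorname{sgn}(\Phi_f)=|\Phi_f|$ gives
\[
\mathcal{D}_{\mathrm{num}}=\tfrac12\sum_f(1-\lambda_f)|\Phi_f|[C]_f^2 .
\]
This is the first claim of~\eqref{eq:fct_sink}. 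The endpoints serve as a consistency check. At $\lambda_f=1$ the face value is centred and the face contributes nothing, as in Proposition~\ref{prop:variance}(i). At $\lambda_f=0$ the expression reproduces~(ii).

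The bounds then follow termwise. Each summand is a product of nonnegative factors, since $1-\lambda_f\in[0,1]$ and $|\Phi_f|[C]_f^2\ge0$, which gives the lower bound. Replacing $1-\lambda_f$ by $1$ in each term gives the upper bound, namely the upwind sink.

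The one delicate point is faces with $\Phi_f=0$, where $\operatorname{sgn}$ is ambiguous. These faces contribute zero to every sum regardless of the convention, so they are harmless. I would also note explicitly that $\lambda_f$ may depend on the solution; this does not matter because the identity is pointwise in time.
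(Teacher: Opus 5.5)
Your proof is correct and follows the paper's own argument exactly. You compute $(RC)_f-\langle C\rangle_f=-\tfrac12(1-\lambda_f)\operatorname{sgn}(\Phi_f)[C]_f$ from the blend, substitute it into the face sum defining $\mathcal{D}_{\mathrm{num}}$, and read both bounds off $0\le\lambda_f\le1$; your remark on faces with $\Phi_f=0$ is a harmless detail the paper leaves implicit.
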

\begin{proof}
  From~\eqref{eq:fct_face},
  $(RC)_f - \langle C\rangle_f = -(1-\lambda_f)\tfrac12
  \operatorname{sgn}(\Phi_f)[C]_f$, so
  $\Phi_f[C]_f\bigl((RC)_f-\langle C\rangle_f\bigr)
  = -\tfrac12(1-\lambda_f)|\Phi_f|[C]_f^2$; insert
  in~\eqref{eq:chi_disc}.  The bounds follow from
  $0\le\lambda_f\le1$.
\end{proof}

Three things follow.  The sink is signed because the high-order
target is the centred mean; with any other target the identity of
Proposition~\ref{prop:variance} still holds, but the sign is lost.
Attribution is face by face: $1-\lambda_f$ is the fraction of
the upwind sink the limiter lets through at face $f$, so recording
the limiter coefficient, one face field, makes the numerical mixing
local as well as global.  It is also a property of the spatial operator, since $\lambda_f$ depends on the field, not on the time
step; shortening $\Delta t$ does not remove it, refining the mesh
does, as the gradients the limiter acts on become resolved.  This is
the scheme used in the experiments of Section~\ref{sec:experiments},
where $\mathcal{D}_{\mathrm{num}}$ is nonnegative at every step and
converges under time-step refinement to a nonzero limit.

\begin{corollary}[Variance budget over a region]\label{cor:variance_patch}
  Let $\omega$ be any set of cells, $\partial\omega$ the set of faces
  with exactly one adjacent cell in $\omega$, and for
  $f\in\partial\omega$ let $c(f)$ be that cell and
  $\sigma_f := D_{c(f),f}$ the outward orientation.  Under the
  hypotheses of Proposition~\ref{prop:variance}, but without closing
  the domain, the variance
  $V_\omega := \tfrac12\sum_{c\in\omega}|c|\,\rho_{\mathrm{AC},c}C_c^2$
  satisfies
  \begin{equation}\label{eq:variance_patch}
    \frac{dV_\omega}{dt}
    = -\,\rho_0\,\mathcal{D}_{\mathrm{num},\omega}
      \;-\;\frac{\rho_0}{2}\,\chi_\omega
      \;-\;\rho_0\!\!\sum_{f\in\partial\omega}\!\sigma_f
        \Bigl(C_{c(f)}\,J_f-\tfrac12\,C_{c(f)}^2\,\Phi_f\Bigr),
  \end{equation}
  with the interior sums of~\eqref{eq:chi_disc} and $J_f$ the total
  flux~\eqref{eq:tracer_flux}.
\end{corollary}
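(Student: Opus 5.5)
We redo the computation behind Proposition~\ref{prop:variance} cell by cell and stop before summing over the whole domain. The faces then split into interior faces of $\omega$ and faces of $\partial\omega$. We assume the semi-discrete equations of Proposition~\ref{prop:tracer_consistency} in the normalisation used there. With $m_c := |c|\rho_{\mathrm{AC},c}$ the pseudo-mass is advanced by (H1) as $\dot m_c = -\rho_0\sum_f D_{c,f}\Phi_f$. The tracer content obeys $\tfrac{d}{dt}(m_cC_c) = -\rho_0\sum_f D_{c,f}J_f$. The factor $\rho_0$ is fixed by matching~\eqref{eq:variance_budget}; we keep it the same as in the closed-domain proof so that the interior terms reproduce it exactly.

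\textbf{Step 1: pointwise identity.} Differentiating $\tfrac12 m_cC_c^2$ gives
\[
\tfrac{d}{dt}\bigl(\tfrac12 m_cC_c^2\bigr) = C_c\tfrac{d}{dt}(m_cC_c) - \tfrac12 C_c^2\,\dot m_c .
\]
Substituting the two balances yields
\[
\tfrac{d}{dt}\bigl(\tfrac12 m_cC_c^2\bigr) = -\rho_0\sum_f D_{c,f}\bigl(C_cJ_f - \tfrac12 C_c^2\Phi_f\bigr).
\]
This is the step where the residual divergence is absorbed by the weighting. No $\Div\bv$ term survives, because $\dot m_c$ is exactly the flux divergence.

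\textbf{Step 2: sum over $\omega$.} Summing over $c\in\omega$, each face with both cells in $\omega$ appears twice, with $D=+1$ at $c^-$ and $D=-1$ at $c^+$. It contributes
\[
-\rho_0\Bigl[-J_f[C]_f + \tfrac12\Phi_f\bigl(C_{c^+}^2-C_{c^-}^2\bigr)\Bigr] = -\rho_0\Bigl[-J_f[C]_f + \Phi_f[C]_f\langle C\rangle_f\Bigr].
\]
Inserting~\eqref{eq:tracer_flux}, the advective part becomes $\Phi_f[C]_f\bigl((RC)_f-\langle C\rangle_f\bigr)$. Summed over interior faces, this gives $-\rho_0\mathcal{D}_{\mathrm{num},\omega}$. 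The diffusive part gives $-\rho_0\kappa_f\gamma_f[C]_f^2$, which sums to $-\tfrac{\rho_0}{2}\chi_\omega$ by~\eqref{eq:chi_disc}.

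\textbf{Step 3: boundary faces.} Each face of $\partial\omega$ appears once, from the cell $c(f)$, with coefficient $D_{c(f),f}=\sigma_f$. It is left as it stands, which gives the last sum in~\eqref{eq:variance_patch}.

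The only delicate point is the bookkeeping in Step 2. One must check the sign conventions, namely that $[C]_f = C_{c^+}-C_{c^-}$ matches $D_{c^-,f}=+1$, and confirm that the identity $\tfrac12(a^2-b^2) = (a-b)\tfrac{a+b}{2}$ produces exactly the arithmetic mean used in $\mathcal{D}_{\mathrm{num}}$. Hypotheses~(H2)--(H3) enter only through Proposition~\ref{prop:tracer_consistency}, which guarantees the form of the tracer balance used in Step 1. Closing the domain makes $\partial\omega$ empty and recovers Proposition~\ref{prop:variance}, which serves as a consistency check on signs.
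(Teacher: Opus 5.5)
Your proposal is correct and is the same argument as the paper's. The paper's proof reruns the telescoping of the proof of Proposition~\ref{prop:variance} over the cells of $\omega$: faces with both cells in $\omega$ pair up to give $\mathcal{D}_{\mathrm{num},\omega}$ and $\chi_\omega$, and each face of $\partial\omega$ contributes only its interior side. Your version keeps $m_c=|c|\rho_{\mathrm{AC},c}$ inside the time derivative, which is slightly cleaner, and the factor $\rho_0$ then follows directly from $m_c=\rho_0|c|r_c$ and (H1) rather than having to be matched to~\eqref{eq:variance_budget}.
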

\begin{proof}
  The telescoping in the proof of Proposition~\ref{prop:variance},
  carried out over the cells of $\omega$: interior faces pair both
  sides and give the two interior sums; a boundary face contributes
  only its interior side, which is the boundary sum.
\end{proof}

The boundary sum is the discrete transport of variance through
$\partial\omega$, computed from interior-side values alone; its
continuous counterpart is
$\oint_{\partial\omega}\bigl(\tfrac12 C^2\,\bv + C\,\mathbf{q}\bigr)
\cdot\mathbf{n}\,\dd A$ with $\mathbf{q}$ the diffusive flux.  It is
transport, not dissipation: the sink of the region is the two interior
sums, and the budget closes over any focal region as it does globally.
For $\partial\omega$ empty, \eqref{eq:variance_patch}
is~\eqref{eq:variance_budget}.

The step from the variance sink to a diffusivity is the Osborn--Cox
argument \citep{OsbornCox1972}, and its assumptions are worth
stating, because they, not the identity, are what limit its use.
Split the tracer into a mean profile and a fluctuation,
$C = \overline{C}(z) + C'$.  The production term $P$ of the
frame~\eqref{eq:target_budgets} is the work of the turbulent flux
against the mean gradient, $P = -\rho_0\int_\Omega
\overline{w'C'}\,\partial_z\overline{C}\,\dd V$, and the eddy
diffusivity is defined by $\overline{w'C'} = -K_C\,\partial_z
\overline{C}$.  If the variance is statistically steady and its
transport across the region is negligible, production balances
dissipation, $P = \tfrac{\rho_0}{2}\int_\Omega\chi\,\dd V$, and for a
uniform mean gradient the two definitions combine to
$K_C = \chi/(2(\partial_z\overline{C})^2)$: a diffusivity follows
from the variance dissipation alone.  What the discrete budget adds
is that $\chi$ on the right is computed, and that the
non-physical part of the sink is known separately.

\begin{corollary}[Osborn--Cox diffusivity and irreversible mixing rate]
  \label{cor:osborn_cox}
  Under the hypotheses of Proposition~\ref{prop:variance}, in a
  statistically steady state with negligible variance transport and
  a uniform mean gradient $\partial_z\overline{C}$, the eddy
  diffusivity is
  \begin{equation}\label{eq:K_osborn_cox}
    K_C = \frac{\displaystyle\int_\Omega\chi\,\dd V}
               {2\,|\Omega|\,(\partial_z\overline{C})^2},
    \qquad
    \int_\Omega\chi\,\dd V = 2\sum_f\kappa_f\,\gamma_f\,[C]_f^2 ,
  \end{equation}
  with $\chi$ the explicit sum of~\eqref{eq:chi_disc}, and the
  numerical sink $\mathcal{D}_{\mathrm{num}}$ excluded from it.
  For the buoyancy, $C=b$, with the weighted potential energy
  $\mathrm{PE}_{\mathrm{AC}} := (2N^2)^{-1}\sum_c|c|
  (\rho_{\mathrm{AC},c}/\rho_0)\,b_c^2$, the discrete form of the
  small-amplitude, constant-$N^2$ reduction of~\eqref{eq:pe_def}, the
  same identity is
  $d\,\mathrm{PE}_{\mathrm{AC}}/dt
  = -\int_\Omega(\varepsilon_b+\varepsilon_b^{\mathrm{num}})\dd V$
  with the irreversible buoyancy-mixing rate
  \begin{equation*}
    \int_\Omega\varepsilon_b\,\dd V = N^{-2}\sum_f\kappa_f\gamma_f[b]_f^2 .
  \end{equation*}
\end{corollary}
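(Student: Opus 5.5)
The corollary has two parts: the Osborn--Cox diffusivity for a general tracer $C$, and the same identity rewritten for the buoyancy. I will derive both from Proposition~\ref{prop:variance}, applied to the flux~\eqref{eq:tracer_flux}, and then use the Osborn--Cox argument sketched just before the statement.

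\emph{Step 1: the variance budget with production.} Decompose $C=\overline{C}(z)+C'$. The production term $P$ in the frame~\eqref{eq:target_budgets} is the part of the variance budget that appears when the variance is taken about the mean profile. The identity~\eqref{eq:variance_budget} is written for the full variance $V_h$, so it carries no explicit $P$. I would therefore apply the regional form, Corollary~\ref{cor:variance_patch}, and absorb the mean-gradient contribution of the boundary transport into $P$ in the standard way. The hypothesis of negligible variance transport lets me drop the remaining boundary sum.

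\emph{Step 2: the Osborn--Cox balance.} Statistical steadiness means $dV/dt=0$ on average. Under Proposition~\ref{prop:variance} this leaves
\[
P=\tfrac{\rho_0}{2}\int_\Omega\chi\,\dd V+\rho_0\,\mathcal{D}_{\mathrm{num}}.
\]
This is the delicate point. The statement asks for $\chi$ with $\mathcal{D}_{\mathrm{num}}$ excluded. I read this as a definition of the \emph{physical} eddy diffusivity: production is attributed to the explicit sink alone, and the numerical share $\rho_0\mathcal{D}_{\mathrm{num}}$ is reported separately, as the face sum~\eqref{eq:chi_disc}. I would state this split explicitly rather than claim that $\mathcal{D}_{\mathrm{num}}$ vanishes. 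Inserting $P=\rho_0\,K_C(\partial_z\overline{C})^2|\Omega|$, which follows from $\overline{w'C'}=-K_C\partial_z\overline{C}$ with a uniform gradient, gives~\eqref{eq:K_osborn_cox}. The discrete expression for $\int\chi$ is then just its definition~\eqref{eq:chi_disc}.

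\emph{Step 3: the buoyancy version.} Put $C=b$ and multiply~\eqref{eq:variance_budget} by $1/(\rho_0N^2)$. This turns $V_h/(\rho_0N^2)$ into
\[
\mathrm{PE}_{\mathrm{AC}}=(2N^2)^{-1}\sum_c|c|(\rho_{\mathrm{AC},c}/\rho_0)b_c^2.
\]
The explicit sink becomes
\[
\tfrac{1}{2N^2}\cdot 2\sum_f\kappa_f\gamma_f[b]_f^2=N^{-2}\sum_f\kappa_f\gamma_f[b]_f^2=\int_\Omega\varepsilon_b\,\dd V,
\]
and we set $\varepsilon_b^{\mathrm{num}}$ so that $\int_\Omega\varepsilon_b^{\mathrm{num}}\,\dd V=N^{-2}\mathcal{D}_{\mathrm{num}}$. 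In a closed inviscid-buoyancy setting (no production) the identity $d\,\mathrm{PE}_{\mathrm{AC}}/dt=-\int(\varepsilon_b+\varepsilon_b^{\mathrm{num}})\dd V$ follows directly. Because $N^2$ is constant, it commutes with the face sums, so no further work is needed.

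The main obstacle is the bookkeeping of Steps 1--2. The proposition gives an exact identity for total variance, whereas Osborn--Cox requires the fluctuation budget with production separated out. I would first try to show that the cross term $\overline{C}\,C'$, summed through the telescoping of Corollary~\ref{cor:variance_patch}, reproduces $P$ up to transport terms that the hypotheses discard. Failing that, I would simply adopt $P$ as defined in~\eqref{eq:target_budgets} as part of the hypotheses.
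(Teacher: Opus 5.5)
This is essentially the paper's own argument. The corollary rests only on the Osborn--Cox paragraph preceding it, which posits the production $P$ of~\eqref{eq:target_budgets}, balances it against the explicit sink under steadiness and negligible transport, closes with $\overline{w'C'}=-K_C\,\partial_z\overline{C}$ at uniform gradient, and obtains the buoyancy form by dividing~\eqref{eq:variance_budget} by $\rho_0N^2$. That is your Step~2 with the Step~1 fallback, together with your Step~3, and your separate bookkeeping of $\rho_0\mathcal{D}_{\mathrm{num}}$ is more careful than the paper's bare $P=\tfrac{\rho_0}{2}\int_\Omega\chi\,\dd V$.

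Do take that fallback rather than deriving $P$ from the boundary sum of Corollary~\ref{cor:variance_patch}. With the steady mean balance, that sum supplies $P+\rho_0\kappa_v(\partial_z\overline{C})^2|\Omega|$, so any discrete derivation balances $P$ against only the fluctuation part of $\chi$. The full face sum in~\eqref{eq:K_osborn_cox} would then return $K_C+\kappa_v$ rather than $K_C$. The paper's posited balance drops this background term, which is negligible only at large Cox number.
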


The numerical sink $\chi_{\mathrm{num}}$ of the
frame~\eqref{eq:target_budgets} has two sources that are measured
differently: the reconstruction, whose contribution is the face sum
$\mathcal{D}_{\mathrm{num}}$ and falls with resolution, and the time
integration, which has no independent formula and appears as the
misfit of the semi-discrete identity under time-step refinement.

The relation to the discrete-variance-decay analysis of
\citet{BurchardRennau2008} and \citet{Klingbeil2014} is one of
complementary cuts through the same quantity.  Discrete variance
decay evaluates the fully discrete update: the variance a cell loses
in one time step beyond what the explicit diffusion accounts for,
exact for that update and for any scheme, linear or limited, and
therefore inclusive of the time-integration part, which the
semi-discrete identity here sees only as the misfit.  It is the
natural instrument for localising $\chi_{\mathrm{num}}$ in a
simulation, and the lock-exchange configuration in which it was
calibrated \citep{Ilicak2012} is the setting in which the sizes
reported in Section~\ref{sec:experiments} can be judged.  What the
face sum adds is the other cut: the spatial operators conserve
exactly, so $\mathcal{D}_{\mathrm{num}}$ is the whole spatial sink
and not a part of it; the identity holds under the non-solenoidal AC
velocity through the ${\rhoa}$ weighting; and, for a limited
scheme, Corollary~\ref{cor:fct_sink} attributes the sink to the
limiter coefficient face by face.  Applied to the same run, the two
diagnostics should agree on the spatial part and differ by the
time-integration part, which is a test either analysis can run.

\subsection{Dissipation of Energy}\label{sec:diss}
An ocean model removes energy from the resolved flow in four ways:
through the viscous closure, through the transport scheme, through
the time integration and, under AC/DC, through the compressibility
relaxation.  Only the first is physics.  The spatial discretisation
conserves energy exactly, \eqref{eq:EKL_budget_h}, so the four can be
computed one by one as sums over the mesh; in a model whose spatial
operators dissipate, only their total is accessible.  The four terms
are the following.

\begin{enumerate}[nosep]
\item[(a)] \emph{Physical dissipation.}  The viscous
  identity~\eqref{eq:KL_energy_identity_aniso} resolves the
  dissipation of the closure into three reservoirs.  The two
  rotational ones are physical friction,
  \begin{equation}\label{eq:eps_phys}
    \int_\Omega \varepsilon\,\dd V
    \;:=\; \int_\Omega \frac{{\rhoa}}{\rho_0}
       \bigl(\nu_h\,\omega_z^2 + \nu_v\,|\bom_H|^2\bigr)\dd V ,
  \end{equation}
  and this is the $\varepsilon$ of the mixing diagnostics: it acts on
  the vorticity of the resolved flow, it is nonnegative, and its
  integrand is local, so $\varepsilon$ is available as a spatial
  field.
\item[(b)] \emph{Compressibility dissipation.}  The divergence
  reservoir is not physics, since the ocean's velocity is solenoidal,
  and is counted separately,
  \begin{equation}\label{eq:eps_comp}
    \int_\Omega \varepsilon_{\mathrm{c}}\,\dd V
    \;:=\; \int_\Omega \frac{{\rhoa}}{\rho_0}\,\nu_d\,(\Div\bv)^2\,\dd V
    \;=\;\OO(\Fr_{\mathrm{baro}}^4) ,
  \end{equation}
  the order following from the divergence error of
  Section~\ref{sec:hybrid_div}.
\item[(c)] \emph{Advective mismatch.}  When buoyancy is advected
  with a non-centred reconstruction while the kinetic--potential
  conversion uses the centred interpolation, the two pairings do not
  cancel.  The mismatch is an explicit sum over faces, written
  $\mathcal{E}_{\mathrm{adv}}$, and it vanishes identically for the
  centred reconstruction (Section~\ref{sec:energy}).
\item[(d)] \emph{Time integration.}  It has no independent formula
  and appears as the misfit $r$ of the balance.
\end{enumerate}

\begin{proposition}[Separation of physical and numerical dissipation]
  \label{prop:diss_separation}
  Let $E_h^{\,n}$ be the discrete energy of~\eqref{eq:EKL_budget_h} at
  time level $n$, on a domain closed to mass and tracer flux and
  without forcing, and let $\varepsilon_h^{\,n}$,
  $\varepsilon_{\mathrm{c},h}^{\,n}$, $\mathcal{E}_{\mathrm{adv}}^{\,n}$
  be the discrete forms of~(a)--(c) and $W_{\alpha,h}^{\,n}$ the two
  $\OO(1/\alpha)$ pressure-work terms of~\eqref{eq:EKL_budget_h},
  evaluated at the time levels the scheme uses.  Then
  \begin{equation}\label{eq:closure_test}
    -\frac{E_h^{\,n+1}-E_h^{\,n}}{\Delta t}
    \;=\; W_{\alpha,h}^{\,n}
       + \varepsilon_h^{\,n} + \varepsilon_{\mathrm{c},h}^{\,n}
       + \mathcal{E}_{\mathrm{adv}}^{\,n}
       + r^{\,n},
  \end{equation}
  where every term except $r^{\,n}$ is a sum over cells, faces or
  edges of fields the scheme holds, and
  \begin{enumerate}[nosep]
  \item[(i)] $\varepsilon_h^{\,n}\ge0$ and
    $\varepsilon_{\mathrm{c},h}^{\,n}\ge0$;
  \item[(ii)] $\mathcal{E}_{\mathrm{adv}}^{\,n} = 0$ for the centred
    reconstruction;
  \item[(iii)] $r^{\,n}\to0$ as $\Delta t\to0$ at fixed mesh, at the
    order of the time scheme.
  \end{enumerate}
\end{proposition}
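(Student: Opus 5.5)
The identity~\eqref{eq:closure_test} is best read as the definition of $r^{\,n}$. The content of the statement is therefore twofold: every other term is an explicit mesh sum, and (i)--(iii) hold. I would start from the semi-discrete energy budget~\eqref{eq:EKL_budget_h}. It is the discrete counterpart of~\eqref{eq:KL_energy_inviscid} together with~\eqref{eq:KL_energy_identity_aniso}, and it states that along the semi-discrete flow
\[
-\frac{dE_h}{dt} = W_{\alpha,h} + \varepsilon_h + \varepsilon_{\mathrm{c},h} + \mathcal{E}_{\mathrm{adv}},
\]
on a closed, unforced domain. This works because the spatial operators conserve energy exactly. The advective (Lamb) pairing vanishes by antisymmetry, and the Bernoulli kinetic-energy work is cancelled by the $\rho_{\mathrm{AC}}$ weighting transported in the flux form~\eqref{eq:ac_flux}, which is hypothesis~(H1). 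The $\psi$ pressure work is absorbed into the elastic energy. What survives is the two $\OO(1/\alpha)$ pairings of the hydrostatic/surface pressure with $\Div(\rho_{\mathrm{AC}}\bv/\rho_0)$, and the buoyancy pairing. Against the potential energy of Section~\ref{sec:energy}, the buoyancy pairing cancels exactly for centred interpolation; otherwise it leaves the face sum $\mathcal{E}_{\mathrm{adv}}$. I would then set
\[
r^{\,n} := -\frac{E_h^{\,n+1}-E_h^{\,n}}{\Delta t} + \frac{dE_h}{dt}\Big|_{t^n\text{-levels}},
\]
where the semi-discrete right-hand side is evaluated at the time levels the scheme uses. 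Adding this to the budget gives~\eqref{eq:closure_test} verbatim.

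Item (i) follows from the discrete form of the viscous identity. The discrete Hodge-form operator~\eqref{eq:visc_hodge_aniso} is built from the native pair $(\nabla,\Div)$ and $(\nabla\times,\nabla\times)$ as mutual adjoints on the Delaunay--Voronoi mesh. Its pairing with the $\rho_{\mathrm{AC}}$-weighted velocity therefore splits into weighted quadratic forms. These are $\sum_e \rho_{\mathrm{AC},e}\nu_h\,\omega_{z,e}^2|e|$ and the analogous vertical-vorticity and divergence sums, each with a nonnegative weight, since $\rho_{\mathrm{AC}}>0$ under the standing assumption $\psi/\alpha>-1$ and the viscosities are nonnegative. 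These sums are exactly $\varepsilon_h$ and $\varepsilon_{\mathrm{c},h}$. Item (ii) follows by the same face-by-face computation as Proposition~\ref{prop:variance}(i), applied to the buoyancy with $C=b$. When the buoyancy reconstruction equals the centred mean used in the conversion term $\langle (\rho_{\mathrm{AC}}/\rho_0) b, w\rangle$, the two face sums coincide term by term, so $\mathcal{E}_{\mathrm{adv}}=0$.

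Item (iii) is where I expect the main obstacle. For a one-step scheme of order $q$, I would write $E_h^{\,n+1}-E_h^{\,n}$ as $\int_{t^n}^{t^{n+1}} \frac{dE_h}{dt}\,dt$ plus the local truncation error of the discrete state propagated through the smooth map $E_h$. That error is $\OO(\Delta t^{q+1})$ at fixed mesh, since $E_h$ is a quadratic-plus-cubic polynomial of the state. The time-level evaluation of the right-hand side then differs from the integral average by $\OO(\Delta t^{q})$, so $r^{\,n}=\OO(\Delta t^{q})$. The delicate points are twofold. The host scheme is split: the pseudo-pressure is sub-stepped inside the barotropic solve, and the column solve for $p_V$ is implicit. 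So I would need the splitting to be consistent of order $q$ and the energy-evaluation levels to match those of each sub-step. I would handle this by treating each sub-step's contribution separately, each being a consistent quadrature of its own term of the semi-discrete budget. A further subtlety is that $p_V$ enters only through $\nabla(p_V/\rho_0)$, and its pairing with $\bv$ must be absorbed into the same pressure-work terms. This holds because $p_V$ vanishes at the surface and satisfies the Neumann condition at the bottom, so it does no net work beyond the $W_{\alpha,h}$ terms already counted.
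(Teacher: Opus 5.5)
Your proposal follows the paper's proof essentially step for step. In both, $r^{\,n}$ is the time-discretisation defect of the exact semi-discrete balance~\eqref{eq:EKL_budget_h}; (i) is the adjointness of the native mimetic pair, which turns the viscous power into $\rho_{\mathrm{AC}}$-weighted squares; (ii) is the adjoint pairing of buoyancy transport against the centred conversion term; and (iii) follows from the order of the integrator, which you argue in somewhat more detail than the paper does. One correction to your gloss: the second of the two $\OO(1/\alpha)$ terms in $W_{\alpha,h}^{\,n}$ is exactly the column-pressure work $\alpha^{-1}\langle p_V/\rho_0,\Dt\psi\rangle_h$, which in general does not vanish. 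Its boundary term drops because $p_V=0$ at the surface and $\bv\cdot\mathbf{n}=0$ at the bottom, not because of the Neumann condition, so it belongs in $W_{\alpha,h}^{\,n}$ as its own term rather than being described as doing no net work beyond the hydrostatic/surface pairings.
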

\begin{proof}
  The left side is computed from two states, since $E_h$ is a
  functional of the state.  (i)~In the native mimetic pair the
  viscous power is
  $-\langle\nu_d D\bv,D\bv\rangle_h - \langle\mathsf{M}C\bv,
  C\bv\rangle_h$ by the adjointness~(E2) of
  Appendix~\ref{app:boundedness}, with the weight ${\rhoa}$ carried
  inside the operator as in~\eqref{eq:visc_hodge_aniso}: a sum of
  weighted squares.  (ii)~is the adjoint pairing of transport and
  conversion of Section~\ref{sec:energy}.  (iii)~The semi-discrete
  balance~\eqref{eq:EKL_budget_h} is exact, so the spatial
  discretisation leaves no remainder; the misfit
  of~\eqref{eq:closure_test} is the defect of the time discretisation
  of an exactly conservative semi-discrete system, and vanishes with
  $\Delta t$ at the order of the scheme.
\end{proof}

The closure of~\eqref{eq:closure_test} under time-step refinement is
thus a computable test of the implementation; in the numerical
experiments every term is evaluated independently and the closure is
checked.  Like the variance budget, the energy balance localises.

\begin{corollary}[Energy budget over a region]\label{cor:energy_patch}
  Let $\omega$, $\partial\omega$, $c(f)$ and $\sigma_f$ be as in
  Corollary~\ref{cor:variance_patch}.  Under the hypotheses of
  Proposition~\ref{prop:diss_separation}, but without closing the
  domain, \eqref{eq:closure_test} holds over $\omega$ with the
  boundary term $\mathcal{F}_{\partial\omega}$ added to its right
  side, where
  \begin{equation*}
    \mathcal{F}_{\partial\omega}
    = \mathcal{F}^{\nabla}_{\partial\omega}
      + \Lambda_{\partial\omega}
      + \text{viscous band},
  \end{equation*}
  a two-point face sum for the gradient pairings~\eqref{eq:grad_flux},
  a one-stencil band of edge pairs for the Lamb
  term~\eqref{eq:lamb_band}, and a band of cut dual cells for the
  viscous term~\eqref{eq:visc_band}, all computed from cells, faces
  and boundary-adjacent edges of $\omega$ alone.  The continuous
  counterpart of $\mathcal{F}_{\partial\omega}$ is the energy flux
  \begin{equation}\label{eq:energy_flux}
    \mathcal{F}_{\partial\omega} \;\widehat{=}\;
    \oint_{\partial\omega}
      \Bigl(\tfrac12|\bv|^2
        + \frac{p_{\mathrm{hyd}}+p_{\mathrm{sfc}}+p_V+\psi}{\rho_0}\Bigr)
      \frac{{\rhoa}}{\rho_0}\,\bv\cdot\mathbf{n}\,\dd A
    \;+\;\text{viscous flux}.
  \end{equation}
\end{corollary}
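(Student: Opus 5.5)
The plan is to repeat the localisation argument of Corollary~\ref{cor:variance_patch}, now applied to the semi-discrete energy balance. The global budget~\eqref{eq:EKL_budget_h} is exact, so it arises as a sum of pairings. Each pairing, once written over the cells, edges and faces, becomes a telescoping sum whose interior contributions cancel in pairs. I will restrict every pairing to $\omega$. Interior faces and edges then give the same dissipation terms as before, namely $\varepsilon_h$, $\varepsilon_{\mathrm{c},h}$, $\mathcal{E}_{\mathrm{adv}}$ and $W_{\alpha,h}$, and the time-discretisation misfit $r^{\,n}$ is unchanged because it is defined from the same update. Whatever fails to cancel at $\partial\omega$ is, by definition, $\mathcal{F}_{\partial\omega}$. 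The work is to show that this remainder has the three-part structure claimed and depends only on data of $\omega$.

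\textbf{Gradient and divergence pairings.} These cover $\nabla B_{\mathrm{hyd}}$, $\nabla(p_V+\psi)/\rho_0$, the flux-form $\rhoa$ equation and the kinetic-energy weight. The discrete adjointness (E2) between gradient and divergence is a face-by-face summation by parts. On a face $f\in\partial\omega$ only the $c(f)$ side survives, which gives $\sigma_f$ times a cell value times the normal mass flux $\Phi_f$. This is the two-point sum~\eqref{eq:grad_flux}. The same computation that produced the boundary sum in~\eqref{eq:variance_patch} applies, with $C$ replaced by the Bernoulli function $\tfrac12|\bv|^2+(p_{\mathrm{hyd}}+p_{\mathrm{sfc}}+p_V+\psi)/\rho_0$. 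The cubic cancellation of Proposition~\ref{prop:dw_removes} was a telescoping into a divergence. Restricted to $\omega$, that divergence leaves the $\tfrac12|\bv|^2$ flux term instead of zero, which is the first term of~\eqref{eq:energy_flux}.

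\textbf{Lamb and viscous terms.} The Lamb term $\bom_a\times\bv$ pairs to zero globally through an antisymmetry over pairs of edges within a reconstruction stencil. Restricted to $\omega$, the only pairs that fail to cancel are those with one edge inside $\omega$ and one outside. These form the one-stencil band~\eqref{eq:lamb_band}. The viscous curl--divergence operator integrates by parts separately on primal and dual cells. Dual cells cut by $\partial\omega$ are where the adjointness is incomplete, and this gives~\eqref{eq:visc_band}. Because the bulk terms are weighted squares, (i) continues to hold on $\omega$, and (ii) holds as before.

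\textbf{Main obstacle.} I expect the hardest step to be the Lamb and viscous bands. There I must show that the non-cancelling contributions come only from boundary-adjacent edges whose stencils touch cells of $\omega$, so that the band can be computed from $\omega$ alone. I will also need to identify the band's continuum limit as part of~\eqref{eq:energy_flux}; the Lamb band should tend to zero flux, since $\bom_a\times\bv\cdot\bv=0$ pointwise. My approach will be to write the antisymmetric Lamb weights explicitly and to define the band as the set of pairs $(e,e')$ with $e\in\omega$ and $e'\notin\omega$, assigning each pair to its interior edge. Finally, taking $\partial\omega$ empty recovers~\eqref{eq:closure_test}, which serves as a consistency check.
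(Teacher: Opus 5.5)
Your plan is essentially the paper's own proof. The patch form of the adjointness~(E2) telescopes the kinetic-energy, pressure and elastic pairings over interior faces and leaves the interior-side two-point sum~\eqref{eq:grad_flux} against the pseudo-mass flux; the Lamb weights stay antisymmetric on the edges of $\omega$, so only the straddling pairs of~\eqref{eq:lamb_band} survive; and the curl--curl term leaves the band of cut dual cells~\eqref{eq:visc_band}.

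There is one caveat, and it applies to the paper's argument just as to yours. On an open region, the computation behind~\eqref{eq:transport_identity} gives $d\mathrm{PE}_\omega/dt=-\bigl\langle\tfrac{\rhoa}{\rho_0}b,w\bigr\rangle_\omega+\oint_{\partial\omega}\tfrac{\rhoa}{\rho_0}\,\Phi\,\bv\cdot\mathbf{n}\,\dd A$ for the reservoir~\eqref{eq:pe_def}. Your ``whatever fails to cancel'' step will therefore also return an advective potential-energy flux (discretely, a boundary face sum of $\Phi$-values against $F_f$) that none of the three listed pieces contains. Correspondingly, the bracket in~\eqref{eq:energy_flux} should carry $-\Phi$ alongside $\tfrac12|\bv|^2$ and the four pressures.
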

\begin{proof}
  The mechanism is the patch form of the adjointness~(E2): for a cell
  field $\phi$ and a face field $u$,
  \begin{equation}\label{eq:patch_sbp}
    \langle\phi, Du\rangle_{h,\omega}
    \;=\; -\,(G\phi,\,u)_{h,\mathrm{int}(\omega)}
    \;+\; \sum_{f\in\partial\omega}\sigma_f\,\phi_{c(f)}\,u_f\,|f| ,
  \end{equation}
  with the interior product over the faces having both cells in
  $\omega$ and $|f|$ the face area of the divergence stencil; the
  global statement of Appendix~\ref{app:boundedness} is the case of
  empty $\partial\omega$, and~\eqref{eq:variance_patch} is the same
  identity applied twice.  The kinetic-energy gradient is a two-point
  difference between cell centres, so its energy pairing, and likewise
  the pairings of the four pressures and of the elastic term,
  telescope on the interior faces and stop at $\partial\omega$.
  Applied with the mass flux in the place of $u$, the gradient
  pairings collect the boundary work
  \begin{equation}\label{eq:grad_flux}
    \mathcal{F}^{\nabla}_{\partial\omega}
    \;=\; \sum_{f\in\partial\omega}\sigma_f
      \Bigl(K + \frac{p_{\mathrm{hyd}}+p_{\mathrm{sfc}}+p_V+\psi}
        {\rho_0}\Bigr)_{\!c(f)}\,\Phi_f ,
  \end{equation}
  the discrete counterpart of~\eqref{eq:energy_flux}, with
  interior-side evaluation and the pseudo-mass flux $\Phi_f$ of
  Corollary~\ref{cor:variance_patch}.  The Lamb term contributes no
  face flux.  Writing it as a matrix on edge velocities,
  $(\bom\times\bv)_e = \sum_{e'}w_{ee'}v_{e'}$, with weights built
  from the Stokes vorticity at the shared vertex, global energy
  neutrality is the antisymmetry $w_{ee'} = -w_{e'e}$; the
  restriction of an antisymmetric matrix to the edges of $\omega$ is
  antisymmetric, so the interior part of the quadratic form vanishes
  and what survives is
  \begin{equation}\label{eq:lamb_band}
    \Lambda_{\partial\omega}
    := \sum_{e\in\omega}\sum_{e'\notin\omega} w_{ee'}\,v_e\,v_{e'} ,
  \end{equation}
  a sum over edge pairs straddling the boundary, one stencil wide,
  computed from the weights the operator already holds.  Its
  continuum counterpart is zero, since $\bom\times\bv$ is orthogonal
  to $\bv$ pointwise, so~\eqref{eq:lamb_band} is a discrete boundary
  effect.  The viscous curl--curl term leaves the analogous band
  across the dual cells the boundary cuts: with $c_{d,e}$ the
  circulation weights of the discrete curl,
  \begin{equation}\label{eq:visc_band}
    \langle C^{\mathsf T}(\mathsf{M}C\bv), \bv\rangle_{h,\omega}
    \;=\; (\mathsf{M}C\bv,\,C\bv)_{h,\,d\subset\omega}
    \;+\; \sum_{d\ \mathrm{cut}} (\mathsf{M}C\bv)_d
          \sum_{e\in d\cap\omega} c_{d,e}\,v_e ,
  \end{equation}
  the first sum over the dual cells all of whose edges lie in
  $\omega$ and the second, the band, over the dual cells with edges
  on both sides, each contributing its partial circulation.
\end{proof}

As for the variance, the boundary terms are transport, not
dissipation, and do not enter the dissipative side of the balance.
The physical term~\eqref{eq:eps_phys} is a spatial field; the
numerical terms $\varepsilon_{\mathrm{c}}$, $\mathcal{E}_{\mathrm{adv}}$
and $r$ are global numbers, or per column for the compressibility
term, and serve as controls.  Their size relative to the physical
dissipation,
\begin{equation}\label{eq:quality}
  q := \frac{\varepsilon_{\mathrm{c},h}
        + |\mathcal{E}_{\mathrm{adv}}|
        + |r|}{\varepsilon_h} ,
\end{equation}
can be computed at every step and over any region.  It is also a
convergence measure in the method's own parameters:
$\varepsilon_{\mathrm{c}} = \OO(\Fr_{\mathrm{baro}}^4)$ falls with
$\tilde\alpha$, and $r$ falls with $\Delta t$.

\subsection{Mixing Efficiency}\label{sec:efficiency}
Two distinct quantities are called the mixing efficiency.  The
\emph{flux coefficient} $\Gamma := \varepsilon_b/\varepsilon$ is the
quantity in the Osborn relation $K_\rho = \Gamma\varepsilon/N^2$; the
\emph{flux Richardson number}
$R_f := \varepsilon_b/(\varepsilon_b+\varepsilon)$ is the fraction of
the turbulent kinetic energy converted irreversibly into potential
energy.  They are related by
\begin{equation}\label{eq:gamma_Rf}
  \Gamma = \frac{R_f}{1-R_f},
  \qquad
  R_f = \frac{\Gamma}{1+\Gamma},
\end{equation}
so that the canonical value $\Gamma = 0.2$ corresponds to
$R_f = 1/6 \approx 0.167$.  Throughout this paper $\Gamma$ denotes
the flux coefficient.  The constant is not settled.  The review of
\citet{Gregg2018} finds the estimates of $\Gamma$ spread by up to a
factor of five, attributes part of the spread to the multiplicity of
definitions, and recommends that $\Gamma$ be sought as a function of
two parameters, one for the stratification and one for the strength
of the turbulence, $\Gamma = \Gamma(\mathrm{Ri},\mathrm{Re}_b)$.  Such a dependence can only be
diagnosed in a computation in which both $\varepsilon_b$ and
$\varepsilon$ are outputs.

\begin{corollary}[Computed mixing efficiency]\label{cor:gamma_computed}
  Under the hypotheses of Propositions~\ref{prop:variance}
  and~\ref{prop:diss_separation}, with the buoyancy transported by
  the centred reconstruction,
  \begin{equation}\label{eq:gamma_h}
    \Gamma_h
    := \frac{\displaystyle N^{-2}\sum_f\kappa_f\gamma_f[b]_f^2}
            {\displaystyle \varepsilon_h}
  \end{equation}
  is an explicit function of the discrete state, with numerator the
  irreversible mixing rate of Corollary~\ref{cor:osborn_cox}, that is
  $N^{-2}$ times the buoyancy form of $\tfrac12\int_\Omega\chi\,\dd V$
  in~\eqref{eq:chi_disc}, and denominator the physical
  dissipation~(a).  The numerical energy
  removal relative to that denominator is the ratio $q$
  of~\eqref{eq:quality}, computed alongside it.  By
  Corollaries~\ref{cor:variance_patch} and~\ref{cor:energy_patch} the
  same holds over any region $\omega$, with the exchange across
  $\partial\omega$ accounted for.
\end{corollary}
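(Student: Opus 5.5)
The argument is an assembly of results already established; nothing new needs to be estimated. I would show three things: the numerator of $\Gamma_h$ is the physical buoyancy-mixing rate with no numerical admixture; the denominator is the physical dissipation~(a) with no numerical admixture; and both, together with the control $q$, are explicit functions of the discrete state, both globally and over a region $\omega$.

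\emph{Numerator.} Specialise Proposition~\ref{prop:variance} to $C=b$ under hypotheses~(H1)--(H3). For the centred reconstruction, part~(i) gives $\mathcal{D}_{\mathrm{num}}=0$, so the only variance sink is $\tfrac{\rho_0}{2}\int_\Omega\chi\,\dd V=\rho_0\sum_f\kappa_f\gamma_f[b]_f^2$ from~\eqref{eq:chi_disc}. Dividing the weighted variance by $\rho_0 N^2$ gives $\mathrm{PE}_{\mathrm{AC}}$ of Corollary~\ref{cor:osborn_cox}. Hence $\varepsilon_b^{\mathrm{num}}=0$ and $\int_\Omega\varepsilon_b\,\dd V=N^{-2}\sum_f\kappa_f\gamma_f[b]_f^2$, which is exactly the numerator of~\eqref{eq:gamma_h}. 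It is a face sum of $\kappa_f$, $\gamma_f$ and jumps of the cell field $b$, so it is explicit in the state.

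\emph{Denominator.} By Proposition~\ref{prop:diss_separation}, $\varepsilon_h$ is the discrete form of~\eqref{eq:eps_phys}. By item~(i) and the adjointness~(E2), it is a sum of weighted squares of the discrete vorticity components, with the weight $\rho_{\mathrm{AC}}$ carried inside the operator, so it is again explicit in the state. The compressibility part $\varepsilon_{\mathrm{c},h}$, the mismatch $\mathcal{E}_{\mathrm{adv}}$ and the residual $r$ enter~\eqref{eq:closure_test} as separate terms and are therefore not part of $\varepsilon_h$. Since the buoyancy uses the centred reconstruction, item~(ii) gives $\mathcal{E}_{\mathrm{adv}}=0$. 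The remaining numerical removal is $\varepsilon_{\mathrm{c},h}$ and $r$, and its ratio to $\varepsilon_h$ is $q$ of~\eqref{eq:quality}. $q$ is computable because $\varepsilon_{\mathrm{c},h}$ is an explicit sum and $r$ is determined from two time levels via~\eqref{eq:closure_test}.

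\emph{Regional form.} Replace the global identities by Corollary~\ref{cor:variance_patch} and Corollary~\ref{cor:energy_patch}. In both, the interior face and edge sums restricted to $\omega$ play the roles of $\chi$ and $\varepsilon_h$. With the centred reconstruction, $\mathcal{D}_{\mathrm{num},\omega}=0$ again. The boundary terms in~\eqref{eq:variance_patch} and $\mathcal{F}_{\partial\omega}$ are transport, computed from interior-side values, and do not enter the numerator or denominator. This gives $\Gamma_{h,\omega}$ with the exchange across $\partial\omega$ accounted for separately.

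The one point that needs care is a caveat, not an obstacle. Identifying the numerator with $\varepsilon_b$ depends on the constant-$N^2$, small-amplitude reduction behind $\mathrm{PE}_{\mathrm{AC}}$. The corollary asserts only that $\Gamma_h$ is explicit and that its numerator and denominator are the physical sinks; it does not claim that the reduction is exact. I would state this explicitly so that the claim is not overread.
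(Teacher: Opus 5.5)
Your proposal is correct and follows the paper's own justification. The paper gives no separate proof. It assembles Proposition~\ref{prop:variance}(i) with Corollary~\ref{cor:osborn_cox} for the numerator, Proposition~\ref{prop:diss_separation}(i)--(ii) and the ratio $q$ of~\eqref{eq:quality} for the denominator, and Corollaries~\ref{cor:variance_patch} and~\ref{cor:energy_patch} for the regional form. The centred reconstruction removes $\mathcal{D}_{\mathrm{num}}$ and $\mathcal{E}_{\mathrm{adv}}$ together, exactly as you argue, and your identification $\mathrm{PE}_{\mathrm{AC}} = V_h/(\rho_0 N^2)$ for $C=b$ and your caveat about the small-amplitude, constant-$N^2$ reduction are both accurate.
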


The centred reconstruction is the one configuration in which both
budgets are purely physical: it removes $\mathcal{E}_{\mathrm{adv}}$
in the energy balance, Proposition~\ref{prop:diss_separation}(ii), and
the numerical sink in the variance balance,
Proposition~\ref{prop:variance}(i), because both are the same
non-adjointness of a non-centred tracer flux against the centred
buoyancy interpolation.  When a monotone transport is required and
the reconstruction is not centred, $\Gamma_h$ is still computable;
its numerator then contains the face sum $\mathcal{D}_{\mathrm{num}}$
and its denominator the mismatch $\mathcal{E}_{\mathrm{adv}}$, both
explicit and both reported with it.  For the flux-corrected
transport of Corollary~\ref{cor:fct_sink} the numerical sink is
state-dependent and is not removed by shortening the time step; it
falls with resolution, which the sweep of Remark~\ref{rem:eps_sgs}
measures, and it is reported, never driven to negligibility.  The
mixing diagnostics are reliable where $q\ll1$.

\begin{remark}[What the computed $\varepsilon$ contains]
  \label{rem:eps_sgs}
  The coefficients $\nu_h$, $\nu_v$ in~\eqref{eq:eps_phys} are eddy
  viscosities in the sense of Reynolds averaging: each is the sum of a
  physical part and a part standing in for the motions the mesh does
  not resolve.  The operator is linear in the coefficient, so the
  computed dissipation splits in the same proportion,
  $\varepsilon = \varepsilon_{\mathrm{res}} + \varepsilon_{\mathrm{sgs}}$,
  but the proportion is not known, because the coefficient is tuned.  It can be estimated by a resolution sweep.
  Over a fixed region and flow, refine the mesh: motions move from
  the parametrized to the resolved side, the eddy part of the
  coefficient falls, and the computed dissipation re-partitions.
  Writing $\varepsilon(\Delta x)$ for its value over the region,
  \begin{equation}\label{eq:eps_sgs}
    \varepsilon_{\mathrm{sgs}}(\Delta x)
     := \varepsilon(\Delta x) - \varepsilon_{\mathrm{res}},
    \qquad
    \varepsilon_{\mathrm{res}}
     := \lim_{\Delta x\to 0}\varepsilon(\Delta x).
  \end{equation}
  If the sum over the region stays unchanged across the sweep while
  its parts shift, the sum estimates the physical dissipation of the
  flow and the limit exists in practice; a sum that keeps changing is
  itself a diagnosis, of a coefficient removing energy erroneously or
  of a genuine flux of energy toward larger scales.  The same
  decomposition applies to the diffusivities $\kappa_h$, $\kappa_v$
  and hence to $\chi$ and to $\Gamma_h$.
\end{remark}

\section{Performance Analysis}\label{sec:pressure_calc}
With current technology a global non-hydrostatic ocean simulation in which 
 the non-hydrostatic elliptic pressure equation is solved by the projection
method is impossible (Appendix~\ref{app:cost_detail}), so at global a cost
comparison between AC/DC and the projection method would compare a
model that can be run with one that cannot.  We compare instead  the hydrostatic model 
and the same model with AC/DC added, and we ask by what factor the time step lengthens.

The argument proceeds in four steps.  We first state what sets the
runtime on present hardware.  From this we form the
ratio of the two runtimes, hydro and non-hydrostatic,  
and observe which of its factors cancel.
What survives cancellation is a ratio of data volumes, and we
measure the data volume  in units of ``mesh pass''.
The structure of both time steps is then described in this unit, and
at the end are numbers attached, for a representative
configuration using ICON-O.  Origins of the counted entries and the detail of the
$\bar\psi$ solve are in Appendix~\ref{app:cost_detail}.

\subsection{Runtime Factors}\label{sec:pass_runtime}
The runtime of a model is set by four
quantities:
\begin{enumerate}[nosep]
  \item[(i)] $F$, the \emph{operation count}: how much arithmetic the
    routine does;
  \item[(ii)] $Q$, the \emph{memory traffic}: how much data that work
    moves between memory and processor;
  \item[(iii)] $B$, the \emph{memory bandwidth}: how fast the machine
    moves data;
  \item[(iv)] $\eta$, the \emph{achieved bandwidth fraction}: how much
    of that speed the routine actually reaches, which depends on how
    it is written.
\end{enumerate}
The first two are linked by the \emph{arithmetic intensity}
$$I := F/Q,$$
 the operations performed on each byte fetched.  Every
machine has its own ratio of arithmetic speed to bandwidth.  
When the intensity $I$ lies below that ratio the routine is \emph{memory-bound}, i.e. the
processor waits for data, and the run time is set by the data volume alone,
\begin{equation}\label{eq:routine_time}
  t \;=\; \frac{Q}{\eta\,B} \;=\; \frac{F}{I\,\eta\,B} .
\end{equation}
This is the memory-bound branch of the roofline model
\citep{Williams2009}.  Whether an ocean time step falls on this branch
is a question of measurement, and the measurement
exists: on the accelerator system of the Gordon Bell run of
\citet{Klocke2025}, the machine's ratio is about $15$ operations per
byte, while the routines of the ICON time step lie between $0.4$ and
$1.6$; a tracer sweep performs some $50$ operations per cell while
moving four fields, a tridiagonal column solve some $10$, while moving
three.  The step is limited by data movement throughout, an order of
magnitude below the crossover.  We take this machine as
representative of the class on which such models will run, and use
\eqref{eq:routine_time} for every routine of both models.  The
consequence for what follows is that on such machines the
algorithm costs are costs of reading.

\subsection{Hydrostatic-Non-Hydrostatic Runtime Ratio}\label{sec:pass_ratio}
Adding~\eqref{eq:routine_time} over the all subroutines of a time step, the
elapsed time of the AC/DC step relative to the hydrostatic one is
\begin{equation}\label{eq:runtime_ratio}
  R \;=\;
  \frac{\displaystyle\sum_{j\,\in\,\mathrm{AC/DC}} \frac{F_j}{I_j\,\eta_j\,B}}
       {\displaystyle\sum_{k\,\in\,\mathrm{hydrostatic}} \frac{F_k}{I_k\,\eta_k\,B}} .
\end{equation}
Consider the different quantities individually.
\
\begin{enumerate}[nosep]
  \item[(i)]  The bandwidth $B$ a machine property, since it is the
same machine in numerator and denominator, it cancels.  The
factor $R$ therefore holds for any hydrostatic wall-clock time on any
machine of the memory-bound class, no timing on a particular
machine is needed.  In this sense the
comparison is structural: the machine drops out.

  \item[(ii)] The efficiencies $\eta_j$ do not cancel term by term, because each
routine reaches its own fraction of the total bandwidth.  They enter,
however, only through the question whether the routines AC/DC adds are
written as well as the routines of the host.  We collect this in a
single number $m$, the ratio of the mean efficiency of the AC/DC
routines to that of the hydrostatic ones; $m=1$ when they are written
equally well, and $m$ is what an implementation, not the method,
controls.
\end{enumerate}
What survives is a ratio of two sums of $F_j/I_j$, that is, of memory
traffic $Q_j$.  The runtime ratio is a ratio of data volumes.  It is
therefore natural to count cost in a unit of data volume, and the
next subsection introduces one.  Separating the hydrostatic step from
the increments AC/DC adds to it, the ratio takes the form
\begin{equation}\label{eq:R_form}
  R \;=\; 1 + m\,\frac{\mathrm{OV}}{\mathrm{BASE}} + p_{\bar\psi},
  \qquad
  \mathrm{OV} := \!\!\sum_{j\,\in\,\mathrm{increments}}\!\! T_j ,
  \qquad
  \mathrm{BASE} := \!\!\sum_{k\,\in\,\mathrm{hydrostatic}}\!\! T_k ,
\end{equation}
where $T_j$ is the traffic of routine $j$ in the unit defined next,
$\mathrm{OV}$ is the traffic of everything AC/DC adds, $\mathrm{BASE}$
the traffic of the hydrostatic step, and $p_{\bar\psi}$ the
depth-averaged solve, which uses machinery the model already runs and
is carried separately.
\begin{figure}[tbp]
  \centering
  \begin{tikzpicture}[
    font=\footnotesize,
    >={Stealth[length=1.8mm]},
    ing/.style={draw,thick,rounded corners=1mm,align=center,
                text width=27mm,inner sep=1.6mm,minimum height=9mm},
    keep/.style={ing,draw=cSurvive,fill=cSurvive!10},
    drop/.style={ing,draw=cDrop,fill=black!4},
  ]
  \node[draw,thick,rounded corners=1mm,inner sep=2mm,minimum width=126mm] (h) at (0,0)
    {time of one routine\quad $t=Q/(\eta B)=F/(I\,\eta\,B)$};

  \node[ing,below left=7mm and 24mm of h.south] (F)
    {$F$ operations\\[-1pt]\scriptsize counted from source};
  \node[ing,right=3mm of F] (I)
    {$I$ intensity\\[-1pt]\scriptsize operations per byte};
  \node[ing,right=3mm of I] (B)
    {$B$ bandwidth\\[-1pt]\scriptsize the machine};
  \node[ing,right=3mm of B] (E)
    {$\eta$ efficiency\\[-1pt]\scriptsize how it is written};

  \foreach \n in {F,I,B,E} \draw[->] (h.south-|\n.north) -- (\n.north);

  \node[keep,below=8mm of F.south east,anchor=north,text width=57mm] (T)
    {$F/I$ is traffic, counted in mesh passes\\[-1pt]\scriptsize what remains};
  \node[drop,below=8mm of B] (Bc)
    {cancels\\[-1pt]\scriptsize same machine};
  \node[drop,below=8mm of E] (Ec)
    {collapses to $m$\\[-1pt]\scriptsize one number};

  \draw[->] (F.south) -- (F.south|-T.north);
  \draw[->] (I.south) -- (I.south|-T.north);
  \draw[->] (B.south) -- (Bc.north);
  \draw[->] (E.south) -- (Ec.north);

  \node[keep,below=8mm of T,text width=90mm] (R)
    {$R = 1 + m\,(\Delta T_{\mathrm{NH}}+\Delta T_{\mathrm{acdc}})/T_{\mathrm{hyd}} + p_{\bar\psi}$\\[-1pt]
     \scriptsize a ratio of mesh passes, on any machine};
  \draw[->] (T.south) -- (T.south|-R.north);
  \draw[->] (Ec.south) |- ($(R.north east)+(0,4mm)$) -- ($(R.north east)-(6mm,0)$);
  \end{tikzpicture}
  \caption{\it\small What survives the ratio of two runtimes.  Four
    quantities set the time of a routine on a memory-bound machine.
    Under the ratio~\eqref{eq:runtime_ratio} the bandwidth $B$ cancels
    exactly, since numerator and denominator run on the same machine,
    and the per-routine efficiencies collapse into the single number
    $m$, which an implementation controls.  What remains is $F/I$,
    the memory traffic, counted in mesh passes.  The factor therefore
    holds for any hydrostatic wall-clock time on any machine of this
    class, and no timing on a particular machine is needed to state
    it.}
  \label{fig:cost_cascade}
\end{figure}
\subsection{Cost Counting Cost}\label{sec:pass_counting}
The counting unit is  the \emph{mess pass}: the amount of data moved by
moving a single tracer once through the mesh with the transport routine,
one cell at a time.  The pass is the smallest unit of work an ocean
model performs that touches every cell, and every routine of a time
step is some number of such passes.  

A routine that touches every cell once with the same data per cell as a tracer sweep costs one pass; a
routine that touches every cell with more or fewer data per cell costs
correspondingly more or less.  Two-dimensional routines, which touch
only the surface cells, cost a fraction of a pass equal to the ratio of
surface cells to volume cells, which on a mesh of $n_z$ layers is
$1/n_z$ per touch.  This is the reason why the two-dimensional pieces
of both models are nearly free, and it applies to the barotropic
solve of the host and to the barotropic pseudo-pressure of AC/DC.

The reference pass performs $C_{\mathrm{stencil}}$
operations per cell,
\begin{equation*}
  F_1 = N_{\mathrm{col}}\,n_z\,C_{\mathrm{stencil}},
\end{equation*}
with $N_{\mathrm{col}}$ the number of columns and $n_z$ the number of
layers, at intensity $I_1$.  A routine of $F_j$ operations at
intensity $I_j$ moves
\begin{equation}\label{eq:Tj}
  T_j \;=\; \frac{F_j}{F_1}\,\frac{I_1}{I_j}
\end{equation}
mesh passes of data.  The first factor is the operation count in units of 
mesh passes, or tracer sweeps; the second
converts operations to traffic and is what distinguishes run time
from an operation count.  A routine with less arithmetic per byte than
the reference pass weighs more in traffic than in operations.  
The increments AC/DC adds are the parts of the step with the least arithmetic per
datum touched, and so are the host's own column solves, which is why
the correction moves numerator and denominator of \eqref{eq:R_form} in
the same direction.  Intensities are obtained from the arithmetic of each stencil and the fields it touches, at eight bytes per value
and without cache reuse between cells.  

\subsection{Time Step Structure}\label{sec:pass_acdc}
Three blocks make up the hydrostatic time step.  The tendencies of
momentum and tracers dominate: each tracer is one pass by definition,
and the momentum tendencies, vector-invariant advection with
reconstruction, pressure gradients, the equation of state and the
mixing coefficients, are a fixed number of passes independent of the
tracer count.  In either of its two forms the surface solve is two-dimensional and
nearly free, sub-stepped or solved implicitlys; the two forms differ 
not in traffic but in communication,
the sub-step loop exchanging data with its neighbours, the implicit solve needing two global reductions per iteration.  Implicit vertical mixing costs one tridiagonal solve per column and per
prognostic field.  The tridiagonal solve is the routine with the least arithmetic
per datum in the step, and it is the routine AC/DC will add more of.
Together the three blocks form the denominator of every factor quoted
below.

AC/DC inherits this structure and adds four terms, each with a fixed amount
of work per time step and each counted before the model is run.
\begin{enumerate}[nosep]
  \item[(i)]  The first is the \emph{contraction increment}.  The vector-invariant
form is used in either model; the non-hydrostatic one adds to the Lamb
contraction the terms that the hydrostatic approximation omits, one
extra pass over the edge triples.  This is the cost of non-hydrostatic
dynamics, not of AC/DC: any non-hydrostatic method on this dynamical core must pay it.

 \item[(ii)]  The second is the \emph{column solve}, one tridiagonal solve and one
vertical gradient per column.  The tridiagonal is the same operation
as the host's implicit vertical mixing for one field, and it is local:
each column is solved independently of its neighbours, with no
communication and no global stop.

 \item[(iii)]  The third is the \emph{barotropic pseudo-pressure} $\bar\psi$.  It is
two-dimensional, and the same conversion that made the host's surface
solve nearly free applies.  This row covers the entire acoustic
sub-loop, for three reasons (Section~\ref{sec:variants2}).  The
pressure recursion acts on the depth average and is two-dimensional.
The sub-step count follows from the horizontal acoustic condition
alone, since the vertical is implicit, and with
$c_{\mathrm{ac}}=\sqrt{gH}$ the stepping is the model's barotropic
stepping itself: AC/DC subdivides nothing of its own but adds one
field to sub-steps that already exist, and on a semi-implicit model,
which has no sub-step loop, $\bar\psi$ enters the free-surface solve at
a looser tolerance and hence at a fraction of its iterations
(Appendix~\ref{app:cost_detail}).  Finally, the velocity update per
sub-step is the gradient of a field constant in the vertical, and
reduces to a two-dimensional accumulator with a single
three-dimensional correction at the end of the loop; without this
reduction the sub-loop would be three-dimensional at every sub-step
and would dominate the step.

 \item[(iv)]  The fourth is the \emph{baroclinic $\psi'$ update}, one column-implicit
sweep per step.
\end{enumerate}
AC/DC performs no global communication of its
own: the column solve is local, the horizontal sub-steps are
nearest-neighbour, and $\bar\psi$ satisfies an equation of the same
stiffness as the free surface, so it is advanced inside the barotropic
solve the host already performs.  
The overhead of AC/DC is small, and
its decisive property is that it is bounded, known in advance, and flat
under refinement.

\subsection{Specific Numbers}\label{sec:pass_numbers}
The configuration used for the numbers is submesoscale-resolving:
$\Delta x = 50$\,m and $\Delta z = 1$\,m in the upper ocean stretching
to $50$\,m at depth over $H = 4000$\,m, with
$N_{\mathrm{col}} = 4\times10^{6}$ columns, $n_z = 500$ levels,
$n_{\mathrm{sub}} = 200$ barotropic sub-steps and
$n_{\mathrm{tracer}} = 2$ tracers, temperature and salinity.  The
reference sweep uses $C_{\mathrm{stencil}}\sim50$ FLOP per cell for
advection with reconstruction, so that $F_1\approx10^{11}$ FLOP, at
intensity $I_1 = 1.56$.  With $n_z = 500$ it takes $2500$
two-dimensional touches to equal one three-dimensional pass.

Table~\ref{tab:cost_accounting} gives the operation counts.  The
hydrostatic tendencies bring the counted total to $7\,F_1$, some $350$
FLOP per cell; the sub-stepped surface solve, $n_{\mathrm{sub}} = 200$
steps at $20$ FLOP per surface cell, costs $0.16\,F_1$ (an implicit
solve at $200$ iterations would cost $0.24\,F_1$); the implicit
vertical mixing, $(n_{\mathrm{tracer}}{+}2)=4$ tridiagonal solves,
$0.80\,F_1$.  The hydrostatic step is about $8$ passes of operations.
The non-hydrostatic Lamb contraction costs $3.86\,F_1$ against the
hydrostatic core's $2.96\,F_1$, both counted from source
(Appendix~\ref{app:cost_detail}), an increment of $0.90\,F_1$.  For each of its two parts the column solve is $10$ FLOP per cell for each of its two parts,
$0.20 + 0.20\,F_1$, the tridiagonal a quarter of the host's implicit
mixing.  The $\psi'$ update is $20$ FLOP per cell, $0.40\,F_1$.  The
barotropic $\bar\psi$, $200$ sub-steps at $10$ FLOP per surface cell,
costs $0.08\,F_1$; without the two-dimensional reduction of the
sub-loop it would cost $15$--$20$ passes.  The overhead totals $1.78$
passes, $0.90$ for the dynamics and $0.88$ for the pressure, and the
AC/DC step is about $10$ passes of operations.  Spanning the counting
conventions for the contraction widens the overhead to
$1.68$--$2.18$.

\begin{table}[ht]
  \centering
  \caption{Operation counts for the hydrostatic baseline and the
    AC/DC overhead, in tracer-equivalents
    $F_1 = N_{\mathrm{col}}\,n_z\,C_{\mathrm{stencil}}
    \approx 10^{11}$ FLOP, for the configuration of the text
    ($N_{\mathrm{col}} = 4\times 10^6$, $n_z = 500$,
    $n_{\mathrm{sub}} = 200$, $n_{\mathrm{tracer}} = 2$).  In the
    operation counts, ``c'' denotes FLOP per cell over
    $N_{\mathrm{col}}\,n_z$ cells and ``s'' FLOP per cell per sub-step
    over $n_{\mathrm{sub}}\,N_{\mathrm{col}}$.  The overhead is
    separated into the increment of non-hydrostatic dynamics, which any non-hydrostatic method pays, and the cost of the AC/DC pressure.
    Provenance of the two contraction entries is given in
    Appendix~\ref{app:cost_detail}.}
  \label{tab:cost_accounting}
  \renewcommand{\arraystretch}{1.05}
  \begin{tabular}{llc}
    \toprule
    Term & Operation count & $F_1$ \\
    \midrule
    \emph{Hydrostatic baseline}
      & tendency $7.00$, barotropic $0.16$, vert.\ impl.\ $0.80$
      & $\mathbf{\approx 8}$ \\
    \midrule
    \multicolumn{3}{l}{\emph{Non-hydrostatic dynamics, independent of the pressure method}} \\
    3D contraction increment
      & NH $3.86$ less hydrostatic $2.96$; one $\widetilde U$ pass
      & $0.90$ \\
    \midrule
    \multicolumn{3}{l}{\emph{AC/DC pressure}} \\
    Column solve: tridiagonal
      & $N_{\mathrm{col}}\,n_z \times 10$\,c $= C_{\mathrm{vimp}}/4$
      & $0.20$ \\
    Column solve: gradient
      & $N_{\mathrm{col}}\,n_z \times 10$\,c
      & $0.20$ \\
    Barotropic $\bar\psi$
      & $n_{\mathrm{sub}}\,N_{\mathrm{col}} \times 10$\,s
      & $0.08$ \\
    Baroclinic $\psi'$ update
      & $N_{\mathrm{col}}\,n_z \times 20$\,c
      & $0.40$ \\
    \quad\emph{pressure subtotal} & & $\mathbf{0.88}$ \\
    \quad\emph{overhead total} & & $\mathbf{1.78}$ \\
    \midrule
    \quad\emph{AC/DC total} & & $\mathbf{\approx 10}$ \\
    \bottomrule
  \end{tabular}
\end{table}

Table~\ref{tab:intensity} converts operations to traffic
by~\eqref{eq:Tj}.  The baseline grows from $8$ passes of operations to
$T_{\mathrm{hyd}} = 10.4$ passes of traffic, because the implicit
vertical mixing has little arithmetic per byte; the increments grow
from $1.78$ to $3.4$, because the column solve and the $\psi'$ update
have the same property.  The two movements partly cancel in the ratio.
Of the $3.4$, one pass is the non-hydrostatic dynamics,
$\Delta T_{\mathrm{NH}} = 0.94$, and the remaining
$\Delta T_{\mathrm{acdc}} = 2.50$ is the AC/DC pressure.

\begin{table}[ht]
  \centering
  \caption{Operation count $F_j$ in tracer-equivalents, arithmetic
    intensity $I_j$ in operations per byte, and memory traffic
    $T_j = (F_j/F_1)(I_1/I_j)$ in passes, for the
    configuration of Section~\ref{sec:pass_numbers}.  The reference
    sweep has $I_1 = 1.56$.  Routines with less arithmetic per byte
    than the reference weigh more in traffic than in operations.  The
    increments are separated into the non-hydrostatic dynamics
    $\Delta T_{\mathrm{NH}}$, which any non-hydrostatic method pays, and the AC/DC pressure $\Delta T_{\mathrm{acdc}}$.}
  \label{tab:intensity}
  \renewcommand{\arraystretch}{1.05}
  \begin{tabular}{lccc}
    \toprule
    Routine & $F_j$ & $I_j$ & $T_j$ \\
    \midrule
    \multicolumn{4}{l}{\emph{Hydrostatic step}} \\
    Tracer transport ($n_{\mathrm{tracer}}=2$) & $2.00$ & $1.56$ & $2.00$ \\
    Momentum tendency                          & $5.00$ & $1.50$ & $5.21$ \\
    Implicit vertical mixing                   & $0.80$ & $0.42$ & $3.00$ \\
    Surface solve                              & $0.16$ & $1.56$ & $0.16$ \\
    \quad$T_{\mathrm{hyd}}$                    &        &        & $\mathbf{10.4}$ \\
    \midrule
    \multicolumn{4}{l}{\emph{Increment of non-hydrostatic dynamics, carried by any method}} \\
    3D contraction increment                   & $0.90$ & $1.50$ & $0.94$ \\
    \quad$\Delta T_{\mathrm{NH}}$              &        &        & $\mathbf{0.94}$ \\
    \midrule
    \multicolumn{4}{l}{\emph{Increment of the AC/DC pressure}} \\
    Column solve: tridiagonal                  & $0.20$ & $0.42$ & $0.75$ \\
    Column solve: vertical gradient            & $0.20$ & $0.42$ & $0.75$ \\
    $\psi'$ update                             & $0.40$ & $0.62$ & $1.00$ \\
    \quad$\Delta T_{\mathrm{acdc}}$            &        &        & $\mathbf{2.50}$ \\
    \bottomrule
  \end{tabular}
\end{table}

The runtime factor is now computed from~\eqref{eq:R_form} in three
steps: a central value, a lower bound, and an upper bound.  Each step
states exactly which ingredients it uses, so that every number can be
recomputed from the two tables.

\emph{Central value.}  Insert the traffic sums of
Table~\ref{tab:intensity}, $\Delta T_{\mathrm{NH}} = 0.94$ and
$\Delta T_{\mathrm{acdc}} = 2.50$ against $T_{\mathrm{hyd}} = 10.4$,
take the AC/DC routines to be written as well as the host's, $m = 1$,
and take the depth-averaged solve at the middle of its range,
$p_{\bar\psi} = 0.03$.  Then
\begin{equation*}
  R \;=\; 1 + \frac{0.94 + 2.50}{10.4} + 0.03 \;\approx\; 1.36 .
\end{equation*}

\emph{Lower bound.}  The assigned intensities are the one ingredient
that is not counted from source.  If every routine were as
arithmetic-rich as the tracer sweep, $I_j = I_1$ throughout, traffic
and operations would coincide and~\eqref{eq:R_form} would reduce to
the ratio of the operation counts of Table~\ref{tab:cost_accounting},
$1 + (0.90 + 0.88)/8 \approx 1.22$; across the counting conventions for the
contraction, $1.21$ to $1.27$.  This cannot be the true value, because
the column solves of both models do move more data per operation than
a tracer sweep, but it is the value below which no assignment of
intensities can push the factor, since every correction in
Table~\ref{tab:intensity} moves a routine to an intensity at or below
$I_1$ and the increments contain proportionally more such routines
than the baseline.  Adding the smallest depth-averaged contribution,
$p_{\bar\psi} = 0.02$, the lower bound is $1.24$.

\emph{Upper bound.}  Keep the assigned intensities, which already
weigh the increments at the least favourable intensities in the step,
and take the depth-averaged solve at the top of its range,
$p_{\bar\psi} = 0.05$, the value for a semi-implicit host on which
$\bar\psi$ costs a fifth of the free-surface iterations
(Appendix~\ref{app:cost_detail}); on a split-explicit host it is one
extra field in the existing sub-step loop and lies at the bottom of
the range.  Then $R = 1 + (0.94 + 2.50)/10.4 + 0.05 \approx 1.38$.

The estimate is therefore
\begin{equation}\label{eq:R_result}
  \boxed{\;R \;\approx\; 1.36, \qquad 1.24 \;\le\; R \;\le\; 1.38
  \quad\text{at } m = 1,\;}
\end{equation}
that is, AC/DC lengthens the hydrostatic time step by about a third.
Figure~\ref{fig:pass_budget} shows the budget behind this number and
where the bracket falls.
The bounds are not a confidence interval; they are the two weightings
the section possesses, operations and assigned traffic, with the
depth-averaged solve at either end of its range.  What is outside the
bracket is $m$: the bracket assumes the added routines reach the same
fraction of the bandwidth as the host's.  Every $10\%$ shortfall in
$m$ adds about $0.03$ to $R$, and $m$ is measurable routine by routine
with standard bandwidth profiling before a model is committed.  Two
implementation defects, and only these, carry the factor past $1.5$:
the depth-averaged pseudo-pressure communicating outside the
free-surface solve it shares, and $m$ well below one.  Both are
transient.

\begin{figure}[tbp]
  \centering
  \begin{tikzpicture}[
    font=\footnotesize,
    >={Stealth[length=1.8mm]},
    bar/.style={draw,thick},
    x=9mm,
  ]
  \def\ybone{0}
  \def\ybtwo{-13mm}
  \node[anchor=west,font=\scriptsize] at (0,\ybone+9mm) {(a) where the passes go};
  \node[anchor=east,font=\scriptsize] at (0,\ybone) {hydrostatic};
  \fill[cHost!18] (0,\ybone-3.5mm) rectangle (10.4,\ybone+3.5mm);
  \draw[bar]     (0,\ybone-3.5mm) rectangle (10.4,\ybone+3.5mm);
  \node[font=\scriptsize] at (5.2,\ybone) {$T_{\mathrm{hyd}} = 10.4$ passes};

  \node[anchor=east,font=\scriptsize] at (0,\ybtwo) {AC/DC};
  \fill[cHost!18] (0,\ybtwo-3.5mm) rectangle (10.4,\ybtwo+3.5mm);
  \draw[bar]     (0,\ybtwo-3.5mm) rectangle (10.4,\ybtwo+3.5mm);
  \node[font=\scriptsize] at (5.2,\ybtwo) {$T_{\mathrm{hyd}} = 10.4$ passes};
  \fill[cDyn!30]  (10.4,\ybtwo-3.5mm) rectangle (11.34,\ybtwo+3.5mm);
  \draw[bar,draw=cDyn] (10.4,\ybtwo-3.5mm) rectangle (11.34,\ybtwo+3.5mm);
  \fill[cPrs!25]  (11.34,\ybtwo-3.5mm) rectangle (13.84,\ybtwo+3.5mm);
  \draw[bar,draw=cPrs] (11.34,\ybtwo-3.5mm) rectangle (13.84,\ybtwo+3.5mm);

  \fill[cDyn!30] (0,\ybtwo-9mm) rectangle ++(2.6mm,2.6mm);
  \draw[cDyn]    (0,\ybtwo-9mm) rectangle ++(2.6mm,2.6mm);
  \node[anchor=west,font=\scriptsize] at (3.6mm,\ybtwo-7.7mm)
    {$\Delta T_{\mathrm{NH}} = 0.94$: non-hydrostatic dynamics, carried by any method};
  \fill[cPrs!25] (0,\ybtwo-14mm) rectangle ++(2.6mm,2.6mm);
  \draw[cPrs]    (0,\ybtwo-14mm) rectangle ++(2.6mm,2.6mm);
  \node[anchor=west,font=\scriptsize] at (3.6mm,\ybtwo-12.7mm)
    {$\Delta T_{\mathrm{acdc}} = 2.50$: the AC/DC pressure};

  \def\ax{-46mm}
  \draw[->] (0,\ax) -- (14.4,\ax);
  \foreach \r/\lab in {0/1.0, 2.768/1.1, 5.536/1.2, 8.304/1.3, 11.072/1.4, 13.84/1.5}{
    \draw (\r,\ax-1mm) -- (\r,\ax+1mm);
    \node[below,font=\scriptsize] at (\r,\ax-1mm) {\lab};}
  \fill[cPrs!25] (6.643,\ax+1.5mm) rectangle (10.518,\ax+4.5mm);
  \draw[cPrs]    (6.643,\ax+1.5mm) rectangle (10.518,\ax+4.5mm);
  \draw[very thick] (9.965,\ax+0.5mm) -- (9.965,\ax+6mm);
  \node[above,font=\scriptsize] at (9.965,\ax+6mm) {$1.36$};
  \node[anchor=west,font=\scriptsize] at (0,\ax+11mm)
    {(b) the resulting factor, bracket $1.24$ to $1.38$ at $m=1$};
  \node[below,font=\scriptsize] at (6.92,\ax-5.5mm) {runtime factor $R$};
  \end{tikzpicture}
  \caption{\it\small The pass budget and the factor it implies, for
    the configuration of Section~\ref{sec:pass_numbers} and the
    traffic column of Table~\ref{tab:intensity}.  Bar lengths in~(a)
    are memory traffic in mesh passes.  The two increments are of
    almost the same size, and only the second belongs to AC/DC: the
    non-hydrostatic dynamics are carried by any non-hydrostatic
    method, projection included, so a method obtaining the pressure at
    no cost at all would still land at $R\approx1.09$.  Panel~(b)
    marks the central value $1.36$ and the bracket $1.24$ to $1.38$
    of~\eqref{eq:R_result}, whose ends are the operation-weighted and
    the traffic-weighted readings of the same table with the
    depth-averaged solve at either end of its range.  The two panels
    have different horizontal scales.}
  \label{fig:pass_budget}
\end{figure}

Three further readings of the same numbers.  The pressure machinery is
$0.88$ of the $9.7$ passes of operations of the AC/DC step, under a
tenth, and $\Delta T_{\mathrm{acdc}} = 2.50$ of its $13.8$ passes of
traffic, under a fifth; the second figure bounds what any improvement
of the pressure treatment can recover.  The dynamics increment
$\Delta T_{\mathrm{NH}}$ is not recoverable by any method, so a
hypothetical free pressure would still give $R \approx 1.09$.  Increments are fixed while the baseline grows with every tracer, each
tracer adding one transport pass and one tridiagonal solve: at five
tracers the factor is about $1.15$ in operations and $1.25$ in
traffic, and at the twenty to fifty tracers of coupled biogeochemistry
it falls to $1.03$--$1.06$ in operations and $1.07$--$1.11$ in
traffic.  The projection method, priced in the same unit, spends
$10^{2}$--$10^{3}$ passes and as many rounds of global communication
on the pressure alone, against AC/DC's $0.88$ passes and none.

In a coupled configuration where the ocean is not on the critical path
the factor can vanish from the wall clock, because the atmosphere sets
the pace.  AC/DC therefore runs at about $1.3$ to $1.4$ times the
hydrostatic wall-clock time at any resolution and machine size: a
scale estimate under the stated configuration and the measured
memory-bound regime of \citet{Klocke2025}, not a measurement of AC/DC
and not a theorem.

\section{Theoretical Analysis of AC/DC}\label{sec:ACDC_Bousinesq}\label{sec:properties}
Consider again the Boussinesq equations
\begin{equation}\begin{split}\label{Boussinesq_eq}
  \Dt\bv + \mathbf{N}_{\mathrm{3D}} 
  + \nabla B_{\mathrm{3D}}
    &= \mathcal{D}(\bv) + b\,\bk,\\[2ex]
 \Div\bv &= 0, \\
 \Dt b +\nabla\cdot(b\bv)=\Delta b,
\end{split}\end{equation}
where the rotational part of the nonlinearity is given by $\mathbf{N}_{\mathrm{3D}}:=\bom_a\times \bv$, with absolute vorticity $\bom_a:=\nabla\times\bv+\mathbf{f}$ and $\mathbf{f}$ the Coriolis vector,
the Bernoulli function is  $\nabla B_{\mathrm{3D}}:=\nabla (E_{\mathrm{kin}}+p)$, with $E_{\mathrm{kin}}$
the kinetic energy, $p$ the pressure, and $b$ denotes the buoyancy.

The physical plausibility of AC/DC rests on three quantitative
estimates, established for the Boussinesq equations in this section: the energy conservation
 (Section~\ref{sec:energy}), the volume-conservation error introduced by the artificial-compressibility
relaxation and its consequences for tracer conservation (Section~\ref{sec:hybrid_div}),
and the fidelity of the internal-gravity-wave dispersion relation
(Section~\ref{sec:hybrid_dispersion}). 

\subsection{Total Energy}\label{sec:energy}

In stratified flows the buoyancy is prognostic, and its work exchanges
kinetic and potential energy.
The weighted budget of~\eqref{eq:KL_energy_inviscid} produces the pairing
$\langle({\rhoa}/\rho_0)\,b,\,w\rangle$, and the question is
which reservoir absorbs it.  It has an answer valid for an
arbitrary equation of state, so we give that first and describe then the
familiar forms as special cases.

\paragraph*{Transport identity.}
If $r := {\rhoa}/\rho_0$ satisfies the flux
form~\eqref{eq:ac_flux} then for any field $F$, on a closed domain,
\begin{equation}\label{eq:transport_identity}
  \frac{d}{dt}\int_\Omega r\,F\dd V
    = \int_\Omega r\,\bigl(\Dt F + \bv\cdot\nabla F\bigr)\dd V ,
\end{equation}
because $\int F\,\Dt r = -\int F\,\Div(r\bv) = \int r\,\bv\cdot\nabla F$.
This is the mechanism used below.

\paragraph*{The Potential Energy.}
Let the buoyancy be diagnosed from an equation of state,
$b = b(\Theta,S,z)$, in temperature, salinity and depth, and let
$\Theta$ and $S$ be transported in flux form against the same volume
fluxes that carry $r$, so that by the argument
of~\eqref{eq:transport_identity} applied to each tracer they are
advected, $\Dt\Theta + \bv\cdot\nabla\Theta = 0$ and likewise for $S$.
Define
\begin{equation}\label{eq:pe_def}
  \Phi(\Theta,S,z) := \int_{z_{\mathrm{ref}}}^{z}
     b(\Theta,S,z')\,\dd z' ,
  \qquad
  \boxed{\;\mathrm{PE} := -\int_\Omega
     \frac{{\rhoa}}{\rho_0}\,
     \Phi(\Theta,S,z)\dd V \; }.
\end{equation}
Since $z$ is a fixed coordinate and $\partial\Phi/\partial z|_{\Theta,S} = b$,
\begin{equation}
  \Dt\Phi + \bv\cdot\nabla\Phi
    = \Phi_\Theta\!\cdot\!0 + \Phi_S\!\cdot\!0 + \Phi_z\,w = b\,w ,
\end{equation}
so~\eqref{eq:transport_identity} gives the conversion
\begin{equation}\label{eq:ke_pe_conversion}
  \frac{d\,\mathrm{PE}}{dt}
    = -\Bigl\langle \frac{{\rhoa}}{\rho_0}\,b,\;w\Bigr\rangle,
\end{equation}
and this holds for any equation of state.
The weighting is
built in, so the pairing produced by the momentum equation is absorbed
whole and no $\alpha^{-1}\langle\psi b,w\rangle$ mismatch arises.

Two densities appear here:  the pseudo-density
${\rhoa}$ of~\eqref{eq:rho_ac} responds to pressure
relaxation alone; it is a volume variable, and $\int_\Omega{\rhoa}\dd V$
is conserved by~\eqref{eq:ac_flux}, and  the density entering $b$ is the
thermodynamic $\rho(\Theta,S,z)$, which in a Boussinesq model is used
only in the buoyancy term, and whose domain integral is
not conserved.

\paragraph*{Total energy.}
With~\eqref{eq:pe_def} the conserved density-weighted total energy is
\begin{equation}\label{eq:EKL_def}
  \boxed{\;
  E_{\mathrm{KL}} := k_{\mathrm{dw}}
  + \frac{1}{2\alpha\rho_0}\!\int_\Omega\psi^2\,\dd V
  + \mathrm{PE}
  + \mathrm{PE}_{\mathrm{sfc}},
  \qquad
  \mathrm{PE}_{\mathrm{sfc}} := \frac{g}{2}\!\int_A \eta^2\,\dd A\;},
\end{equation}
the last term being the barotropic surface reservoir required by the
free surface and derived in
Proposition~\ref{prop:free_surface}(iii); it vanishes identically on a
fixed domain, $\eta\equiv0$. 
Starting from the inviscid
budget~\eqref{eq:KL_energy_inviscid}, using
$\Div({\rhoa}\bv/\rho_0) = -\alpha^{-1}\Dt\psi$
from~\eqref{eq:ac_flux} and the conversion~\eqref{eq:ke_pe_conversion},
\begin{equation}\label{eq:EKL_budget}
  \frac{dE_{\mathrm{KL}}}{dt}
  = -\frac{1}{\alpha}\Bigl\langle
      \frac{p_{\mathrm{hyd}}+p_{\mathrm{sfc}}}{\rho_0},\;\Dt\psi\Bigr\rangle
    \;\underbrace{-\;\frac{1}{\alpha}\Bigl\langle
      \frac{p_V}{\rho_0},\;\Dt\psi\Bigr\rangle}_{\text{AC/DC only}}
    - (\text{dissipation}).
\end{equation}
Two terms remain in AC/DC, and one in pure AC.  Both are of the same
kind: the work of a pressure
against the artificial compressibility.  The first is the
hydrostatic and surface pressures, the second is present only when the
column solve is used, is  the column pressure.

\paragraph*{Discrete Energy Budget}\label{sec:disc_energy}

The energy~\eqref{eq:dw_energy} with density weighting is defined as the following sum over faces
\begin{equation}\label{eq:kdw_h}
  K_h := \tfrac12\sum_f r_f\,u_f^2\,|f|\,|f^*| ,
  \qquad r_f := \langle r\rangle_f ,
\end{equation}
to each face we assign the volume $|f|\,|f^*|$ of the diamond
spanned by the primal and the dual face.

\paragraph*{The discrete total energy and its budget.}
With $K_h$ defined, the remaining components of~\eqref{eq:EKL_def} discretise
on the cells, and the discrete counterpart of
the conserved total is
\begin{equation}\begin{split}\label{eq:EKL_h}
  &E_h := K_h
       + \frac{1}{2\alpha\rho_0}\sum_{C\in\mathcal{C}} |C|\,\psi_C^2
       + \mathrm{PE}_h
       + \mathrm{PE}_{\mathrm{sfc},h},\\
\text{with }&  \mathrm{PE}_h := -\sum_{C\in\mathcal{C}} |C|\,r_C\,\Phi_C,
 \quad \text{and }\quad
 \mathrm{PE}_{\mathrm{sfc},h} := \tfrac{g}{2}\sum_{C\in\mathcal{C}_{\mathrm{sfc}}} |C|\,\eta_C^2
\end{split}\end{equation}
The elastic norm is the
volume-weighted one, $\|\psi\|_h^2 = \sum_{C\in\mathcal{C}}|C|\psi_C^2$: it is the metric in
which the discrete gradient and divergence are adjoint to one another,
so that the pseudo-pressure and the velocity form a conjugate pair.

Every term of~\eqref{eq:EKL_budget} has its discrete counterpart, and $dE_h/dt$ is balanced as follows
\begin{equation}\label{eq:EKL_budget_h}
  \frac{dE_h}{dt}
  + \frac{1}{\alpha}\Bigl\langle
      \frac{p_{\mathrm{hyd}}+p_{\mathrm{sfc}}}{\rho_0},\,\Dt\psi\Bigr\rangle_h
  + \frac{1}{\alpha}\Bigl\langle \frac{p_V}{\rho_0},\,\Dt\psi\Bigr\rangle_h
  + \mathcal{D}_h
  \;=\; 0 ,
\end{equation}
$\mathcal{D}_h$ the discrete dissipation and the inner product on prisms is defined as
$\langle x,y\rangle_h := \sum_{C\in\mathcal{C}} |C|\,x_C y_C$.  
Equation~\eqref{eq:EKL_budget_h}
states that nothing  remains, in particular no cubic term,
which is Proposition~\ref{prop:dw_removes}(iii) transferred to the discrete
setting, and no contribution from the Lamb term. All terms are diagnosable.

\subsection{Error Analysis}\label{sec:errors}

\subsubsection{Divergence and volume}\label{sec:hybrid_div}
The divergence error, equivalently the volume-conservation error,
 quantifies how well AC/DC approximates
incompressibility.

With the
boundary conditions of Section~\ref{sec:acdc_closure}, Dirichlet at
$z=\eta$, Neumann at the bottom, the eigenfunctions of $-L_z$ on a
column of depth $H$ are $\sin(k_nz)$ with
\begin{equation}\label{eq:kz_spectrum}
  k_n = \frac{(2n+1)\pi}{2H},
  \qquad n = 0,1,2,\dots,
  \qquad k_{\min} = \frac{\pi}{2H},
\end{equation}
on which $L_z$ acts as multiplication by $-k_n^2$.  The
vertical wavenumber is 
bounded away from zero: the vertically constant mode is not in
the column solve's spectrum at all, which is why the expressions below
never encounter the singularity their $k_z\to0$ form would suggest.
Writing $k_z$ for $k_n$ and treating it as continuous at grid scale,
the column solve gives
$p_V/\rho_0 = S'/(k_z^2)$ and the non-hydrostatic pressure is
$p_{\mathrm{NH}}/\rho_0 = S'/(k_x^2+k_z^2)$.  AC/DC
pseudo-pressure converges to the residual
$\psi \to p_{\mathrm{NH}} - p_V$:
\begin{equation}
  |\psi_{\mathrm{hyb}}|
  = |p_{\mathrm{NH}} - p_V|
  = \left|\frac{S'}{k_x^2+k_z^2} - \frac{S'}{k_z^2}\right|
  = \frac{k_x^2}{k_z^2}\,|p_{\mathrm{NH}}|.
  \label{eq:psi_ratio}
\end{equation}
Since the AC divergence error is proportional to $|\psi|/\alpha$, the
AC/DC divergence error is smaller than the AC error without column solve by
the wavenumber-dependent factor $k_x^2/k_z^2$:
\begin{equation}
  \frac{|\Div\bv|_{\mathrm{hyb}}}{U/L}
  \;\sim\; \frac{k_x^2}{k_z^2}\,\delta^2\,\Fr_{\mathrm{baro}}^2 .
  \label{eq:hybrid_div_error}
\end{equation}
Equation~\eqref{eq:psi_ratio} is exact on each mode;
\eqref{eq:hybrid_div_error} is a scaling estimate, obtained from
$|\Div\bv|\sim|\Dt\psi|/\alpha$ with the scales of
Section~\ref{sec:ac}, and is written with $\sim$ for that reason.
At the grid scale ($k_x \sim 1/\Delta x$,
$k_z \sim 1/\Delta z$): the reduction factor is
$(\Delta z/\Delta x)^2$.  On a mesh with
e.g.$\Delta x/\Delta z = 100$, AC/DC divergence error is
$10^{-4}$ times the AC-error, i.e. without the column solve.

A projection method does not produce a
divergence-free velocity either: it produces one whose divergence has
been reduced to whatever tolerance the iteration was stopped at.
\citet{Marshall1997b} report terminating their pressure inversion when
the divergence has been reduced to one part in $10^4$, a level they
found sufficient for numerical stability.  The relevant question for
AC/DC is therefore not whether its divergence error is zero, because no
practical method delivers that, but whether it is small enough.

The column solve captures the dominant component of
$p_{\mathrm{NH}}$, because the vertical eigenvalues
$\sim 1/\Delta z^2$ are the largest, leaving only a small
residual for the artificial compressibility step.  A smaller $\psi$ means a smaller AC error.
The two benefits, namely smaller error and relaxed CFL, arise from the
same fact: the vertical modes are the stiffest, and
handling them exactly removes both the stiffest CFL constraint and
the largest contribution to the approximation error.  On
isotropic meshes ($\Delta z = \Delta x$), the factor is unity and
AC/DC offers no advantage.

\begin{remark}[Regime of validity: stratification, not
  slenderness]\label{rem:aspect_ratio}
  We have above evaluated the factor $k_x^2/k_z^2$ at 
  grid scale, $k_x/k_z\sim\Delta z/\Delta x$, which is the right
  reading for grid-scale noise.  For a coherent structure the relevant
  ratio is set by the pressure field instead.  \citet{Marshall1997b} show that whether non-hydrostatic
  dynamics matters depends on  
  \begin{equation}\label{eq:marshall_n}
    \mathrm{n} = \frac{\gamma^2}{R_i},
    \qquad \gamma = \frac{h}{L},
    \qquad R_i = \frac{N^2h^2}{U^2},
    \qquad\text{so that}\qquad
    \mathrm{n} = \frac{U^2}{L^2N^2},
  \end{equation}
  in which the height $h$ cancels identically.  The regime is fixed by
  the horizontal scale measured against the stratification.
  The condition
  fails, as \citet{Marshall1997b} emphasise, in weakly stratified water
  on small horizontal scales, the case of  open-ocean deep
  convection \citep{JonesMarshall1993}.

  For AC/DC the pertinent question is which vertical scale the non-hydrostatic pressure represents, since the column solve is accurate when $k_z^2\gg k_x^2$.  In
  a convecting region the non-hydrostatic pressure is coupled over the
  full depth $D$ of the weakly stratified layer, so $k_z\sim\pi/D$ is
  set by the layer, while $k_x\sim\pi/L$ is set
  by the plume width.  The relevant ratio is therefore $D/L$:
  \begin{equation}\label{eq:DL_ratio}
    \frac{k_x^2}{k_z^2} \;\sim\; \left(\frac{D}{L}\right)^{\!2}
    \qquad\text{(convecting layer of depth $D$, plumes of width $L$).}
  \end{equation}
  \begin{center}\small
  \begin{tabular}{lrrr}
    \toprule
    & $D$ (m) & $L$ (m) & $(D/L)^2$ \\
    \midrule
    Deep convection, wide chimney   & 2000 & 10000 & $0.04$ \\
    Deep convection, plume          & 2000 & 1000  & $4$ \\
    Deep convection, narrow plume   & 2000 & 200   & $100$ \\
    Shallow mixed layer, plume      & 100  & 100   & $1$ \\
    Shallow mixed layer, wide       & 100  & 1000  & $0.01$ \\
    \bottomrule
  \end{tabular}
  \end{center}
  \noindent
  The column solve retains its advantage while the convecting layer is
  shallow compared with the width of its plumes, and loses it, when narrow 
  plumes span a deep weakly stratified layer.  The
  seasonal and shallow mixed layers that cover most of the ocean fall in
  the benign column; deep convection with narrow plumes does not.  Where
  the factor exceeds unity the remedy is~\eqref{eq:alpha_choice}: the
  divergence error falls as $1/\alpha$, so raising $\alpha$ by the same
  factor restores the accuracy at the cost of a proportionally shorter
  sub-step, which remains far below the cost of an elliptic solve.
\end{remark}
\subsubsection{Tracer Transport}\label{sec:tracer_transport}
The residual divergence is not equally harmless for all tracers.  A
nonzero $\Div\bv$ acts as a spurious local source in the
conservative tracer equation. For temperature
this is potentially masked by surface restoring and large effective
diffusivity. 

For \emph{salinity} the situation is different: ocean salinity has no
interior sources, the basin-integrated salt content is expected to
be conserved to high precision over climate-length integrations,
and salinity feeds back on density and hence on the dynamics.  An
uncontrolled spurious source that integrated into a slow drift of
total salt would be a serious defect.

The pseudo-density form of Section~\ref{sec:ac_system} shows how
this is avoided. Equation~\eqref{eq:ac_flux} is an
exact conservation law; building the tracer transport on it gives the following.
The statement is made for a general prismatic mesh whose cell volumes
may move, so that it covers $z$- and $z^*$-coordinates alike: $|c|(t)$
is the volume of cell $c$, $|\Omega|(t)=\sum_c|c|$, interior faces $f$
carry the flux $F_f$ \emph{relative to the moving faces} with
orientations $\sigma_{c,f}=-\sigma_{c',f}$, and $F_f=0$ on boundary
faces.  One point of naming must be fixed here, because the two
readings give different schemes.  The flux $F_f$ that
appears in~\eqref{eq:disc_cont_tracer} is the \emph{pseudo-mass} flux
\begin{equation}\label{eq:Ff_def}
  F_f = r_f\,u_f\,|f| ,\quad \text{ with }\quad r={\rhoa}/\rho_0
\end{equation}
with $u_f$ the face-normal velocity relative to the moving face and
$r_f$ the face value of $r={\rhoa}/\rho_0$: it is the flux
whose divergence advances $|c|r_c$, which is the discrete form
of~\eqref{eq:ac_flux}.  It is not the volumetric flux $u_f|f|$; a
tracer transported against the volumetric flux would satisfy the
relaxation form~\eqref{eq:ac_cont} instead, and
Proposition~\ref{prop:tracer_consistency} and
Proposition~\ref{prop:variance} would be lost.  

\begin{proposition}[Consistent tracer transport under AC/DC]
  \label{prop:tracer_consistency}
 Define $M:=\sum_c|c|r_c$, $\Sigma:=\sum_c|c|r_cC_c$,
  $\bar C_w := \Sigma/M$,
  $\bar C_{\mathrm{vol}} := |\Omega|^{-1}\sum_c|c|C_c$,
  $C' := C-\bar C_{\mathrm{vol}}$ and $m := M-|\Omega|$.

  Suppose the discrete tracer transport satisfies:
  \begin{enumerate}[nosep]
    \item[(H1)] Consistency with pseudo-continuity: the tracer and the
      pseudo-density are advanced with the same discrete fluxes $F_f$
      and the same face values, in the flux forms
      \begin{equation}\label{eq:disc_cont_tracer}
        \frac{d}{dt}\bigl(|c|\,r_c\bigr)
          = -\!\!\sum_{f\in\partial c}\!\sigma_{c,f}F_f ,
        \qquad
        \frac{d}{dt}\bigl(|c|\,r_c C_c\bigr)
          = -\!\!\sum_{f\in\partial c}\!\sigma_{c,f}F_f\,C_f ,
      \end{equation}
      the continuum counterpart being
      $\Dt({\rhoa} C)
       + \nabla\!\cdot({\rhoa} C\bv) = 0$;
    \item[(H2)] Preservation of constants: the face reconstruction of $C$ is
      constancy-preserving;
    \item[(H3)] Time integration: the tracer and ${\rhoa}$ use the same time
      discretisation, and any flux limiter is applied so as to
      preserve~(H1);
    \item[(H4)] Positivity: $r_c>0$ for all $c$ and all $t$.
  \end{enumerate}
  Then
  \begin{enumerate}[nosep]
  \item[(i)] $M$ and $\Sigma$, equivalently the pseudo-density-weighted
    content $\int_\Omega{\rhoa}C\dd V$ are conserved, for {any} face reconstruction $C_f$;
  \item[(ii)] constant tracer fields are preserved;
  \item[(iii)] the physical content $\int_\Omega\rho_0C\dd V$ is
    conserved up to a bounded $\OO(\Fr_{\mathrm{baro}}^2)$
    error, not a secular drift;
  \item[(iv)] the representation error obeys
    \begin{equation}\label{eq:tracer_repr_bound}
      \bigl|\bar C_{\mathrm{vol}}(t)-\bar C_w\bigr|
        = \frac{\bigl|\langle\psi,C'\rangle_h\bigr|}{\alpha M}
        \le \frac{\bigl(2|\Omega|+|m|\bigr)\,\|C'(t)\|_\infty}{M} ,
    \end{equation}
    so that with $r(0)\equiv1$ and a monotone transport
    $|\bar C_{\mathrm{vol}}(t)-\bar C_w| \le 2\|C'(0)\|_\infty$ for all
    $t$, with no growth in time.  If in addition $\|\psi\|_h$ is
    bounded 
    the bound improves to
    $\OO(\alpha^{-1/2})$.
  \end{enumerate}
\end{proposition}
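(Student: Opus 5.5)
The plan is to do everything at the semi-discrete level from the two flux-form equations of (H1), and to use the exact identity $r_c-1=\psi_c/\alpha$, which is the cell form of the definition~\eqref{eq:rho_ac}.

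\emph{(i) and (ii).} Sum both equations of~\eqref{eq:disc_cont_tracer} over all cells. Each interior face appears twice with $\sigma_{c,f}=-\sigma_{c',f}$, and both cells use the same $F_f$ and the same face value $C_f$ by (H1). Boundary faces carry $F_f=0$. Hence $dM/dt=0$ and $d\Sigma/dt=0$. Nothing in this argument involves the form of $C_f$, so (i) holds for any reconstruction. (H3) is what carries the cancellation to the fully discrete update: the same time scheme and a limiter acting on $C_f$ but not on $F_f$ keep the fluxes paired face by face.

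For (ii), let $C\equiv C_0$. By (H2) the face values are $C_f=C_0$. The second equation is then $C_0$ times the first, so $\frac{d}{dt}(|c|r_cC_c)=C_0\frac{d}{dt}(|c|r_c)$. With $|c|r_c>0$ from (H4), this forces $C_c\equiv C_0$ to remain a solution, and by uniqueness of the ODE it is the solution.

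\emph{(iv) first, then (iii).} Since $\sum_c|c|C'_c=0$, we have
\[
\Sigma-M\bar C_{\mathrm{vol}}=\sum_c|c|r_cC'_c=\sum_c|c|(r_c-1)C'_c=\alpha^{-1}\langle\psi,C'\rangle_h .
\]
This gives the equality in~\eqref{eq:tracer_repr_bound}. For the inequality, write $|r_c-1|\le r_c+1$, which uses (H4). Then $\sum_c|c||r_c-1|\le M+|\Omega|=2|\Omega|+m\le 2|\Omega|+|m|$.

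For the time-uniform statement I would not use this bound, since it would lose a factor of two. Instead, observe that $\bar C_w$ and $\bar C_{\mathrm{vol}}$ are both convex combinations of the values $C_c(t)$, with weights $|c|r_c/M$ and $|c|/|\Omega|$, both positive by (H4). Their difference is therefore at most $\max_cC_c(t)-\min_cC_c(t)$. A monotone transport keeps this range inside the initial range, which is bounded by $2\|C'(0)\|_\infty$. The initial condition $r(0)\equiv1$ makes the two means coincide at $t=0$ and fixes $M=|\Omega|(0)$. No time enters the bound, so there is no growth.

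Statement (iii) then follows from $\rho_0\sum_c|c|C_c=\rho_0\Sigma-\rho_0\alpha^{-1}\langle\psi,C\rangle_h$. Here $\Sigma$ is exactly conserved by (i), and the correction is bounded by $\max_c|\psi_c/\alpha|$ times the content. The scaling of Section~\ref{sec:ac} gives $\psi/\alpha=\OO(\Fr_{\mathrm{baro}}^2)$. The error is thus a bounded instantaneous offset and not an accumulated drift.

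\emph{The $\alpha^{-1/2}$ refinement.} Apply Cauchy--Schwarz in $\langle\cdot,\cdot\rangle_h$:
\[
|\langle\psi,C'\rangle_h|\le\|\psi\|_h\|C'\|_h\le\|\psi\|_h|\Omega|^{1/2}\|C'\|_\infty .
\]
The hypothesis I would use is the one the energy budget~\eqref{eq:EKL_h} supplies, namely a bounded elastic energy $\|\psi\|_h^2/(2\alpha\rho_0)$. This gives $\|\psi\|_h=\OO(\alpha^{1/2})$, and after division by $\alpha M$ the error is $\OO(\alpha^{-1/2})$. If $\|\psi\|_h$ is bounded independently of $\alpha$, the same argument gives $\OO(\alpha^{-1})$.

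\emph{Main obstacle.} I expect the hardest part to be making ``monotone transport'' precise on a moving mesh with the pseudo-mass flux. Range preservation for $C$ requires the update of $|c|r_cC_c$ to be a positive combination of old values. That depends jointly on (H3), (H4) and a CFL condition. I would state this as part of what ``monotone'' means rather than prove it for a particular limiter.
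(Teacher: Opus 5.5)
Your proposal is correct. For (i), (ii), the identity $\Sigma-M\bar C_{\mathrm{vol}}=\alpha^{-1}\langle\psi,C'\rangle_h$, the $L^1$ estimate and the Cauchy--Schwarz refinement, it follows the paper's proof. Your one-line inequality $|r_c-1|\le r_c+1$ gives the same bound the paper obtains by splitting $r_c-1$ into positive and negative parts.

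The genuine difference is the time-uniform bound. The paper routes it through~\eqref{eq:tracer_repr_bound}: it sets $m=0$, $M=|\Omega|$ from $r(0)\equiv1$ and asserts $\|C'(t)\|_\infty\le\|C'(0)\|_\infty$ from the discrete maximum principle. That step is not what a maximum principle delivers. $C'$ is measured from $\bar C_{\mathrm{vol}}(t)$, which drifts when $r\neq1$; only $\bar C_w$ is conserved. A maximum principle therefore gives only $\|C'(t)\|_\infty\le\max_cC_c(0)-\min_cC_c(0)\le2\|C'(0)\|_\infty$, and the paper's route would then yield $4\|C'(0)\|_\infty$. A concrete case: three unit cells starting from $r\equiv1$ and $C(0)=(0,\tfrac12,1)$, with an upwind pseudo-mass flux from the first cell into the second, already give $\|C'(t)\|_\infty>\|C'(0)\|_\infty$. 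That route also needs $m(t)=0$ for all $t$, i.e.\ a constant $|\Omega|$, which the moving-mesh setting does not guarantee.

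Your observation that $\bar C_w$ and $\bar C_{\mathrm{vol}}$ are both convex combinations of the current values $C_c(t)$ avoids all of this. It uses only (H4) and range preservation, needs neither $r(0)\equiv1$ nor a fixed volume, and gives the sharper constant $\max_cC_c(0)-\min_cC_c(0)$.

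You also read the last clause more accurately than the paper's proof, which says only that a bounded $\|\psi\|_h$ plus Cauchy--Schwarz gives $\OO(\alpha^{-1/2})$. That rate corresponds to $\|\psi\|_h=\OO(\alpha^{1/2})$, the bounded-elastic-energy statement of Proposition~\ref{prop:energy_bound}(iii). An $\alpha$-independent bound on $\|\psi\|_h$ would give $\OO(\alpha^{-1})$.

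For (iii), you can make the time-uniform boundedness rigorous with your own estimate, $|\alpha^{-1}\langle\psi,C\rangle_h|\le(M+|\Omega|)\max_c|C_c|$, as the paper does, rather than invoking the scaling. In both versions the $\OO(\Fr_{\mathrm{baro}}^2)$ size rests on the scaling $\psi/\alpha=\OO(\Fr_{\mathrm{baro}}^2)$, not on anything proved in the proposition.
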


\begin{proof}
  See Appendix~\ref{app:proofs}.
\end{proof}

\begin{remark}\label{rem:bounded_error}
  The proposition shows:  $M$,
  $\Sigma$ and $\bar C_w$ are conserved, i.e.  total salt shows no drift of any size, for any run length, any reconstruction,
  and
  any vertical  coordinate in the $z/z^*$ family.  The representation error is
  bounded uniformly in time by~(iv), by a constant fixed by the
  initial tracer spread and with no growth in $t$; this
rests only on positivity and exact conservation.
  The energy statements of Section~\ref{sec:energy} add the
  size of that constant, $\OO(1)\to\OO(\alpha^{-1/2})$.  
  The
  density-free kinetic energy supports none of this: its budget does
  not close, it is not a sum of non-negative reservoirs, and no bound
  on $\psi$ follows from it at all.
\end{remark}

\subsection{Dispersion relations}\label{sec:hybrid_dispersion}

How  AC/DC reproduces the exact non-hydrostatic
inertia--gravity-wave dispersion is described on
the $f$-plane.  Setting $f = 0$ recovers the non-rotating case, which by
part~(iv) of Proposition~\ref{prop:hybrid_dispersion} is the worst case
for the error bounds.\\

{\bf Linearisation.}
Linearise the AC/DC system in a stratified Boussinesq fluid at rest on
the $f$-plane with the traditional Coriolis force ($\tilde f = 0$),
fields $\propto e^{i(k_x x + k_y y + k_z z - \omega t)}$.  By the
horizontal isotropy of the traditional $f$-plane system one may take
$k_y = 0$ without loss of generality, $k_x$ then denoting the magnitude
of the horizontal wave vector.  The prognostic variables
$(\hat u,\hat v,\hat w,\hat b,\tilde\psi)$ evolve with the column
pressure $\tilde p_V$ determined diagnostically.

The column-solve source is the divergence of all terms except the pressure, 
here the Coriolis and buoyancy terms,
$S = ik_x f\hat v + ik_z\hat b$, so that the column constraint
$-k_z^2\tilde p_V = S$ gives
\begin{equation}
  \tilde p_V = -\,i\,\frac{k_x f\hat v + k_z\hat b}{k_z^{2}} ,
  \qquad\text{and at } f = 0:\quad \tilde p_V = -i\hat b/k_z .
  \label{eq:pV_diag}
\end{equation}
With total pressure $\tilde p_V + \tilde\psi$ the linearised system reads
\begin{alignat}{2}
  -i\omega\,\hat u &= f\hat v \;-\; \frac{k_x}{k_z^{2}}
      \bigl(k_x f\hat v + k_z\hat b\bigr) &&{}-\, ik_x\tilde\psi,
    \label{eq:hyb_u}\\
  -i\omega\,\hat v &= -f\hat u, && \label{eq:hyb_v}\\
  -i\omega\,\hat w &= -\frac{k_x}{k_z}\,f\hat v &&{}-\, ik_z\tilde\psi,
    \label{eq:hyb_w}\\
  -i\omega\,\hat b &= -N^{2}\hat w, && \label{eq:hyb_b}\\
  -i\omega\,\tilde\psi/\tilde\alpha
    &= -(ik_x\hat u &&{}+\, ik_z\hat w).
    \label{eq:hyb_psi}
\end{alignat}

The buoyancy contribution to the vertical
momentum equation is cancelled by the column pressure,
$-ik_z\tilde p_V$ contributing $-\hat b$ against the $+\hat b$ of the
buoyancy force; the column solve absorbs the vertical
hydrostatic--buoyancy balance and leaves only
$\tilde\psi$ behind.  This cancellation is not an artefact of
$f = 0$: it holds with rotation, and the Coriolis
remainder $-(k_x/k_z)f\hat v$ survives.  In the horizontal momentum equation, by
contrast, the column pressure contributes a horizontal buoyancy
pressure gradient that the column solve does not eliminate.

\begin{proposition}[AC/DC dispersion accuracy]
  \label{prop:hybrid_dispersion}
  \label{prop:hybrid_dispersion_rot}
  Let $K^{2} = k_x^{2}+k_z^{2}$ and let
  $\omega_0^{2} = (N^{2}k_x^{2} + f^{2}k_z^{2})/K^{2}$ be the
  non-hydrostatic inertia-gravity-wave frequency.  Then:

  \emph{(i) Dispersion relation.}  The AC/DC system
  \eqref{eq:hyb_u}--\eqref{eq:hyb_psi} has the biquadratic
  \begin{equation}
    \omega^{4}
    - \Bigl[\tilde\alpha K^{2}
        + f^{2}\Bigl(1 - \tfrac{k_x^{2}}{k_z^{2}}\Bigr)\Bigr]\omega^{2}
    + \tilde\alpha\bigl(N^{2}k_x^{2} + f^{2}k_z^{2}\bigr)
    - \tfrac{k_x^{2}}{k_z^{2}}\,N^{2}f^{2}
    = 0,
    \label{eq:hybrid_dispersion_rot}
  \end{equation}
  whereas pure AC, i.e.\ without the column solve, gives
  $\omega^{4} - [\tilde\alpha K^{2} + N^{2} + f^{2}]\omega^{2}
   + \tilde\alpha(N^{2}k_x^{2} + f^{2}k_z^{2}) + N^{2}f^{2} = 0$.
  The fast branch of~\eqref{eq:hybrid_dispersion_rot} is
  $\omega^{2} = \tilde\alpha K^{2} + \OO(1)$, as for pure AC.

  \emph{(ii) Error of the slow branch.}  For $k_z \neq 0$,
  \begin{equation}
    \frac{\omega^{2}_{\mathrm{AC/DC}} - \omega_0^{2}}{\omega_0^{2}}
    = +\,\frac{k_x^{4}\,(N^{2}-f^{2})^{2}}
             {\tilde\alpha\,K^{6}\,\omega_0^{2}}
      + \OO(\tilde\alpha^{-2}),
    \qquad
    \frac{\omega^{2}_{\mathrm{AC}} - \omega_0^{2}}{\omega_0^{2}}
    = -\,\frac{k_x^{2}k_z^{2}\,(N^{2}-f^{2})^{2}}
             {\tilde\alpha\,K^{6}\,\omega_0^{2}}
      + \OO(\tilde\alpha^{-2}).
    \label{eq:rot_disp_errors}
  \end{equation}
  Both methods therefore reproduce the dispersion  at leading
  order in $1/\tilde\alpha$; the ratio of their errors is
  $k_x^{2}/k_z^{2}$ in magnitude, independently of $f$ and $N$, with
  the AC/DC error lying above the exact frequency and the pure-AC
  error below.

  \emph{(iii) Non-rotating limit.}  At $f = 0$,
  \eqref{eq:hybrid_dispersion_rot} reduces to
  \begin{equation}
    \omega^{4} - \tilde\alpha K^{2}\,\omega^{2}
      + \tilde\alpha\,k_x^{2}N^{2} = 0,
    \qquad
    \omega^{2}_{\mathrm{slow}}
      = \frac{k_x^{2}N^{2}}{K^{2}}
        + \frac{k_x^{4}N^{4}}{\tilde\alpha K^{6}}
        + \OO(\tilde\alpha^{-2}),
    \label{eq:hybrid_dispersion}
  \end{equation}
  the leading term being the exact non-hydrostatic relation, and
  \eqref{eq:rot_disp_errors} reduces to
  \begin{equation}
    \frac{|\omega^{2}_{\mathrm{AC/DC}} - \omega^{2}_{\mathrm{NH}}|}
         {\omega^{2}_{\mathrm{NH}}}
    = \frac{k_x^{2}N^{2}}{\tilde\alpha K^{4}}
    = \frac{k_x^{2}}{k_z^{2}}\cdot\frac{k_z^{2}N^{2}}{\tilde\alpha K^{4}},
    \label{eq:hybrid_disp_error}
  \end{equation}
  the second factor being the pure-AC error.  The two biquadratics
  differ only by the absence of the $N^{2}$ term in the
  $\omega^{2}$-coefficient: the column solve removes the buoyancy
  coupling from the vertical momentum, eliminating the $N^{2}$
  contribution to the acoustic branch and improving the gravity-wave
  branch.

  \emph{(iv) Effect of rotation.}  At $k_x = 0$,
  \eqref{eq:hybrid_dispersion_rot} factorises as
  $(\omega^{2}-f^{2})(\omega^{2}-\tilde\alpha k_z^{2}) = 0$: pure
  inertial oscillations are reproduced at any $\alpha$.  If
  $f^{2}\le 2N^{2}$, satisfied when the water column is
  stratified, $N\gtrsim f$; it can fail in near-neutral convecting
  layers, where $N<f$ is possible, then rotation only \emph{decreases} both
  errors relative to their non-rotating values, so
  \eqref{eq:hybrid_disp_error} bounds~\eqref{eq:rot_disp_errors}.
  Without that hypothesis the conclusion fails.
\end{proposition}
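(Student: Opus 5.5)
The plan is to treat \eqref{eq:hyb_u}--\eqref{eq:hyb_psi} as a $5\times5$ homogeneous linear system in $(\hat u,\hat v,\hat w,\hat b,\tilde\psi)$ and reduce it by elimination instead of expanding the full determinant.

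\textbf{Eliminations.} From \eqref{eq:hyb_v} and \eqref{eq:hyb_b} I write $\hat v = f\hat u/(i\omega)$ and $\hat b = N^2\hat w/(i\omega)$. From \eqref{eq:hyb_psi} I write $\tilde\psi = \tilde\alpha(k_x\hat u + k_z\hat w)/\omega$. Substituting into \eqref{eq:hyb_u} and \eqref{eq:hyb_w} leaves a $2\times2$ system in $(\hat u,\hat w)$.

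\textbf{Part (i).} I multiply through by powers of $\omega$ and set the $2\times2$ determinant to zero. This gives a polynomial in $\omega$. After discarding the trivial factor $\omega^2$ (the steady geostrophic/zero-frequency mode), I expect exactly the biquadratic \eqref{eq:hybrid_dispersion_rot}. The terms $-f^2k_x^2/k_z^2$ and $-N^2f^2k_x^2/k_z^2$ should come from the column-pressure entries $-(k_x/k_z^2)(k_xf\hat v+k_z\hat b)$ and $-(k_x/k_z)f\hat v$. Repeating the same elimination without these $p_V$ entries, but with $+\hat b$ in \eqref{eq:hyb_w}, gives the pure-AC biquadratic as a check. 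The fast branch follows because the $\omega^2$-coefficient is $\tilde\alpha K^2+\OO(1)$.

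\textbf{Part (ii).} I write each biquadratic as $\omega^4 - (\tilde\alpha K^2 + b)\omega^2 + \tilde\alpha c + d = 0$, with $c = N^2k_x^2+f^2k_z^2$, so that $\omega_0^2 = c/K^2$.

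1. The slow root is $\omega^2 = C/B + C^2/B^3 + \dots$, where $B$ and $C$ are the $\omega^2$-coefficient and the constant term.
2. Expanding in $1/\tilde\alpha$ gives
\[
\omega^2_{\mathrm{slow}} = \frac{c}{K^2} + \frac{dK^4 - bcK^2 + c^2}{\tilde\alpha K^6} + \OO(\tilde\alpha^{-2}).
\]
3. The key algebraic step is to show that the numerator $dK^4 - bcK^2 + c^2$ equals $k_x^4(N^2-f^2)^2$ for AC/DC, where $b=f^2(1-k_x^2/k_z^2)$ and $d=-N^2f^2k_x^2/k_z^2$.
4. I also need it to equal $-k_x^2k_z^2(N^2-f^2)^2$ for pure AC, where $b=N^2+f^2$ and $d=N^2f^2$.

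For pure AC this is a short expansion. For AC/DC the $1/k_z^2$ terms must combine via $K^2 = k_x^2+k_z^2$. I would organize the check by writing $c = N^2k_x^2 + f^2k_z^2$ and collecting powers of $N^2$ and $f^2$. This identity is where I expect the only real bookkeeping difficulty. The ratio $k_x^2/k_z^2$ and the opposite signs then follow immediately from the two numerators.

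\textbf{Part (iii).} Setting $f=0$ in \eqref{eq:hybrid_dispersion_rot} and in the error formula gives \eqref{eq:hybrid_dispersion} and \eqref{eq:hybrid_disp_error} directly.

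\textbf{Part (iv).} At $k_x=0$ the constant term becomes $\tilde\alpha f^2k_z^2$ and the middle coefficient becomes $\tilde\alpha k_z^2+f^2$, which factorizes as stated. For the monotonicity claim, both relative errors have the form $g(f^2)\,(N^2-f^2)^2/\omega_0^2$ with $g$ independent of $f$. So it suffices to show that
\[
h(s) = \frac{(N^2-s)^2}{N^2k_x^2+sk_z^2}, \qquad s=f^2,
\]
satisfies $h(s)\le h(0)$. Cross-multiplying, this reduces to
\[
s\bigl[k_x^2(s-2N^2) - N^2k_z^2\bigr] \le 0,
\]
which holds whenever $s\le 2N^2$, and in fact whenever $s \le 2N^2 + N^2k_z^2/k_x^2$. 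For the failure statement I would exhibit a counterexample with $f^2 \gg N^2$ and small $k_z/k_x$, where $h$ exceeds $h(0)$.
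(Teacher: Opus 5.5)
Your proposal is correct and follows the paper's proof essentially step for step. You eliminate $\hat v,\hat b$ (and $\tilde\psi$) to get the biquadratic, expand the slow root as $C/B+C^2/B^3$, set $f=0$ for (iii), and factor at $k_x=0$ for (iv). Two small remarks: with your $(\hat u,\hat w)$ determinant, rows multiplied by $i\omega$, the biquadratic comes out directly with no $\omega^2$ factor to discard; and the bookkeeping you anticipate in (ii) collapses if you divide the numerator by $K^4$, getting $\omega_0^4-b\,\omega_0^2+d$, and use the paper's identities $\omega_0^2-f^2=k_x^2(N^2-f^2)/K^2$ and $\omega_0^2-N^2=-k_z^2(N^2-f^2)/K^2$. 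In (iv) the paper bounds $(N^2-f^2)^2\le N^4$ and $N^2k_x^2+f^2k_z^2\ge N^2k_x^2$ separately, whereas your cross-multiplication gives the sharper exact threshold $f^2\le N^2(2+k_z^2/k_x^2)$.
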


\begin{proof}
  See Appendix~\ref{app:proofs}.
\end{proof}

\begin{remark}[Where the improvement holds, and where it does not]
  The error ratio $k_x^{2}/k_z^{2}$ of part~(ii) favours AC/DC on the near-hydrostatic
  sector $k_z \gg k_x$, in which the frequency reduces to the
  hydrostatic $N^{2}k_x^{2}/k_z^{2}$, and which is what an anisotropic ocean mesh predominantly has.  On the genuinely
  non-hydrostatic branch $k_x > k_z$ the factor exceeds unity: here
  pure AC is the more accurate of the two, and exact at
  $k_z = 0$.  This is not an artefact of the
  estimate, the grid anisotropy that makes the vertical modes stiff
  is what the column solve exploits, and it is absent when
  $k_x \sim k_z$.

  Two remarks on the impact of this limitation.  First, the absolute error stays
  small even at its worst.  By part~(iv) it suffices to bound the
  non-rotating expression~\eqref{eq:hybrid_disp_error},
  $k_x^{2}N^{2}/(\tilde\alpha K^{4})$.  Writing $s = k_x^{2}$, the
  factor $s/(s+k_z^{2})^{2}$ has derivative
  $(k_z^{2}-s)/(s+k_z^{2})^{3}$, so at fixed $k_z$ it is largest at
  $k_x = k_z$ and equals $1/(4k_z^{2})$; since the admissible
  vertical wavenumbers are bounded below by~\eqref{eq:kz_spectrum}, the
  maximum over the whole admissible band is attained at
  $k_x = k_z = k_{\min} = \pi/2H$ and equals
  \begin{equation}\label{eq:disp_sup}
    \sup\;\frac{\omega^2_{\mathrm{AC/DC}}-\omega_0^2}{\omega_0^2}
      \;=\;\frac{N^{2}}{4\,\tilde\alpha\,k_{\min}^{2}} .
  \end{equation}
  An example, for $N = 10^{-2}\,\mathrm{s^{-1}}$,
  $\tilde\alpha = gH_{\mathrm{full}}$ and $H = 4000$\,m this is
  $4\times10^{-3}$ in $\omega^{2}$, hence about $2\times10^{-3}$ in
  frequency, at a horizontal wavelength of some $16$\,km.  The worst case therefore falls in the low-mode internal-wave band and not at
  grid scale, where the same expression is smaller by four orders
  of magnitude; even there it remains below the spatial truncation
  error measured in Section~\ref{sec:exp_igw}.  Second, the vertically constant mode
  $k_z = 0$ does not arise here at all: it is excluded from the column
  solve's spectrum by the same boundary conditions that make $L_z$ invertible.
\end{remark}

\section{Implementation of AC/DC}\label{sec:disc_time_impl}\label{sec:disc_time_boussinesq}
This section states the time discretisation.
Section~\ref{sec:templates} discretises classical artificial
compressibility and shows why it stops at ocean resolutions: its
velocity equation is a global three-dimensional system on every step.
Section~\ref{sec:acdc_step} states the AC/DC step, whose acoustic
stage~S3 is the one place the global system could reappear and where it
is instead relaxed; Sections~\ref{sec:variants2}
and~\ref{sec:variants3} give the two concrete forms of that stage.
What remains per sub-step, in either form, is nearest-neighbour stencil
work and at most a solve local to a single column, the access pattern
throughput-oriented hardware is built for.

\subsection{Classical Artificial Compressibility}\label{sec:templates}

Sections~\ref{sec:ac_system} and~\ref{sec:acdc_closure} settled the
time-continuous systems.  This subsection discretises classical
artificial compressibility in time on a \emph{single} interval
$\Delta t$ and states it as Template~A, the reference against which the
AC/DC step is measured.

With a scalar viscosity and no directional treatment, the one-scale
system is discretised exactly as in \citet[\S2.3]{Guermond2015}.  Write
$\mathcal{A}$ for the viscous operator and $\mathbf{R}^{n}$ for the
non-pressure, non-viscous tendency, advection, Coriolis, buoyancy, the
hydrostatic Bernoulli gradient, evaluated explicitly, and
\begin{equation}\label{eq:GM_dictionary}
  \varepsilon := \frac{1}{\tilde\alpha} = \frac{\rho_0}{\alpha}
\end{equation}
for the artificial-compressibility parameter, which is the dictionary
between the present notation and that of \citet{Guermond2015}.  The
step is a coupled velocity--pressure pair, \citet[eq.~(2.6)]{Guermond2015},
\begin{subequations}\label{eq:AC_BE}
\begin{align}
  \frac{\bv^{n+1}-\bv^{n}}{\Delta t}
    + \mathcal{A}\bv^{n+1}
    + \nabla\frac{\psi^{n+1}}{\rho_0}
    &= \mathbf{R}^{n} ,
    \label{eq:AC_BE_mom}\\[2pt]
  \frac{\varepsilon}{\Delta t}\,
    \frac{\psi^{n+1}-\psi^{n}}{\rho_0}
    + \Div\bv^{n+1} &= 0 .
    \label{eq:AC_BE_psi}
\end{align}
\end{subequations}
The two are still coupled, and the coupling is removed without
merging them: solve~\eqref{eq:AC_BE_psi} for $\psi^{n+1}$ and
substitute into~\eqref{eq:AC_BE_mom}.
What is left is again a velocity
system and a pressure system, executed in that order, which
is \citet[eq.~(2.7)]{Guermond2015},
\begin{subequations}\label{eq:AC_BE_split}
\begin{align}
  \frac{\bv^{n+1}-\bv^{n}}{\Delta t}
    + \mathcal{A}\bv^{n+1}
    - \frac{\Delta t}{\varepsilon}\,\nabla\Div\bv^{n+1}
    + \nabla\frac{\psi^{n}}{\rho_0}
  &= \mathbf{R}^{n} ,
  \qquad \bv^{n+1}\!\cdot\!\mathbf{n}\big|_\Gamma = 0 ,
  \label{eq:AC_BE_vel}\\[4pt]
  \frac{\psi^{n+1}}{\rho_0}
  &= \frac{\psi^{n}}{\rho_0}
      - \frac{\Delta t}{\varepsilon}\,\Div\bv^{n+1} .
  \label{eq:AC_BE_pres}
\end{align}
\end{subequations}


\begin{remark}[Where the coefficient $\Delta t/\varepsilon$ separates the
  two settings]\label{rem:coefficient}
  In \citet{Guermond2015} the parameter is tied to the step,
  $\varepsilon\sim\Delta t$, so that $\Delta t/\varepsilon=\OO(1)$
  and~\eqref{eq:AC_BE_vel} reads
  $\bv + \Delta t(\mathcal{A}\bv - \nabla\Div\bv) = \mathbf{r}$: the
  grad--divergence term is a mild stabilisation of viscous strength, and
  it is in that regime that their conditioning estimate
  $\OO(\Delta t\,h^{-2})$ is stated.  Here $\alpha$ is fixed
  by~\eqref{eq:alpha_choice} independently of the step, so
  \begin{equation}\label{eq:coefficient_ratio}
    \frac{\Delta t}{\varepsilon} = \tilde\alpha\,\Delta t
      = c_{\mathrm{ac}}^{2}\,\Delta t ,
    \qquad
    \frac{\Delta t}{\varepsilon}\cdot\frac{\Delta t}{\ell^{2}}
      = \Bigl(\frac{c_{\mathrm{ac}}\Delta t}{\ell}\Bigr)^{2}
      = \Bigl(\frac{\Delta t}{\Delta t^{\mathrm{ac}}_{\ell}}\Bigr)^{2}
  \end{equation}
  on a cell of size $\ell$: the dimensionless strength of the
  grad--divergence term is the squared ratio of the baroclinic
  step to the acoustic transit time of that cell. 
\end{remark}

Equation~\eqref{eq:AC_BE_split} has five properties, all established
by \citet{Guermond2015}.
\begin{enumerate}[nosep]
\item \emph{No Poisson problem.}  The pressure is updated by one
  divergence evaluation.  Nothing elliptic and scalar is solved at any
  point in the step.
\item \emph{Symmetry and definiteness.}  Multiplying
  \eqref{eq:AC_BE_vel} by $\Delta t$ gives
  $\bigl(I+\Delta t\mathcal{A}
    -(\Delta t^{2}/\varepsilon)\nabla\Div\bigr)\bv^{n+1}=\mathbf{r}$,
  and since $\mathcal{A}$ is $H^1$-coercive and
  $\langle-\nabla\Div\bv,\bv\rangle = \|\Div\bv\|^{2}\ge0$,
  \eqref{eq:AC_BE_vel} is amenable to conjugate gradients and has no saddle-point structure.
\item \emph{Order.}  First order in $\Delta t$ and first order in
  $\tilde\alpha^{-1}$, by the analysis of \citet{Guermond2015}.
\item \emph{No boundary condition on $\psi$.}  No artificial boundary
  condition on $\psi$ is required or imposed anywhere
  in~\eqref{eq:AC_BE}--\eqref{eq:AC_BE_split}.
\item \emph{Conditioning.}  On gradient modes the eigenvalues are
  $1 + \Delta t\,\nu|\mathbf{k}|^{2}
     + \tilde\alpha\Delta t^{2}|\mathbf{k}|^{2}$
  and on divergence-free modes
  $1 + \Delta t\,\nu|\mathbf{k}|^{2}$, so
  \begin{equation}\label{eq:AC_BE_cond}
    \kappa \;\sim\; 1
      + \frac{\nu\,\Delta t}{\ell_{\min}^{2}}
      + \frac{\Delta t}{\varepsilon}\frac{\Delta t}{\ell_{\min}^{2}}
    \;=\; 1
      + \frac{\nu\,\Delta t}{\ell_{\min}^{2}}
      + \Bigl(\frac{c_{\mathrm{ac}}\Delta t}{\ell_{\min}}\Bigr)^{2},
  \end{equation}
  $\ell_{\min}$ the smallest grid spacing.  The acoustic term dominates
  the viscous one whenever $\tilde\alpha\Delta t\gg\nu$, which holds by
  many orders under~\eqref{eq:alpha_choice}.
\end{enumerate}

\medskip
\noindent
One step of classical AC is Template~A.
\begin{enumerate}[nosep]
\item[\textbf{A1}] \emph{Explicit tendencies.}  Form $\mathbf{R}^{n}$
  from the advective, Coriolis, buoyancy and hydrostatic-Bernoulli
  terms at time level $n$.
\item[\textbf{A2}] \emph{Velocity system}, \eqref{eq:AC_BE_vel}: one
  global, symmetric positive definite, vector-valued solve containing the viscous operator and the grad--divergence term, with the lagged
  pressure gradient on the right-hand side.
\item[\textbf{A3}] \emph{Pressure system}, \eqref{eq:AC_BE_pres}: one
  divergence evaluation, explicit, local.
\item[\textbf{A4}] \emph{Tracers}: transported by the pseudo-mass flux
  built from $\bv^{n+1}$, as required by~(H1)
  and~\eqref{eq:onescale_tracer}.
\end{enumerate}
Step~A2 is the only global operation and the only expensive one; its
cost is governed by~\eqref{eq:AC_BE_cond}, with $\ell_{\min}$ the
vertical spacing on a prismatic ocean mesh.  By
Remark~\ref{rem:coefficient} this is what stops Template~A at ocean
resolutions, and everything that follows is directed at removing~A2.

\subsection{The AC/DC Step}\label{sec:acdc_step}

AC/DC removes~A2 by treating the two directions of the mesh
differently: the stiff vertical direction is solved exactly within each
column, and only the horizontal remainder is left to the acoustic
stage.  To state this, split every operator into a horizontal and a
vertical part.  Write $\gradh$ and $\gradz$ for the horizontal
and vertical discrete gradient, $\divh$ and $\divz$ for the
corresponding divergences, so that
\begin{equation}\label{eq:dir_operators}
  \nabla = \gradh + \gradz ,
  \qquad
  \Div = \divh + \divz ,
  \qquad
  L_H = \divh\gradh ,
  \qquad
  L_z = \divz\gradz ,
\end{equation}
the discrete counterparts of the splitting~\eqref{eq:laplace_split}.
Let
$\mathcal{D} = \mathcal{D}_H+\mathcal{D}_V$ be the corresponding split
of the dissipation operator~\eqref{eq:visc_hodge_aniso}, with
$\mathcal{D}_V := {\rhoa}^{-1}\partial_z({\rhoa}\nu_v\partial_z\bv_H)$
the part that is tridiagonal within a column.  Both parts are
taken explicitly: the step performs no viscous
column pass, and the vertical viscosity enters S1 with everything else
at time level $n$.

The time stepping of AC/DC is the following.  
\begin{enumerate}
\item[\textbf{S1}] \emph{Explicit tendencies.}  Form
  $\mathbf{R}^{n} = -\mathbf{N}_{\mathrm{3D}}(\bv^{n})
     - \nabla B_{\mathrm{hyd}}^{n} + b^{n}\bk
     + (\mathcal{D}_H+\mathcal{D}_V)\bv^{n}$
  and the predictor
  \begin{equation}\label{eq:DC_S1}
    \bv^{*} = \bv^{n} + \Delta t\,\mathbf{R}^{n}
              - \Delta t\,\nabla\frac{\psi^{n}}{\rho_0} .
  \end{equation}

\item[\textbf{S2}] \emph{Column pressure.}  Solve
  \begin{equation}\label{eq:DC_S2}
    L_z\!\Bigl(\frac{p_V^{\,n+1}}{\rho_0}\Bigr) = \frac{\Div\bv^{*}}{\Delta t},
    \qquad
    p_V^{\,n+1}\big|_{z=\eta} = 0 ,
    \quad \partial_z p_V^{\,n+1}\big|_{z=-H} = 0 ,
  \end{equation}
  one tridiagonal solve per column, and correct
  \begin{equation*}
    \bv^{**} := \bv^{*} - \Delta t\,\nabla(p_V^{\,n+1}/\rho_0).
  \end{equation*}
  The divergence that survives this step is exactly the residual
  of~\eqref{eq:residual},
  \begin{equation}\label{eq:DC_residual}
    \Div\bv^{**}
      = \Div\bv^{*} - \Delta t\,(L_H+L_z)\Bigl(\frac{p_V^{n+1}}{\rho_0}\Bigr)
      = -\,\Delta t\,L_H\Bigl(\frac{p_V^{n+1}}{\rho_0}\Bigr)
      = \Delta t\,S^{*} ,
  \end{equation}
  the vertical part having been removed identically.

\item[\textbf{S3}] \emph{Acoustic relaxation of the residual.}  Advance
  the pair $(\bv,\psi)$ from $\bv^{**}$ over the interval $\Delta t$,
  \begin{equation}\label{eq:DC_S3}
    \frac{\bv^{n+1}-\bv^{**}}{\Delta t}
      + \nabla\frac{\psi^{n+1}-\psi^{n}}{\rho_0} = 0 ,
    \qquad
    \frac{\varepsilon}{\Delta t}\frac{\psi^{n+1}-\psi^{n}}{\rho_0}
      + \Div\bv^{n+1} = 0 ,
  \end{equation}
  and integrate it over $n_{\mathrm{sub}}$ sub-steps of length
  $\Delta t^{\mathrm{ac}} = \Delta t/n_{\mathrm{sub}}$.  The stage is
  left in this generic form: any sub-loop is admissible that reproduces
  \eqref{eq:DC_S3} to the accuracy of the sub-loop and requires no
  global operation.  Section~\ref{sec:variants2} and
  Section~\ref{sec:variants3} give the two concrete forms.  They differ
  in whether $\psi$ is split, and therefore in whether $\Delta x$ or
  $\Delta z$ sets $n_{\mathrm{sub}}$.

\item[\textbf{S4}] \emph{Pseudo-continuity and tracers.}  With the
  face fluxes $F_f$ of~\eqref{eq:Ff_def} built from $\bv^{n+1}$,
  \begin{equation}\label{eq:DC_S4}
    \frac{(|c|r_c)^{n+1}-(|c|r_c)^{n}}{\Delta t}
      = -\!\!\sum_{f\in\partial c}\!\sigma_{c,f}F_f ,
  \end{equation}
  \begin{equation}\label{eq:DC_S4b}
    \frac{(|c|r_cC_c)^{n+1}-(|c|r_cC_c)^{n}}{\Delta t}
      = -\!\!\sum_{f\in\partial c}\!\sigma_{c,f}F_f\,C_f
        - \!\!\sum_{f\ \mathrm{lat}}\!\!\sigma_{c,f}J_f^{\mathrm{diff},n}
        - \!\!\sum_{f\ \mathrm{hor}}\!\!\sigma_{c,f}J_f^{\mathrm{diff},n+1} ,
  \end{equation}
  advection and lateral ($\kappa_h$) diffusion explicit, vertical
  ($\kappa_v$) diffusion implicit-, one tridiagonal solve per column
  per tracer, sharing the sparsity of~S2.  Constants are preserved by
  both: advection by~(H2), diffusion because $[C]_f=0$ annihilates
  $J_f^{\mathrm{diff}}$.
\end{enumerate}

\paragraph*{Why S3 is left generic.}
Two requirements fix the stage.  It must not be
solved as a system: substituting the pressure equation
of~\eqref{eq:DC_S3} into the velocity equation reproduces
\eqref{eq:AC_BE_vel}, the global system of Template~A, conditioned by
the smallest spacing in the mesh.  It must also not cost more than the host already pays for its barotropic mode, which is a statement about
$n_{\mathrm{sub}}$.

An explicit sub-loop is limited by the smallest spacing in the problem,
and on a prismatic ocean mesh that is $\Delta z$.  Stage~S2 removed the
vertical direction from $p_V$; it did not remove it from $\psi$, so the
sub-loop meets the vertical limit unless the vertical structure of
$\psi$ is removed as well.  Removing it is Option~S3-a and leaving it
is Option~S3-b.  Both satisfy the two requirements above and both give
a working algorithm; they behave differently, and the sections that
follow state each in full.

\subsubsection{Option S3-a: Pseudo-Pressure Splitting}\label{sec:variants2}
We decompose $\psi$ as
\begin{equation}\label{eq:psi_split_concept}
  \psi = \bar\psi + \psi' ,\quad\text{ with }\bar\psi(x,y,t):=\frac{1}{H}\int_{-H}^0  \psi (x,y,z,t)\, dz
\end{equation}
with $\bar\psi$ the depth-averaged component and $\psi'$ the remainder.
The remainder holds the vertical structure
that~\eqref{eq:psi_converged} exhibits, and with it the stiffness the
column solve does not remove; it is kept out of the fast dynamics
entirely and updated once per step by a solve local to each column.
The depth average is two-dimensional and contains the fast response, and
is advanced explicitly with the velocity on the sub-interval
$\Delta t^{\mathrm{ac}} = \Delta t/n_{\mathrm{sub}}$.  The two are
therefore on different time scales, $\bar\psi$ cheap and fast, $\psi'$
stiff and slow, which is what makes the per-sub-step work
two-dimensional and the three-dimensional work once-per-step, and is
what the cost accounting of Section~\ref{sec:pass_acdc} rests on.
Steps~S1, S2 and~S4 of Section~\ref{sec:acdc_step} are unchanged; only
the acoustic stage~S3 is taken in the split form, by the two stages
below.  Nothing implicit and global remains: the implicit part is
confined to a single column and the rest is explicit.

\paragraph*{Boundary conditions of the pressure splitting.}
Equation~\eqref{eq:psi_converged} is an interior identity, and
the boundary conditions of the aggregated pressure are as follows:  the projection~\eqref{eq:pressure_laplace} of Section~\ref{sec:splitting} carries
$\mathbf{n}\cdot\nabla(p_{\mathrm{NH}}/\rho_0)=0$ on the fixed
impermeable boundaries  and
$p_{\mathrm{NH}}=0$ at the free surface.  Under the aggregation
$p_{\mathrm{NH}} = p_V+\psi$ the surface condition splits as
$p_V=0$ (imposed by the column solve) together with $\psi=0$, which
is~\eqref{eq:psi_bc} of Section~\ref{sec:free_surface}.  On the lateral
and bottom boundaries $\psi$ has no condition of its own: it is a
prognostic cell field, and the wall condition enters weakly through
$\bv\cdot\mathbf{n}=0$ in the flux form of~\eqref{eq:hybrid_psi}, whose
boundary faces carry zero flux.  The column condition
$\partial_z p_V=0$ then makes the aggregate satisfy the projection's
bottom condition cellwise.

\begin{enumerate}
\item[\textbf{S3-a1}] \emph{Remainder, column-implicit, once per step.}
  With $(\cdot)'$ the column-mean-free part
  of~\eqref{eq:psi_split_concept},
  \begin{equation}\label{eq:C_psiprime}
    \bigl(I - \tilde\alpha\Delta t^{2}L_z\bigr)
      \frac{\psi'^{\,n+1}}{\rho_0}
    = \frac{\psi'^{\,n}}{\rho_0}
      - \tilde\alpha\Delta t\,\bigl(\Div\bv^{**}\bigr)'
      + \tilde\alpha\Delta t^{2}L_H\,\frac{\psi'^{\,n}}{\rho_0} .
  \end{equation}
  $L_z$ is tridiagonal within each column and couples no two columns,
  so~\eqref{eq:C_psiprime} is one small solve per column with no
  horizontal communication, sharing the sparsity of the column pressure
  solve~S2.  The horizontal
  part $L_H$ is evaluated at the old level and carried on the
  right-hand side; it is the only explicit term in the pressure
  update, and it is the one that sets the step restriction below.

\item[\textbf{S3-a2}] \emph{Depth-averaged pair, explicit sub-steps.}
  Holding $\psi'^{\,n+1}$ fixed as an offset, the column pressure
  $p_V$ having already been applied to $\bv^{**}$ in~S2,  advance
  $(\bv,\bar\psi)$ over $n_{\mathrm{sub}}$ sub-steps of length
  $\Delta t^{\mathrm{ac}}$ by St\"ormer--Verlet,
  \begin{subequations}\label{eq:C_leapfrog}
  \begin{align}
    \bar\psi^{\,k+\frac12} &= \bar\psi^{\,k}
      - \tfrac{1}{2}\Delta t^{\mathrm{ac}}\,\tilde\alpha\rho_0\,
        \overline{\Div\bv^{\,k}} ,
      \label{eq:C_leapfrog_a}\\[2pt]
    \bv^{\,k+1} &= \bv^{\,k}
      - \Delta t^{\mathrm{ac}}\,\nabla
        \frac{\bar\psi^{\,k+\frac12} + \psi'^{\,n+1}}{\rho_0} ,
      \label{eq:C_leapfrog_b}\\[2pt]
    \bar\psi^{\,k+1} &= \bar\psi^{\,k+\frac12}
      - \tfrac{1}{2}\Delta t^{\mathrm{ac}}\,\tilde\alpha\rho_0\,
        \overline{\Div\bv^{\,k+1}} ,
      \label{eq:C_leapfrog_c}
  \end{align}
  \end{subequations}
  with $\bv^{\,0}=\bv^{**}$, $\bar\psi^{\,0}=\bar\psi^{\,n}$ and
  $\bv^{n+1}=\bv^{\,n_{\mathrm{sub}}}$,
  $\bar\psi^{\,n+1}=\bar\psi^{\,n_{\mathrm{sub}}}$.  The overbar is the
  depth average of~\eqref{eq:psi_split_concept}, so the pressure
  recursion~\eqref{eq:C_leapfrog_a},~\eqref{eq:C_leapfrog_c} acts on a
  two-dimensional field; only~\eqref{eq:C_leapfrog_b} is
  three-dimensional, and it is a gradient of a field that is constant
  in the vertical plus a fixed offset.

\end{enumerate}

Three consequences follow, and they are what the option is for.

\emph{Sub-step count.}  The vertical channel is implicit
in~\eqref{eq:C_psiprime}, so the only condition on the sub-loop is the
horizontal one,
$c_{\mathrm{ac}}\Delta t^{\mathrm{ac}}/\Delta x = \OO(1)$, hence
$n_{\mathrm{sub}} = \OO(c_{\mathrm{ac}}\Delta t/\Delta x)$
by~\eqref{eq:coefficient_ratio}: the same count, and the same mode, as
the barotropic sub-stepping an ocean model already performs.  Each
sub-step applies one gradient and one depth-averaged divergence, both
nearest-neighbour stencils, and~\eqref{eq:C_psiprime} is block-diagonal
by column, so the sub-loop stays local to a cell and its column.

\emph{The sub-loop flux.}  By
\eqref{eq:C_leapfrog_b} the velocity at sub-step $k$ is affine in the
accumulated $\bar\psi$, so the time-integrated flux of the whole
sub-loop follows from one additional two-dimensional accumulator and a
single lift at the exit.  Step~S4 therefore uses the same $F_f$ it
does in Section~\ref{sec:acdc_step}, and hypotheses~(H1)--(H3) carry
over unchanged.  The weight $r_f$ in~\eqref{eq:Ff_def} is evaluated
once, at the state entering the step, and held fixed
through~\eqref{eq:C_leapfrog}: the time level is not fixed
by~\eqref{eq:Ff_def}, and a constant weight is what keeps the
recursion's operator constant, as the symplectic structure requires.

\emph{Applying the horizontal term.}  This is the one
implementation choice that decides whether the option is affordable, so
it belongs with the scheme.  The explicit term $L_H\psi'$
of~\eqref{eq:C_psiprime} is a gradient, a divergence and the removal of
a column mean.  Applied in sequence, each factor is local and the
composition touches $\OO(1)$ entries per row.  Multiplied out into a
matrix, the column-mean removal couples every cell of a column to every
other and the stored operator grows as (columns)$\times n_z^{2}$.  Both
give the same operator and~\eqref{eq:C_psiprime} is unchanged; the
choice decides only whether the option's memory is a field or a matrix,
and at the sizes it exists for that decides feasibility.

\subsubsection{Option S3-b: An Explicit Acoustic Step}\label{sec:variants3}
Option~S3-a kept the pseudo-pressure implicit and split it so that the
implicit part stayed inside a column.  This option gives up both:
$\psi$ is left whole, and the pair $(\bv,\psi)$ of~\eqref{eq:DC_S3} is
advanced explicitly, so no matrix is assembled and none is inverted
anywhere in the step.  Again only~S3 of Section~\ref{sec:acdc_step}
changes; S1, S2 and~S4 are as before.

\begin{enumerate}
\item[\textbf{S3-b}] \emph{Acoustic relaxation, explicit.}  Over
  $n_{\mathrm{sub}}$ sub-steps of length
  $\Delta t^{\mathrm{ac}} = \Delta t/n_{\mathrm{sub}}$, advance the
  full three-dimensional pair by St\"ormer--Verlet,
  \begin{subequations}\label{eq:D_leapfrog}
  \begin{align}
    \psi^{\,k+\frac12} &= \psi^{\,k}
      - \tfrac{1}{2}\Delta t^{\mathrm{ac}}\,\tilde\alpha\rho_0\,
        \Div\bv^{\,k} ,
      \label{eq:D_leapfrog_a}\\[2pt]
    \bv^{\,k+1} &= \bv^{\,k}
      - \Delta t^{\mathrm{ac}}\,\nabla
        \frac{\psi^{\,k+\frac12}}{\rho_0} ,
      \label{eq:D_leapfrog_b}\\[2pt]
    \psi^{\,k+1} &= \psi^{\,k+\frac12}
      - \tfrac{1}{2}\Delta t^{\mathrm{ac}}\,\tilde\alpha\rho_0\,
        \Div\bv^{\,k+1} ,
      \label{eq:D_leapfrog_c}
  \end{align}
  \end{subequations}
  with $\bv^{\,0}=\bv^{**}$, $\psi^{\,0}=\psi^{\,n}$ and
  $\bv^{n+1}=\bv^{\,n_{\mathrm{sub}}}$,
  $\psi^{\,n+1}=\psi^{\,n_{\mathrm{sub}}}$.  There is no operator to
  invert or factor here: the step applies $\nabla$ and $\Div$.
\end{enumerate}

Every operation in~\eqref{eq:D_leapfrog} is a nearest-neighbour
stencil, so the option needs no reduction and no global communication
of any kind, and the whole cost is in $n_{\mathrm{sub}}$.  Because
$\psi$ is not split, the vertical channel is present
in~\eqref{eq:D_leapfrog}: this is the restriction that~S2 removes from
the \emph{pressure} and the splitting removes from the
\emph{sub-loop}, and that~S3-b reinstates.

\paragraph*{Classical AC and the two options side by side.}

\begin{center}\footnotesize
\begin{tabular}{@{}l p{3.0cm} p{3.4cm} p{3.0cm}@{}}
  \toprule
  & Template~A (classical AC)
  & AC/DC, Option~S3-a (split $\psi$)
  & AC/DC, Option~S3-b (whole $\psi$) \\
  \midrule
  explicit tendencies & A1 & S1 & S1 \\
  viscosity           & inside the global solve A2
                      & explicit, in S1 & explicit, in S1 \\
  column pressure     & ---
                      & S2, column tridiagonal
                      & S2, column tridiagonal \\
  velocity update     & A2, one global SPD solve
                      & S3-a2, explicit sub-steps
                      & S3-b, explicit sub-steps \\
  pressure update     & A3, explicit
                      & S3-a1 column-implicit ($\psi'$);
                        S3-a2 explicit ($\bar\psi$)
                      & S3-b, explicit \\
  tracers             & A4 & S4 & S4 \\
  \midrule
  global operations   & one, cond.~\eqref{eq:AC_BE_cond}
                      & \textbf{none} & \textbf{none} \\
  assembled operators & the global system
                      & per-column blocks only
                      & \textbf{none} \\
  column solves/step  & ---
                      & $n_{\mathrm{tracer}}+2$
                      & $n_{\mathrm{tracer}}+1$ \\
  sub-steps/step      & ---
                      & $n_{\mathrm{sub}}$, 2D field, horizontal
                        condition
                      & $n_{\mathrm{sub}}$, 3D pair, vertical
                        condition \\
  \bottomrule
\end{tabular}
\end{center}
\noindent
The three columns differ in one place only, the acoustic stage~S3:
Template~A inverts the grad--divergence operator whole and pays the
conditioning~\eqref{eq:AC_BE_cond} for it, S3-a splits the
pseudo-pressure, S3-b leaves it whole.  Everything above the rule is
the same computation in a different order.  What the two lower rows
record is the property the paper is after: neither option performs a
global reduction, and both reduce to nearest-neighbour stencils and
column-local work, the access pattern throughput-oriented
architectures are built for.

\subsubsection{Choosing $\alpha$ and the Number of Sub-steps}\label{sec:parameters}

Both options leave two numbers to the user, $\alpha$ and the number of
acoustic sub-steps, and a reader setting up a run needs to know how they are
fixed.  They are not independent, and only one of them is a choice.

\emph{The mechanism.}  Section~\ref{sec:ac_system}
described $\psi$ as a spring: where the flow converges the fluid is
compressed, $\psi$ rises, and $\nabla\psi$ pushes back.  That push-back is
\emph{local} and needs no parameter tuning at all --- it happens in the cell
where the convergence is, within the step, for any $\alpha$.  What $\alpha$
decides is how far the push-back has \emph{travelled} by the end of the
step.  In a truly incompressible fluid the pressure is global and
instantaneous: a convergence anywhere is felt everywhere at once, and that
simultaneity is precisely what the elliptic solve computes.  The AC system
reproduces it only over the time its sound needs to get there.  In one outer
step the news travels a distance $c_{\mathrm{ac}}\Delta t$, and the ratio of
that reach to the scale over which the pressure must act is the
\emph{crossing number}
\begin{equation}\label{eq:crossing}
  \mathcal{C} := \frac{c_{\mathrm{ac}}\,\Delta t}{L} .
\end{equation}
So $\alpha$ sets two things at once, and it is worth keeping them apart: it
is the stiffness of the spring, and, through
$c_{\mathrm{ac}}=\sqrt{\alpha/\rho_0}$, also its \emph{speed}.  A stiffer
spring pushes back harder \emph{and} spreads the news further.  The crossing
number measures the second, and it is the second that decides whether the
constraint acts locally or across the domain.

\emph{The lower bound.}  If $\mathcal{C}<1$ the
pseudo-sound does not cover $L$ within a step, and a pressure field
organised on that scale cannot form at all --- not inaccurately, but not in
principle.  Where $L$ is the domain, $\mathcal{C}\geq1$ is therefore
necessary.  It is not, however, sufficient: the useful value of
$\mathcal{C}$ is larger, it is not the same for every flow, and it is not
derivable from the parameters of the discretisation, because $\alpha$ is
built from the mesh and the time step alone and takes no account of the
speed of the flow it is being asked to constrain.  It has to be calibrated
per configuration, by comparing against a projection reference on the same
mesh, and the experiments of Section~\ref{sec:experiments} use different values for exactly that reason.

\emph{Fixing $\alpha$.}  Given a target crossing number, inverting
\eqref{eq:crossing} with $c_{\mathrm{ac}}=\sqrt{\alpha/(\rho_0|c|)}$ gives
the rule used throughout:
\begin{equation}\label{eq:alpha_from_crossing}
  \alpha \;=\; \rho_0\,|c|\left(\frac{\mathcal{C}\,L}{\Delta t}\right)^{2},
\end{equation}
with $|c|$ the largest cell volume, the largest because it has the slowest sound and so the worst case.  Two features of \eqref{eq:alpha_from_crossing}
are easy to miss and both matter.  First, $\alpha$ depends on the
\emph{mesh}: it includes a cell volume, so a constant $\alpha$ cannot be
correct on two meshes and has to be recomputed whenever the mesh changes.
Second, $\alpha \propto \Delta t^{-2}$.  Refining the time step at fixed
$\alpha$ lowers the crossing number and weakens the constraint, which is the
opposite of what refinement is supposed to do; holding $\mathcal{C}$ fixed
is what makes a sequence of runs a refinement of one scheme.

\emph{The sub-step count.}  The acoustic stage of
Option~S3-b is explicit, so its sub-step is bounded by its own
Courant condition, $c_{\mathrm{ac}}\Delta t_{\mathrm{sub}} \lesssim
\Delta x_{\min}$. 
Combining the bound with \eqref{eq:alpha_from_crossing} gives the count
directly,
\begin{equation}\label{eq:nsub}
  n_{\mathrm{sub}} \;\simeq\; \frac{\mathcal{C}\,L}{\Delta x_{\min}} ,
\end{equation}
and the time step has cancelled.  Refining $\Delta t$ at fixed
$\mathcal{C}$ does not change the number of sub-steps; refining the
\emph{mesh} does, in proportion to $1/\Delta x_{\min}$.  A graded mesh pays
at both ends, since $\alpha$ is set by the largest cell
in~\eqref{eq:alpha_from_crossing} and $n_{\mathrm{sub}}$ by the smallest
in~\eqref{eq:nsub}; that product is the price of the grading, and it is why
the telescoped disc of Section~\ref{sec:exp_bubble} takes so many acoustic
sub-steps per outer step.

\emph{In practice}, then: choose $\mathcal{C}$ for the flow, subject to
$\mathcal{C}\geq1$ where the constraint must act across the domain; obtain
$\alpha$ from \eqref{eq:alpha_from_crossing}; let the sub-step count follow
from the Courant condition.

\subsection{Well-Prepared Initial Conditions}\label{sec:initialisation}

Artificial compressibility turns a constrained system into an unconstrained one with a fast mode, and every such relaxation
inherits a requirement that the constrained system does not have: the
initial data must be compatible with the constraint, not merely with
the prognostic variables.  

Two conditions are involved:  the first is
that the initial velocity be discretely divergence-free,
$\Div\bv_0 = 0$; the second is that the initial pseudo-pressure be the
one that keeps it divergence-free, i.e. $\psi_0$ must be such that
$\Dt(\Div\bv) = 0$ at $t = 0^+$, which by~\eqref{eq:ac_mom} means
\begin{equation}
  \Div\!\left[\nabla\frac{\psi_0}{\rho_0}\right]
  = \Div\!\left[\mathcal{D}(\bv_0) + b_0\,\bk
    - \mathbf{N}_{\mathrm{3D}}(\bv_0) - \nabla B_{\mathrm{hyd},0}\right].
  \label{eq:well_prepared}
\end{equation}

Linearising~\eqref{eq:ac_psi}--\eqref{eq:ac_mom} about
rest gives $\Dt^2\psi = c_{\mathrm{ac}}^2\Delta\psi$, so an
inconsistent $\psi_0$ of magnitude $\Delta\psi$ launches a wave whose
velocity signature is $\Delta\psi/(\rho_0 c_{\mathrm{ac}})$.  For a
buoyancy anomaly of scale $\ell$ and amplitude $b$, the pressure
required by~\eqref{eq:well_prepared} is $\psi_0 \sim \rho_0 b\ell$,
against a physical velocity $\sqrt{b\ell}$, so the spurious velocity
relative to the physical one is
$\sqrt{b\ell}/c_{\mathrm{ac}} = \Fr_{\mathrm{baro}}$.  Initialisation
error therefore enters at $\OO(\Fr_{\mathrm{baro}})$, whereas every
error controlled elsewhere in this paper, the divergence error, the
dispersion error, the energy residual, enters at
$\OO(\Fr_{\mathrm{baro}}^2)$.  An unprepared start is an error one order lower in the small parameter,
and at the values of Section~\ref{sec:ac_system} that is two orders of
magnitude.  The two
failure modes scale differently in the AC parameter: an inconsistent
$\psi_0$ produces an error decaying as $\alpha^{-1/2}$, whereas a
non-zero initial divergence produces a velocity error independent of
$\alpha$.

 Condition~\eqref{eq:well_prepared} can be reached
by running the AC sub-stepping itself in pseudo-time with damping
added, until $\Div\bv$ falls below the tolerance of interest.  This is
Chorin's original steady-state use of artificial compressibility
\citep{Chorin1967} applied once at initialisation, and it inherits all of the properties argued for in
Section~\ref{sec:pressure_calc}.  Where an
elliptic solver does happen to be available, a single projection at
$t=0$ is equally valid and cheaper, and it does not weaken the cost
argument of Section~\ref{sec:pressure_calc}.

\section{Telescoping Meshes: Where the Advantages Converge}\label{sec:telescoping}
Variable-resolution meshes (Figure~\ref{fig:telescoping}) are a defining capability of unstructured-mesh ocean models such as ICON-O and MPAS-Ocean: a single global mesh with for example $\Delta x\sim 50$\,km in the open ocean narrowing continuously to $\Delta x\sim 500$\,m or finer in some target region.  The outer region is firmly hydrostatic, the inner region may need non-hydrostatic dynamics, and the two must coexist in one model.  (Throughout, $\delta$ denotes the aspect ratio of the resolved flow, distinct from the grid ratio $H/\Delta x$.) 

The projection method faces a difficulty here that it does not face on a uniform mesh: 
the physics is local and the solve is not.  Solved globally, the iteration count, the per-iteration cost and the communication are all set by the global problem, while the non-hydrostatic physics is confined to a localized patch.  Restricted to that patch, the solve requires a condition at the hydrostatic--non-hydrostatic interface: $p_{\mathrm{NH}}=0$ is only approximately right, since the transition is gradual, and a sharp Dirichlet condition reflects internal gravity waves back into the refined region.  Sponge layers mitigate this and require tuning; coupling a non-hydrostatic model to a hydrostatic one across the interface requires bespoke treatment of different prognostic variables, time scales and conservation properties on either side. 

Artificial compressibility removes the interface entirely.  One equation set holds everywhere: $\psi$ is prognostic on every cell and the AC equation~\eqref{eq:ac_psi} is solved globally, so there is no interface, no boundary condition at one, and no switching.  In the coarse region $p_{\mathrm{NH}}\sim\rho_0\delta^2U^2\approx0$, so $\psi$ stays near zero, the prognostic $w$ relaxes to its diagnostic value, and the system reduces to the hydrostatic primitive equations without being told to.  The physics self-selects. Pure AC performs this  globally: its sub-stepping advances $\psi$ and evaluates $\Div\bv$ and $\nabla\psi$ on the full three-dimensional mesh at every sub-step, with the sub-step count set by the finest cell anywhere in the domain: the coarse region is sub-stepped far more often than its own dynamics requires, and the overhead does not fall as the refined patch shrinks. AC/DC places the work where the physics is.  The column solve is local and independent per column, $\OO(n_z)$ regardless of horizontal resolution, so it is cheap where $p_V\approx0$ and captures the dominant non-hydrostatic pressure.  

\begin{figure}[H]
  \centering
  \begin{tikzpicture}[
    font=\footnotesize,
    >={Stealth[length=1.8mm]},
    ocean/.style={draw,thick},
    ptitle/.style={align=center,text width=90mm,font=\footnotesize\bfseries},
    pcap/.style={align=center,text width=86mm,font=\scriptsize},
  ]
  \def\PL{28mm}\def\PR{46mm}   
  \begin{scope}[yshift=0mm]
    \fill[black!30] (0,0) rectangle (80mm,-15mm);
    \foreach \x in {0,8,16,24,56,64,72}
      \draw[black!45] ({\x mm},0) -- ({\x mm},-15mm);
    \foreach \i in {0,...,8}
      \draw[black!45] ({28mm+\i*2mm},0) -- ({28mm+\i*2mm},-15mm);
    \draw[black!70,thick] (\PL,0) -- (\PL,-15mm);
    \draw[black!70,thick] (\PR,0) -- (\PR,-15mm);
    \draw[ocean] (0,0) rectangle (80mm,-15mm);
    \fill[black] (37mm,-7.5mm) circle (0.9mm);
    \draw[->,black!80] (37mm,-7.5mm) -- (77mm,-7.5mm);
    \draw[->,black!80] (37mm,-7.5mm) -- (3mm,-7.5mm);
    \node[anchor=south,font=\scriptsize] at (37mm,0.6mm) {refined patch};
    \node[anchor=north,font=\scriptsize] at (40mm,-16.2mm)
      {every iteration sweeps the whole mesh; conditioning set by $\Delta x_{\min}$};
    \node[ptitle,anchor=north] at (40mm,-21mm)
      {(a) projection: cost set by $\Delta x_{\min}$, paid everywhere};
  \end{scope}
  \begin{scope}[yshift=-34mm]
    \foreach \x in {0,8,16,24,56,64,72}
      \draw[black!45] ({\x mm},0) -- ({\x mm},-15mm);
    \foreach \i in {0,...,8}
      \draw[black!45] ({28mm+\i*2mm},0) -- ({28mm+\i*2mm},-15mm);
    \begin{scope}
      \clip (\PL,0) rectangle (\PR,-15mm);
      \fill[black!12] (37mm,-7.5mm) circle (13mm);
      \foreach \r/\op in {9/20,5/30}{\fill[black!\op] (37mm,-7.5mm) circle (\r mm);}
      \foreach \r in {5,9}{\draw[black!55] (37mm,-7.5mm) circle (\r mm);}
      \draw[densely dashed,black!65] (37mm,-7.5mm) circle (13mm);
    \end{scope}
    \draw[black!70,thick] (\PL,0) -- (\PL,-15mm);
    \draw[black!70,thick] (\PR,0) -- (\PR,-15mm);
    \draw[ocean] (0,0) rectangle (80mm,-15mm);
    \fill[black] (37mm,-7.5mm) circle (0.9mm);
    \node[anchor=south,font=\scriptsize] at (37mm,0.6mm) {refined patch};
    \node[anchor=north,font=\scriptsize] at (40mm,-16.2mm)
      {coarse region untouched: $p_{\mathrm{NH}}\approx0$, $\psi$ relaxed to
       $\OO(\delta^4\Fr_{\mathrm{baro}}^2)$};
    \node[ptitle,anchor=north] at (40mm,-21mm)
      {(b) AC/DC: cost confined to the patch, one equation set throughout};
  \end{scope}
  \end{tikzpicture}
  \caption{\it \small A telescoping mesh under the two algorithms:
    $50$\,km in the open ocean, refined to $500$\,m or finer in a
    target region.  For the projection~(a) the
    elliptic problem stays global.  Its
    conditioning follows the \emph{finest} spacing, each iteration sweeps the entire mesh, including the
    coarse region where $p_{\mathrm{NH}}$ is negligible.\\  
    AC/DC~(b) has one equation everywhere, and $\psi$ relaxes to
    $\OO(\delta^4\Fr_{\mathrm{baro}}^2)$ in the hydrostatic region.  The coupling of
    Figure~\ref{fig:acdc_idea}(c) has only the target region to cover,  the
    completion time $L/c_{\mathrm{ac}}$ is set by the target region's size.}
  \label{fig:telescoping}
\end{figure}
The error adapts with the cost.  For resolved flows  the prefactor $(k_x/k_z)^2$ of~\eqref{eq:hybrid_div_error} is taken at the aspect ratio of the structure, which turns that estimate into 
$\delta^4\Fr_{\mathrm{baro}}^2$.  With $\delta\approx7\times10^{-3}$ for the flow a $50$\,km mesh resolves, a horizontal scale of some $600$\,km against a depth of $4$\,km, the divergence error is $\sim10^{-13}$ in the coarse region, effectively machine zero, while pure AC gives $\delta^2\Fr_{\mathrm{baro}}^2\sim10^{-9}$ there.  In the refined patch, where $\delta$ rises to the submesoscale value $0.2$, the corresponding figures are $10^{-11}$--$10^{-9}$, still far below any other error source there.  Neither the cost nor the error is switched or tuned at the transition; both follow the mesh.

\begin{table}[H]\small  \centering  \caption{\small\it Comparison on a telescoping mesh (50\,km to 500\,m, global    ocean).  ``NH patch'' denotes the submesoscale-resolving refined    region.  Iteration and    reduction counts are the heuristic bands of    Section~\ref{sec:pressure_calc}; the divergence errors are the
    scaling estimates of Section~\ref{sec:hybrid_div}.  The entries    that read ``none'' are exact: they follow from the algorithm.}  \label{tab:telescoping}  \renewcommand{\arraystretch}{0.95}  \begin{tabular}{lccc}    \toprule    & \textbf{Projection}    & \textbf{AC (3D)}    & \textbf{AC/DC} \\    \midrule    Solver scope      & global (or regional + BC)      & global      & global (but locally cheap) \\    Work in coarse region      & large (Poisson iter.)      & moderate (3D sub-steps)      & small ($p_V\!\approx\!0$) \\    Work in NH patch      & large      & moderate      & moderate \\    Interface conditions      & needed (if regional)      & none      & none \\    Iterations per step      & $\mathcal{O}(10^{2})$      & ---      & --- \\    Sequential reductions      & $\mathcal{O}(10^{2})$      & none      & none \\    Div.\ error, coarse      & 0      & $\sim 10^{-9}$      & $\sim 10^{-13}$ \\    Div.\ error, NH patch      & 0      & $\sim 10^{-6}$      & $10^{-11}$--$10^{-9}$ \\    \bottomrule  
\end{tabular}\end{table}

\section{Numerical Experiments}\label{sec:experiments}

This section presents numerical experiments that test the
physical correctness of the AC/DC method against a projection-method reference.  

The projection method provides a ground-truth solution: it
solves the same discrete equations using an exact Leray projection
(Poisson solve on the Laplace operator $L = L_H + L_z$) at
every step. This implies that the numerical tests are confined to idealized test,
and do not show large-scale high-resolution ocean experiments, for which this reference does not exist. Furthermore we use benchmarkk solution to compare both methods agains.

\paragraph{AC parameter calibration.}
The AC parameter $\alpha = \rho_0 c_\mathrm{ac}^2$ is the only
non-trivial hyperparameter of AC/DC, and its calibration is
experiment-specific.  The rule is
$c_\mathrm{ac} = \sqrt{\alpha/\rho_0} \gg U$ where $U$ is the
slow buoyancy-driven velocity scale of the problem.  For
buoyancy-driven flows with $U \sim 1$ (Härtel units), $\alpha = 10^3$
is sufficient; for surface-cooling-driven convection at $\mathrm{Ra} =
10^6$ where $U \sim \sqrt{\mathrm{Ra\,Pr}} = 10^3$, the requirement
escalates to $\alpha = 10^8$.  The validated $\alpha = 1$ default of the regression tests is
calibrated for IGW problems with weak buoyancy forcing, where
Proposition~\ref{prop:hybrid_dispersion} already gives
$\OO(10^{-5})$ error.

\begin{center}
\small
\begin{tabular}{lcll}
\toprule
Experiment & $\alpha$ & Velocity scale $U$ & $n_\mathrm{sub}$, coarse mesh \\
\midrule
E1 lock-exchange    & $10^3$    & $\sim \sqrt{g'H/2} = 0.7$           & 2 \\
E2 IGW dispersion   & $1$ (any) & $\sim N\,\mathrm{amp}$ (weak)        & ---  \\
E3 deep convection  & $10^8$    & $\sim \sqrt{\mathrm{Ra\,Pr}} = 10^3$ & 1 (quads), 2 (tri.) \\
E4 geostrophic adj. & $10^3$    & $\sim 0.5$                           & 2 (quads), 3 (tri.) \\
\bottomrule
\end{tabular}
\end{center}
\noindent\emph{AC parameter $\alpha$ used in each experiment.}
Rule of thumb: $\alpha \gg U^2 \rho_0$ where $U$ is the slow
buoyancy- or forcing-driven velocity scale.

\noindent
$n_\mathrm{sub}$ is the number of horizontal-acoustic sub-steps per
outer time step, set automatically as
$n_\mathrm{sub} = \lceil \Delta t\, c_\mathrm{ac} /
\Delta x_\mathrm{min} \rceil$.  At all listed values,
$n_\mathrm{sub}$ is a small integer, so the AC/DC
overhead is bounded.

\emph{Vertical coordinate.}  All experiments use $z$-coordinates
(fixed geopotential levels) for simplicity and transparency: the
internal-wave, lock-exchange, convection, and energy-conservation
experiments have flat bottoms and no free-surface effects that would
require $z^*$.  On prismatic meshes with $z$-coordinates, the
vertical direction is orthogonal to the horizontal, the vertical
Laplacian $L_z = \partial_{zz}$ is a clean tridiagonal operator per
column, and the Hodge star decomposes as
$\hodge_1 = \mathrm{diag}(\hodge_1^H, \hodge_1^V)$ without
cross-terms.  The free surface and the extension to $z^*$ are treated
in Section~\ref{sec:free_surface}, where the conservation statements
are shown to hold verbatim on moving meshes.

\subsection{Experiment 1: Lock-exchange gravity current}\label{sec:exp_lock}

\emph{Purpose.}  Demonstrate that the projection method and AC/DC
both reproduce the classical Boussinesq lock-exchange front speed,
and that the front structure (including the gravity-current head)
agrees with the \citet{Hartel2000} DNS benchmark.

\emph{Setup.}  Härtel non-dimensional formulation, with the dense
fluid initially confined to $x > L_x/2$ in a rectangular vertical
plane $L_x = 18$, $H = 2$ (in Härtel units, after rescaling by the
half-depth), walls on all boundaries.  Reduced gravity $g' = 1$,
Grashof number $\mathrm{Gr} = 1.25 \times 10^4$, Schmidt number $\mathrm{Sc} = 1$.
The coarse mesh is $240 \times 3 \times 80$ quad cells, the fine one
$480 \times 3 \times 160$.  The figures below are from the fine mesh
with $\Delta t = 1\times10^{-3}$, integrated $8000$ steps to $t = 8$;
the step-refinement refinement sequence of the closing paragraph is run on the
coarse mesh to $t = 3$.  The density is
transported by the validated divergence-consistent upwind scheme
(used unchanged in all experiments).  

\emph{Reference solution.}  Benjamin's energy-conserving front
speed for a current occupying half of a channel of depth $H$ is
$U_f = \tfrac{1}{2}\sqrt{g'H}$ \citep{Benjamin1968}; in units of the
buoyancy velocity $\sqrt{g'H/2}$ used here this is
$U_f = 1/\sqrt{2} \approx 0.707$, an inviscid upper bound that a
current against no-slip walls cannot attain.  \citet{Hartel2000}
normalise by the same buoyancy velocity (their equation~(4), with the
channel half-height), so their Froude numbers can be placed on
this axis without conversion.  Their value rises with Grashof number
along separate no-slip and free-slip curves, both approaching
Benjamin's bound from below; read off the no-slip curve of their
figure~4 at a Grashof number comparable to this run, it is
$\mathrm{Fr} \approx 0.59$--$0.63$.  

\paragraph*{A.\ Projection against AC/DC.}
\emph{Results: projection versus AC/DC.}  The two algorithms were run on
identical meshes, initial conditions, transport scheme and time
integrator, differing only in the velocity-update block, at two
resolutions of the same trimmed domain.  The front speed is fitted from
the dense-front position $x_f(t)$, and the interval it is fitted over
has to be stated, because two different quantities are being measured.
Over $t\in[1.0,2.4]$ the front is still passing through its peak speed,
so the fit there measures how the two methods compare on the same
transient and is the right window for that purpose.  But it is not a steady front speed and cannot be set beside Benjamin's or a DNS value.
Over $t\in[2.4,7.5]$ the front travels at constant speed, flat to
within $2\%$, and a straight line fits the trajectory three times
better ($0.20\%$ of the distance travelled against $0.54\%$); that is
the window for comparison with the references, and the one
Figure~\ref{fig:E1_front} uses.  On the transient window:

\begin{figure}[ht]
\begin{center}
\noindent\includegraphics*[width=1.\textwidth]{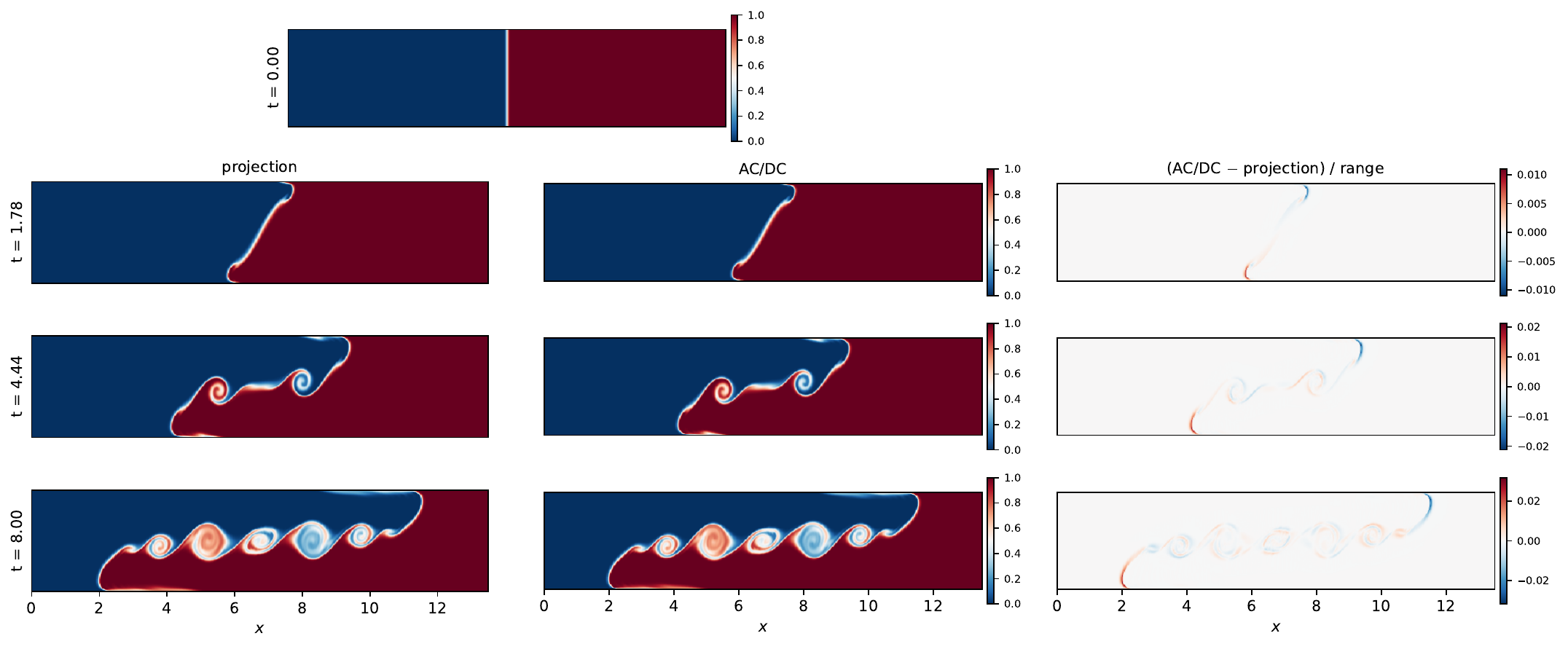}
\noindent\includegraphics*[width=1.\textwidth]{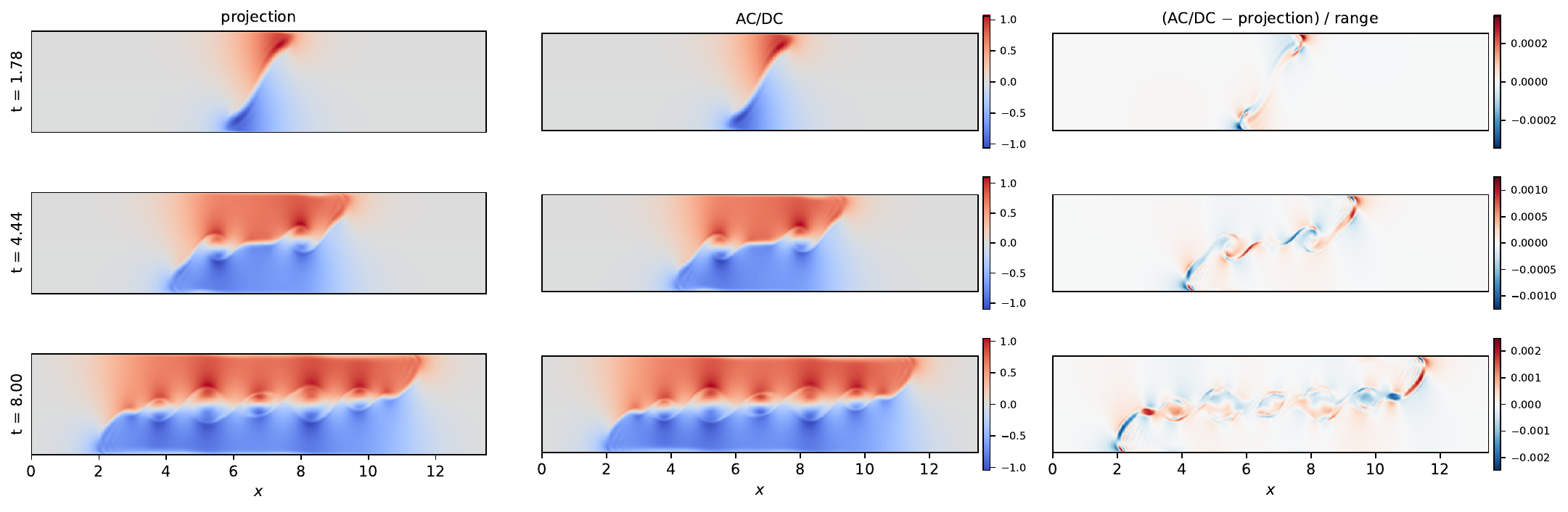}
\caption{\it\small Experiment~\ref{sec:exp_lock}: the density (upper
  block) and the horizontal velocity (lower block) on the two methods.
  \emph{Top strip:} the initial density, which both runs share.
  \emph{Rows:} $t = 1.78$, $4.44$ and $8.00$.  \emph{Columns:}
  projection, AC/DC, and their difference divided by the range of the
  projection field at that instant.  Within a row the two method columns
  share a single colour scale, so the eye is comparing the same numbers;
  the difference column has its own symmetric scale, which therefore
  changes from row to row and has to be read off its own colour bar.
  The difference is normalised by the field's range. The channel is many times longer than it is deep,
  and the panels are drawn with the vertical scale exaggerated by the
  same factor throughout.  The two solutions are indistinguishable by eye at
  every instant, and what difference there is lies on the rolled-up interface.}
  \label{fig:E1_fields}
\end{center}
\end{figure}

\begin{center}
\begin{tabular}{lccc}
\toprule
Mesh & $\Delta t$ & $\mathrm{Fr}_{\mathrm{proj}}$ & $\mathrm{Fr}_{\mathrm{AC/DC}}$ \\
\midrule
$240\times3\times80$  & $2\times10^{-3}$ & 0.697139 & 0.697290 \\
$480\times3\times160$ & $1\times10^{-3}$ & 0.682638 & 0.682760 \\
\bottomrule
\end{tabular}
\end{center}

\begin{figure}[ht]
\begin{center}
\noindent\includegraphics*[width=0.55\textwidth]{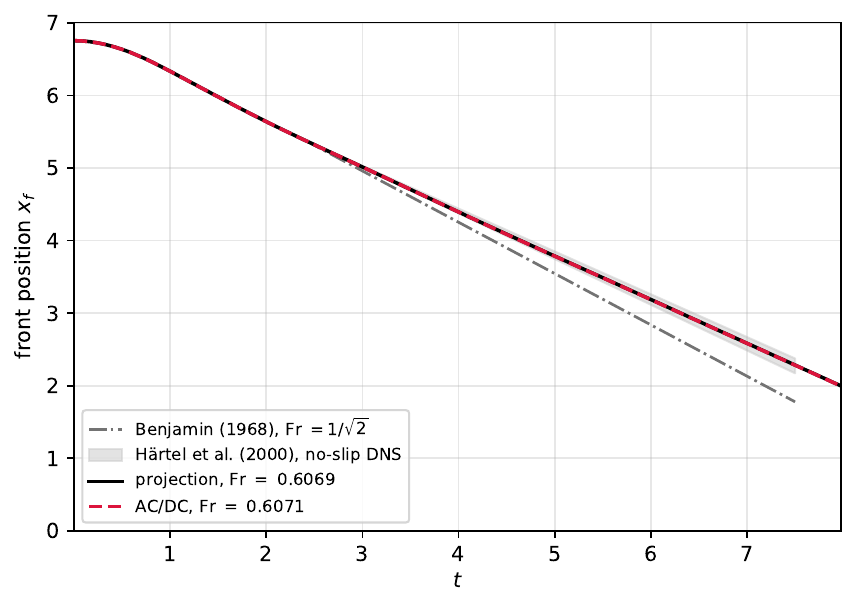}
\caption{\it\small Experiment~\ref{sec:exp_lock}: the front position
  $x_f(t)$ on the two methods, against the two external references.  The
  Froude numbers in the legend are fitted over the quasi-steady interval
  $t \in [2.4, 7.5]$, over which the front speed is flat to within
  $2\%$; only there is a steady front speed what is being measured,
  therefore Benjamin's line is drawn over that interval alone and is
  anchored to the computed front where it begins.  The grey band is the no-slip DNS of \citet{Hartel2000} at a
  comparable Grashof number, $\mathrm{Fr} = 0.59$--$0.63$; it is a
  graphical read of their figure~4 and not a tabulated value, and the
  width of the band reflects that.  The two curves lie on top of one
  another to the width of the line.  Both travel more slowly than
  Benjamin's energy-conserving bound, as a viscous current against
  no-slip walls must, so their trajectories lie \emph{above} his line in
  the figure, and both fall inside the DNS band.}
  \label{fig:E1_front}
\end{center}
\end{figure}
\noindent
The two methods agree on the front Froude number to $1.5\times10^{-4}$
and $1.2\times10^{-4}$ respectively, while changing the mesh moves it by
$1.5\times10^{-2}$: the difference between the two methods is two
orders of magnitude below the difference between two meshes.  On the
quasi-steady window the same fine-mesh pair gives
$\mathrm{Fr} = 0.606930$ against $0.607133$, a difference of
$2.0\times10^{-4}$, and both fall inside the DNS interval quoted above
and below Benjamin's inviscid bound, as Figure~\ref{fig:E1_front}
shows.
Pointwise the fields differ far more, at $t=8$ the density fields
differ by up to $3.2\times10^{-2}$ of the unit density jump, on the
rolled-up interface, and they must, since a sharp front's position is
not a convergent pointwise quantity.  That difference grows through the run, reaching $1.1\times10^{-2}$ at
$t=1.78$ and $2.1\times10^{-2}$ at $t=4.44$, and stays on the interface throughout, which is what
Figure~\ref{fig:E1_fields} shows; in the horizontal velocity it reaches
only $2.5\times10^{-3}$ of that field's range at $t=8$. The carried tracer content is conserved to
$1\times10^{-15}$ relative to projection and to zero for
AC/DC, and the pseudo-mass carried by the tracer agrees with
$\rhoa$ from the flux-form balance to $1.2\times10^{-10}$, so
hypothesis~(H1) of Proposition~\ref{prop:tracer_consistency} holds to
round-off in the configuration these budgets are read in.

A three-times refinement of the time step at fixed mesh
($\Delta t = 2, 1, 0.5 \times10^{-3}$) separates the time-discretisation
error from the spatial one.  Both methods converge at the order of the
integrator, observed orders $0.97$ and $0.90$ in $\mathrm{Fr}$, and
$1.02$ on both in the kinetic energy, and, extrapolated to
$\Delta t\to0$ on the same mesh, they give the same answer:
$\mathrm{Fr}=0.695636$ against $0.695590$, a relative difference of
$6.6\times10^{-5}$, and $6.4\times10^{-6}$ in the kinetic energy.  The
gap between projection and AC/DC itself falls as $\OO(\Delta t)$ (successive ratios
$2.13$ and $2.06$ in the kinetic energy), so what separates them at
finite $\Delta t$ is a splitting error and not the compressibility.  The
elastic reservoir $\tfrac{1}{2\alpha\rho_0}\sum_C|C|\psi_C^2$
of~\eqref{eq:EKL_h} confirms this: across the refinement sequence $\alpha$
quadruples at each halving of $\Delta t$ and the peak elastic energy
falls by factors of $4.0003$ and $4.0001$, i.e.\ exactly as
$\alpha^{-1}$.  In the production run of Figure~\ref{fig:E1_budgets}
the reservoir peaks at $2.02\times10^{-4}$, which is $0.061\%$ of the
peak kinetic energy, and it is identically zero for projection, which has
no such energy reservoir.  The
artificial compressibility is therefore a bounded, accounted reservoir.

\begin{figure}[ht]
\begin{center}
\noindent\includegraphics*[width=0.95\textwidth]{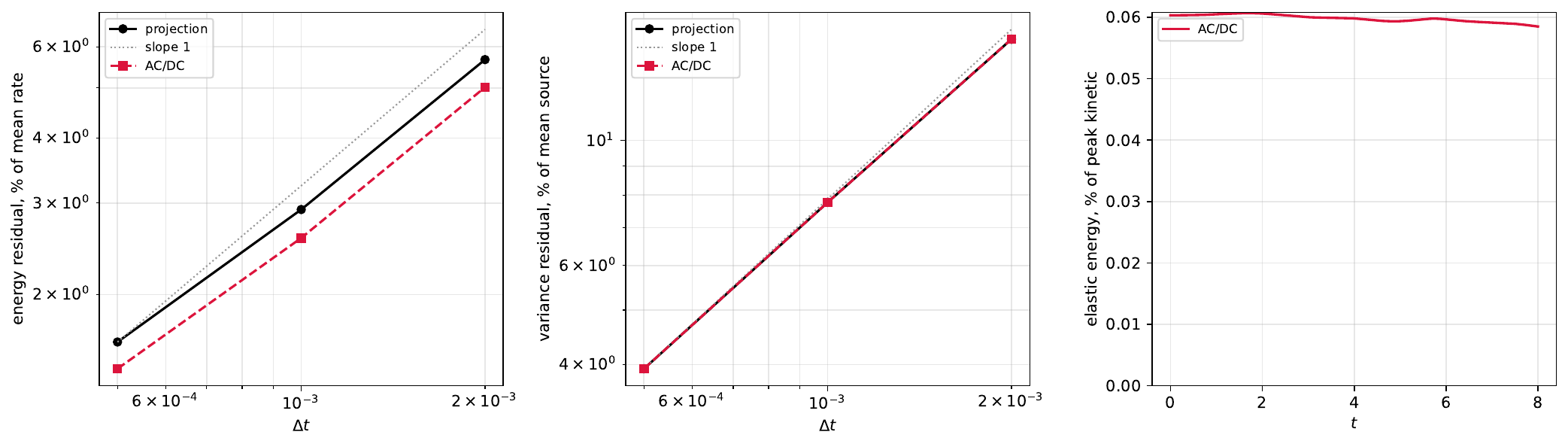}
\caption{\it\small Experiment~\ref{sec:exp_lock}: the two budget
  closures and the elastic reservoir.  \emph{Left and centre:} the
  residual of the discrete total-energy budget~\eqref{eq:EKL_budget_h},
  as a percentage of the mean rate, and of the tracer-variance
  budget~\eqref{eq:variance_budget}, as a percentage of the mean source,
  both against the time step on logarithmic axes with a guide line of
  slope one.  Both runs
  fall at the order of the integrator, and the two lie close enough
  that neither method can be said to close its budget better.
  \emph{Right:} the elastic reservoir
  $\tfrac{1}{2\alpha\rho_0}\sum_C|C|\psi_C^2$ of~\eqref{eq:EKL_h}, as a
  percentage of the peak kinetic energy, over the run. The reservoir stays
  bounded and small for the whole run.}
  \label{fig:E1_budgets}
\end{center}
\end{figure}
\paragraph*{B.\ Energy, variance and the mixing efficiency.}
\emph{Budgets.}  The discrete total energy budget~\eqref{eq:EKL_budget_h}
and the tracer-variance budget~\eqref{eq:variance_budget} were evaluated
at every step.  Both are semi-discrete identities, so the
fully discrete residual is the time-integration error and the test is its
behaviour under $\Delta t$-refinement, not its absolute size.  The energy
residual $|dE_h/dt-\text{rate}|$, as a fraction of the mean rate, runs
$5.67\%,\,2.92\%,\,1.62\%$ for projection and
$5.01\%,\,2.57\%,\,1.44\%$ on AC/DC as $\Delta t$ falls through
$2,\,1,\,0.5\times10^{-3}$, an observed order of $0.90$ on both runs.
The variance residual, as a fraction of the mean source, runs
$15.1\%,\,7.8\%,\,3.9\%$ over the same refinement sequence at an observed order of
$0.97$, and the two methods are indistinguishable in it: they agree to
three decimal places at every step, so the tracer budget does not
distinguish them at all.  On the
production run itself the energy residual is $2.66\%$ for projection
and $2.41\%$ for AC/DC.
Their observed orders are $\approx0.9$ and $1.03$: both budgets close at
the rate of the time integrator, which is
what Propositions~\ref{prop:variance}(i) requires and what excludes a residual
due to the compressibility error.  The numerical
sink $\mathcal{D}_{\mathrm{num}}$ of~\eqref{eq:chi_disc} was
non-negative at every step of every run, as
Proposition~\ref{prop:variance}(iii) requires of a monotone
reconstruction.  Under $\Delta t$-refinement $\mathcal{D}_{\mathrm{num}}$
converges to a  limit, $84\%$ of its value at the
production step, and $\int_\Omega\chi\dd V$ to $97\%$ of its own: the
numerical variance sink is a property of the spatial reconstruction, as
the proposition says, and does not shrink with the step.

\emph{Mixing.}  Corollary~\ref{cor:osborn_cox} makes both diffusivities
computable.  With the mean gradient $\partial_z\overline{\rho}$
taken as a least-squares slope over the interior of the column, at $t=8$:

\begin{center}
\begin{tabular}{lcccc}
\toprule
Mesh & Method & $K_{\mathrm{expl}}$ & $K_{\mathrm{num}}$ & $K_{\mathrm{num}}/\kappa$ \\
\midrule
$240\times3\times80$  & projection & $6.855\times10^{-3}$ & $4.177\times10^{-3}$ & 12.14 \\
$240\times3\times80$  & AC/DC      & $6.827\times10^{-3}$ & $4.150\times10^{-3}$ & 12.06 \\
$480\times3\times160$ & projection & $8.587\times10^{-3}$ & $1.169\times10^{-3}$ &  3.40 \\
$480\times3\times160$ & AC/DC      & $8.589\times10^{-3}$ & $1.168\times10^{-3}$ &  3.40 \\
\bottomrule
\end{tabular}
\end{center}

\noindent
with $\kappa=3.44\times10^{-4}$ the configured diffusivity.  Two remarks
follow.  First, the spurious mixing is the dominant one: the scheme
supplies an effective diffusivity twelve times the prescribed value on
the coarser mesh, falling to $3.4$ times on the finer, this is the spurious-mixing
problem for which the lock exchange is the standard test
\citep{BurchardRennau2008, Ilicak2012}.  Because
$\mathcal{D}_{\mathrm{num}}$ is an explicit face sum, and not  a
residual, that statement is computed.  Second,
and this is the point for AC/DC: the two methods agree on
$K_{\mathrm{expl}}$ to $0.41\%$ and on $K_{\mathrm{num}}$ to $0.65\%$ on
the coarser mesh, and to $0.03\%$ and $0.09\%$ on the finer, with the
convention-free ratio $K_{\mathrm{num}}/K_{\mathrm{expl}}$ agreeing to
$0.25\%$ and $0.12\%$.  The agreement improves with refinement.  Artificial compressibility therefore does not alter the mixing the model
performs..

\emph{The mixing efficiency.}  That same run also fixes
$\Gamma$ itself, the constant of Section~\ref{sec:efficiency} that
parameterisations assume.  The irreversible mixing is
read from the background potential energy, the energy of the adiabatically
sorted state \citep{Winters1995}: unlike the ordinary potential energy,
which rises and falls reversibly as the front sloshes, $E_b$ rises only
when fluid is mixed across density surfaces, so
$\varepsilon_b = dE_b/dt$ and $\Gamma = \varepsilon_b/\varepsilon$.  This experiment is the simplest case for it: unforced, with mixing driven by
shear at the interface, which is the regime the Osborn relation assumes.
On the finer mesh at $t=8$:

\begin{center}
\begin{tabular}{lccc}
\toprule
Method & $\Gamma$ (viscous $\varepsilon$) & $\Gamma$ (total $\varepsilon$)
       & $R_f$ \\
\midrule
projection & $0.1878$ & $0.1830$ & $0.1547$ \\
AC/DC      & $0.1879$ & $0.1836$ & $0.1551$ \\
\bottomrule
\end{tabular}
\end{center}

\noindent
The two methods agree to $0.05\%$ and $0.3\%$ in the two columns.  The value matches the canonical $\Gamma\approx0.2$, computed here from the sorted
state and the dissipation.
Figure~\ref{fig:E1_mixing} shows the ratio over the run: it settles early
and the two curves lie on one another to the width of the line.

\begin{figure}[ht]
\begin{center}
\noindent\includegraphics*[width=0.62\textwidth]{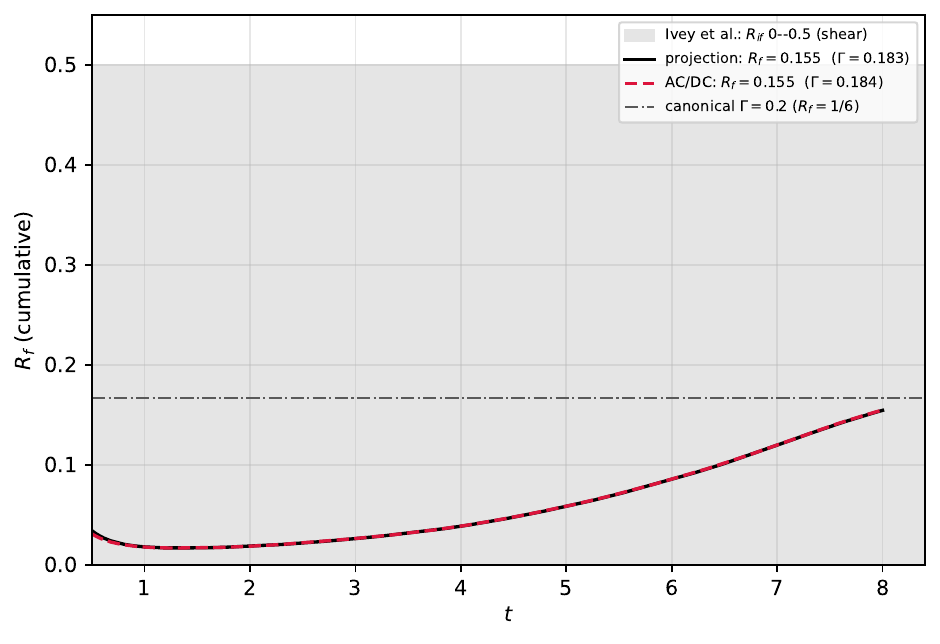}
\caption{\it\small Experiment~\ref{sec:exp_lock}: the flux Richardson
  number $R_f = \varepsilon_b/(\varepsilon_b+\varepsilon)$ on the two
  runs, with the numerical dissipation included in $\varepsilon$.  The ratio is \emph{cumulative} (the mixing integrated from the start of the
run over the dissipation integrated with it).  The instantaneous ratio is not usable here:
  early in the run it divides one near-zero quantity by another and
  swings across the whole axis.  For the same reason the horizontal axis
  begins at $t=0.5$, after the ratio has settled; the integral itself
  still starts at $t=0$ and no value depends on where the axis is cut.
  The dash-dotted line is the canonical $\Gamma = 0.2$, which
  through~\eqref{eq:gamma_Rf} is $R_f = 1/6$, and the shaded band is the
  range \citet{Ivey2021} report for shear-dominated mixing.  The final
  $\Gamma$ of each run is given in the legend, since it is $\Gamma$ that
  the parameterisation literature quotes and $R_f$ that stays a fraction
  in either regime.  Compare Figure~\ref{fig:E3_mixing}, the same
  diagnostic on a convecting flow.}
  \label{fig:E1_mixing}
\end{center}
\end{figure}

Which mixing regime this is can also be measured.
\citet{Ivey2021} separate the mechanisms by the gradient Richardson
number $\mathrm{Ri} = N^2/S^2$, shear-driven mixing dominating below
$0.25$ and convective mixing above $1.0$.  Evaluated over the interfacial cells (those carrying the stratification,
where alone the ratio is meaningful), the median $\mathrm{Ri}$ rises from $0.24$ at $t=3.6$
through $0.30$ at $t=5.3$ to $0.41$ at $t=8$, with the fraction of the
interface below $0.25$ falling from $51\%$ to $28\%$.  The lock exchange
therefore begins at the shear-dominated boundary and matures into the
intermediate regime in which both mechanisms contribute.  The
$R_f = 0.155$ measured above lies inside the range of $0$ to $0.5$ that
\citet{Ivey2021} give for shear-dominated mixing, and near the canonical
$\Gamma \approx 0.2$; the upper limit of that range is an upper bound
attained only in the most efficient shear flows.  The two methods agree on these figures to three decimal
places at every instant.

What is put in the denominator differs between the two columns.  The first uses only the dissipation the
viscous operator performs; the second adds the numerical dissipation,
which the total energy budget returns as its leftover and which amounts to
$2.6\%$ and $2.4\%$ of the viscous part on the two methods.  A mixing
efficiency is a ratio to the energy the turbulence loses, so leaving the
numerical part out inflates it: an efficiency quoted without saying which
$\varepsilon$ is in its denominator is not a determined quantity.  That
the correction is small here is a property of this experiment, where the
prescribed viscosity does most of the work..

\subsection{Experiment 2: Wave dispersion --- internal gravity waves and the free surface}\label{sec:exp_igw}

The purpose of this experiment i to quantify the dispersion error of the projection
scheme and of AC/DC against the analytical
non-hydrostatic dispersion relation.  Direct time-integration of standing
internal-gravity-waves modes is sensitive to the initial-condition projection.  
We therefore use an operator-theoretic measurement: for a fixed buoyancy
mode shape $b_m(x,z)$, compute the discrete IGW frequency via
$\omega_d^2 = N^2 \langle b_m, L_H L^{-1} b_m\rangle$, where $L_H$
is the horizontal restriction of $L$ via the cell-volume mask.
This isolates the spatial discretisation error from the time-stepping
error.

\emph{Experimental Setup.}  Doubly periodic-x, wall-z box, $L_x = L_y = 8$,
$H = 4$.  Uniform stratification $N^2 = 10^{-4}$.  Horizontal
modes $n_x \in \{1, 2, 4, 8\}$ at fixed vertical mode $n_z = 1$. 
The spatial resoluton is $40 \times 40 \times 32$.


\emph{Comparison.}  Two operator-theoretic measurements:

(i) \emph{Projection:} $\omega_d^2 = N^2 \langle b_m, L_H L^{-1}
b_m\rangle$ using the exact discrete Poisson inverse.  This is the
scheme's pure spatial-discretisation error.

(ii) \emph{AC/DC analytical:} the AC/DC dispersion relation from
Proposition~\ref{prop:hybrid_dispersion} is the quartic
$\omega^4/\tilde\alpha - K^2\omega^2 + k_x^2 N^2 = 0$, with $K^2 = k_x^2 + k_z^2$.
We take the slow (IGW) root
$\omega^2_{\mathrm{AC/DC}} = (\tilde\alpha K^2/2)\bigl(1 -
\sqrt{1 - 4 k_x^2 N^2 / (\tilde\alpha K^4)}\bigr)$.  At given
$\tilde\alpha$, this is exact: it has no spatial discretisation error
(it is the continuous AC/DC limit) and isolates the AC closure
error, which is $\OO(1/\tilde\alpha)$ per
Proposition~\ref{prop:hybrid_dispersion}.

\emph{Results.}  On the coarse mesh, $\tilde\alpha = 1$:

\begin{center}
\begin{tabular}{ccccc}
\toprule
$n_x$ & projection (quads) & projection (triangles) & AC/DC analytical & proposition formula \\
& $|\Delta\omega^2|/\omega^2_{\mathrm{NH}}$ & $|\Delta\omega^2|/\omega^2_{\mathrm{NH}}$ & $|\Delta\omega^2|/\omega^2_{\mathrm{NH}}$ & $k_x^2 N^2 / (\tilde\alpha K^4)$ \\
\midrule
1 & $5.5 \times 10^{-2}$ & $2.7 \times 10^{-1}$ & $4.1 \times 10^{-5}$ & $4.1 \times 10^{-5}$ \\
2 & $1.54\times 10^{-2}$ & $2.7 \times 10^{-2}$ & $2.6 \times 10^{-5}$ & $2.6 \times 10^{-5}$ \\
4 & $2.0 \times 10^{-3}$ & $6.5 \times 10^{-3}$ & $9.0 \times 10^{-6}$ & $9.0 \times 10^{-6}$ \\
8 & $9.1 \times 10^{-3}$ & $4.5 \times 10^{-4}$ & $2.5 \times 10^{-6}$ & $2.5 \times 10^{-6}$ \\
\bottomrule
\end{tabular}
\end{center}

\begin{figure}[H]
  \centering
  \includegraphics[width=0.75\textwidth]
    {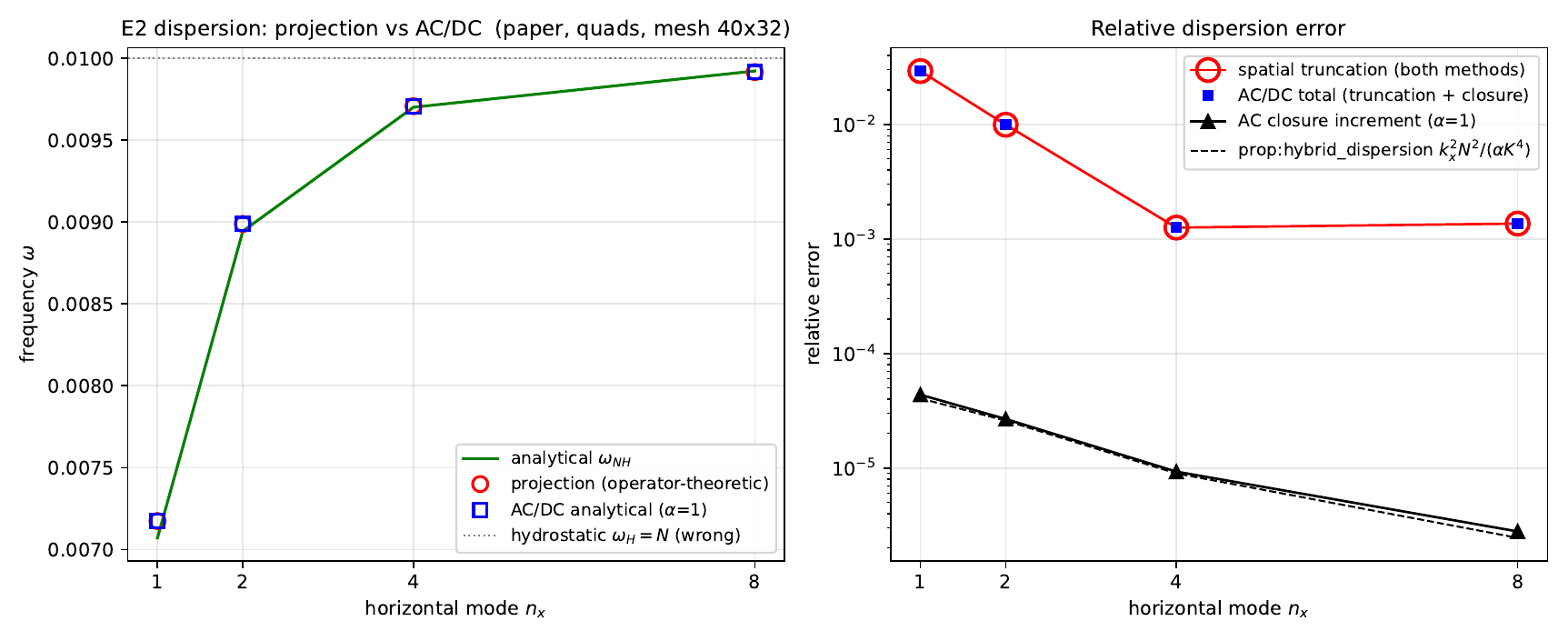}
  \caption{\it\small E2 at resolution ($40\times40\times32$),
    $\tilde\alpha = 1$.  Left: measured frequency against horizontal
    mode number, with the analytical non-hydrostatic curve and the
    hydrostatic $\omega = N$ for reference.  Right: relative error in
    $\omega^2$.  The spatial truncation (open circles) and the AC/DC
    total (filled squares) coincide, because AC/DC's frequency is the
    discrete incompressible frequency plus the closure increment
    (triangles), which lies three orders of magnitude below.  
    }
  \label{fig:e2_dispersion}
\end{figure}

\noindent
The last two columns match to all reported digits across all four
modes: the operator-theoretic AC/DC measurement \emph{exactly}
reproduces the analytical formula
$k_x^2 N^2 / (\tilde\alpha K^4)$ from
Proposition~\ref{prop:hybrid_dispersion}.  This is direct numerical
verification of the proposition; on the log-error plot, the dashed reference line for the formula lies exactly under the AC/DC curve.

The two error sources are independent:

\begin{itemize}
\item Projection error ($5.5 \times 10^{-2}$ at the longest mode,
$\OO(10^{-2}\text{--}10^{-3})$ at $n_x \geq 2$, on the coarse quad
mesh): pure spatial truncation, expected to drop as $h^2$ under
refinement.
\item AC/DC closure error ($\OO(10^{-5})$ at $\tilde\alpha = 1$):
drops as $1/\tilde\alpha$, so at the production value
$\tilde\alpha = gH_{\mathrm{full}} \approx 4\times10^{4}\,
\mathrm{m^2\,s^{-2}}$ of~\eqref{eq:alpha_choice}, four to five orders
above the $\tilde\alpha = 1$ used here, it falls to $\OO(10^{-10})$,
far below the spatial error.
\end{itemize}


\paragraph*{Free-surface standing waves.}\label{sec:exp_seiche}

\emph{Purpose.}  Exercise the free-surface machinery of
Sections~\ref{sec:free_surface} and~\ref{sec:variant_split}, which
none of E1--E4 activates, and test the most basic non-hydrostatic
free-surface phenomenon: the dispersion of surface gravity waves.  The
exact and the hydrostatic answer are both closed-form, so the
comparison against hydrostatic dynamics is supplied analytically,
without a hydrostatic model run.  The long-wave members of the sweep
retain, in addition, the null test for the pseudo-pressure.

\emph{Setup.}  Rectangular basin, walls on the lateral and bottom
boundaries, free surface above.  No stratification ($N^2=0$), no
rotation, no forcing.  One run per mode: the initial state is
$\eta(x,0) = \eta_0\cos(k_n x)$ with $k_n = n\pi/L_x$, released from
rest, with $\eta_0/H \sim 10^{-3}$ to keep the dynamics linear.  The
mode numbers are chosen so that $k_nH$ spans from $\ll 1$ (the seiche
limit) to order one; the vertical resolution is chosen to resolve the
$\cosh\bigl(k_n(z+H)\bigr)$ structure of the deepest mode in the
sweep.  Runs are repeated for $\alpha$ separated by a decade.

\emph{Reference solution.}  The exact eigenfrequencies of standing
surface gravity waves in a walled basin,
\begin{equation}
  \omega_n^2 = g\,k_n\tanh(k_nH),
  \label{eq:surface_dispersion}
\end{equation}
and, on the same axis, the hydrostatic relation
$\omega_n^2 = g\,k_n^2H$, which is~\eqref{eq:surface_dispersion}
linearised in $k_nH$.  The gap between the two curves at
$k_nH \gtrsim 1$ is the non-hydrostatic signal, referenced entirely
against analytics.  In the long-wave members the two curves merge and
the correct pseudo-pressure is zero: any $\psi$ signal above the
level set by $(k_nH)^2$ is spurious there, which keeps the null
character of the test.

\emph{Diagnostics.}  (i) The measured frequency of each mode against
the exact curve~\eqref{eq:surface_dispersion} and against the
hydrostatic curve, across the sweep in $k_nH$.  (ii)
$\|\bar\psi\|$ and $\|\psi'\|$ as time series in the long-wave
members, expected to remain at the $\OO\bigl((k_nH)^2\bigr)$ level
predicted for the $\eta$--$\bar\psi$ coupling in
Section~\ref{sec:variant_split}.  (iii) The sub-step count required
for stability, confirming that the AC step imposes no restriction
beyond the barotropic one the model already has
(Section~\ref{sec:cfl_advantage}).

\emph{Predictions and falsification.}  The measured frequencies
should follow~\eqref{eq:surface_dispersion} within the spatial
truncation error and the compressibility correction controlled by
$\alpha$---the $\alpha$-decade repetition separates the two; this is
the experiment's principal falsifiable content, and an AC/DC
frequency that tracks the hydrostatic curve at $k_nH$ of order one
falsifies the non-hydrostatic content of the free-surface formulation
outright.  In the long-wave members $\|\psi\|$ should remain at the
$\OO\bigl((k_nH)^2\bigr)$ level throughout; a growing $\bar\psi$
signal indicates that surface height and depth-averaged
pseudo-pressure are wrongly coupled, and invalidates
the cost argument of Appendix~\ref{app:cost_detail}, which assumes
the barotropic solve is reused unchanged.

\emph{Results.}  The prediction is met, and
Figure~\ref{fig:e5_dispersion} shows it.  Across six standing modes
spanning $k_nH$ from $0.031$ to $1.005$ --- from the seiche limit to
the intrinsically non-hydrostatic end --- the measured periods follow
the exact relation to within $1.24\%$ on the projection run and
$1.05\%$ on the AC/DC run, the largest error falling at the shortest
wave, where the mesh has sixteen cells per wavelength.  Measured
against the hydrostatic relation the same periods are wrong by up to
$16.15\%$ and $15.93\%$.  The two methods therefore agree with the exact
dispersion and with each other, and depart from the hydrostatic curve
exactly where the theory says they must; the AC/DC run is the closer of
the two at every mode.

\begin{figure}[tbp]
  \centering
  \includegraphics[width=0.62\textwidth]
    {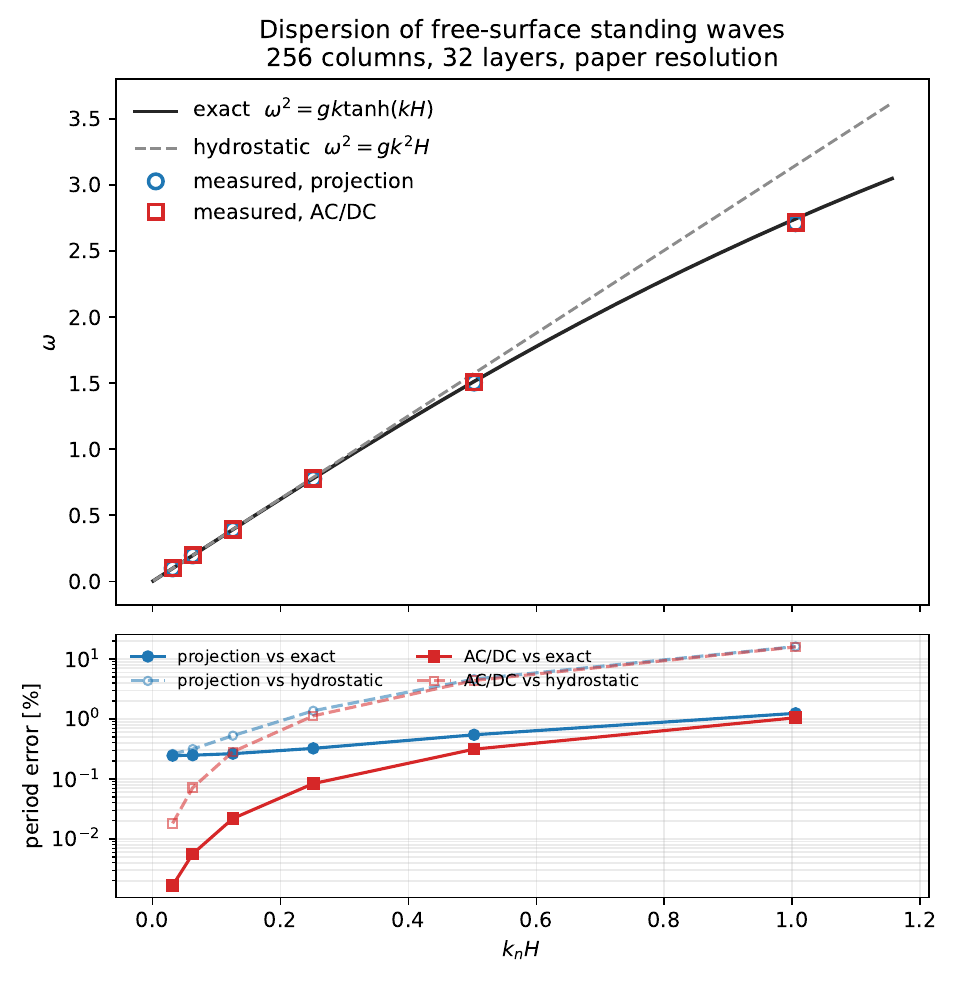}
  \caption{\it\small Experiment~\ref{sec:exp_seiche}: dispersion of
    free-surface standing waves, on a mesh of $256$ columns and $32$
    layers.  \emph{Top:} the measured frequency of each standing mode
    against $k_nH$, with the exact relation
    $\omega^2 = gk\tanh(kH)$ and the hydrostatic $\omega^2 = gk^2H$
    drawn for reference.  Both methods lie on the exact curve and leave
    the hydrostatic one as $k_nH$ approaches unity.  \emph{Bottom:} the
    period error of each run against \emph{both} references, on a
    logarithmic scale; the two solid curves are the error against the
    exact relation and the two pale dashed ones the error against the
    hydrostatic relation, and it is the separation between them that makes the experiment.  Measured against the exact relation the
    error stays at or below $1.24\%$ (projection) and $1.05\%$ (AC/DC)
    across the whole sweep; measured against the hydrostatic relation
    the same numbers grow to $16.15\%$ and $15.93\%$ at
    $k_nH\approx1$.  A method whose frequencies tracked the hydrostatic
    curve there would falsify the non-hydrostatic content of the
    free-surface formulation, and neither does.}
  \label{fig:e5_dispersion}
\end{figure}

\subsection{Experiment 3: Open-ocean deep convection}\label{sec:exp_convection}

\emph{Purpose.}  Demonstrate a genuinely three-dimensional
non-hydrostatic flow: surface-cooling-driven open-ocean deep
convection in the \citet{MarshallSchott1999} paradigm.  Convective
plumes are intrinsically non-hydrostatic (aspect ratio
$H/\ell_\mathrm{plume} \sim 1$); a hydrostatic model produces
spurious chimneys with no entrainment.

\emph{Setup.}  Non-dimensional Boussinesq formulation on the project's
Rayleigh--B\'enard infrastructure.  Domain $L_x = L_y = 2$, $H = 1$
(non-dim, $H$ as the natural length scale), with stable linear
background stratification (temperature increases upward,
$T_\mathrm{base}(z) = z$).  Rayleigh number $\mathrm{Ra} = 10^6$,
Prandtl number $\mathrm{Pr} = 1$.  A circular surface-cooling patch
of radius $r_p = 0.15\,L_x$ is centered at $(L_x/2, L_y/2)$ and
applies a buoyancy-loss source rate of $0.5$ (in $T$-units per
free-fall time) over the topmost cell layer.  Bottom: insulating
(no flux).  Coarse mesh $16 \times 16 \times 10$, fine mesh
$64 \times 64 \times 32$.  Time step $\Delta t = 10^{-5}$; integration to
$t = 0.04$ (about $40$ free-fall times, $1/\sqrt{\mathrm{Ra\,Pr}}$).

\emph{Reference solution.}  The convective velocity scale is
$w_* = (B_0 H)^{1/3}$; in non-dim,
$\sqrt{\mathrm{Ra\,Pr}} \approx 1000$ is the free-fall scale.
Plume reaches the bottom on timescale $t_{\mathrm{conv}} \sim H/w_*$.

The AC parameter requires careful calibration for this experiment.
The buoyancy-driven velocity scale is $\sqrt{\mathrm{Ra\,Pr}} = 10^3$,
so the AC acoustic speed must satisfy $c_\mathrm{ac} =
\sqrt{\alpha/\rho_0} \gg 10^3$, i.e.\ $\alpha \gg 10^6$.  An
$\alpha$-sensitivity test confirms this: at $\alpha \in \{10^5, 10^6, 10^7\}$,
AC/DC develops a runaway acoustic instability and NaNs by
$t \approx 0.012$--$0.015$.  At $\alpha = 10^8$, AC/DC runs to
completion and tracks projection closely.

\paragraph*{A.\ Projection against AC/DC.}
\emph{Results.}  At $t = 0.04$, $n_\mathrm{sub} = 1$
(quads), $n_\mathrm{sub} = 2$ (triangles):

\begin{center}
\begin{tabular}{lcccc}
\toprule
Mesh & Method & plume depth & $\max|w|$ & $\min T'$ \\
\midrule
quads     & projection & 0.8 & 1963 & $-1.00$ \\
quads     & AC/DC ($\alpha = 10^8$) & 0.8 & 2105 & $-1.00$ \\[2pt]
\bottomrule
\end{tabular}
\end{center}

\noindent

Appendix~\ref{app:alpha_law} collects the measurements that fix $\alpha$ for
this experiment: the $1/\alpha$ law, and the diagnostics bearing on the
approximation itself.  This section keeps to the method comparison and the
mixing analysis alone.

\begin{figure}[tbp]
  \centering
  \includegraphics[width=\textwidth]{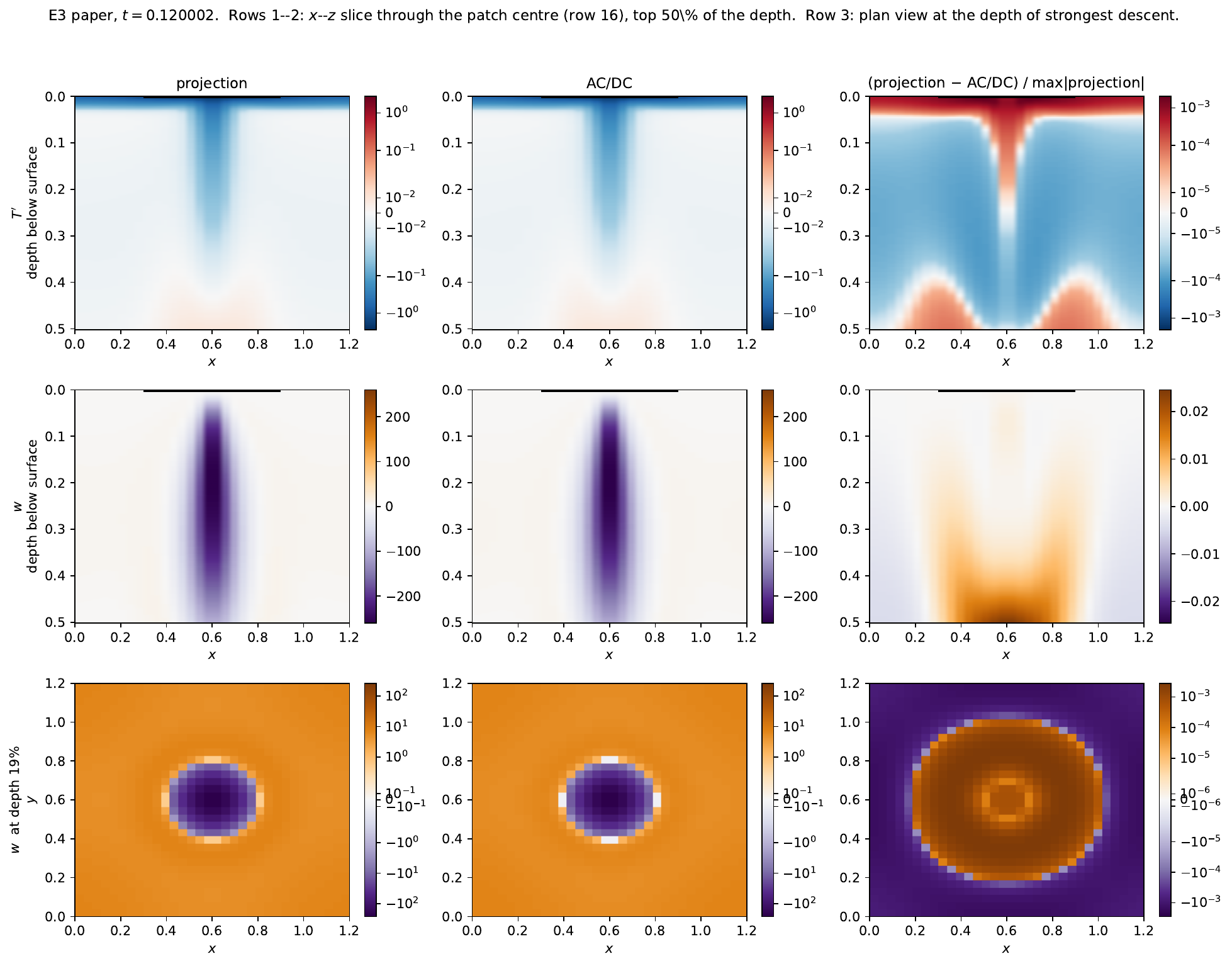}
  \caption{\it\small Experiment~\ref{sec:exp_convection} at $t=0.12$:
    projection (left), AC/DC (centre) and their difference normalised by the
    projection field's own maximum (right), so the third column reads as a
    fraction. Rows one and two are an $x$--$z$ section through the centre of the cooling patch (the buoyancy anomaly $T'$
and the vertical velocity $w$) over the upper half of the depth, with the
    patch marked on the surface.  Row three is a plan view of $w$ at the
    depth of strongest descent, which shows the whole circulation: a compact descending core inside
    a broad annulus of return flow.  $T'$ and the plan view are drawn on
    symmetric-logarithmic scales because they span decades from the surface
    layer to the deep field; $w$ in section is linear.}
  \label{fig:e3_arms}
\end{figure}

\begin{figure}[tbp]
  \centering
  \includegraphics[width=\textwidth]{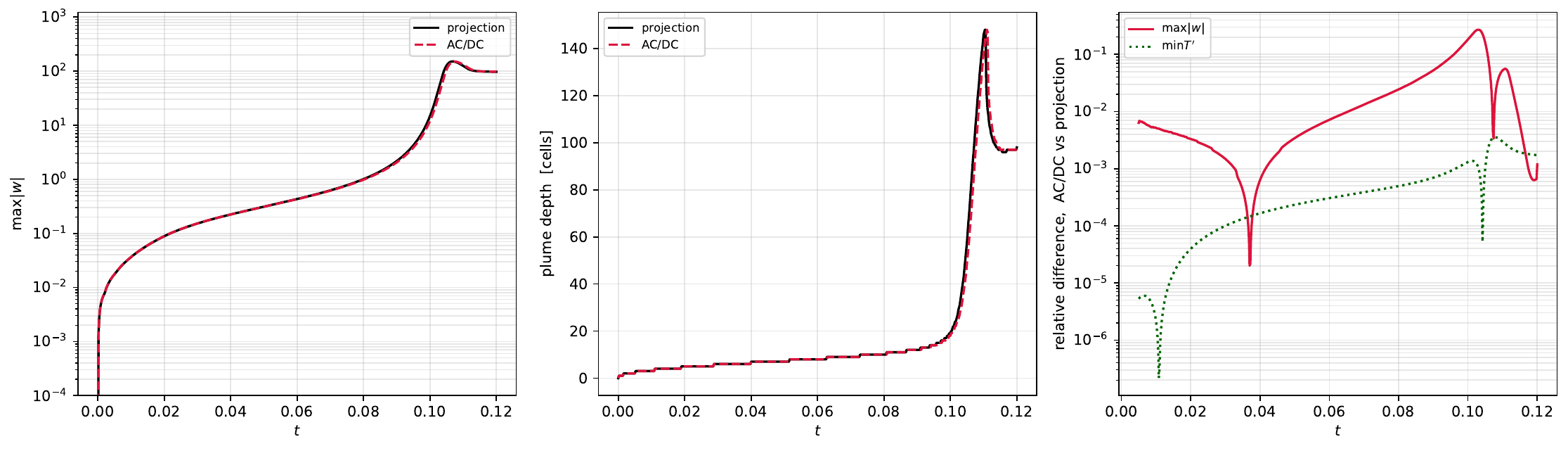}
  \caption{\it\small Experiment~\ref{sec:exp_convection} through the
    transition, on the interval both runs cover.  Left: the maximum vertical
    velocity, logarithmic, growing by an order of magnitude per $0.01$ time
    units at onset.  Centre: the plume depth in cells.  Right: the relative
    difference between the two runs for the two continuous diagnostics.  The
    plume depth is deliberately absent from the third panel: it is quantised
    to whole cells, so its relative difference is exactly zero while the
    runs agree and jumps when they differ by one cell of $267$.  The curves start at $t=0.005$;
    before that both runs have $\max|w| \sim 10^{-13}$.}
  \label{fig:e3_time}
\end{figure}

\emph{Agreement between the methods.}  Before
the onset of convection the runs differ by $0.95\%$ in $\max|w|$ and
$0.024\%$ in $\min T'$; after it saturates, by $0.18\%$ in both.  Between
those, at $t=0.1030$, the relative difference in $\max|w|$ reaches $27\%$.
That excursion is not a divergence of the solutions but a small offset in
when they cross the same trajectory: $\max|w|$ is growing by an order
of magnitude per $0.01$ time units there, so a shift of well under one per
cent of the elapsed time accounts for the whole of it, and $\min T'$, which is not
growing steeply, never differs by more than $0.4\%$ anywhere,
including through the same instant.  The plume depths differ by at most
$33$ cells of $267$ at the steepest moment and by $0.3$ cells once the flow
has saturated.  Figure~\ref{fig:e3_arms} shows the consequence in the
fields: the descending core, the return annulus and the anomaly beneath the
patch are the same structures in both runs, and the difference column is
everywhere a small fraction of what is there.

\begin{figure}[tbp]
  \centering
  \includegraphics[width=\textwidth]{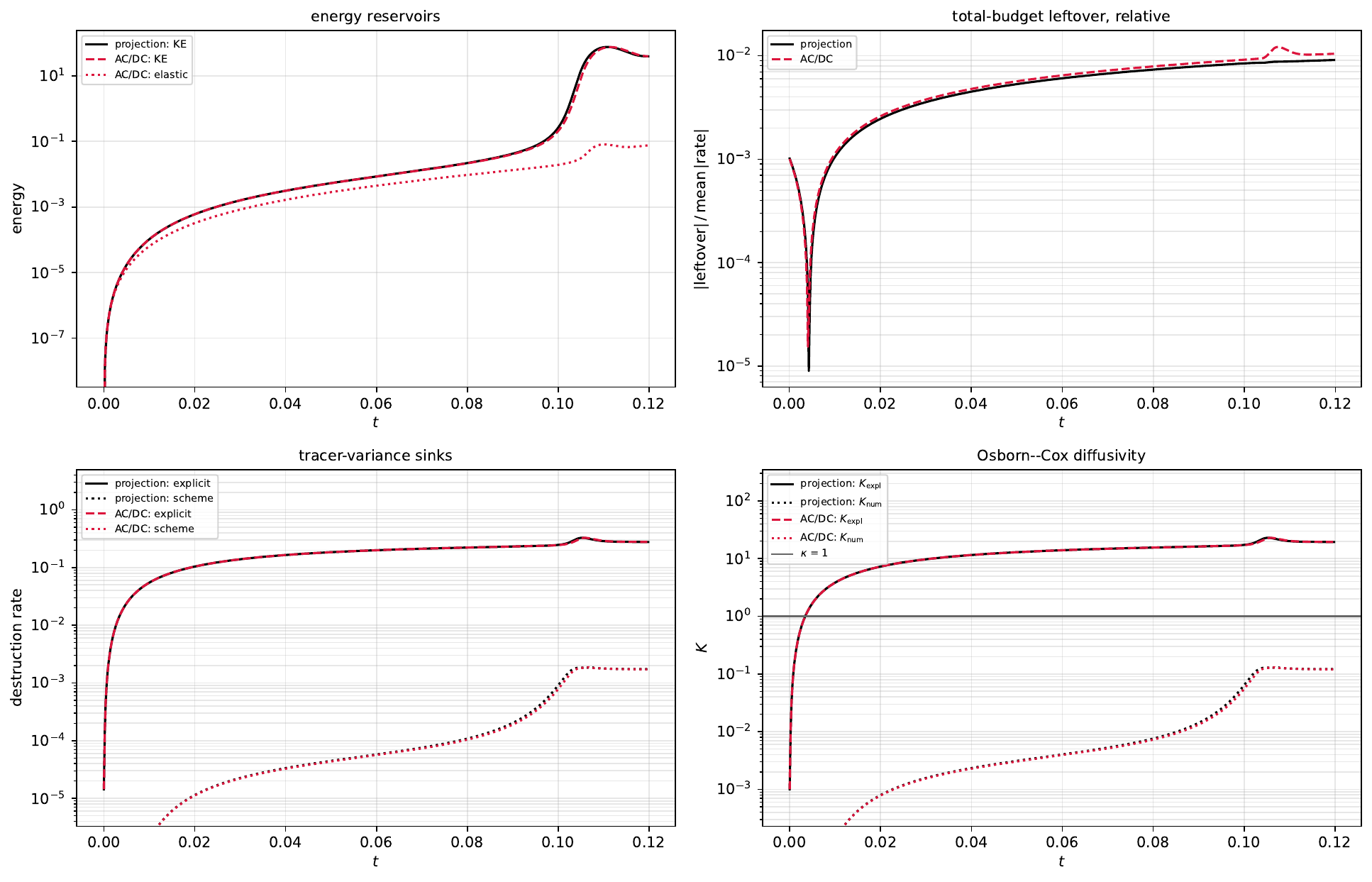}
  \caption{\it\small Energy, dissipation and mixing for the two methods of
    Experiment~\ref{sec:exp_convection}; all four ordinates logarithmic,
    projection in black and AC/DC in red.  \emph{Top left:} the energy reservoirs: kinetic energy for both methods, and the elastic
reservoir that exists only under artificial compressibility.  \emph{Top right:}
    the total energy budget's leftover, that is the rate of change of
    kinetic plus potential plus elastic energy minus the rate the measured
    terms account for, divided by the mean size of those terms; the ordinate
    is therefore the fraction by which the budget fails to close.
    \emph{Bottom left:} the two sinks of tracer variance separated by
    Proposition~\ref{prop:variance} - the prescribed diffusion
    $\tfrac12\int_\Omega\chi\,\dd V$ and the transport scheme's own
    contribution $\mathcal{D}_{\mathrm{num}}$.  \emph{Bottom right:} the
    same two sinks converted to diffusivities by
    Corollary~\ref{cor:osborn_cox}, with the configured $\kappa=1$ marked.}
  \label{fig:e3_budgets}
\end{figure}

\paragraph*{B.\ Energy, variance and the mixing efficiency.}
\emph{Budgets and the elastic reservoir.}  The total
energy budget's leftover stays at $0.61\%$ of the mean rate on the
projection run and $0.65\%$ on the AC/DC run, never exceeding $0.91\%$ and
$1.22\%$ respectively.  Elastic energy peaks at $0.21\%$ of the kinetic energy, which is the sense in
which the added compressibility perturbs the energetics.

\emph{Mixing efficiency in a convective regime.}
Background potential energy from the sorted state
\citep{Winters1995} is available here too, but three terms must be
separated before a ratio means anything, and this experiment shows why.

The first is the surface forcing.  A buoyancy flux changes the sorted
profile directly, without any mixing, so $dE_b/dt$ is not by itself the
irreversible mixing.  Measured on this run, that term is negligible after the first hundredth of a time unit: the cooled water
becomes the densest in the domain and its sorted height falls from $0.997$ to
$4\times10^{-4}$, and adding density to fluid already at the bottom of
the sorted column does not raise $E_b$.  The second is diffusion of the
background stratification, which raises $E_b$ with no turbulence at all:
in the quiescent window before the onset of convection, where the kinetic
energy is $7\times10^{-3}$ and the viscous dissipation is $61$, $E_b$
still rises at $2.1\times10^{4}$.  That rate is measured, not modelled,
and subtracted.  The third is the numerical dissipation, $4.6\%$ of the
viscous part on the projection run and $7.9\%$ on the AC/DC run, added to
the denominator as in Section~\ref{sec:exp_lock}.

What remains is the turbulent mixing, and with it
$R_f = 0.718$ on the projection run.  As in Section~\ref{sec:exp_lock} the ratio is cumulative (the integrated mixing over the integrated
dissipation), but integrated from the onset of convection since before onset there is no turbulence.  The two methods differ by $3.3\%$ on
the viscous-only form; the comparison between the two cannot be carried into
the corrected form, because on an AC/DC run the total-budget leftover
contains the constraint's work as well as the numerical dissipation, and
separating the two requires the pressure-work terms
of~\eqref{eq:EKL_budget_h}, which are not evaluated here.  The corrected
value is therefore quoted for the projection run, and the AC/DC value is
an upper bound.  Through~\eqref{eq:gamma_Rf} that is $\Gamma = 2.55$, an order
of magnitude above the canonical $0.2$, and the reason is the regime: in this experiment the buoyancy
conversion is a source of kinetic energy, $+1.8\times10^{4}$
against a viscous dissipation of $1.8\times10^{4}$, whereas the Osborn
relation is derived for shear-driven turbulence in which buoyancy is a
sink.  Where the turbulence is fed by buoyancy, $\Gamma$ is not bounded
by unity and $R_f$ is the form that remains a fraction; the two are the
same measurement through~\eqref{eq:gamma_Rf}.  The value is also the
one the observational and numerical literature reports for this
mechanism.  \citet{Ivey2021}, separating the mechanisms by Richardson
number, conclude that when $\mathrm{Ri} > 1.0$ ``the mixing is dominated
by convection\ldots and $R_{if}$ is in the range from $0$ to $0.75$'',
against a corresponding range of $0$ to $0.5$ for shear-dominated
mixing.  The $0.718$ computed here, in a flow driven entirely by surface
buoyancy loss, lies within the convective range and close to its upper limit.  Taken with
Section~\ref{sec:exp_lock}, where the same diagnostic on a shear-driven
flow returns $\Gamma = 0.183$, the pair shows the efficiency computed in
two regimes and departing from the assumed constant in one of them.
Figure~\ref{fig:E3_mixing} shows it over the convecting phase.

\begin{figure}[ht]
\begin{center}
\noindent\includegraphics*[width=0.62\textwidth]{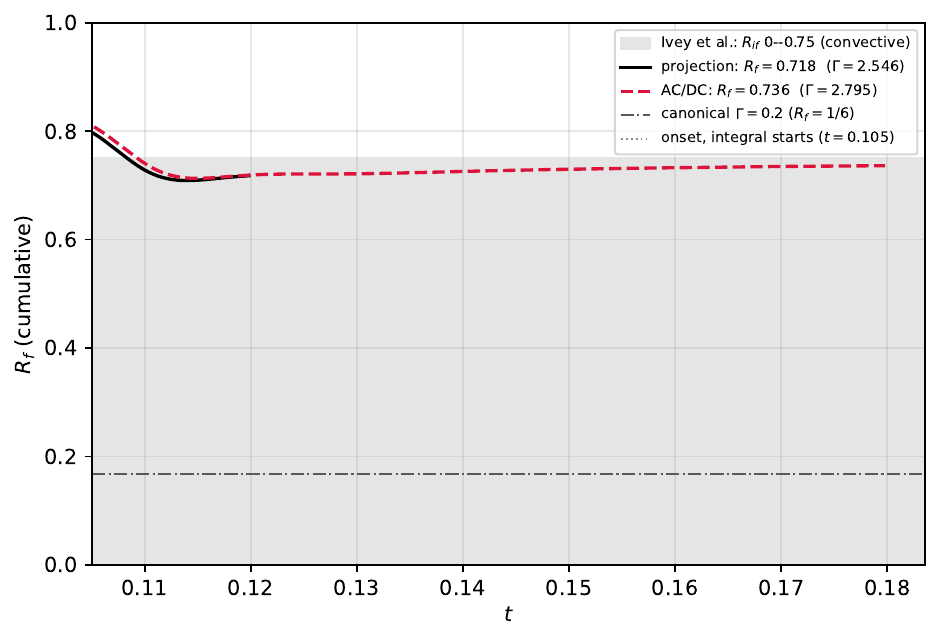}
\caption{\it\small Experiment~\ref{sec:exp_convection}: the flux
  Richardson number $R_f = \varepsilon_b/(\varepsilon_b+\varepsilon)$ on
  the two methods, drawn exactly as in Figure~\ref{fig:E1_mixing} so the two
  regimes can be set beside one another, and with the same three corrections applied: the forcing's direct effect on the
sorted state, the laminar diffusion of the background stratification, and
the numerical dissipation in the denominator.  The dotted vertical line
  marks the onset of convection, where the integral begins: before it
  there is no turbulence for an efficiency to be the efficiency of, and
  integrating from $t=0$ would average the convecting phase with a
  quiescent one.  The shaded band is the range \citet{Ivey2021} report
  for convection-dominated mixing, $R_{if}$ from $0$ to $0.75$, four
  times the canonical value marked by the dash-dotted line.  The measured $R_f$ lies inside that band and near its top, as a vigorously
  convecting flow should, and the corresponding $\Gamma$ in the legend
  is far above unity; $R_f$ is therefore the form drawn here.  The AC/DC run's corrected value is an upper bound: its
  budget leftover includes the constraint's work as well as the numerical
  dissipation, and the two are not separated here.}
  \label{fig:E3_mixing}
\end{center}
\end{figure}

\emph{Mixing.}  Corollary~\ref{cor:osborn_cox} converts each variance sink
into the eddy diffusivity an observer would infer from it, and the two methods
give the same answer.  Once convection is established, $K_{\mathrm{expl}}$
settles at $20.2$ under projection and $20.4$ under AC/DC, twenty times the configured $\kappa$.  That factor is the
Cox number: the convection
sharpens the tracer gradients about twentyfold in mean square over the
background stratification.  The transport scheme's own contribution,
$K_{\mathrm{num}}$, settles at $0.124$ and $0.123$: measured against
$K_{\mathrm{expl}}$, the scheme adds $0.61\%$
and $0.61\%$ of the mixing.  It is positive at
every step of both runs, as Proposition~\ref{prop:variance}(iii) requires of
a monotone reconstruction.  The number a model usually cannot quote is here
not estimated by comparing resolutions or fitting a decay: it is read off a
budget that closes, and $\mathcal{D}_{\mathrm{num}}$ is an explicit face sum.


\subsection{Experiment 4: Rising Bubble on a Telescoping Grid}\label{sec:exp_bubble}

 Experiments
\ref{sec:exp_lock}--\ref{sec:exp_convection} establish that AC/DC and
projection produce the same physics, which is a statement about the
treatment of the constraint and not about meshes; every one of them runs on
a mesh of uniform spacing.  The argument of Section~\ref{sec:telescoping} is
that the advantage of removing the global solve is largest exactly where a uniform mesh cannot go: on a grid whose cells vary by a
large factor,
where an elliptic solve is worst conditioned and a local relaxation is
indifferent.  This experiment runs a rising thermal on such a grid and asks
whether the two methods still agree.

\emph{The mesh.}  A log--polar disc: an equilateral lattice in
$(\ln r,\theta)$ mapped by $z\mapsto e^{z}$, with a hexagonal core and the
hexagon blended to a circle over the outer rings.  Cells coarsen
monotonically outward by a factor of eight from an inner radius $r=0.2$ to
the wall at $r=1.6$, so the base triangles span a factor of $79$ in area,
and the construction produces no obtuse triangle at any grading.  Extruded over $48$ layers the mesh has $161{,}280$ cells.
Figure~\ref{fig:lognormalmesh} shows it: the hexagonal core, the blend to a
circular rim, and the monotone coarsening between them.  The fine region is
deliberately not finer than a uniform mesh would be there: the smallest cell
sets the time step and, for AC/DC, the acoustic sub-step count, so the
saving is entirely in the far field.  Both runs use the same mesh, and
the cell centres of the two archives agree to the last bit.

\begin{figure}[H]
  \centering
  \includegraphics[width=0.5\textwidth]
    {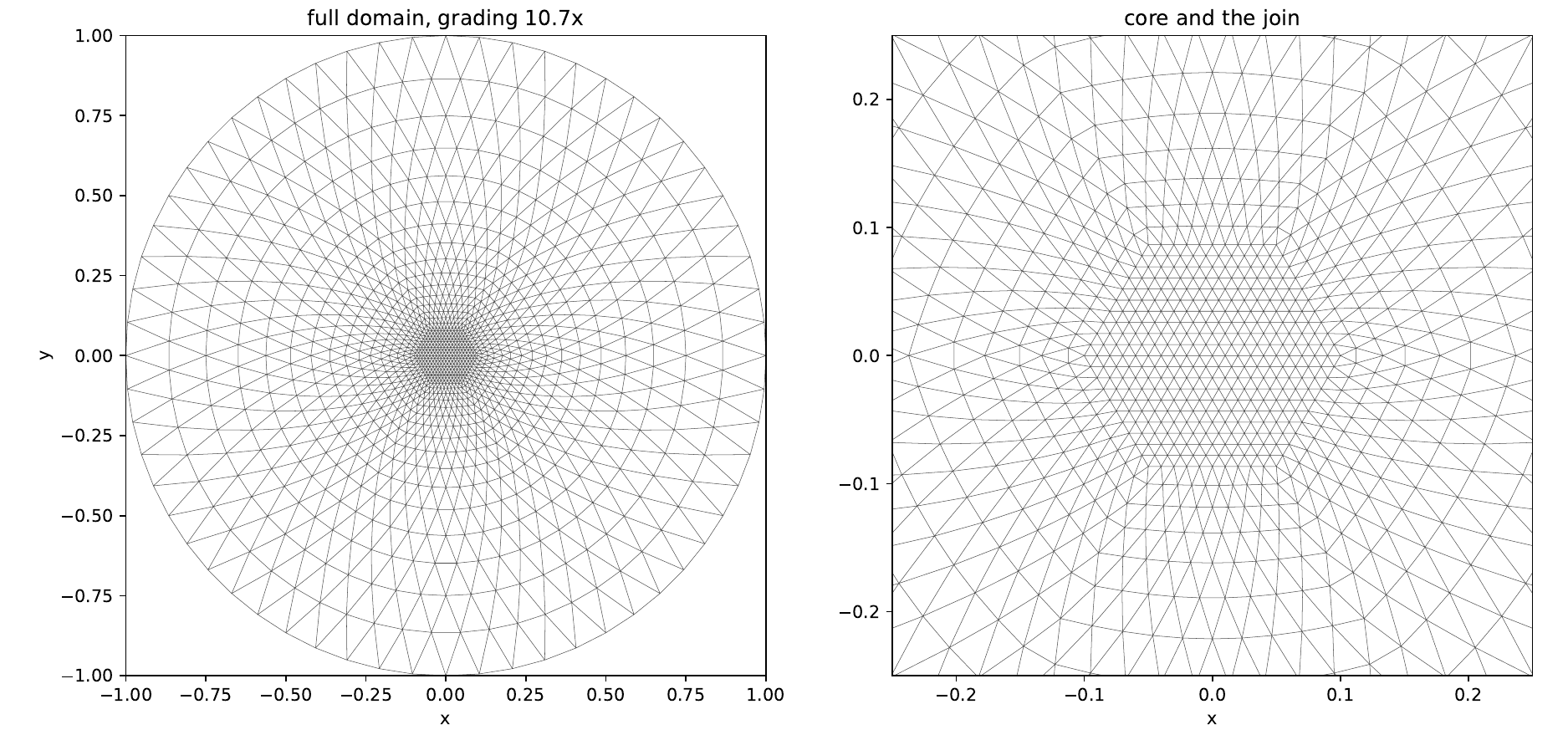}
  \caption{\it\small The telescoping log--polar disc used in
    Experiment~\ref{sec:exp_bubble}, drawn at reduced resolution for
    legibility.  Cells coarsen by a factor of eight from the fine core
    outward, a factor of $79$ in area; the construction is conformal in
    $(\ln r,\theta)$ and produces no obtuse triangle, so the circumcentric
    requirement of Section~\ref{sec:disc_time_impl} is met everywhere.  The
    hexagonal core and the blend to a circular rim are visible in the
    innermost and outermost rings.}
  \label{fig:lognormalmesh}
\end{figure}
\noindent

\emph{The configuration.}  A warm Gaussian thermal of amplitude $2$ and
radius $0.1$ is released at $z=0.15$ into a two-layer stratification,
near-neutral below an inversion at mid-depth and strongly stable above it,
and rises for two advective times.  The stratification is $N^2 = 0.25$
below the inversion and $3.0$ above it, with $g_{\mathrm{eff}} = 1$ and
$\nu = \kappa = 2.5\times10^{-4}$ on both runs.  Both integrate $400$ steps
of $\Delta t = 5\times10^{-3}$ to $t = 2$ with SSPRK3 for the slow terms
and a flux-corrected transport scheme for the tracer.  The AC/DC run runs
Option~S3-b, the explicit St\"ormer--Verlet form of~\eqref{eq:D_leapfrog}:
nothing is assembled and nothing is inverted at any point of the step.  It
uses $\tilde\alpha = 2500$ with $\theta = \tfrac12$ in the acoustic
sub-step, which puts the acoustic Courant number at $0.16$ and requires
$2290$ sub-steps per step, that count being set by the smallest cell of the
grading as Section~\ref{sec:telescoping} describes.

\begin{figure}[tbp]
  \centering
  \includegraphics[width=\textwidth]
    {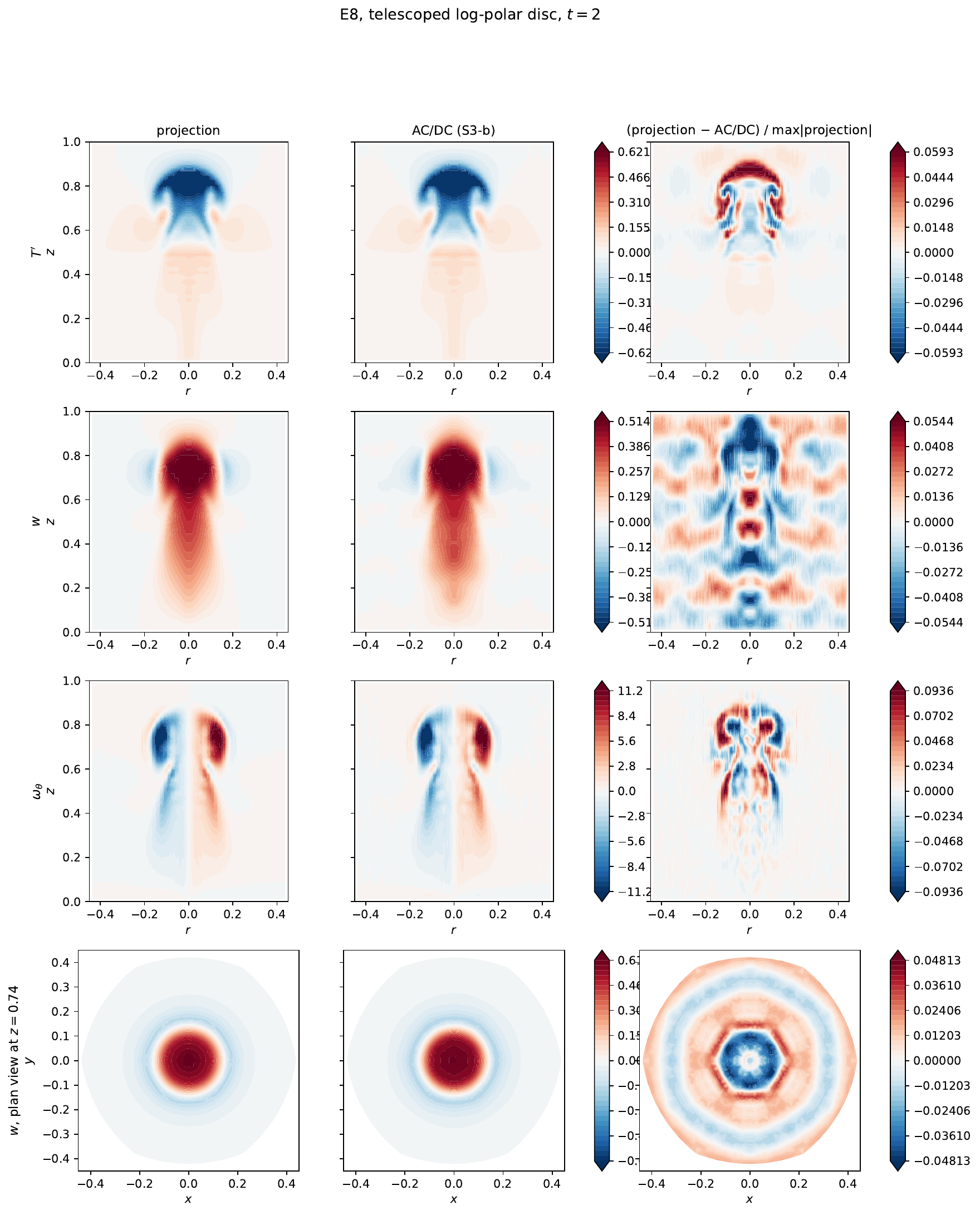}
  \caption{\it\small Experiment~\ref{sec:exp_bubble} at $t=2$: projection
    (left), AC/DC under Option~S3-b (centre), and their difference
    normalised by the projection field's own maximum (right).  Rows: the
    buoyancy anomaly $T'$; the vertical velocity $w$; the azimuthal
    vorticity $\omega_\theta$, whose two opposite-signed lobes are the
    rolled-up vortex ring; and $w$ in plan view at the depth of strongest
    ascent, which shows the whole circulation.  The disc is axisymmetric, so each
    ring is averaged and the meridional rows are mirrored about the axis.  Maximum
    differences: $14.6\%$ in $T'$, $8.5\%$ in $w$, $19.0\%$ in
    $\omega_\theta$ and $5.8\%$ in the plan view.  Note that the difference
    fields are aligned with the mesh (the plan view shows the hexagonal core and
the concentric rings) and not with the flow.}
  \label{fig:e8_arms}
\end{figure}

\paragraph*{A.\ Projection against AC/DC.}
\emph{Comparison of the two methods.}  The
thermal reaches the same height in both runs, $0.9063$, with the two rise
trajectories separated by at most one cell of $48$ over the whole ascent.
Measured over the whole field at $t=2$, and weighted by cell volume because
the cells span a factor of $79$ in area, the buoyancy anomalies differ by
$9.3\%$ in a relative $L^2$ norm and correlate at $0.9962$; their peak
values agree to $1.0\%$ at $t=1.30$ and $0.5\%$ at $t=1.60$, and to $4.6\%$
at the end.  In the same norm the vertical velocity differs by $19.3\%$ and
the azimuthal vorticity by $13.9\%$.

  The \emph{peak} of $\omega_\theta$ differs
by $19.0\%$ while the field differs by $13.9\%$ and its $99$th percentile by
$0.3\%$: that peak is a single cell in the core of the vortex ring, and away
from it the two methods agree closely, so the extremum overstates the difference.  The peak of $w$ does the reverse.  It agrees to $1.3\%$ while the field
differs by $19.3\%$ and its $99$th percentile by $9.4\%$, so there the extremum understates it. 

Figure~\ref{fig:e8_arms} makes the character of the residual visible.  The
difference between the methods follows the symmetry of the \emph{mesh} and
not of the flow: the plan view shows the hexagonal core and the ring
structure of the grading, and the meridional rows show banding at the layer
spacing.  The two methods agree about the thermal itself: head, stem, return flow
and vortex ring alike.


\begin{figure}[tbp]
  \centering
  \includegraphics[width=0.8\textwidth]
    {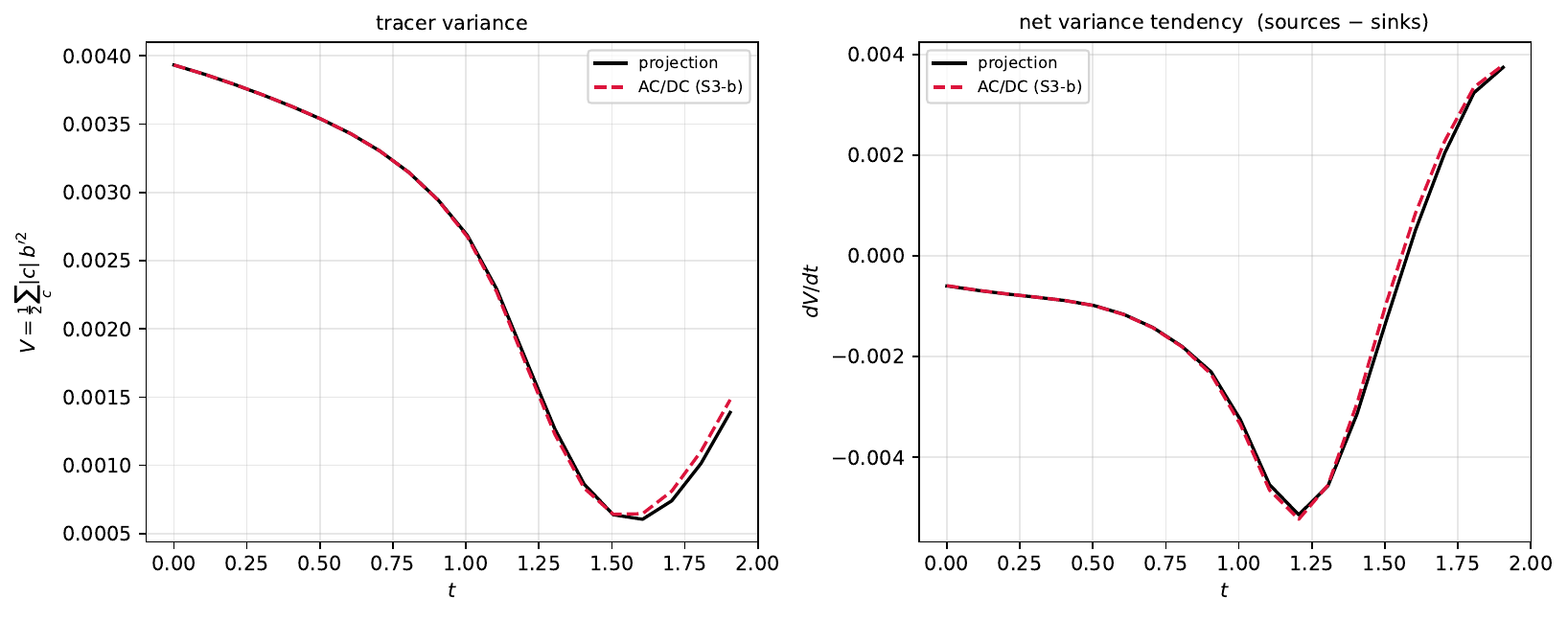}
  \caption{\it\small Tracer variance $V=\frac{1}{2}\sum_c|c|\,b'^2$ (left)
    and its net tendency (right) for the two methods of
    Experiment~\ref{sec:exp_bubble}.  The volume weighting is not optional
    here: the disc's cells span a factor of $79$ in area.  The variance
    falls to a minimum near $t\approx1.55$ and then rises, because the tracer has a source, the conversion $-w N^2$ against the
background stratification, as the thermal overshoots into the stable
    upper layer.  For that reason $dV/dt$ is a net tendency; the
    separation into sinks requires the discrete budget of
    Proposition~\ref{prop:variance} and is reported separately.  Both runs
    change sign at the same time and with the same shape.}
  \label{fig:e8_variance}
\end{figure}

\begin{figure}[tbp]
  \centering
  \includegraphics[width=\textwidth]{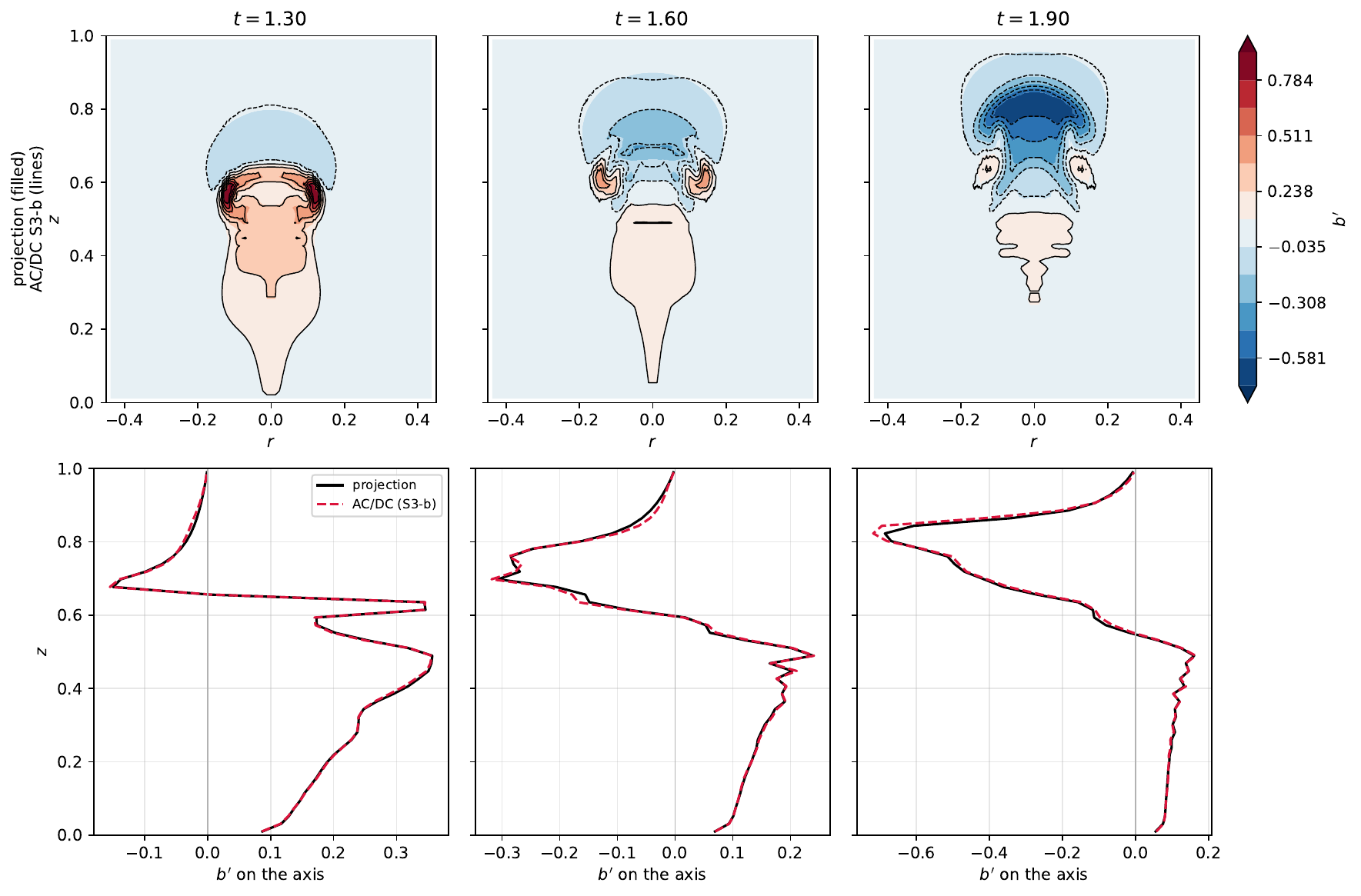}
  \caption{\it\small Experiment~\ref{sec:exp_bubble} in the display
    conventions of the rising-bubble literature.  \emph{Top:} the buoyancy
    anomaly of the projection run as filled bands, with the AC/DC contours
    at the same levels drawn over them, so that a line lying on a
    band edge means the two solutions agree at that level and any offset is
    the disagreement.  \emph{Bottom:} $b'$ along the axis, both runs.  Levels
    are uniformly spaced with the lowest below zero, following their
    Figs.~3 and~5; the values themselves are set by the field, since theirs
    are degrees Celsius for a $0.5\,^\circ$C bubble in a neutral atmosphere
    and $b'$ here is non-dimensional.  The axis profiles differ by
    $1.7\%$, $7.4\%$ and $12.7\%$ of the profile's own range at the three
    instants.}
  \label{fig:e8_contours}
\end{figure}

\paragraph*{B.\ Energy, variance and the mixing efficiency.}
\emph{Mixing.}  Over the run the projection run loses $64.7\%$ of the
initial tracer variance and the AC/DC run $62.4\%$: the artificial
compressibility mixes $3.7\%$ less, not more.  Equivalently, the
variance the AC/DC run still holds at the last sample exceeds the
projection run's by a factor $1.067$.  Figure~\ref{fig:e8_variance} shows why the comparison must be made
on the variance itself: this tracer has a
source, the stratification conversion, and the net tendency changes sign
near $t\approx1.55$ when the thermal overshoots.

\begin{figure}[tbp]
  \centering
  \includegraphics[width=\textwidth]
    {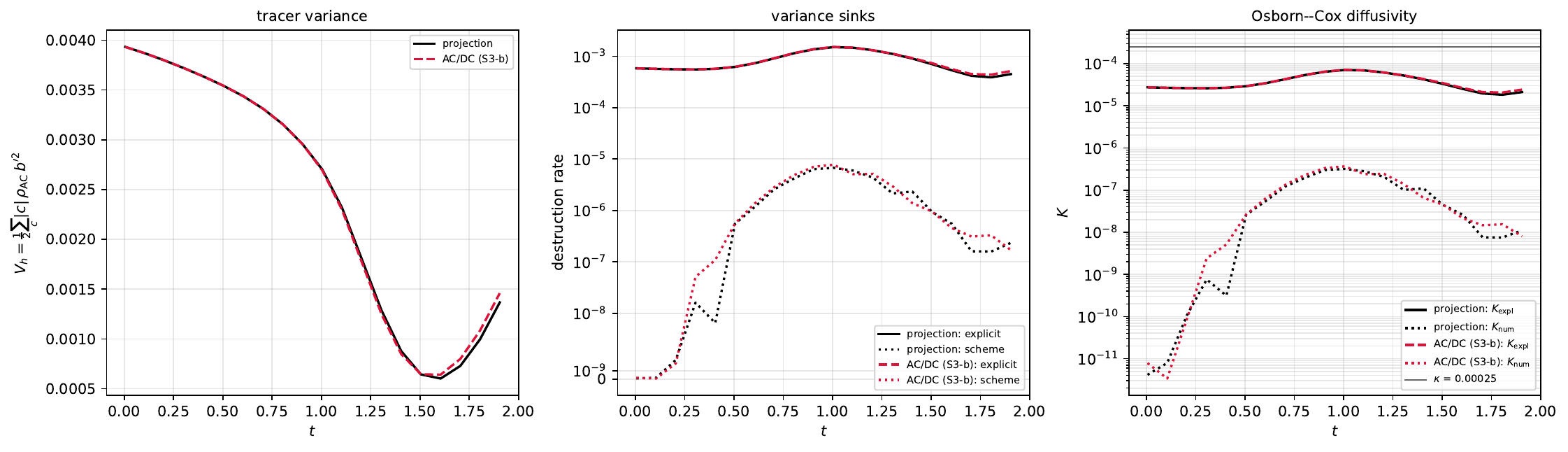}
  \caption{\it\small The tracer-variance budget of
    Experiment~\ref{sec:exp_bubble}; projection in black, AC/DC under
    Option~S3-b in red.  \emph{Left:} the weighted variance $V_h$.
    \emph{Centre:} the two sinks separated by Proposition~\ref{prop:variance}: the prescribed
diffusion
    $\tfrac12\int_\Omega\chi\,\dd V$ (solid) and the transport scheme's own
    contribution $\mathcal{D}_{\mathrm{num}}$ (dotted), the latter smaller by
    two to three orders of magnitude throughout.    \emph{Right:} the same two sinks as
    diffusivities by Corollary~\ref{cor:osborn_cox}, with the configured
    $\kappa$ marked.  The two $K_{\mathrm{expl}}$ curves lie on one another
    for the whole run.}
  \label{fig:e8_budgets}
\end{figure}

The discrete budget of Proposition~\ref{prop:variance} separates that total
into its two parts, and Corollary~\ref{cor:osborn_cox} turns each into the
eddy diffusivity an observer would infer from it.  Both are drawn in
Figure~\ref{fig:e8_budgets}.  Both runs agree:
$K_{\mathrm{expl}}$ ends at $2.1\times10^{-5}$ under projection and
$2.4\times10^{-5}$ under AC/DC, and the two curves are indistinguishable
over the run.  The transport scheme's own contribution is
$K_{\mathrm{num}}/K_{\mathrm{expl}} = 0.052\%$ and $0.033\%$ at the end,
never exceeding $0.52\%$ on either run at any time, and
$\mathcal{D}_{\mathrm{num}} \geq 0$ at every sample of both, as
Proposition~\ref{prop:variance}(iii) requires of a monotone reconstruction.

One number here reads oppositely to
Experiment~\ref{sec:exp_convection}.  
The Cox number $K_{\mathrm{expl}}/\kappa$ is about $0.09$ on
this experiment against about $20$ on the convection one: a compact thermal
in a domain filled by a background stratification has a mean-square
perturbation gradient smaller than the background gradient it is
measured against, whereas open-ocean convection fills its domain and
sharpens the gradients twentyfold.  

\emph{Summary.}  On a grid whose cells span a factor
of $79$ in area, AC/DC without any global operation reproduces the
projection solution: the same rise, the same head and vortex ring, the same
return flow, the same mixing to within a few per cent, with the residual
confined to the scale of individual cells.  Section~\ref{sec:telescoping} claims that the acoustic
stage of Option~S3-b assembles nothing and reduces nothing globally, and is
therefore indifferent to a grading that ill-conditions an elliptic solve.
That claim is tested here.

\section{Integration into General Ocean Circulation Model}\label{sec:ogcm}

\subsection{Hydrostatic--Non-Hydrostatic Decomposition}\label{sec:split}\label{sec:splitting}
 Operational ocean models are built around the primitive equations
with their hydrostatic approximation; a non-hydrostatic capability is
particularly valuable if it can be added to that existing core.
This section expresses the non-hydrostatic equations in the form of the primitive
equations with a non-hydrostatic correction. One key element
is the vector-invariant nonlinearity:
the contraction $(\bom+2\bOm)\times\bv$ is a vector, it decomposes
into a horizontal vector and a vertical scalar, the horizontal
part splits further into a piece that is already present in the
hydrostatic model and a genuinely non-hydrostatic correction.  Once
this decomposition is in hand, the non-hydrostatic algorithm is
written as hydrostatic core plus a correction, and
AC/DC is the device that closes the correction.  The single
approximation introduced anywhere in this section is the AC closure
of the non-hydrostatic pressure (introduced in
Section~\ref{sec:ac}).

The incompressibility constraint brings with it the total pressure as  Lagrange multiplier.  
The hydrostatic pressure is not a second multiplier: it is determined by
the buoyancy field alone.  Writing
$p = p_{\mathrm{hyd}} + p_{\mathrm{NH}}$ is therefore a shift of the
single multiplier by a known field, so that a large part of the answer
is supplied in advance and a smaller residual is left to be determined.
AC/DC splits the residual into
a vertical part obtained exactly by a tridiagonal column solve and a
horizontal part carried by the AC relaxation.  The three stages form one hierarchy.  Vertical integration removes the
buoyancy-driven part of the multiplier, a direct column solve removes
its vertical block, and only the horizontal remainder is left to an
iteration, which is what makes that iteration local.

\paragraph*{Hydrostatic and Non-Hydrostatic Ocean Equations}\label{sec:pe_subsystem}
We denote the 3D velocity field by $\bv=(\bv_H, w)$. Then
the non-hydrostatic system in full three-dimensional
vector-invariant form and the hydrostatic equations in vector-invariant form are given by
\begin{equation}\label{all-ocean_eq}
\begin{split}
  \Dt\bv + \mathbf{N}_{\mathrm{3D}} 
  + \nabla B_{\mathrm{3D}}
    &= \mathcal{D}(\bv) + b\,\bk,\\[2ex]
 \Div\bv &= 0, 
\end{split}
\qquad\qquad
\begin{split}
  \Dt\bv_H + \mathbf{N}_H^{\mathrm{hyd}}
    + \nabla_H B_H &= \mathcal{D}_H(\bv_H),\\  
  \partial_z\!\left(\frac{p_{\mathrm{hyd}}}{\rho_0}\right) &= b,\\
  \nabla_H\cdot\bv_H + \partial_z w &= 0,
\end{split}
\end{equation}
In the non-hydrostatic equations, $\mathbf{N}_{\mathrm{3D}}:=\bom_a\times\bv$ denotes the full three-dimensional rotational term, including the Coriolis force via the absolute vorticity $\bom_a:=\bom+\mathbf{f}$, with relative vorticity  $\bom:=\nabla\times\bv$, and $\mathbf{f}:=(f_H, f_V)$, the 3D Coriolis parameter with horizontal and vertical component; the Bernoulli function $B_{\mathrm{3D}}:=p/\rho_0+\tfrac12|\bv|^2 = p/\rho_0+\tfrac12(|\bv_H|^2 + w^2)$ contains the full pressure and the 3D kinetic energy; the
vertical velocity is prognostic, the buoyancy $b$ is from the equation of state.  
The operator $\mathcal{D}$ denotes a dissipation operator.

In contrast, the hydrostatic primitive equations are obtained by retaining the
nonlinearity in the form
$\mathbf{N}_H^{\mathrm{hyd}}
:= \bom_a^z(\bk\times\bv_H) + w\,\partial_z\bv_H$ and the Bernoulli function
\begin{equation}\label{eq:BH_def}
  B_H := \frac{p_{\mathrm{hyd}}+p_{\mathrm{sfc}}}{\rho_0}
       + \tfrac{1}{2}|\bv_H|^2
\end{equation}
in the horizontal momentum equation
balance, replacing the vertical momentum equation by hydrostatic
balance, and diagnosing $w$ from incompressibility.  The surface
pressure is carried inside $B_H$ because the hydrostatic core computes
it, being the barotropic pressure gradient of the host model, so that
the only pressure absent from the core is $p_{\mathrm{NH}}$.  The sole
difference between $B_H$ and the $B_{\mathrm{hyd}}$
of~\eqref{eq:B_hyd} is then the kinetic-energy contribution,
$\tfrac12|\bv_H|^2$ against $\tfrac12|\bv|^2$.

\paragraph{Pressure.}
We recall the decomposition~\eqref{eq:pressure_split} of the total
dynamic pressure $p$ into the hydrostatic pressure $p_{\mathrm{hyd}}$,
the surface pressure $p_{\mathrm{sfc}}$ and the non-hydrostatic
pressure $p_{\mathrm{NH}}$,
\begin{equation*}
  p = p_{\mathrm{hyd}} + p_{\mathrm{sfc}} + p_{\mathrm{NH}}.
\end{equation*}
This decomposition is standard;  new is the closure
of its third term, and the pressure split of
Section~\ref{sec:acdc_closure} that makes that closure local.
The hydrostatic pressure is determined via the hydrostatic relation
$\partial_z(p_{\mathrm{hyd}}/\rho_0) = b$, integrated
\emph{downward from the free surface},
\begin{equation} \label{eq:pressure_hydro}
p_{\mathrm{hyd}}(x,y,z,t)=\rho_0\int_{\eta}^z b(x,y,z',t)\, dz'
 = -\rho_0\int_{z}^{\eta} b(x,y,z',t)\, dz',
\end{equation}
so that $\partial_z(p_{\mathrm{hyd}}/\rho_0)=b$ holds while
$p_{\mathrm{hyd}}$ vanishes at the free surface.  The surface pressure
is the weight of the displaced column,
\begin{equation}\label{eq:psfc_def}
  p_{\mathrm{sfc}} = \rho_0\,g\,\eta ,
\end{equation}
advanced by the barotropic free-surface solve of the host model
(Section~\ref{sec:disc_time_impl}); it is the pressure appearing as
$\nabla_H(g\eta)$ in the fast momentum
equation~\eqref{eq:substep_mom}.  Note that the total dynamic pressure
does not vanish at $z=\eta$: it equals $p_{\mathrm{sfc}}$ there,
and the consequence for the energy budget is the barotropic reservoir
(cf. Proposition~\ref{prop:free_surface}(iii)).

The challenge is the calculation of the non-hydrostatic pressure $p_{\mathrm{NH}}$. In the pressure-projection method the divergence operator is applied to the velocity equation; by incompressibility the time derivative vanishes and the resulting equation becomes a constraint to be satisfied at each time instant. Taking the divergence of the momentum equation in~\eqref{all-ocean_eq}, using $\Div\Dt\bv=\Dt\Div\bv=0$, and using the hydrostatic relation $\partial_z(p_{\mathrm{hyd}}/\rho_0)=b$ to cancel the vertical part of the hydrostatic pressure gradient against the buoyancy $b\,\bk$, the elliptic equation for $p_{\mathrm{NH}}$ follows
\begin{equation} \label{eq:pressure_laplace}
  \Delta\!\left(\frac{p_{\mathrm{NH}}}{\rho_0}\right)=\Div\Big(-\,\mathbf{N}_{\mathrm{3D}}
  -\nabla_3\!\left(\tfrac12|\bv|^2\right)
  -\nabla_H\!\left(\frac{p_{\mathrm{hyd}}}{\rho_0}\right)
  +\mathcal{D}(\bv) \Big).
\end{equation}
The surface-pressure contribution $-\nabla_H(p_{\mathrm{sfc}}/\rho_0)$ is
carried by the barotropic free-surface solve and is therefore not part
of this projection.  \citet{Marshall1997b}, working under a rigid lid, obtain the
surface pressure first and describe that solution as
provisional: their non-hydrostatic pressure does not vanish at
the upper surface, so solving for it subsequently refines the surface
value, and the two problems are coupled.  Under a free surface the
coupling is carried differently.  The surface elevation is a prognostic
variable advanced by the barotropic sub-stepping, not a constraint to
be inverted, so the provisional-and-refine structure of the rigid-lid
formulation need not appear.  
The purpose of the non-hydrostatic pressure is to adjust $p_{\mathrm{NH}}$ such that the ocean's volume is conserved, i.e., $\Div\bv =0$. 

\paragraph{Vertical velocity.}
In the hydrostatic approximation the vertical velocity changes its nature and becomes diagnostic, it is diagnosed 
from the continuity equation in \eqref{all-ocean_eq} by
vertical integration,
\begin{equation}
  w = w_{\mathrm{diag}}
    = -\int_{-H}^{z}\nabla_H\cdot\bv_H\dd z',
  \label{eq:w_diag}
\end{equation}
so that $w$ in the hydrostatic system is determined by $\bv_H$.

\paragraph{Nonlinear term.} The nonlinear term contains the momentum advection
$(\bv\cdot\nabla_3)\bv=\bom\times\bv + \nabla_3 \frac12|\bv|^2$. The hydrostatic approximation retains only those  terms that belong to the horizontal velocity equation: $(\bv\cdot\nabla_3)\bv|_H=\bom\times\bv|_H 
+ \nabla_H \frac12|\bv_H|^2$; the kinetic energy term contains only the prognostic horizontal velocity, and no longer the now diagnostic vertical velocity. The Coriolis force is treated under the so-called \emph{traditional approximation}, which retains only the vertical component of the Coriolis vector, the component belonging to the horizontal velocity equations.

\paragraph*{The non-hydrostatic system as hydrostatic core plus
  correction.}
The terms a classical hydrostatic ocean model must add to become non-hydrostatic are the following.
 Hydrostatic and non-hydrostatic  differ
in their nonlinearity ($\mathbf{N}_{\mathrm{3D}}$ versus
$\mathbf{N}_H^{\mathrm{hyd}}$), their Bernoulli function
($\tfrac12|\bv|^2$ versus $\tfrac12|\bv_H|^2$), the status
of $w$, and in the pressure term. This implies that the non-hydrostatic system~\eqref{all-ocean_eq} is
  equivalent to the hydrostatic core plus correction terms
  \begin{align}
    \Dt\bv_H + \mathbf{N}_H^{\mathrm{hyd}}
      + \nabla_H B_H
      &= \mathcal{D}_H(\bv_H) + \mathbf{C}_H,
      \label{eq:nh_horizontal}
  \end{align}
  with $\mathbf{N}_H^{\mathrm{hyd}}
  = \bom_a^z(\bk\times\bv_H) + w\,\partial_z\bv_H$ and $B_H$
  of~\eqref{eq:BH_def}, together with
  the prognostic vertical equation
  \begin{align}
    \Dt w &= \mathcal{D}_V(w) + \mathbf{C}_V,
      \label{eq:nh_vertical}
  \end{align}
  driven by the horizontal and vertical non-hydrostatic corrections
  \begin{align}
    \mathbf{C}_H
      &= -\,\tilde f\,(\hat{\mathbf{y}}\times\bv)_H
        - \nabla_H\!\left(\frac{p_{\mathrm{NH}}}{\rho_0}\right),
      \label{eq:CH}\\
    \mathbf{C}_V
      &= -\,\bigl(\bom_a^H\times\bv\bigr)\cdot\bk
        - \partial_z\bigl(\tfrac12 |\bv|^2\bigr)
        - \partial_z\!\left(\frac{p_{\mathrm{NH}}}{\rho_0}\right).
      \label{eq:CV}
  \end{align}
with $\tilde f$ the horizontal Coriolis component,
$\mathbf{f}=(0,\tilde f, f)$, so that
$\tilde f(\hat{\mathbf{y}}\times\bv)_H = \tilde f\,w\,\hat{\mathbf{x}}$.

\subsection{Free Surface}\label{sec:surface}\label{sec:free_surface}
In this subsection we include the movement of the ocean's free surface.

Let the upper boundary be $z=\eta(x,y,t)$, with the kinematic condition,
\begin{equation}\label{eq:kinematic_bc}
  w = \Dt\eta + \bv_H\cdot\nabla_H\eta
  \qquad\text{at } z=\eta ,
\end{equation}
and the dynamic condition that the total pressure equal the
atmospheric pressure, here taken as zero.  Under the
decomposition~\eqref{eq:pressure_split} this condition is carried by
the three parts separately: the hydrostatic pressure vanishes at
$z=\eta$ by the choice of integration limit
in~\eqref{eq:pressure_hydro}, the surface pressure
$p_{\mathrm{sfc}}=\rho_0 g\eta$ of~\eqref{eq:psfc_def} is  the
weight of the displaced column, and
the non-hydrostatic pressure must vanish.  With the aggregation
$p_{\mathrm{NH}} = p_V+\psi$ and the column solve already imposing
$p_V=0$ at the free surface (Section~\ref{sec:acdc_closure}), the
condition enforces
\begin{equation}\label{eq:psi_bc}
  \psi = 0 \qquad\text{at } z=\eta .
\end{equation}
The total dynamic pressure
$p = p_{\mathrm{hyd}}+p_{\mathrm{sfc}}+p_{\mathrm{NH}}$ does not vanish
at the surface $z=\eta$, but equals $\rho_0 g\eta$ there.  
The surface pressure work is the exchange
with the barotropic surface reservoir, and
Proposition~\ref{prop:free_surface}(iii) shows it is absorbed exactly.
This is the boundary condition the residual pseudo-pressure must carry
for the aggregated pressure to satisfy the boundary conditions.  

\paragraph*{The unsplit form under a free surface.}\label{app:unsplit}
Option~S3-a makes the fast loop two-dimensional by splitting $\psi$;
Option~S3-b sub-steps the full three-dimensional pseudo-pressure
directly, which is well defined and serves as the verification
reference, the split scheme reproduces it to round-off.  Coupled to
the free surface of a split-explicit host, whose depth-averaged fast
system is that of Section~\ref{sec:algorithm}, the fast system here
reads
\begin{align}
  \Dt\eta + \nabla_H\cdot(H\bar{\bv}_H) &= 0,
    \label{eq:unsplit_eta}\\
  \frac{1}{\alpha}\Dt\psi + \nabla_H\cdot\bv_H
    + \frac{1}{\alpha}\nabla_H\cdot(\psi\bv_H)
    &= -\Bigl[\partial_z w
       + \tfrac{1}{\alpha}\partial_z(\psi w)\Bigr]_{\mathrm{frozen}},
    \label{eq:unsplit_psi}\\
  \Dt\bv_H + \nabla_H\!\left(g\eta + \frac{\psi}{\rho_0}\right) &= 0,
    \label{eq:unsplit_mom}
\end{align}
with $\psi$ carried on the full three-dimensional mesh.  Here
\eqref{eq:unsplit_psi} is the flux-form AC relation~\eqref{eq:ac_psi}
with $\Div\bv = \nabla_H\cdot\bv_H + \partial_z w$ and
$\Div(\psi\bv) = \nabla_H\cdot(\psi\bv_H) + \partial_z(\psi w)$; with
$w$ frozen the two vertical contributions form a known source, constant
across the sub-step loop, so the only divergence components evolving
under the AC relaxation are the horizontal ones.

The cost is what rules it out of production: sub-stepping $\psi$ on the
full three-dimensional mesh promotes each barotropic update from two
dimensions to three, adding of order $n_z$ times the hydrostatic
barotropic cost.  This is the same three-dimensionality that
Remark~\ref{rem:D_tension} prices from the other side, through the
sub-step count.

\begin{proposition}[Identities on a moving material domain]
  \label{prop:free_surface}
  Let $\Omega(t)$ have upper boundary $z=\eta$
  satisfying~\eqref{eq:kinematic_bc}, all other boundaries fixed with
  $\bv\cdot\mathbf{n}=0$, and let ${\rhoa}$ obey the flux
  form~\eqref{eq:ac_flux}.  Then, with $r={\rhoa}/\rho_0$:
  \begin{enumerate}[nosep]
  \item[(i)] the transport identity~\eqref{eq:transport_identity} holds
    verbatim: $\frac{d}{dt}\int_{\Omega(t)} rF\dd V
      = \int_{\Omega(t)} r(\Dt F + \bv\cdot\nabla F)\dd V$;
  \item[(ii)] the cubic cancellation of
    Proposition~\ref{prop:dw_removes}(iii) holds verbatim: the surface
    term produced by the moving domain cancels the surface term
    produced by the integration by parts, identically and without
    approximation;
  \item[(iii)] the pressure pairing acquires the single surface
    contribution
    $-\int_{z=\eta} r\,\Pi\,\bv\cdot\mathbf{n}\dd \sigma$.  Its
    non-hydrostatic part vanishes under~\eqref{eq:psi_bc} together with
    the column condition $p_V|_{z=\eta}=0$, and its hydrostatic part
    vanishes by~\eqref{eq:pressure_hydro}; what remains is the surface
    pressure work, which is exactly the time derivative of the
    barotropic reservoir,
    \begin{equation}\label{eq:surface_reservoir}
      -\int_{z=\eta} r\,\Pi\,\bv\cdot\mathbf{n}\dd\sigma
        = -\frac{d}{dt}\,\mathrm{PE}_{\mathrm{sfc}},
      \qquad
      \mathrm{PE}_{\mathrm{sfc}} = \frac{g}{2}\int_A\eta^2\dd A ,
    \end{equation}
    so that it is absorbed identically by the definition
    of $E_{\mathrm{KL}}$ in~\eqref{eq:EKL_def};
  \item[(iv)] consequently~\eqref{eq:KL_energy_inviscid},
    \eqref{eq:KL_energy_exact}, \eqref{eq:ke_pe_conversion},
    \eqref{eq:EKL_budget} and
    Proposition~\ref{prop:tracer_consistency} hold unchanged under a
    free surface, with $\mathrm{PE}_{\mathrm{sfc}}$ included in
    $E_{\mathrm{KL}}$ and vanishing when $\eta\equiv0$.
  \end{enumerate}
\end{proposition}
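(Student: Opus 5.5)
The plan rests on one tool: the Reynolds transport theorem on $\Omega(t)$, whose boundary moves with normal speed $V_n$. By the kinematic condition~\eqref{eq:kinematic_bc}, $V_n=\bv\cdot\mathbf{n}$ on $z=\eta$, and $V_n=0=\bv\cdot\mathbf{n}$ on the fixed walls. So for any integrand $g$,
\[
\frac{d}{dt}\int_{\Omega(t)} g\dd V=\int_{\Omega(t)}\Dt g\dd V+\oint_{\partial\Omega(t)} g\,\bv\cdot\mathbf{n}\dd\sigma .
\]

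\emph{(i) Transport identity.} Take $g=rF$ and expand $\Dt(rF)=r\Dt F+F\Dt r$. Replace $\Dt r$ by $-\Div(r\bv)$ using~\eqref{eq:ac_flux}. Integrating $-F\Div(r\bv)$ by parts gives the interior term $\int r\,\bv\cdot\nabla F$ and the boundary term $-\oint rF\,\bv\cdot\mathbf{n}$. This boundary term cancels the Reynolds boundary term exactly, which yields (i).

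\emph{(ii) Cubic cancellation.} Apply (i) with $F=\tfrac12|\bv|^2$. This reproduces the mechanism of Proposition~\ref{prop:dw_removes}(iii). There, the cubic remainder arose as the pairing of $\nabla\tfrac12|\bv|^2$ against $r\bv$ together with $\tfrac{d}{dt}$ of the weight. On the moving domain, the moving-boundary flux $\oint\tfrac12 r|\bv|^2\bv\cdot\mathbf{n}$ appears with opposite sign in the two places, so the cancellation holds without approximation. I will check this bookkeeping term by term against the fixed-domain proof, since that proof is in the appendix and its exact grouping has to be mirrored.

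\emph{(iii) Pressure pairing.} This is where I expect the main work. Write $\Pi:=(p_{\mathrm{hyd}}+p_{\mathrm{sfc}}+p_V+\psi)/\rho_0$ and pair $\nabla\Pi$ with $r\bv$. Integrating by parts gives $-\int\Pi\,\Div(r\bv)$ plus the surface term $\oint r\Pi\,\bv\cdot\mathbf{n}$. The sign convention is fixed so that it matches the statement's $-\int_{z=\eta}r\Pi\,\bv\cdot\mathbf{n}$ on the energy-tendency side. The walls contribute nothing. At $z=\eta$ three of the four pressure parts vanish: $p_V=0$ by the column condition, $\psi=0$ by~\eqref{eq:psi_bc}, and $p_{\mathrm{hyd}}=0$ by~\eqref{eq:pressure_hydro}. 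Only $\Pi=g\eta$ remains.

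It then remains to show
\[
\int_{z=\eta} r\,g\eta\,\bv\cdot\mathbf{n}\dd\sigma=\frac{d}{dt}\frac g2\int_A\eta^2\dd A .
\]
With $\mathbf{n}\dd\sigma=(-\nabla_H\eta,1)\dd A$, the kinematic condition gives $\bv\cdot\mathbf{n}\dd\sigma=\Dt\eta\dd A$. The integrand therefore becomes $r\,g\eta\,\Dt\eta$.

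The obstacle is the weight $r$ at the surface. The exact identity requires $r=1$ there, but in general $r=1+\OO(\Fr_{\mathrm{baro}}^2)$. My plan is to use $\psi|_{z=\eta}=0$ from~\eqref{eq:psi_bc}, which gives $r=1+\psi/\alpha=1$ exactly on $z=\eta$. That makes the identity exact. If instead the pairing is against $\bv$ rather than $r\bv$, no weight appears at all.

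\emph{(iv) Consequences.} Substitute (i)--(iii) into the derivations of~\eqref{eq:KL_energy_inviscid}, \eqref{eq:ke_pe_conversion} and~\eqref{eq:EKL_budget}. The only new term is the surface work, and it is absorbed into $\mathrm{PE}_{\mathrm{sfc}}$. For Proposition~\ref{prop:tracer_consistency}, its hypotheses are stated for moving cell volumes with fluxes relative to the faces, so it carries over with no change. Setting $\eta\equiv0$ removes the reservoir.
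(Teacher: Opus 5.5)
Your proposal is correct and follows the paper's proof essentially step for step. The shared steps are: Reynolds' theorem with the material free surface; cancellation of the moving-boundary term against the integration-by-parts term in (i) and (ii); removal of $p_{\mathrm{hyd}}$, $p_V$, $\psi$ and the weight $r$ at $z=\eta$ via \eqref{eq:pressure_hydro}, the column condition and \eqref{eq:psi_bc}; and the kinematic condition turning $\bv\cdot\mathbf{n}\dd\sigma$ into $\Dt\eta\dd A$. One point in (iv) is worth making explicit, since the paper also leaves it implicit: the elastic reservoir $\frac{1}{2\alpha\rho_0}\int_{\Omega(t)}\psi^2\dd V$ acquires its own Reynolds boundary term $\frac{1}{2\alpha\rho_0}\oint\psi^2\,\bv\cdot\mathbf{n}\dd\sigma$, which vanishes only because of \eqref{eq:psi_bc}.
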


\begin{proof}
  See Appendix~\ref{app:proofs}.
\end{proof}

\paragraph*{The $z^*$ coordinate.}
The natural realisation for a global non-hydrostatic model is
$z^*$-free-surface-following $z$-levels, in which layers move
vertically with $\eta$.  Isopycnal
coordinates are unsuitable for non-hydrostatic dynamics, and
terrain-following coordinates are confined to regional configurations;
hybrid coordinates are beyond the scope of this paper.  Three statements
cover the $z^*$ extension:
\begin{enumerate}
\item 
Proposition~\ref{prop:tracer_consistency} covers the case of
time-dependent cell volumes $|c|(t)$ inside the time derivative and
fluxes $F_f$ taken relative to the moving faces, and its proof uses
only the incidence pattern of faces and cells.  Both the exact
conservation of $\int{\rhoa}C\dd V$ and the
uniform-in-time bound on the representation error therefore hold
verbatim on $z^*$ meshes, as does the variance identity of
Proposition~\ref{prop:variance}.

\item  On $z^*$ the column axes remain vertical, so
$L_z$ remains tridiagonal and the direct solve of
Section~\ref{sec:acdc_closure} is unaffected: the property that makes
AC/DC effective is a property of the prismatic column, not of the levels
being flat.

\item Layer interfaces do tilt on $z^*$, by
$\OO(|\nabla_H\eta|)$; this tilt is smaller by the factor $\eta/H$
than the bathymetric slope the columns already have, and contributes
nothing at leading order to $\|S^*\|/\|S\|$.
\end{enumerate}
\subsection{Errors in Context of other Errors}\label{sec:errors_context}

Having quantified the AC/DC errors above, we place the
volume-conservation (divergence) error within the hierarchy of
volume-budget errors ocean models already have.  
Two distinct quantities are involved: a model may fail to
satisfy its own incompressibility constraint, so that
$\Div\bv \neq 0$; or it may
enforce $\Div\bv = 0$ exactly but on the wrong velocity
field, so that the model velocity $\bv_{\mathrm{model}}$ differs
from the true velocity $\bv_{\mathrm{true}}$.  The first is a
constraint-violation error; the second is a velocity-field error.
We measure both, relative to the characteristic strain rate $U/L$.

\begin{enumerate}
\item \emph{Hydrostatic approximation (velocity-field error).}  The
ocean primitive equations enforce $\Div\bv = 0$ exactly. The
error is not a residual divergence but a wrong velocity field: the
PE velocity differs from the true non-hydrostatic velocity by
$\OO(\delta^2 U)$, and the divergence of that velocity error,
$\Div(\bv_{\mathrm{PE}} - \bv_{\mathrm{NH}})$, is $\OO(\delta^2)$
relative to $U/L$.  

\item \emph{Boussinesq approximation (constraint-violation error).}  The
Boussinesq model enforces $\Div\bv = 0$, whereas the true ocean
satisfies $\Div\bv = -\frac{1}{\rho}\frac{D\rho}{Dt}$.  The Boussinesq velocity field therefore has a residual divergence.

\item \emph{Precipitation minus evaporation (real volume source).}  The
surface freshwater flux $P-E$ is a real volume source in the true
ocean, not a model error; its effective divergence
$(P\!-\!E)/(\rho_w H)$ is $\sim 6\times 10^{-7}$ relative to
$U/L$.  It is included as a physical reference scale.

\item \emph{AC pseudo-compressibility (constraint-violation error).}
AC/DC relaxes incompressibility, so the velocity field retains a residual divergence.  Without the column solve
(pure 3D~AC) this is
$|\Div\bv|/(U/L) \sim \delta^2\,\Fr_{\mathrm{baro}}^2 \sim 10^{-6}$; the
AC/DC column solve reduces it by the factor $(k_x/k_z)^2$, so at grid scale
it reaches $\sim 10^{-10}$ (Table~\ref{tab:error_comparison}).  
\end{enumerate}
The hierarchy is
\begin{equation}
  \underbrace{\delta^2}_{\substack{\text{hydrostatic}\\
    \text{(velocity-field)}}}
  \;\gg\;
  \underbrace{gH_{\mathrm{feat}}/c_s^2}_{\substack{\text{Boussinesq}\\
    \text{(constraint)}}}
  \;\gg\;
  \underbrace{\delta^2\Fr_{\mathrm{baro}}^2}_{\substack{\text{AC (3D)}\\
    \text{(constraint)}}}
  \;\sim\;
  \underbrace{(P\!-\!E)/(\rho_w H U/L)}_{\substack{\text{freshwater}\\
    \text{(real source)}}}.
\end{equation}
The comparison to draw is with the Boussinesq approximation, the
error of the same kind as AC's: the AC constraint violation
is two orders of magnitude below the Boussinesq one that every
Boussinesq ocean model already accepts, and is comparable to the
real P-E forcing.  Set against the velocity-field error of the
hydrostatic approximation -the error AC/DC removes- it is more
than four orders of magnitude smaller.

Table~\ref{tab:error_comparison} collects the errors of the two
formulations, pure AC and AC/DC, alongside their CFL and cost
properties.

\begin{table}[H]
  \centering
  \caption{\it \small Comparison: AC (3D) vs.\ AC/DC.  All errors
    for $\alpha = \rho_0 gH_{\mathrm{full}}$ and the
    submesoscale worst case $\delta = 0.2$,
    $\Fr_{\mathrm{baro}} = 5\times10^{-3}$,
    $\Delta x/\Delta z = 100$.  Divergence errors are relative to
    $U/L$.  The prefactor $(k_x/k_z)^2$
    of~\eqref{eq:hybrid_div_error} admits two readings:
    evaluated at the grid ratio it measures grid-scale noise, evaluated
    at the aspect ratio of a flow structure it measures the error
    on the resolved flow (Remark~\ref{rem:aspect_ratio}).  
    A last block states structural properties.
    The total energy $E_{\mathrm{KL}}$ of~\eqref{eq:EKL_def} obeys the
    balance law~\eqref{eq:EKL_budget} with no numerical remainder: its
    right-hand side is the work of the hydrostatic, surface and for
    AC/DC column pressures against the artificial compressibility,
    together with the dissipation, each an identified and computable
    exchange.  The closure is exact for the curl-divergence viscous
    operator~\eqref{eq:visc_hodge}; the plain Laplacian leaves the
    $\OO(\Fr_{\mathrm{baro}}^2)$ remainder.
    }
  \label{tab:error_comparison}
  \renewcommand{\arraystretch}{0.9}
  \begin{tabular}{lcc}
    \toprule
    & \textbf{AC (3D)} & \textbf{AC/DC} \\
    \midrule
Divergence, grid scale
      & $\delta^2 \Fr_{\mathrm{baro}}^2 \sim 10^{-6}$
      & $(\Delta z/\Delta x)^2 \delta^2 \Fr_{\mathrm{baro}}^2
        \sim 10^{-10}$ \\
    \quad resolved flow feature, $\delta = H/L$
      & $\delta^2 \Fr_{\mathrm{baro}}^2$
      & $\delta^4 \Fr_{\mathrm{baro}}^2$ \\     
    Dispersion
      & $k_z^2 N^2/(\tilde\alpha K^4)$
      & $(k_x^2/k_z^2)\cdot k_z^2 N^2/(\tilde\alpha K^4)$;
        $+\;\OO(\Delta t^2)$ splitting \\
    \midrule
    Total energy $E_{\mathrm{KL}}$
      & \multicolumn{2}{c}{balance~\eqref{eq:EKL_budget} closes;
        every term diagnosable} \\
    Terms on the right-hand side
      & $p_{\mathrm{hyd}}, p_{\mathrm{sfc}}$ work; dissip.
      & $+\ p_V$ work \\
    CFL
      & $\Delta z/c_{\mathrm{ac}}$
      & $\Delta x/c_{\mathrm{ac}}$ \\
    3D cost indep.\ of $\alpha$?
      & No & Yes \\
    2D fast loop indep.\ of $\alpha$?
      & --- & semi-impl.\ yes; split-expl.\ $\propto\!\sqrt{\alpha}$ \\
    \bottomrule
  \end{tabular}
\end{table}

\subsection{Time Stepping Structure}\label{sec:algorithm}

Ocean models deal with two time scales: the fast-moving free surface (barotropic mode) and the slow-moving baroclinic mode. The barotropic mode is handled either by sub-stepping or by semi-implicit time stepping.
The two time-scale structure suggests a corresponding decomposition for the pseudo-pressure.

We recall the splitting~\eqref{eq:psi_split_concept} of the
pseudo-pressure into its depth-averaged component $\bar\psi$ and the
column-mean-free remainder $\psi'$ (Section~\ref{sec:variants2}).
The depth-averaged component satisfies an equation with the structure
of the barotropic mode: it is two-dimensional, it couples to the
surface height through the free surface, and it therefore inherits
whatever barotropic machinery the host model already possesses: no
new solver, no new time-step constraint, and no global communication
beyond the one the hydrostatic model already performs.  Its remainder $\psi'$ holds the vertical structure that~\eqref{eq:psi_converged}
exhibits, and with it the stiffness the column solve does not remove;
it is updated once per baroclinic step by the column-implicit
treatment of step~S3-a1.  Boundary conditions for the split are those stated in Section~\ref{sec:variants2}.

\paragraph*{One ocean-model time step, two AC/DC time objects.}\label{par:two_objects}
The hydrostatic ocean model into which AC/DC is inserted -referred to
as \emph{the model} below- supplies its own time stepping, and
AC/DC does not modify it.  That stepping consists of the baroclinic (outer) step
$\Delta t$ and, where the model has a free surface, a barotropic
treatment of its own: either explicit sub-stepping at $\Delta t^{*}$ or
a single implicit solve per $\Delta t$. 

AC/DC adds two fields that must be advanced in time, on two different time scales:
\begin{enumerate}[nosep]
\item[(O1)] $\bar\psi$, the depth-averaged pseudo-pressure, sub-stepped
  on the acoustic sub-step
  $\Delta t^{\mathrm{ac}} := \Delta t/n_{\mathrm{sub}}$, \
  $n_{\mathrm{sub}} := \lceil \Delta t\,c_{\mathrm{ac}}/\Delta x_{\min}\rceil$;
\item[(O2)] $\psi'$, the column-mean-free remainder, updated once per
  outer step $\Delta t$, implicitly in the vertical.
\end{enumerate}
Both updates run, whether or not the model has a
barotropic mode: in a closed rigid-lid domain the model has the single
step $\Delta t$ and no barotropic loop, and AC/DC still sub-cycles~(O1)
and still updates~(O2) once per step.  Where the model does have a barotropic loop, the sub-stepping of~(O1) can be folded into it; that
is an implementation choice, cheap under the calibration of
Remark~\ref{rem:host_variants}, but it is not part of the method's
definition.

Every statement in this section is attached to one of the resulting
update intervals:
\begin{center}
\small
\begin{tabular}{llll}
\toprule
Advanced field & Interval & Supplied by & Integrator \\
\midrule
baroclinic state $(\bv,T,S,w)$ & $\Delta t$ & model & model's \\
free surface $\eta$, barotropic mode
  & $\Delta t^{*}$ or one solve & model (absent if closed) & model's \\
$\bar\psi$ \ (O1)  & $\Delta t^{\mathrm{ac}}$ & AC/DC
  & neutral sub-step \\
$\psi'$ \ (O2)     & $\Delta t$ & AC/DC
  & column-implicit, once per $\Delta t$ \\
\bottomrule
\end{tabular}
\end{center}
\noindent
All three-dimensional work falls on $\Delta t$; the sub-stepped interval
$\Delta t^{\mathrm{ac}}$ carries two-dimensional fields only.

\paragraph*{No new CFL restriction.}\label{sec:cfl_advantage}\label{sec:cfl}

The main motivation for AC/DC is lightening the CFL constraint by exploiting 
the oceans thin-fluid property.
In pure AC, the acoustic CFL is
$\Delta t \lesssim \ell^*_{\min} / c_{\mathrm{ac}}$, where
$\ell^*_{\min}$ is the smallest edge length, typically
$\Delta z$ on prismatic meshes.  In AC/DC 
the acoustic CFL
is set by the horizontal grid spacing:
\begin{equation}
  \Delta t \lesssim \frac{\Delta x_{\min}}{c_{\mathrm{ac}}}.
\end{equation}
Since typically $\Delta x / \Delta z \sim 10$--$1000$ in ocean models, 
AC/DC allows larger time steps.  The consequences are stated interval
by interval.

\emph{Baroclinic step $\Delta t$.}  Unchanged from the hydrostatic
model: it is limited by the advective CFL $\Delta t < \Delta x/U$.  
No three-dimensional computation in AC/DC scales
with $\alpha$.  Increasing $\alpha$ does not force a smaller baroclinic
step.

\emph{Barotropic treatment.}  Unchanged in both its
variants: a split-explicit model keeps its barotropic sub-step
$\Delta t^{*} = \Delta x/\sqrt{gH_{\mathrm{full}}}$, and a
semi-implicit model keeps an implicit free-surface solve that is
unconditionally stable and imposes no barotropic CFL at all.  AC/DC
adds fields to these algorithms; it does not change what limits them.

\emph{AC/DC acoustic sub-step $\Delta t^{\mathrm{ac}}$.}  This is the
one interval AC/DC introduces, and the one place where $\alpha$ still
enters: $n_{\mathrm{sub}}$
grows as $\sqrt{\alpha}$, so raising $\alpha$
raises two-dimensional sub-steps only.

\paragraph*{The time step template.}
Each step below is labelled by the interval it runs on and by who
supplies it: \textsf{[hydrostatic]} for machinery the hydrostatic model
already has, \textsf{[AC/DC]} for machinery the method adds.  The
template is stated for a model that has a free surface; the closed,
rigid-lid case is the same template with Step~2a absent, and is
recorded at the end.

\begin{enumerate}[nosep]
\item \emph{Step~1: Baroclinic tendencies.  Interval $\Delta t$;
\textsf{[hydrostatic]} core with an \textsf{[AC/DC]} correction.}  Compute
the full three-dimensional vorticity $\bom = \nabla\times\bv$, the full
3D Lamb contraction $\bom_a\times\bv$, the hydrostatic Bernoulli
gradient $\nabla B_{\mathrm{hyd}}$, its surface-pressure part being
advanced by the barotropic loop of Step~2a, and the viscous term.

\item \emph{Step~2a: Barotropic update.  Barotropic interval
($\Delta t^{*}$ sub-steps, or one implicit solve); \textsf{[hydrostatic]}.}
The free surface and the depth-averaged velocity are advanced by
whatever algorithm the model already uses, on the two-dimensional
horizontal mesh,
\begin{equation}
  \Dt\eta + \nabla_H\cdot(H\bar{\bv}_H) = 0 ,
    \label{eq:substep_eta}
\end{equation}
with the barotropic momentum content appearing as the surface-height
gradient $g\nabla_H\eta$ in~\eqref{eq:substep_mom} below.  Nothing in
this step is supplied by AC/DC.

\item \emph{Step~2b: Pseudo-pressure loop, object~(O1).  Interval
$\Delta t^{\mathrm{ac}}$; \textsf{[AC/DC]}.}  Sub-step the
depth-averaged pseudo-pressure against the horizontal velocity.  By the
split of Section~\ref{sec:acdc_closure} this system is
two-dimensional:
\begin{align}
  \frac{1}{\alpha}\Dt\bar\psi
    + \nabla_H\cdot\bar\bv_H
    + \frac{1}{H}\bigl[w\bigr]_{\mathrm{bot}}^{\mathrm{top}}
    + \frac{1}{\alpha}\overline{\Div(\psi\bv)} &= 0,
    \label{eq:ac_baro}\\
  \Dt\bv_H + \nabla_H\Bigl(g\eta + \frac{B\bar\psi}{\rho_0}\Bigr)
    &= -\,\nabla_H\Bigl(\frac{p_V + \psi'}{\rho_0}\Bigr)_{\mathrm{frozen}} .
    \label{eq:substep_mom}
\end{align}
Here $B$ denotes the broadcast operator that assigns the column value
$\bar\psi$ to every cell of that column, so that $B\bar\psi$ is
constant in the vertical.  Equation~\eqref{eq:ac_baro} is the depth average of the
flux-form AC relation~\eqref{eq:ac_psi}.  The vertical velocity $w$ is
not sub-stepped but held at its baroclinic value, so the vertical
divergence contributions form a constant source; 
the baroclinic pseudo-pressure $\psi'$ and the column pressure
$p_V$ enter only as the frozen offset on the right-hand side
of~\eqref{eq:substep_mom}, applied \emph{distributed through the loop}
(see Remark~\ref{rem:psi_prime_stability}).  


\item \emph{Step~3: Vertical implicit pass, column solve, and object~(O2).
Interval $\Delta t$; \textsf{[hydrostatic]} pass carrying two \textsf{[AC/DC]}
solves.}
Implicit vertical viscosity/diffusion  as in standard ocean models; in the same pass the
non-hydrostatic vertical pressure $p_V$ is obtained by the direct
column solve $L_z(p_V/\rho_0)=S$, and the baroclinic pseudo-pressure $\psi'$ is updated
once per step by the column-implicit treatment of
step~S3-a1 (Section~\ref{sec:variants2}).  The two added solves share the
tridiagonal structure of the viscosity solves.

\item \emph{Step~4: Tracer transport.  Interval $\Delta t$;
\textsf{[hydrostatic]}.}  Transport $T$, $S$ with velocity
$\bv = (\bv_H, w)$ and pseudo density $\rho_{AC}$.
\end{enumerate}

\noindent Figure~\ref{fig:flowchart} displays the resulting step for a
semi-implicit host.

\begin{remark}[{Distributed updates}]\label{rem:psi_prime_stability}
The $\psi'$ correction has to be
distributed over the fast loop, as part of the offset
$\nabla(p_V+\psi')$, not as a velocity update after it.  The two give
the same per-step map for the velocity but not for the volume budget, 
an end-of-step correction removes divergence while carrying no volume
flux, so the time-integrated divergence of the actual velocity path,
the quantity a consistency-preserving tracer transport
(Proposition~\ref{prop:tracer_consistency}) integrates, drifts
linearly, independent of $\alpha$. With the distributed
application both budgets close.
\end{remark}

\begin{remark}[Merging object~(O1) into the model's barotropic loop]
\label{sec:variant_split}\label{sec:variant_semiimplicit}\label{rem:host_variants}

(i) In a \emph{split-explicit} model (MOM6, ROMS) the
barotropic mode is advanced by explicit sub-stepping; with
$\alpha = \rho_0 gH_{\mathrm{full}}$ the artificial acoustic speed
equals the barotropic gravity-wave speed
(Section~\ref{sec:cfl_advantage}), so $\bar\psi$ uses the same loop as
$\eta$ and $\bar\bv_H$, one additional two-dimensional field per
sub-step, no new loop and no new CFL constraint.  

(ii) In a \emph{semi-implicit} model (ICON-O, MITgcm) the barotropic mode is
advanced by a single implicit free-surface solve, and~\eqref{eq:ac_baro}
has the same two-dimensional elliptic structure on the same horizontal
mesh; since $\eta$ is the hydrostatic surface pressure and $\bar\psi$
the depth-averaged pseudo-non-hydrostatic pressure, their coupling is
weak (formally $\OO(\delta^2)$ in the aspect ratio) and the
$2\times2$ system for $(\eta,\bar\psi)$ is block-diagonal.  
Either $\bar\psi$ is then solved as a separate two-dimensional
elliptic problem, calling the free-surface solver a second time, or the pair is advanced
as a single $2\times2$ block with the weak off-diagonal coupling
carried implicitly.
\end{remark}

\begin{figure}[tbp]
  \centering
  \begin{tikzpicture}[
    font=\footnotesize,
    >={Stealth[length=2mm]},
    box/.style={draw,rounded corners=2pt,align=center,
      inner sep=3pt,minimum height=7mm,text width=54mm},
    core/.style={box,fill=black!4},
    corr/.style={box,fill=black!12,thick},
    io/.style={draw,align=center,inner sep=3pt,minimum height=6mm,
      text width=32mm,fill=black!3},
    lab/.style={font=\scriptsize\itshape},
    arr/.style={->,thick},
  ]
  \node[io] (start) {State at $t_n$:\ $(\bv,T,S,\eta)$};
  \node[core,below=5mm of start] (s1)
    {\textbf{1. Baroclinic tendencies} (3D, $\Delta t$)\\
     hydrostatic core $\mathbf{N}_H^{\mathrm{hyd}}+\nabla_H B_H$,
     tracers, EOS};
  \node[corr,below=4mm of s1] (s1c)
    {\textbf{$+$ non-hydrostatic correction}\\
     $\mathbf{C}_H,\mathbf{C}_V$ from $\bom_a^H\times\bv$;\
     prognostic-$w$};
  \node[core,below=5mm of s1c] (s2)
    {\textbf{2. Implicit free-surface solve} (2D)\\
     surface height $\eta$-no sub-step loop};
  \node[corr,below=4mm of s2] (s2c)
    {\textbf{$+$ depth-averaged $\bar\psi$}\\
     $2\times2$ block $(\eta,\bar\psi)$-one 2D elliptic solve};
  \node[core,below=5mm of s2c] (s3)
    {\textbf{3. Column pass} (per column)\\
     vertical implicit diffusion,
     $(n_{\mathrm{tracer}}{+}2)$ tridiagonal solves};
  \node[corr,below=4mm of s3] (s3c)
    {\textbf{$+$ direct column solve, $\psi'$ update}\\
     $L_z(p_V/\rho_0)=S$-one extra tridiagonal solve};
  \node[core,below=5mm of s3c] (s4)
    {\textbf{4. Tracer transport} (3D)\\
     $T,S$ with $\bv=(\bv_H,w)$};
  \node[io,below=5mm of s4] (end) {State at $t_{n+1}$};
  \draw[arr] (start)--(s1); \draw[arr] (s1)--(s1c);
  \draw[arr] (s1c)--(s2);   \draw[arr] (s2)--(s2c);
  \draw[arr] (s2c)--(s3);   \draw[arr] (s3)--(s3c);
  \draw[arr] (s3c)--(s4);   \draw[arr] (s4)--(end);
  \node[lab,left=8mm of s1.west,text width=22mm,anchor=east]
    (lc) {hydrostatic core, no new machinery};
  \draw[densely dotted] (lc.east)--(s1.west);
  \node[lab,right=10mm of s1c.east,text width=26mm,anchor=west]
    (la) {shaded: added by AC/DC};
  \draw[densely dotted] (s1c.east)--(la.west);
  \draw[arr,rounded corners=3pt]
    (end.east) -- ++(42mm,0) |-
    node[lab,pos=0.25,right,text width=20mm]{next step}
    (start.east);
  \end{tikzpicture}
  \caption{The AC/DC algorithm for a semi-implicit model (ICON-O),
    one baroclinic step in the order of the core template
    (Section~\ref{sec:algorithm}): baroclinic tendencies,
    barotropic/free-surface update, column pass, tracers.  Light
    boxes are the hydrostatic core, unchanged in its machinery; shaded
    boxes are the additions made by AC/DC.  The depth-averaged pseudo-pressure
    $\bar\psi$ is solved together with the surface height $\eta$ as a
    $2\times2$ block in the implicit free-surface solve; the
    $\eta$--$\bar\psi$ coupling is $\OO(\delta^2)$ in the aspect
    ratio, so the block is diagonal to leading order.  This variant
    has no fast sub-step loop and hence no Strang splitting
    (cf.\ Section~\ref{sec:variant_split}).}
  \label{fig:flowchart}
\end{figure}

\section{Conclusions}\label{sec:conclusions}
The hydrostatic approximation trades computational speed against computability of small scales.
This paper is an argument that the price need not be paid.

The first part of the argument is that the three-dimensional momentum equation
separates into the hydrostatic primitive equations and a correction
without any approximation being made
(Section~\ref{sec:splitting}); the hydrostatic model that ocean centres
already run is a sub-system of the non-hydrostatic one, and what has to be supplied is the correction. 

Second comes a controlled approximation:
Artificial compressibility replaces an instantaneous global constraint
by relaxation at a finite speed, and every error it introduces involves the single parameter that fixes that speed.  The column solve removes
the vertical part of the
constraint and leaves only the horizontal residual to relax.  
The dispersion of internal gravity waves is then
recovered with an error smaller than pure artificial compressibility
gives by the square of the grid aspect ratio
(Section~\ref{sec:hybrid_dispersion}); and the compressibility error
equilibrates 
(Appendix~\ref{app:boundedness}).  The physical payoff promised in
the introduction, mixing as a computed output not a
prescribed input, survives the approximation: the tracer-variance
budget closes exactly despite the residual divergence, the numerical
part of the variance sink is an explicit face sum, and the Osborn--Cox diffusivity is therefore
computable from the discrete fields
(Proposition~\ref{prop:variance},
Corollary~\ref{cor:osborn_cox}).

Third is the feasibility argument of
Section~\ref{sec:pressure_calc}, whose conclusion is that AC/DC's
overhead is bounded, known before the model is run, and flat under
refinement.  It is a scale analysis in a single unit, and not a timing comparison,
because on the projection side there is nothing to time.

The numerical experiments of Section~\ref{sec:experiments} are there to
show that AC/DC reproduces the processes the hydrostatic approximation
removes, and that where the two should agree they do.  They are not
performance measurements and they are not global.  A
convection-resolving simulation of the world ocean belongs to a
computational campaign and is out of scope here.

What the paper claims is that the computational obstruction keeping
ocean models hydrostatic is removable, at a cost that is a small fixed multiple of the model one
already has, on the machines that are being built.  What follows from
that is a matter for simulation.

\appendix
\section{Appendix}\label{sect:appendix}


\subsection{Proofs}\label{app:proofs}
The proofs of the propositions of Section~\ref{sec:properties}.

\begin{proof}[Proof of Proposition~\ref{prop:tracer_consistency}]
  (i)~Sum either equation of~\eqref{eq:disc_cont_tracer} over $c$.  Each
  interior face is counted once from each adjoining cell with opposite
  orientation, so its two contributions cancel; boundary faces carry
  $F_f=0$.  Hence $\dot M=\dot\Sigma=0$.  The argument uses only the
  incidence pattern, neither the metric nor the reconstruction, so cell
  volumes moving with the coordinate do not affect it.

  (ii)~Setting $C\equiv\mathrm{const}$ under~(H2) reduces the second
  equation of~\eqref{eq:disc_cont_tracer} to the first, so constants are
  stationary; with the fluxes $F_fC_f$ built on the same $F_f$ that
  advance $r$, the reduction is exact for any $\alpha$.

  (iv)~Put $d_c:=r_c-1=\psi_c/\alpha$, so $\sum_c|c|d_c=m$ is constant
  by~(i).  By~(H4), $d_c>-1$, so $d_c^-<1$ pointwise and
  $\sum_c|c|d_c^-\le|\Omega|$; with
  $\sum_c|c|d_c^+-\sum_c|c|d_c^-=m$ this gives
  $\sum_c|c|\,|d_c| = 2\sum_c|c|d_c^- + m \le 2|\Omega|+|m|$, i.e.
  $\|\psi\|_{L^1_h}\le\alpha(2|\Omega|+|m|)$.  From $r=1+\psi/\alpha$,
  $\Sigma-\bar C_{\mathrm{vol}}M = \alpha^{-1}\langle\psi,C'\rangle_h$,
  which is the equality in~\eqref{eq:tracer_repr_bound}; H\"older in
  $L^1$--$L^\infty$ and the preceding bound give the inequality.  A
  monotone transport satisfies a discrete maximum principle, so
  $\|C'(t)\|_\infty\le\|C'(0)\|_\infty$, and $r(0)\equiv1$ gives $m=0$,
  $M=|\Omega|$.  If instead $\|\psi\|_h$ is bounded, Cauchy--Schwarz on
  $\langle\psi,C'\rangle_h$ replaces the $L^1$--$L^\infty$ pairing and
  yields the $\OO(\alpha^{-1/2})$ form.  Claim~(iii) is the same
  statement rewritten: $\int\rho_0C = \int{\rhoa}C
  + \int(\rho_0-{\rhoa})C$, the first term conserved by~(i)
  and the second equal to $-\rho_0\alpha^{-1}\langle\psi,C\rangle_h$,
  bounded above.  The bound is thus derived from positivity and exact
  conservation, not assumed as an a~priori property of the
  relaxation.
\end{proof}

\begin{proof}[Proof of Proposition~\ref{prop:variance}]
  Hypothesis~(H1) gives the two semi-discrete balances
  $\tfrac{d}{dt}\bigl(|c|({\rhoa}C)_c\bigr) = -\rho_0\,(DF)_c$ and,
  on setting $C\equiv1$, $\tfrac{d}{dt}\bigl(|c|\rho_{\mathrm{AC},c}\bigr)
  = -\rho_0(D\Phi)_c$, the latter being~\eqref{eq:disc_cont_tracer};
  cell volumes are carried inside the time derivative so that the
  identity holds on $z^*$ as well as $z$ meshes.
  Differentiating~\eqref{eq:variance_def} and regrouping,
  \begin{equation}\label{eq:variance_split}
    \frac{dV_h}{dt}
      = \sum_c |c|\,C_c\,\frac{d}{dt}({\rhoa}C)_c
        - \tfrac12\sum_c |c|\,C_c^2\,\frac{d}{dt}\rho_{\mathrm{AC},c}
      = -\rho_0\Bigl[\,C^{\mathsf T}DJ
        - \tfrac12 (C^{2})^{\mathsf T}D\Phi\,\Bigr],
  \end{equation}
  where $C^2$ denotes the vector with entries $C_c^2$.  With the stated
  incidence convention, summation by parts on a closed domain gives
  $C^{\mathsf T}Dy = -\sum_f y_f[C]_f$ for any face field $y$, so
  $C^{\mathsf T}DJ = -\sum_f J_f[C]_f$ and, since
  $[C^2]_f = (C_{c^-}+C_{c^+})[C]_f = 2\langle C\rangle_f[C]_f$,
  $(C^2)^{\mathsf T}D\Phi = -2\sum_f\Phi_f\langle C\rangle_f[C]_f$.
  Substituting both into~\eqref{eq:variance_split} and
  inserting~\eqref{eq:tracer_flux},
  \begin{equation}
    \frac{dV_h}{dt}
      = \rho_0\sum_f [C]_f\bigl(J_f - \Phi_f\langle C\rangle_f\bigr)
      = \rho_0\sum_f \Phi_f[C]_f\bigl((RC)_f-\langle C\rangle_f\bigr)
        - \rho_0\sum_f\kappa_f\gamma_f[C]_f^2 ,
  \end{equation}
  which is~\eqref{eq:variance_budget}.  For~(i), $D\Phi$ enters
  \eqref{eq:variance_split} only through the second term, which is
  exactly the amount needed to convert $C^{\mathsf T}DJ$ into the jump
  form above, so it cancels.  Claim~(ii) is immediate.
  For~(iii), the upwind value is
  $(RC)_f = \langle C\rangle_f - \tfrac12\operatorname{sgn}(\Phi_f)[C]_f$,
  whence $\Phi_f[C]_f((RC)_f-\langle C\rangle_f)
  = -\tfrac12|\Phi_f|[C]_f^2$.
\end{proof}

\begin{proof}[Proof of Proposition~\ref{prop:free_surface}]
  For a domain whose boundary moves with normal velocity
  $\mathbf{u}_b\cdot\mathbf{n}$, the transport theorem gives
  $\frac{d}{dt}\int_{\Omega(t)} f\dd V
    = \int_{\Omega(t)}\Dt f\dd V
      + \int_{\partial\Omega} f\,\mathbf{u}_b\cdot\mathbf{n}\dd\sigma$.
  By~\eqref{eq:kinematic_bc} the free surface is material, so
  $\mathbf{u}_b\cdot\mathbf{n} = \bv\cdot\mathbf{n}$ there, and on the
  remaining boundaries both vanish.  Hence throughout
  \begin{equation}\label{eq:reynolds_material}
    \frac{d}{dt}\int_{\Omega(t)} f\dd V
      = \int_{\Omega(t)}\Dt f\dd V
        + \int_{\partial\Omega} f\,\bv\cdot\mathbf{n}\dd\sigma .
  \end{equation}

  (i)~and~(ii) are the same cancellation.  Apply
  \eqref{eq:reynolds_material} to $f = rF$, and then to
  $f = \tfrac12 r|\bv|^2$, using $\Dt r = -\Div(r\bv)$: in each case it
  contributes a surface term $\int_{\partial\Omega} f\,\bv\cdot\mathbf{n}$,
  and integrating $\int F\Div(r\bv)$ -respectively the Bernoulli
  gradient- by parts contributes the same integral with the opposite
  sign.  Neither vanishes on its own; their cancellation, guaranteed by
  materiality, leaves the volume terms of the fixed-domain proof and
  hence the stated identities.

  (iii)~The pressure term is
  $-\int r\bv\cdot\nabla\Pi
    = \int\Pi\Div(r\bv)
      - \int_{\partial\Omega} r\,\Pi\,\bv\cdot\mathbf{n}$, and no
  compensating surface term arises from~\eqref{eq:reynolds_material},
  which produces boundary contributions only for the quantity being
  differentiated.  On the fixed boundaries $\bv\cdot\mathbf{n}=0$.  On
  $z=\eta$ the integrand carries
  $\Pi=(p_{\mathrm{hyd}}+p_{\mathrm{sfc}}+p_V+\psi)/\rho_0$.  Three of
  the four parts vanish there: $p_{\mathrm{hyd}}$
  by~\eqref{eq:pressure_hydro}, $p_V$ by the column solve's Dirichlet
  condition, and $\psi$ by~\eqref{eq:psi_bc}.  Hence
  $\Pi|_{z=\eta} = g\eta$ by~\eqref{eq:psfc_def}, and \eqref{eq:psi_bc}
  also makes the weight $r = 1+\psi/\alpha$ equal to unity there, so the
  weighting drops out of the surface integral.  With the surface
  $z=\eta(x,y,t)$ written as $F = z-\eta = 0$, the outward normal and
  area element combine as
  $\bv\cdot\mathbf{n}\dd\sigma = (w - \bv_H\cdot\nabla_H\eta)\dd A
   = \Dt\eta\dd A$ by the kinematic condition~\eqref{eq:kinematic_bc},
  so
  \begin{equation}\label{eq:surface_work}
    -\int_{z=\eta} r\,\Pi\,\bv\cdot\mathbf{n}\dd\sigma
      = -\int_A g\,\eta\,\Dt\eta\dd A
      = -\frac{d}{dt}\,\frac{g}{2}\int_A\eta^2\dd A ,
  \end{equation}
  which is~\eqref{eq:surface_reservoir}.  All three ingredients are
  needed: \eqref{eq:pressure_hydro} to remove the hydrostatic part,
  \eqref{eq:psi_bc} to remove the pseudo-pressure part and fix the
  weight, and~\eqref{eq:kinematic_bc} to make the surface integral a
  total time derivative.

  (iv)~Given~(i)--(iii) every step of the corresponding fixed-domain
  proof applies verbatim.  For
  Proposition~\ref{prop:tracer_consistency} the discrete statement is
  already written with time-dependent cell volumes and face-relative
  fluxes, which is the discrete form of~\eqref{eq:reynolds_material}.
\end{proof}

\begin{proof}[Proof of Proposition~\ref{prop:hybrid_dispersion}]
  From~\eqref{eq:hyb_v} and~\eqref{eq:hyb_b},
  $\hat v = -if\hat u/\omega$ and $\hat b = -iN^{2}\hat w/\omega$.
  Substituting into~\eqref{eq:hyb_w} gives
  $\hat w = -(k_x/k_z)(f^{2}/\omega^{2})\hat u
   + (k_z/\omega)\tilde\psi$, and~\eqref{eq:hyb_u} becomes
  $\hat u\,[\omega^{4} - f^{2}\omega^{2}(1-k_x^{2}/k_z^{2})
   - (k_x^{2}/k_z^{2})N^{2}f^{2}]
   = k_x\tilde\psi\,\omega\,(\omega^{2}-N^{2})$.
  Substituting both into the divergence
  relation~\eqref{eq:hyb_psi} and clearing denominators
  produces~\eqref{eq:hybrid_dispersion_rot}, the $f^{2}$
  contributions to the constant term cancelling exactly.  For~(ii),
  the slow-root expansion $\omega^{2} = C/B + C^{2}/B^{3}
  + \OO(\tilde\alpha^{-2})$ is applied to both biquadratics -the
  $C^{2}/B^{3}$ term is of the same order as the
  $\OO(1/\tilde\alpha)$ shifts in $B$ and $C$ and must be
  retained- and the simplifications
  $\omega_0^{2}-f^{2} = k_x^{2}(N^{2}-f^{2})/K^{2}$ and
  $\omega_0^{2}-N^{2} = -k_z^{2}(N^{2}-f^{2})/K^{2}$
  give~\eqref{eq:rot_disp_errors}.  For~(iii), setting $f = 0$ reduces
  the $\omega^{2}$-coefficient to $\tilde\alpha K^{2}$ and the
  constant term to $\tilde\alpha N^{2}k_x^{2}$, and with
  $\omega_0^{2} = N^{2}k_x^{2}/K^{2}$ the first
  of~\eqref{eq:rot_disp_errors} becomes $k_x^{2}N^{2}/(\tilde\alpha
  K^{4})$ and the second $k_z^{2}N^{2}/(\tilde\alpha K^{4})$.
  For~(iv), the factorisation at $k_x = 0$ is direct substitution;
  and $(N^{2}-f^{2})^{2}\le N^{4}$ whenever $f^{2}\le 2N^{2}$, while
  $N^{2}k_x^{2} + f^{2}k_z^{2} \ge N^{2}k_x^{2}$, both inequalities
  reducing the error relative to its non-rotating value.
\end{proof}

\subsection{Boundedness of Acoustic Energy}\label{app:boundedness}

The budgets of Section~\ref{sec:acdc_compress} allow no conclusion
on whether the energy stays finite in the presence of the new energy
reservoir that AC/DC adds.  What bounds it has
been present (cf. ~\eqref{eq:visc_hodge}): the curl--divergence
dissipation damps $|\Div\bv|^2$ as well as $|\nabla\times\bv|^2$, and
the divergent part of the velocity is what the pseudo-pressure exchanges
energy with.  The argument is made semi-discretely, both because the
quantity it must control, the bound entering
Proposition~\ref{prop:tracer_consistency}(iv) is discrete, and
because the reduction is then exact on the mesh, resting on
$d\circ d = 0$.

Write $D$, $G$, $C$ for the discrete divergence, gradient and curl,
and $\langle\cdot,\cdot\rangle_h$,
$(\cdot,\cdot)_h$ for the cell and face inner products
of~\eqref{eq:EKL_budget_h} and~\eqref{eq:kdw_h}.  Four classical
properties of a discrete exterior calculus are used and no other:
(E1)~$CG=0$, the discrete curl of a gradient, a statement about
incidence carrying no metric; (E2)~$\langle\phi,Du\rangle_h
= -(G\phi,u)_h$ for face fields vanishing on the boundary, whence also
$DC^{\mathsf T} = -(CG)^{\mathsf T} = 0$; (E3)~$L := -DG = G^{*}G$
symmetric and positive semi-definite, with $\ker L$ the cellwise
constants, eigenvalues
$0 = \lambda_0 < \lambda_1 \le \dots \le \lambda_{\max}$ and an
orthonormal eigenbasis, $L$ being the negative $-(L_H+L_z)$ of the
splitting~\eqref{eq:laplace_split}, the operator the model already assembles; and
(E4)~the dissipation in Hodge form
$\mathcal{D}_h = \nu\,(GD - C^{\mathsf T}\Lambda C)$ with $\Lambda$ a
positive diagonal Hodge factor.

\begin{proposition}[The acoustic reservoir is a damped wave, and stays
  bounded]\label{prop:energy_bound}
  Let $(\bv,\psi)$ solve the semi-discrete AC/DC system on a fixed mesh,
  put $q := \psi/\alpha$ and $\tilde\alpha = \alpha/\rho_0$, and assume
  (E1)--(E4) with $\nu>0$.
  \begin{enumerate}[nosep]
  \item[(i)] Exactly, and with no truncation error introduced by the
    reduction,
    \begin{equation}\label{eq:disc_wave}
      \ddot q + \nu L\,\dot q + \tilde\alpha\,L\,q
        = -D\mathbf{F} + \dot\varrho + \nu L\varrho ,
    \end{equation}
    with $\mathbf{F}$ collecting every momentum tendency other than the
    pseudo-pressure gradient and the dissipation and
    $\varrho := -\alpha^{-1}D(\psi\bv)$.  In the eigenbasis of~(E3) this
    decouples completely into
    $\ddot q_k + \nu\lambda_k\dot q_k + \tilde\alpha\lambda_k q_k = f_k$.
  \item[(ii)] Every mode with $\lambda_k>0$ decays, unconditionally in
    $\alpha$, in $\nu$ and in the mesh, at the rate
    $-\mathrm{Re}\,\mu = \tfrac12\nu\lambda_k$ while
    $\nu^2\lambda_k < 4\tilde\alpha$ and at $\tilde\alpha/\nu$ beyond
    it.  On $\ker L$ the forcing vanishes identically, so the constant
    mode is stationary, the exact pseudo-mass conservation of
    Proposition~\ref{prop:tracer_consistency}(i), recovered from the
    wave equation.
  \item[(iii)] If the modal forcings are bounded,
    $\mathcal{F}_k := \sup_t|f_k| < \infty$, then $\|\psi\|_h$ is
    bounded uniformly in $t$, and with well-prepared initial data
    (Section~\ref{sec:initialisation})
    $\|\psi\|_h = \OO(\alpha^{1/2})$, hence
    $\|\psi\|_h/\alpha = \OO(\alpha^{-1/2})$.
  \item[(iv)] If in addition the forcing spectrum lies well below
    $\omega_k := \sqrt{\tilde\alpha\lambda_k}$, the response is the
    static one, $|q_k| \le \mathcal{F}_k/(\tilde\alpha\lambda_k)$, and
    the bound sharpens to $\|\psi\|_h = \OO(1)$,
    $\|\psi\|_h/\alpha = \OO(\alpha^{-1})$.
  \end{enumerate}
\end{proposition}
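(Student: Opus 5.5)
The plan is to derive the wave equation by differentiating the discrete pseudo-continuity equation in time and eliminating the velocity with the momentum equation. Then I analyse the resulting constant-coefficient system mode by mode in the eigenbasis of $L$ from (E3).

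\emph{Step 1: the wave equation (i).} Write the semi-discrete pseudo-pressure equation~\eqref{eq:ac_psi} as $\dot q = -D\bv + \varrho$, with $\varrho = -\alpha^{-1}D(\psi\bv)$. Write the momentum equation as $\dot\bv = \mathbf{F} - \rho_0^{-1}G\psi + \mathcal{D}_h\bv$. Here $\mathbf{F}$ collects the Lamb, Bernoulli, buoyancy and column-pressure terms, and the dissipation is taken in the unweighted form (E4); for the ${\rhoa}$-weighted form I would absorb the commutator into $\mathbf{F}$.

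Differentiate the first relation once in time and substitute the second. The pressure term gives $\rho_0^{-1}DG\psi = -\tilde\alpha Lq$, using $\psi/\rho_0 = \tilde\alpha q$ and $L = -DG$.

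The dissipation term is handled with (E1)--(E2). The curl part vanishes, since $DC^{\mathsf T}=0$, which leaves $-\nu D G D\bv = \nu L D\bv$. Eliminating $D\bv = \varrho - \dot q$ once more gives $-\nu L\dot q + \nu L\varrho$. Collecting terms yields~\eqref{eq:disc_wave}; no truncation enters because only the identity $d\circ d=0$ is used.

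Projecting onto the orthonormal eigenbasis of the symmetric $L$ diagonalises both $L$-terms at once, because the damping and stiffness operators are the same matrix up to scalars. This gives the scalar oscillators $\ddot q_k + \nu\lambda_k\dot q_k + \tilde\alpha\lambda_k q_k = f_k$.

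\emph{Step 2: decay and the kernel (ii).} The characteristic roots are $\mu = \tfrac12\bigl(-\nu\lambda_k \pm \sqrt{\nu^2\lambda_k^2 - 4\tilde\alpha\lambda_k}\bigr)$. In the underdamped case $\nu^2\lambda_k<4\tilde\alpha$ both roots have real part $-\tfrac12\nu\lambda_k$. In the overdamped case the slow root is $-\tilde\alpha/\nu$ to leading order, the rate the statement quotes.

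On $\ker L$, pair the forcing with a constant cell field $\mathbf 1$. By (E2), $\langle\mathbf 1, Dy\rangle_h = -(G\mathbf 1,y)_h = 0$ for every face field $y$ vanishing on the boundary. This kills $D\mathbf{F}$, $\dot\varrho$ and $\nu L\varrho$, so $\ddot q_0 = 0$. To get $\dot q_0=0$ rather than merely linear growth, I would use the first-order relation itself: $\langle\mathbf 1,\dot q\rangle_h = 0$ directly, which is Proposition~\ref{prop:tracer_consistency}(i).

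\emph{Step 3: bounds (iii)--(iv).} These follow from Duhamel's formula per mode. With damping rate $\gamma_k \ge \min(\tfrac12\nu\lambda_1, \tilde\alpha/\nu)$, the homogeneous part decays and the particular part satisfies $|q_k|\lesssim \mathcal{F}_k/(\gamma_k\,\omega_k)$ in the underdamped case. Hence each $q_k$ is bounded uniformly in $t$.

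This is the main obstacle. Summing over modes needs $\sum_k \mathcal{F}_k^2/(\gamma_k\omega_k)^2<\infty$, which is automatic on a fixed finite mesh. The $\alpha$-scaling is the delicate part. I would use that $\gamma_k$ is $\alpha$-independent in the underdamped range while $\omega_k\propto\alpha^{1/2}$. Together with well-prepared data, which removes the $\OO(1)$ homogeneous transient in $\dot q$ (Section~\ref{sec:initialisation}), this should give $|q_k| = \OO(\alpha^{-1/2})$, so $\|\psi\|_h = \alpha\|q\|_h = \OO(\alpha^{1/2})$. One point needs checking: $\mathcal{F}_k$ must itself be $\alpha$-bounded. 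It contains $\varrho$, which is $\OO(\|\psi\|/\alpha)$, so I would close this with a bootstrap or Gronwall argument.

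For (iv), when the forcing spectrum lies well below $\omega_k$ the quasi-static (adiabatic) response dominates. Integrating by parts in the Duhamel integral shows the inertial and damping terms are relatively small, so $|q_k|\le \mathcal{F}_k/(\tilde\alpha\lambda_k)$ up to lower order. This gives $\|\psi\|_h = \OO(1)$.
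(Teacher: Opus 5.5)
Your proposal is correct and follows the paper's proof essentially step for step. It uses the same elimination of $D\bv$ via $D\mathcal{D}_h=-\nu LD$ from (E1), (E2) and (E4), the same characteristic-root analysis per mode, the same Duhamel bound with $\omega_k\propto\tilde\alpha^{1/2}$ for (iii), and the same quasi-static $\omega_k^{-2}$ response for (iv).

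Invoking the first-order relation $\langle\mathbf 1,\dot q\rangle_h=0$ to exclude linear drift of the constant mode tightens a step the paper leaves implicit, since the paper only shows $\ddot q_0=0$. The $\alpha$-uniformity of $\mathcal F_k$ that you flag is assumed rather than proved in the paper's argument, where each summand gets its $\tilde\alpha^{-1}$ from $\omega_k$ alone. Reading the hypothesis that way suffices, and your bootstrap is not required.
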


\begin{proof}
  Apply $D$ to the momentum equation and put $\Theta := D\bv$.  By~(E4)
  and the adjoint identity of~(E2),
  $D\mathcal{D}_h = \nu DGD - \nu(DC^{\mathsf T})\Lambda C = -\nu LD$,
  the middle term vanishing identically by~(E1), the only step at which
  a discretisation property is used.  With $\dot q = -\Theta + \varrho$
  from the discrete~\eqref{eq:ac_psi}, eliminating $\Theta$
  gives~\eqref{eq:disc_wave}, which involves no operator but $L$, so
  projection on the eigenbasis decouples it.  For~(ii) the coefficients
  of $\mu^2 + \nu\lambda_k\mu + \tilde\alpha\lambda_k$ are positive when
  $\lambda_k>0$, so both roots lie in the open left half-plane and the rates are the root pair; on $\ker L$,
  $\langle 1, D\mathbf{F}\rangle_h = -(G1,\mathbf{F})_h = 0$ by~(E2).
  For~(iii), each mode is a damped oscillator with
  $2\beta_k = \nu\lambda_k$ and $\omega_k^2 = \tilde\alpha\lambda_k$,
  whose impulse response has $L^1$ norm at most
  $(\beta_k\omega_{d,k})^{-1}$, so Duhamel gives
  $\sup_t|q_k| \le \mathcal{T}_k
   + \mathcal{F}_k/(\beta_k\omega_{d,k})$ with $\mathcal{T}_k$ the
  decaying homogeneous part; squaring and summing over $k\ge1$ in the
  orthonormal basis, with $\|\psi\|_h = \alpha\|q\|_h$ and the zero mode
  contributing nothing by~(ii), each summand carries
  $\tilde\alpha^{-1}$ through $\omega_k$.  For~(iv), the transfer
  function $[(\omega_k^2-\Omega^2)^2 + (2\beta_k\Omega)^2]^{-1/2}$ at
  $\Omega \ll \omega_k$ is $\omega_k^{-2}$.
\end{proof}

\subsection{Cost Accounting}\label{app:cost_detail}

This appendix records the provenance of the counted entries of
Table~\ref{tab:cost_accounting} and the detail of the $\bar\psi$ solve;
the counts themselves and their use are in
Section~\ref{sec:pressure_calc}.

The two contraction entries are counted from source, the hydrostatic
one, $2.96\,F_1$, from the ICON-O production code, the non-hydrostatic
one, $3.86\,F_1$, from the reference incompressible-Euler
implementation normalised to the same loop idiom, so their difference
is one extra pass over the edge triples, $0.90\,F_1$, which is the
Lamb-vector correction $\bom_a^H\times\bv$ at the continuous level.
Spanning the counting convention for the contraction increment,
$0.8$--$1.3\,F_1$, widens the overhead from $1.78$ to
$1.68$--$2.18\,F_1$ against a baseline of about $8$, so the factor
stays between $1.21$ and $1.27$ throughout.

\paragraph*{Cost of the barotropic $\bar\psi$ solve.}
With
$\alpha = \rho_0 gH_{\mathrm{full}}$ the depth-averaged AC operator
$1 + (\alpha/\rho_0)(\tau/\Delta x)^2(-\nabla_H^2)$ and the free-surface
operator $1 + gH(\tau/\Delta x)^2(-\nabla_H^2)$ are identical in
stiffness, so the $\bar\psi$ solve is cheaper than the $\eta$ solve for
reasons of tolerance and not of conditioning.  The surface height holds the full barotropic dynamics and needs
$\varepsilon_\eta\sim10^{-13}$; $\bar\psi$ contributes only an
$\OO(1/\tilde\alpha)$ momentum correction, for which
$\varepsilon_\psi\sim10^{-2}$--$10^{-3}$ suffices.  With a CG count
scaling as $\tfrac12\sqrt{\kappa}\,\ln(2/\varepsilon)$,
\begin{equation}\label{eq:iter_ratio}
  \frac{n_{\mathrm{iter}}^{\bar\psi}}{n_{\mathrm{iter}}^{\eta}}
  = \frac{\ln(2/\varepsilon_\psi)}{\ln(2/\varepsilon_\eta)}
  \;\approx\; 0.2\text{--}0.25 .
\end{equation}
No absolute iteration count is claimed by this formula, and none is
needed: the two solves act on the same two-dimensional mesh with the
same operator stiffness, so every factor the formula fails to capture
is common to numerator and denominator and cancels, leaving the
tolerance dependence.  Since the free-surface solve is itself some
$20\%$ of a semi-implicit model's cost, this matters.  The arithmetic
increment lies between
\eqref{eq:iter_ratio} and a doubling of the per-iteration work,
according to whether residuals are monitored separately or a single
stopping criterion is used; the two also differ in communication.  On a
split-explicit host $\bar\psi$ instead is advanced in the existing sub-step loop
as one extra two-dimensional field, at $0.16\to0.24\,F_1$.

Under refinement the projection's cost grows with $H/\Delta x$ through
$\kappa_{\mathrm{prec}} = 1 + H^2/\Delta x^2$, whereas AC/DC's column
solve is $\OO(n_z)$ per column whatever the horizontal spacing and its
sub-step count scales as $\Delta x^{-1}$ exactly as the hydrostatic
model's does, so the \emph{ratio} is flat even though neither cost is
(Sections~\ref{sec:pressure_calc} and~\ref{sec:telescoping}).

\subsection{The Density Weighting of the Kinetic Energy}\label{sec:df_source}

Which quantity is reported as ``the energy'' in the experiments of
Section~\ref{sec:experiments} is settled by the fact that
$K_{\mathrm{df}}$ does not close and that the density weighting repairs
it.  The mechanism is the one that forces the density-weighted
formulation in the barotropic Navier--Stokes equations
\citep{Korn2026b}: AC/DC is structurally barotropic in its
pressure--density closure, and the velocity it produces is not
solenoidal.

The obstruction is specific to the \emph{vector-invariant} form, which puts the kinetic energy inside the Bernoulli gradient and pairs it
against $\bv$ in the energy budget.  Under incompressibility that
pairing integrates by parts to zero and the term disappears silently,
which is what the standard energy argument for the incompressible
vector-invariant equations rests on; under artificial compressibility
it does not vanish, and what is left is cubic in the velocity.  Write
$\Pi := (p_{\mathrm{hyd}}+p_{\mathrm{sfc}}+p_V+\psi)/\rho_0$, so that
the Bernoulli term of~\eqref{eq:acdc_mom} is $\tfrac12|\bv|^2+\Pi$;
only its kinetic-energy part produces cubic terms, so what follows
holds verbatim for pure AC, where $p_V\equiv0$.  Throughout, (A1)
and~(A3) denote the discrete Cartan identity and the Lamb-antisymmetry
axiom of~\citet{Korn2026b}; in the continuum~(A3) holds pointwise,
$\bv\cdot(\bom_a\times\bv)=0$.

\begin{proposition}[The cubic obstruction and its unique
  repair]\label{prop:dw_removes}\label{prop:df_source}
  Let $(\bv,\psi)$ solve~\eqref{eq:ac_psi} and~\eqref{eq:acdc_mom} on a
  domain closed to mass flux, with the Lamb term satisfying~(A3), and
  put $r := {\rhoa}/\rho_0 = 1+\psi/\alpha$.
  \begin{enumerate}[nosep]
  \item[(i)] The density-free kinetic energy obeys
    $dK_{\mathrm{df}}/dt = \mathcal{S}_{\mathrm{df}}
    + \langle\Pi,\Div\bv\rangle + \langle\mathcal{D}(\bv),\bv\rangle
    + \langle b,w\rangle$ with
    $\mathcal{S}_{\mathrm{df}} := \int_\Omega\tfrac12|\bv|^2\Div\bv\dd V$,
    which is what $-\langle\nabla(\tfrac12|\bv|^2),\bv\rangle$ becomes
    on integration by parts.  It vanishes for all velocity fields only
    under $\Div\bv\equiv0$ and is not sign-definite, so
    $K_{\mathrm{df}}$ is no Lyapunov functional for the inviscid
    system.
  \item[(ii)] It cannot be removed by modifying the Lamb term.
    Under~(A3) that term contributes identically zero
    to that budget, so there is nothing to trade against it,
    and any modification cancelling $\mathcal{S}_{\mathrm{df}}$ must
    violate~(A3) and with it the Cartan identity~(A1) on which
    circulation conservation rests.  This is the energy--circulation
    dichotomy of~\citet{Korn2026b} in the present setting.
  \item[(iii)] The repair lies in the energy functional and is exact.
    With $k_{\mathrm{dw}} := \tfrac12\int_\Omega r|\bv|^2\dd V
    = K_{\mathrm{dw}}/\rho_0$ the cubic sector telescopes and
    \emph{nothing cubic remains}:
    $dk_{\mathrm{dw}}/dt = -\langle r\bv,\nabla\Pi\rangle$ plus the
    viscous and buoyancy pairings.
  \item[(iv)] The weight is forced: among corrections
    $K_{\mathrm{df}} + \tfrac12\int_\Omega g|\bv|^2\dd V$ with $g$ a
    function of the state vanishing for $\psi\equiv0$, the cubic terms
    cancel for arbitrary $\bv$ if and only if
    \begin{equation}\label{eq:g_condition}
      \Dt g + \Div(g\bv) = -\Div\bv ,
    \end{equation}
    and $g = \psi/\alpha$ is the unique such choice.
  \end{enumerate}
\end{proposition}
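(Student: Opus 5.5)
All four parts come from pairing \eqref{eq:acdc_mom} with $\bv$ (for $K_{\mathrm{df}}$) or with $r\bv$ (for $k_{\mathrm{dw}}$), integrating over the closed domain, and sorting terms by degree in $\bv$. I would use three tools: integration by parts with $\bv\cdot\mathbf n=0$, the flux form \eqref{eq:ac_flux} written as $\Dt r+\Div(r\bv)=0$, and the pointwise Lamb orthogonality (A3).

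For (i), pairing with $\bv$ gives $\tfrac{d}{dt}K_{\mathrm{df}}$ on the left. The Lamb term drops out by (A3). The gradient $-\langle\nabla(\tfrac12|\bv|^2+\Pi),\bv\rangle$ integrates by parts to $\langle \tfrac12|\bv|^2+\Pi,\Div\bv\rangle$, which yields $\mathcal S_{\mathrm{df}}$ together with the $\Pi$ pairing. The dissipation and buoyancy terms remain as they are. To show $\mathcal S_{\mathrm{df}}$ is not sign-definite, take any $\bv$ with $\Div\bv\not\equiv0$. Then either $\bv$ or a suitable modification has a nonzero cubic integral, and $\bv\mapsto-\bv$ flips its sign. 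This also shows it vanishes identically only when $\Div\bv\equiv0$. Varying $\bv$ locally in a ball where $\Div\bv\neq0$ gives the ``only if''.

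For (ii), the argument is structural. By (A3) the Lamb term contributes exactly zero to the $K_{\mathrm{df}}$ budget, so it has nothing to trade against $\mathcal S_{\mathrm{df}}$. Suppose a modified Lamb term $\mathbf N'$ were to cancel $\mathcal S_{\mathrm{df}}$. Then it would need $\langle\mathbf N',\bv\rangle=\mathcal S_{\mathrm{df}}\neq0$ for some $\bv$, which violates (A3). Through the equivalence of (A3) with (A1) established in \citet{Korn2026b}, this loses circulation conservation. I would cite that equivalence rather than reprove it.

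For (iii), compute $\tfrac{d}{dt}\tfrac12\int r|\bv|^2=\tfrac12\int \Dt r\,|\bv|^2+\int r\bv\cdot\Dt\bv$. The first term equals $-\tfrac12\int|\bv|^2\Div(r\bv)=\int r\bv\cdot\nabla\tfrac12|\bv|^2$. The second term contains $-\int r\bv\cdot\nabla\tfrac12|\bv|^2$ from the Bernoulli gradient, so the two cancel exactly. The Lamb term contributes $r\,\bv\cdot(\bom_a\times\bv)=0$ pointwise, which is why (A3) must hold pointwise and not only in integral. What remains is $-\langle r\bv,\nabla\Pi\rangle$, the viscous pairing, and the buoyancy pairing. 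This step needs only the flux form, with no $\OO(1/\alpha)$ remainder.

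Part (iv) is the main obstacle. Repeat the computation of (iii) with $g$ in place of $r-1$. The cubic terms are $\tfrac12\int\Dt g|\bv|^2+\int(1+g)\,\bv\cdot(-\nabla\tfrac12|\bv|^2)$. After integration by parts this becomes $\tfrac12\int|\bv|^2[\Dt g+\Div((1+g)\bv)]$. Requiring this to vanish for arbitrary $\bv$ means $|\bv|^2$ ranges over a family dense enough, using localized bumps, to force the bracket to vanish pointwise. That gives \eqref{eq:g_condition}. I expect the delicate point to be this ``arbitrary $\bv$'' localization. The reason is that $\Dt g$ depends on the dynamics, so the condition should be read as an identity along solutions.

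For uniqueness, $g=\psi/\alpha$ satisfies \eqref{eq:g_condition} by \eqref{eq:ac_psi}. Any two solutions differ by $h$ with $\Dt h+\Div(h\bv)=0$. Since both vanish when $\psi\equiv0$, $h$ is transported with zero initial and state-determined data, and linear transport uniqueness gives $h\equiv0$.
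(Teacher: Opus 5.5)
Your parts (i)–(iii), and the derivation of \eqref{eq:g_condition} in (iv), follow the paper's proof almost line by line. You use the same pairing, the same integration by parts with $\bv\cdot\mathbf n=0$, and the same cancellation supplied by the flux form $\Dt r=-\Div(r\bv)$. Your oddness argument $\mathcal S_{\mathrm{df}}(-\bv)=-\mathcal S_{\mathrm{df}}(\bv)$ for the lack of sign is tidier than the paper's appeal to oscillating acoustic modes.

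\textbf{The gap is in the uniqueness step.} The class condition says $g$ vanishes on \emph{states} with $\psi\equiv0$. It says nothing about the initial state of a given solution, which generically has $\psi_0\neq0$ (well-prepared data, Section~\ref{sec:initialisation}). So $h$ has no zero initial datum, and linear transport uniqueness has nothing to act on.

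Reachability does not rescue the step. On a closed domain $\int_\Omega\psi\dd V$ is conserved, so no state with $\int_\Omega\psi\dd V\neq0$ is ever reached from $\psi\equiv0$. In fact the claim fails in the class as stated. For a constant $\lambda\neq0$, set $h:=\lambda\,r\int_\Omega\psi\dd V$. Then:
\begin{itemize}
\item $h$ is a function of the state and vanishes when $\psi\equiv0$;
\item $h$ satisfies $\Dt h+\Div(h\bv)=0$, since it is $r$ times a spatially constant, time-conserved, hence materially advected, factor.
\end{itemize}
So $g=\psi/\alpha+h$ meets every requirement of (iv). The resulting energy is $\bigl(1+\lambda\int_\Omega\psi\dd V\bigr)k_{\mathrm{dw}}$, a conserved rescaling of $k_{\mathrm{dw}}$. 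This is physically harmless, but it is a different functional. The paper's own argument has the same hole: it writes $h=rG$ with $G$ materially advected and vanishing on states with $r\equiv1$, ``hence identically'', and this $G$ is a counterexample to that last step.

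\textbf{How to close it.} Restrict the class to local weights, $g=\phi(\psi)$ pointwise. By \eqref{eq:ac_psi}, $\Dt\psi+\bv\cdot\nabla\psi=-(\alpha+\psi)\Div\bv$, so
\begin{equation*}
\Dt g+\Div(g\bv)=\bigl[\phi(\psi)-(\alpha+\psi)\phi'(\psi)\bigr]\Div\bv .
\end{equation*}
Then \eqref{eq:g_condition} for arbitrary $\bv$ becomes the ODE $(\alpha+\psi)\phi'=1+\phi$. Equivalently, $\frac{d}{d\psi}\bigl[(1+\phi)/(\alpha+\psi)\bigr]=0$ on $\psi>-\alpha$, which is the positivity of $r$. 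With $\phi(0)=0$ this forces $\phi=\psi/\alpha$.

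This also settles your localisation concern: the bracket then depends only on the pointwise value of $\psi$, and no trajectory-dependent $\Dt g$ is left.
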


\begin{proof}
  For~(i), pair~\eqref{eq:acdc_mom} with $\bv$; the Lamb term drops
  by~(A3) and the Bernoulli gradient integrates by parts to
  $\mathcal{S}_{\mathrm{df}}+\langle\Pi,\Div\bv\rangle$, in which
  $\tfrac12|\bv|^2\ge0$ while $\Div\bv$ oscillates on the acoustic
  modes of~\eqref{eq:ac_psi}.  For~(ii), (A3) makes the Lamb
  contribution vanish for \emph{every} field and not merely in the
  mean, so a compensating term needs a symmetric part, which
  destroys~(A1).  For~(iii), the flux form~\eqref{eq:ac_flux} reads
  $\Dt r = -\Div(r\bv)$, so
  \begin{equation}\label{eq:dw_split}
    \frac{dk_{\mathrm{dw}}}{dt}
      = \tfrac12\!\int_\Omega (\Dt r)|\bv|^2\dd V
        + \int_\Omega r\,\bv\cdot\Dt\bv\dd V
      = -\tfrac12\!\int_\Omega \Div(r\bv)|\bv|^2\dd V
        + \int_\Omega r\,\bv\cdot\Dt\bv\dd V ,
  \end{equation}
  and the Bernoulli part of the second integral is
  $-\int r\bv\cdot\nabla(\tfrac12|\bv|^2)
   = \int\tfrac12|\bv|^2\Div(r\bv)$, because
  $\Div(r\bv\,\tfrac12|\bv|^2)$ integrates to zero on a closed domain.
  Added to the first term this cancels outright, leaving
  the weighted budget of~(iii) with $\mathcal{S}_{\mathrm{df}}$ nowhere
  and no remainder in its place.  For~(iv), the correction contributes
  $\tfrac12\int(\Dt g)|\bv|^2$ and, by the same integration by parts,
  $\tfrac12\int|\bv|^2\Div(g\bv)$, so with $\mathcal{S}_{\mathrm{df}}$
  the cubic total is
  $\tfrac12\int|\bv|^2[\Dt g+\Div(g\bv)+\Div\bv]$, vanishing for
  arbitrary $\bv$ exactly when~\eqref{eq:g_condition} holds pointwise;
  for $g=r-1$ its left side is $\Dt r+\Div(r\bv)-\Div\bv = -\Div\bv$
  by~\eqref{eq:ac_flux}.  Uniqueness holds because the difference of
  two solutions obeys $\Dt h+\Div(h\bv)=0$, whose solutions are $h=rG$
  with $G$ materially advected, and the class condition forces $G$ to
  vanish on states with $r\equiv1$, hence identically.
\end{proof}

The failure of $K_{\mathrm{df}}$ is therefore structural, and the repair exact at any compressibility.  Had
${\rhoa}$ been advanced by the relaxation
form~\eqref{eq:ac_cont}, a remainder
$\OO(1/\alpha)$ would survive.  What remains in~\eqref{eq:EKL_budget}
is of one kind only, the work of the non-pseudo pressures against the
artificial compressibility, each with an explicit $\alpha^{-1}$ and
vanishing in the incompressible limit; the same weighting turns a
tracer-variance budget into a telescoping face sum in
Proposition~\ref{prop:variance}.

\subsection{The $1/\alpha$ Law, Measured}\label{app:alpha_law}

\emph{Diagnostics bearing on the approximation itself.}  This is the
only experiment in which the stratification collapses, and it is
therefore the only one that probes the regime in which
Remark~\ref{rem:aspect_ratio} says the splitting is worked hardest.
The goal of these diagnostics is to measure what cannot be predicted.
The run reports:
\begin{enumerate}
\item the Richardson number $R_i = N^2h^2/U^2$ and the Marshall
  parameter $\mathrm{n}=\gamma^2/R_i$ of~\eqref{eq:marshall_n},
  evaluated within the convecting patch, confirming that the run is in
  the non-hydrostatic regime;
\item the depth $D$ of the weakly stratified layer and the width $L$ of
  the descending plumes, giving the predicted ratio
  $(D/L)^2$ of~\eqref{eq:DL_ratio};
\item the measured residual fraction $\|S^*\|/\|S\|$
  from~\eqref{eq:residual}, which is the same ratio obtained in
  situ without any assumption about $D$ or $L$.  Agreement between
  this and item~2 is the substantive test of
  Remark~\ref{rem:aspect_ratio};
\item the relative divergence error $|\Div\bv|/(U/L)$ evaluated
  within the plumes and not domain-averaged, since the
  average is dominated by the quiescent surroundings and understates
  it; together with the same quantity from a run without the column
  solve, which should be the smaller of the two wherever
  $\|S^*\|/\|S\|$ exceeds unity.
\end{enumerate}
A pair of runs with $\alpha$ separated by a factor nine tests the predicted
$1/\alpha$ scaling of item~4.  The tracer-variance budget of
Proposition~\ref{prop:variance} is not among these diagnostics: the
surface cooling patch is a tracer source, so the closed-domain
hypothesis of the proposition fails here, and the budget is instead
exhibited in the source-free configuration of
Experiment~1 (Section~\ref{sec:exp_lock}).  These
measurements are what the convection claim of
Section~\ref{sec:intro} rests on.

\emph{The $1/\alpha$ law.}
Every quantity that the artificial compressibility governs follows the
$1/\alpha$ law, the observed ratios lying between $8.94$ and $9.01$
against an exact nine:

\begin{center}
\begin{tabular}{lccc}
\toprule
Quantity & $\alpha_1 = 3.131\times10^{7}$ & $\alpha_2 = 2.818\times10^{8}$ & ratio \\
\midrule
acoustic energy, $t = 0$
  & $6.885\times10^{-5}$ & $7.649\times10^{-6}$ & $9.000$ \\
$\Fr_{\mathrm{baro}}^{2} = \max|\psi|/(\rho_0\alpha)$
  & $2.928\times10^{-3}$ & $3.250\times10^{-4}$ & $9.010$ \\
$\max|\Div\bv|$, time mean
  & $2.582\times10^{-4}$ & $2.877\times10^{-5}$ & $8.975$ \\
$\max|\Div\bv|$, at $t_{\mathrm{end}}$
  & $1.746\times10^{-4}$ & $1.953\times10^{-5}$ & $8.940$ \\
acoustic energy, $t_{\mathrm{end}}$
  & $1.078$              & $1.203\times10^{-1}$ & $8.957$ \\
tracer conservation error
  & $1.148\times10^{-6}$ & $1.281\times10^{-7}$ & $8.964$ \\
\bottomrule
\end{tabular}
\end{center}

\noindent
The acoustic energy is $\|\psi\|^{2}/(2\alpha\rho_0)$ and the
flow is at rest at $t = 0$, so with the same well-prepared $\psi$ of
Section~\ref{sec:initialisation} in both runs the ratio is
$\alpha_2/\alpha_1$ by construction. 

\begin{center}
\begin{tabular}{lccc}
\toprule
Quantity & $\alpha_1$ & $\alpha_2$ & rel.\ difference \\
\midrule
plume depth                        & $0.250$ & $0.250$ & $0$ \\
surface cooling $W_{\mathrm{patch}}$
  & $2.4844\times10^{6}$ & $2.4844\times10^{6}$ & $0$ \\
$T'$ within the patch
  & $-1.77092\times10^{-2}$ & $-1.77080\times10^{-2}$ & $7\times10^{-5}$ \\
kinetic energy, $t_{\mathrm{end}}$
  & $7.5560\times10^{2}$ & $7.5556\times10^{2}$ & $5\times10^{-5}$ \\
$\max|w|$                          & $180.15$ & $180.21$ & $3.4\times10^{-4}$ \\
viscous dissipation $\varepsilon_{\mathrm{visc}}$
  & $2.9448\times10^{5}$ & $2.9457\times10^{5}$ & $3.1\times10^{-4}$ \\
potential energy, $t_{\mathrm{end}}$
  & $5.5005\times10^{4}$ & $5.5052\times10^{4}$ & $8.5\times10^{-4}$ \\
relative mass drift
  & $-6.399\times10^{-4}$ & $-6.409\times10^{-4}$ & $1.6\times10^{-3}$ \\
\bottomrule
\end{tabular}
\end{center}

\noindent
The surface cooling and the diffusive potential-energy source agree to
every printed digit, because  rhey never see $\alpha$.
Everythingwhat follows agrees to better than one part in a
thousand, while the error measures of the first table differ by a
factor nine.  The relative mass drift is the same $-6.4\times10^{-4}$
in both, therefore not caused by the artificial compressibility
at all.  The two tables together say not only that the runs agree,
they say how they differ, and that the difference is the
artificial compressibility.

Three diagnostics differ by more than the flow does, and they are the
price of the smaller $\alpha$:

\begin{center}
\begin{tabular}{lccc}
\toprule
Quantity & $\alpha_1$ & $\alpha_2$ & rel.\ difference \\
\midrule
numerical diffusivity $K_{\mathrm{num}}$
  & $8.959\times10^{-2}$ & $8.935\times10^{-2}$ & $0.27\%$ \\
variance residual, divergence term removed
  & $1.379\times10^{-5}$ & $1.366\times10^{-5}$ & $0.90\%$ \\
energy budget residual
  & $2.736\times10^{3}$  & $2.109\times10^{3}$  & $30\%$ \\
\bottomrule
\end{tabular}
\end{center}

\noindent
All three measure numerical error in the tracer and energy
budgets; all three are worse
at the smaller $\alpha$; and a single mechanism accounts for them,
since the $\alpha_1$ run advects with a velocity field carrying nine
times the divergence while both budgets are derived under
$\Div\bv = 0$.  The energy residual is the clearest case, collecting as
it does any operator pair that fails to be a discrete adjoint.  The absolute
magnitudes remain small: the numerical diffusivity stays within one
part in four hundred of $0.0894$ against a physical diffusivity of
unity, and both runs report the energy residual as $0.1\%$ of the rate
scale.  This is one experiment at one resolution, so the $1/\alpha$ law
is here measured, as well as the  residual.



\begin{center}
\begin{tabular}{cccccc}
\toprule
$c_{\mathrm{ac}}\Delta t/L$ & $\alpha$ & $\Fr_{\mathrm{baro}}^{2}$
  & err $\max|w|$ & err $T'$ lid & pressure \\
\midrule
$1$    & $3.13\times10^{7}$ & $0.29\%$ & $0.115\%$ & $0.005\%$ & $47.9$\,ms \\
$0.7$  & $1.53\times10^{7}$ & $0.59\%$ & $0.186\%$ & $0.009\%$ & $39.9$\,ms \\
$0.5$  & $7.83\times10^{6}$ & $1.15\%$ & $0.323\%$ & $0.016\%$ & $35.2$\,ms \\
$0.35$ & $3.84\times10^{6}$ & $2.30\%$ & $0.606\%$ & $0.030\%$ & $30.7$\,ms \\
$0.25$ & $1.96\times10^{6}$ & $4.35\%$ & $1.056\%$ & $0.054\%$ & $26.7$\,ms \\
\midrule
projection & --- & --- & --- & --- & $8.8$\,ms \\
\bottomrule
\end{tabular}
\end{center}

\noindent
Two features matter.  Every error follows
$1/\alpha$ to within $7\%$ across the whole range: there is no threshold
and no onset, and nothing became unstable at any value, including the
two whose departure exceeds that at which an earlier configuration of
this experiment was recorded as failing.  The errors are ordered by scale, the domain-wide surface signal moves twenty times less
than the plume's peak velocity.  Whatever sets a practical lower bound
on $c_{\mathrm{ac}}\Delta t/L$, it is therefore not that a domain-scale
pressure fails to form, because the domain scale is the part that
survives best.

The two endpoints were then repeated on the paper mesh, to $t=0.013$:

\begin{center}
\begin{tabular}{cccccc}
\toprule
$c_{\mathrm{ac}}\Delta t/L$ & $\alpha$ & $\Fr_{\mathrm{baro}}^{2}$
  & err $\max|w|$ & err $T'$ lid & pressure \\
\midrule
$1$    & $1.57\times10^{7}$ & $0.60\%$ & $0.121\%$ & $0.009\%$ & $348.9$\,ms \\
$0.35$ & $1.92\times10^{6}$ & $4.63\%$ & $0.931\%$ & $0.046\%$ & $238.6$\,ms \\
\midrule
projection & --- & --- & --- & --- & $247.0$\,ms \\
\bottomrule
\end{tabular}
\end{center}

\noindent

\subsection{Porting an Existing Hydrostatic Model}\label{sec:recipe}

The machinery AC/DC needs is machinery hydrostatic models already
have, lux-form transport, a tridiagonal column solver, a barotropic
solve, and the substantial work lies in the non-hydrostatic dynamics
.  The host model must supply
flux-form tracer transport consistent with continuity, a column-wise
tridiagonal solver, a barotropic solver of either kind, and a $z$- or
$z^*$-coordinate prismatic mesh; isopycnal and terrain-following
coordinates fall outside the analysis of
Section~\ref{sec:free_surface}.

\paragraph*{The changes, in dependency order.}
\begin{enumerate}[nosep]
\item[C1.] \emph{Prognostic vertical velocity.}  The largest single
  item, and the only one not specific to AC/DC: $w$ acquires its own
  momentum equation, with vertical advection, the full
  three-dimensional vorticity in the Lamb term and the non-hydrostatic
  vertical pressure gradient.  Hosts making the traditional
  approximation should revisit the Coriolis terms coupling $u$ and $w$.
\item[C2.] \emph{Pseudo-pressure.}  Add $\psi$ as a prognostic cell
  field advanced by~\eqref{eq:ac_psi}, with
  $\alpha = \rho_0 gH_{\mathrm{full}}$ from~\eqref{eq:alpha_choice} and
  $\psi = 0$ at the free surface,~\eqref{eq:psi_bc}.
\item[C3.] \emph{Pseudo-density and tracer transport.}  With
  ${\rhoa} = \rho_0(1+\psi/\alpha)$, the cell mass carried by
  the existing scheme becomes $|c|\,\rho_{\mathrm{AC},c}$ and the
  transporting flux becomes the pseudo-mass flux $F_f = r_fu_f|f|$
  of~\eqref{eq:Ff_def}, with the same face reconstruction and no change
  of limiter or tracer time step.  This is the one point at which a
  wrong reading silently gives a different scheme: transporting against
  the volumetric flux forfeits
  Propositions~\ref{prop:tracer_consistency} and~\ref{prop:variance},
  which is what test~V1 detects.
\item[C4.] \emph{Column solve.}  Apply the existing tridiagonal solver
  to $L_z(p_V/\rho_0) = S$ per column, Dirichlet at the surface and
  Neumann at the bottom, in the vertical pass that already runs.
\item[C5.] \emph{Barotropic coupling.}  Split $\psi = \bar\psi + \psi'$
  and carry $\bar\psi$ in the existing barotropic solver as
  Remark~\ref{rem:host_variants} describes for each host type, updating
  $\psi'$ once per baroclinic step.  Weighting $\bar\psi$ to vanish at
  the surface makes the surface condition exact across the loop.
\item[C6.] \emph{Diagnostics.}  Report the density-weighted kinetic
  energy~\eqref{eq:dw_energy} and the potential
  energy~\eqref{eq:pe_def}.
\end{enumerate}
Unchanged: the equation of state, the hydrostatic pressure, both time
steps, the free-surface solver, the tracer advection scheme and the
vertical mixing.

\paragraph*{Verification refinement sequence.}
Each step tests one proposition and should pass before the next is
attempted; the first three need no dynamics.
\begin{enumerate}[nosep]
\item[V1.] Constant tracer, arbitrary flow: $C$ constant to round-off
  (Proposition~\ref{prop:tracer_consistency}(ii)).  Failure indicates
  C3 fluxes not matching the continuity fluxes, on a sub-stepped modelt,
  most often the outer-step fluxes used in place of the time-integrated
  sub-step fluxes.
\item[V2.] Non-uniform passive tracer:
  $\sum_c|c|\rho_{\mathrm{AC},c}C_c$ constant to round-off for any
  limiter (Proposition~\ref{prop:tracer_consistency}(i)); and
  ${\rhoa} > 0$ throughout, as
  Proposition~\ref{prop:tracer_consistency}(iv) requires.
\item[V3.] Inviscid, unstratified, closed domain: $E$
  of~\eqref{eq:total_energy_dw} conserved to time-truncation while
  $K_{\mathrm{df}}$ plus the same elastic term drifts.  The drift in the
  density-free quantity should \emph{not} vanish under time-step
  refinement, while the drift in $E$ should.
\item[V4.] Linear internal wave: dispersion against
  Proposition~\ref{prop:hybrid_dispersion}; and two runs a decade apart
  in $\alpha$, confirming the $\OO(1/\alpha)$ approach to the
  incompressible solution and no $\alpha$-dependent time-step
  restriction.
\end{enumerate}

Hosts on an arbitrary-Lagrangian--Eulerian vertical grid need one
addition: the remapping step must be applied to ${\rhoa}$
consistently with the tracers, since remapping that conserves
$|c|\,C_c$ but not $|c|\,\rho_{\mathrm{AC},c}C_c$ violates Proposition~\ref{prop:tracer_consistency} at every remap.  


\end{document}